\DeclareMathOperator*{\argmin}{argmin}
\newcommand{\norm}[1]{\left\lVert#1\right\rVert}
\theoremstyle{plain}
\newtheorem{thm}{Theorem}[section]
\newtheorem{cor}[thm]{Corollary}
\newtheorem{lem}[thm]{Lemma}
\newtheorem{assu}[thm]{Assumption}
\newtheorem{prop}[thm]{Proposition}
\newtheorem{defn}[thm]{Definition}
\newtheorem{exam}[thm]{Example}
\theoremstyle{remark}
\newtheorem{rem}[thm]{Remark}
\numberwithin{equation}{section}
\date{}
\begin{document}
\title{\Large Globally Optimal And Adaptive Short-Term Forecast of Locally Stationary Time Series And A Test for Its Stability }

\author[1]{ Xiucai Ding \thanks{E-mail: xiucai.ding@duke.edu. }}  
\author[2]{ Zhou Zhou \thanks{E-mail: zhou@utstat.utoronto.ca.}}
\affil[1]{Department of Mathematics, Duke University}
\affil[2]{Department of Statistical Sciences, University of Toronto}

\maketitle


\begin{abstract} 
Forecasting the evolution of complex systems is one of the grand challenges of modern data science. The fundamental difficulty lies in understanding the structure of the observed stochastic process. In this paper, we show that every uniformly-positive-definite-in-covariance and sufficiently short-range dependent non-stationary and nonlinear time series can be well approximated globally by an auto-regressive process of slowly diverging order. When linear prediction with ${\cal L}^2$ loss is concerned, the latter result facilitates a unified globally-optimal short-term forecasting theory for a wide class of locally stationary time series asymptotically.  A nonparametric sieve method is proposed to globally and adaptively estimate the optimal forecasting coefficient functions and the associated mean squared error of forecast. An adaptive stability test is proposed to check whether the optimal forecasting coefficients are time-varying, a frequently-encountered question for practitioners and researchers of time series. Furthermore, partial auto-correlation functions (PACF) of general non-stationary time series are studied and used as a visual tool to explore the linear dependence structure of such series.  We use extensive numerical simulations and two real data examples to illustrate the usefulness of our results.

\end{abstract}
{\bf Keywords:} Optimal prediction, auto-regressive approximation, non-stationary time series, correlation stationarity test. 

\section{Introduction}
It is of critical importance to understand the structure of time series in order to accurately forecast the future. For a stationary process $\{z_{i}\}_{i=1}^n$, Baxter established an important result on its structure in \cite{Baxter_1962, baxter1963}. Together with the deep representation theorems of stationary processes formed in, for instance, \cite{Wiener1958} and \cite{pourahmadi2001foundations}, Baxter's inequality implies that $\{z_i\}_{i=1}^n$ can be well approximated by an auto-regressive (AR) process of slowly diverging order provided that $\{z_i\}$ is of short memory and uniformly positive spectral density. Consequently,  as long as linear prediction with minimum mean squared error (MSE) is concerned, Baxter's inequality serves as a theoretical foundation that guarantees the asymptotic optimality of forecasting a wide class of stationary processes by AR models with slowly diverging order.  Nowadays, as increasingly longer time series are being collected in the modern information age, it has become more appropriate to model many of those series as non-stationary processes whose data generating mechanisms evolve over time. As a result there has been an increasing demand for a systematic optimal forecasting theory for non-stationary processes.  Nevertheless, it has been a difficult and open problem to establish general structural representations such as those of Baxter in the non-stationary domain.  The main difficulty lies in the loss of Toeplitz structure for covariance matrices of general non-stationary time series. As a consequence deep connections between  Toeplitz  matrices and their spectral density functions (cf. e.g., \cite{MR1511625}, \cite{kac1954}, \cite{grenander2001toeplitz}) which are crucial in the proof of  structural representations of stationary processes cannot be used directly in the non-stationary case.

The purpose of the paper is fourfold. Firstly, we establish a unified structural representation result  that every short memory and uniformly-positive-definite-in-covariance (UPDC) non-stationary time series $\{x_{i,n}\}_{i=1}^n$ can be well approximated globally by a non-stationary white-noise-driven AR process of slowly diverging order. Here the speed of the divergence is determined by the strength of the temporal dependence. In the best scenario where the  temporal dependence is of exponential  decay, the order is $O(\log n)$.  Instead of resorting to Toeplitz matrix and spectral density techniques, our proof of the result heavily depends on random matrix theory which controls the proximity of non-stationary covariance matrices and their banded truncations as well as modern spectral theory \cite{DMS} that controls the decay rates of inverse of banded matrices. In the special case of locally stationary time series, that is, non-stationary time series whose data generating mechanisms evolve smoothly over time,  the UPDC condition is shown to be equivalent to uniform time-frequency positiveness of the spectral density of $\{x_{i,n}\}_{i=1}^n$ and the approximating AR process is shown to have smoothly time-varying coefficients. In particular, when linear prediction with ${\cal L}^2$ loss is concerned, our structural representation result implies that a wide class of locally stationary time series can be asymptotically optimally predicted by an AR model with slowly diverging order and smoothly changing coefficients in the short term.

Secondly, we propose a nonparametric sieve-based regression method to adaptively estimate the time-varying optimal linear forecast coefficients and the associated MSE of forecast.   Specifically, we approximate every smooth coefficient function by a finite but diverging term orthonormal basis expansion and perform one high-dimensional simple linear regression to estimate all the coefficient functions which is computationally easy and stable to implement. Contrary to most non-stationary time series forecasting methods in the literature where only data near the end of the sequence are used to estimate the parameters of the forecast, our nonparametric sieve regression is global in the sense that it utilizes all available time series observations to determine the optimal forecast coefficients and hence is expected to be more efficient.  Indeed, by controlling the number of basis functions used in the regression, we demonstrate that the sieve method is adaptive in the sense that the estimation accuracy achieves global minimax rate for nonparametric function estimation in the sense of \cite{stone1982}.  Additionally, since the sieve regression uses all time series observations, the estimated coefficient functions  do not have inferior performances at the boundary of the estimating interval when certain sieves such as the Fourier and wavelet expansions are used. The latter property  is  an important advantage of the nonparametric sieve method as the short term forecast is determined by the estimated regression coefficient at the right boundary. On the contrary, local nonparametric methods such as the kernel regression face relatively sparse data near the boundary and hence produce more volatile estimates in those regions.  

Our third purpose is to develop an adaptive stability test for the optimal forecast coefficients. Many practitioners tend to use classic ARMA models with constant coefficients for time series prediction. Hence it is of great importance to check whether the optimal forecast coefficient functions are time-varying in order to either justify or invalidate such practice. To our knowledge, there exist no results on testing stability of the optimal forecast coefficients for general classes of non-stationary time series in the literature. In this paper, we develop an ${\cal L}^2$ nonparametric test for the constancy of the optimal forecast coefficients based on their sieve estimators. The test is shown to be adaptive to the strength of the time series dependence as well as the smoothness of the underlying data generating mechanism. The theoretical investigation of the test critically depends on a result on Gaussian approximations to quadratic forms of high-dimensional non-stationary time series developed in the current paper. In particular, uniform Gaussian approximations over high-dimensional convex sets (\cite{CF} and \cite{FX}) as well as $m$-dependent approximations to quadratic forms of non-stationary time series are important techniques used in the proofs.  On the other hand, we demonstrate that stability of the forecast coefficients is asymptotically equivalent to correlation stationarity of locally stationary time series. Here correlation stationarity means that the correlation structure of the time series does not change over time. As a result, our stability test can also be viewed as an adaptive test for correlation stationarity. In the statistics literature, there is a recent surge of interest in testing {\it covariance} stationarity of a time series using techniques from the spectral domain. See, for instance,  \cite{EP2010},  \cite{DR}, \cite{DPV} and \cite{GN}. Observe that the time-varying marginal variance has to be estimated and removed from the time series in order to apply those tests to checking correlation stationarity. However, it is unknown whether the errors introduced in such estimation would influence the finite sample and asymptotic behaviour of the tests. Furthermore, estimating the marginal variance involves the difficult choice of a smoothing parameter. One major advantage of our test when used as a test of correlation stationarity is that it is totally free from the marginal variance as the latter quantity is absorbed into the errors of the high-dimensional linear regression and hence is independent of the optimal forecast coefficients. Additionally, our test is expected to be more powerful than the aforementioned tests of covariance stationarity as the latter tests are generally not adaptive to the strength of time series dependence or the smoothness of the data generating mechanism. We refer the readers to Section \ref{sec:simu} for a related simulation study.  Finally, we mention that \cite{MR3931381} studied change point tests for correlations of non-stationary time series. However, their test can only be applied to a fixed number of lags and cannot be used as a test for overall correlation stationarity of time series.




Finally, we study the partial auto-correlation function (PACF) for general non-stationary time series and use it as a visual tool to study non-stationary time series dependence structure and preliminarily determine an appropriate order of the AR approximation for the optimal forecast. The PACF is a commonly used tool to study the pattern of temporal dependence and determine the order of an AR model in stationary time series analysis (cf. e.g. \cite{tsbookfore}). However, to our knowledge there exists no work in the literature conducting statistical inference of PACF under non-stationarity. For a general non-stationary time series, we investigate the PACF as a two-dimensional function of time and lag and develop its uniform decay rate which is determined by the magnitude of the time series dependence measure. In the special case of locally stationary time series, a sieve method is proposed to estimate the smoothly time-varying PACF which is shown to be adaptive and uniformly consistent. For a groups of lags, an ${\cal L}^2$ test is developed to check whether the PACF at those lags are uniformly zero across time. 
Consequently, one can visually investigate the pattern of time series dependence not only across lag but also over time from the estimated PACF plot. Together with the $p$-values of the  ${\cal L}^2$ tests, one is able to identity when the time series dependence disappears from the PACF plot and hence preliminarily determine an appropriate order of the AR model for the optimal forecast.


%
%


In the statistics literature, there have been some scattered works discussing non-stationary time series prediction.   Among others,  \cite{FBS} considered forecasting locally stationary time series by their wavelet process representations and established a wavelet-based prediction equation which is derived from the corresponding Yule-Walker equation; \cite{RSP} used time-varying AR models of a fixed order to forecast a locally stationary time series; \cite{KPF} investigated finite-sample forecasting performances of locally stationary time series using Yule-Walker estimators of both fixed and time-varying parameters. In all the above mentioned works, the optimality of the truncated or clipped AR approximation was not discussed and the non-stationary auto-covariance functions was estimated by simple kernel methods which were not adaptive to the smoothness of the latter functions. On the other hand, \cite{DP} considered optimal model-based and model-free predictions of two special classes of locally stationary time series; that is, locally stationary time series which are correlation stationary and locally stationary processes that can be marginally transformed into stationary Gaussian processes.

At last, we would like to mention that Baxter's inequality has been extended in many different ways and the application of it goes way beyond optimal forecasting. See, for instance, \cite{Cheng1993} for an extension to multivariate processes, \cite{Meyer2015} for an extension to triangular arrays, and \cite{inoue2018} for an extension to long memory processes. On the application side, among others, Baxter's inequality is a key component for the theoretical investigation of the sieve bootstrap (\cite{kreiss1988} and \cite{kreiss2011}).

This paper is organized as follows. In Section \ref{sec:preliminary}, we introduce AR approximation results for general non-stationary time series. The time-varying PACF is also properly defined in this section. In Section \ref{sec:arappoximate}, we study  AR approximation of locally stationary time series.  In Section \ref{sec:predict}, asymptotically globally optimal forecast of locally stationary time series using the AR approximation is studied and the nonparametric sieve method is proposed to estimate the best forecast coefficient functions and the associated MSE of forecast. In Section \ref{sec:test}, we test the stability of the best linear forecast using  $\mathcal{L}^2$ statistics of the estimated forecast coefficient functions. A robust bootstrapping procedure is proposed for practical implementation. In Section \ref{sec:simu}, we use extensive Monte Carlo simulations to verify the accuracy of our prediction and test.  In  Section \ref{sec:realdata}, we conduct analysis on two real datasets using our proposed methods.   Technical proofs  are deferred to an online supplementary material. 

\vspace{5pt}

%

\section{AR approximations and PACF for general non-stationary time series } \label{sec:preliminary}

In this section, we establish a general AR approximation theory for non-stationary time series. A study of the PACF of such series will also be conducted. We start with introducing some notation. 
For a matrix $Y$ or vector $\bm{y},$ we use $Y^*$ and $\bm{y}^*$ to stand for their transposes. For a sequence of  random variables $\{x_n\}$ and real values $\{a_n\},$ we use the notation $x_{n}=O_{\mathbb{P}}(a_n)$ to state that $x_n/a_n$ is stochastically bounded. 
 Similarly, we use the notation $x_n=o_{\mathbb{P}}(a_n)$ to say that $x_n/a_n$ converges to 0  in probability. 
In this paper, unless otherwise specified, for a sequence of random variables $\{x_{i,n} \},$ we use the notation $x_{i,n}=O_{\mathbb{P}}(a_n)$ to state that $x_{i, n}/a_n$ is stochastically bounded uniformly in the index $i.$ 
For general non-stationary time series $\{x_{i,n}\}$, we assume that it  has the following form 
\begin{equation}\label{eq_xi}
x_{i,n}=G_{i,n}(\mathcal{F}_i), \ i=1,2,\cdots,n,
\end{equation}
where $\mathcal{F}_i:=(\cdots, \eta_{i-1}, \eta_i)$ and $\eta_i, i \in \mathbb{Z}$ are i.i.d. random variables and the sequence of functions $ G_{i,n}: \mathbb{R}^{\infty} \times \mathbb{R}^{\infty} \rightarrow \mathbb{R} $ are measurable functions such that for all $ 1 \leq
 i_0 \leq n,$ $G_{i_0,n}(\mathcal{F}_i) $ is a properly defined random variable. The above representation is very general since any non-stationary time series can be represented in the form of \eqref{eq_xi} via the Rosenblatt transform \citep{rosenblatt1952}. Till the end of the paper, we omit the subscript $n$ and simply write $x_i \equiv x_{i,n}$ without causing any confusion.
 
Next we introduce the physical dependence measure defined in \cite{WW, ZZ1, WZ2} to quantify the temporal dependence of $\{x_i\}$ defined in (\ref{eq_xi}).
\begin{defn} Let $\{\eta_i^{\prime}\}$ be an i.i.d. copy of $\{\eta_i\}.$ Assuming that for some $q>2,$ 
\begin{equation} \label{eq_boundx}
||x_i||_q<\infty.
\end{equation}
 Then for $j \geq 0,$ we define the physical dependence measure of $\{x_i\}$ by {
\begin{equation}\label{eq_phygeneral}
\delta^g(j,q):=\max_{k} \max_i || G_{i,k}(\mathcal{F}_0)-G_{i,k}(\mathcal{F}_{0,j}) ||_q,
\end{equation}
where $\mathcal{F}_{0,j}:=(\mathcal{F}_{-j-1}, \eta_{-j}^{\prime},\eta_{-j+1},\cdots, \eta_0).$ }
\end{defn}
In this paper, we focus on time series with short-range temporal dependence. Specifically, 
we impose the following assumption on the physical dependence measure $\delta^g(\cdot, \cdot)$.
{
\begin{assu}\label{phy_generalts}  There exists  a constant $\tau>5+\varpi$, where $\varpi>0$ is some fixed small constant, such that for some constant $C>0,$ we have 
\begin{equation}
\delta^g(j,q) \leq Cj^{-\tau}, \ j \geq 1. 
\end{equation} 
\end{assu} 
} 
{The above assumption guarantees that the temporal dependence of $\{x_i\}$ decays polynomially fast. Additionally, in order to avoid erratic behaviour of the best linear forecast operators, the smallest eigenvalue of the time series covaraince matrix should be bounded away from zero. For stationary time series, this is equivalent to the uniform positiveness of the spectral density function widely used in \cite{Baxter_1962, baxter1963} et al. Further note that the latter assumption is mild and frequently used in the statistics literature of covariance and precision matrix estimation; see, for instance, \cite{cai2016, chen2013, Yuan2010} and the references therein. In this paper we shall call this uniformly-positive-definite-in-covariance (UPDC) condition and we formally summarize it as follows. 
\begin{assu}[UPDC]\label{assu_pdc} For all $n \in \mathbb{N}, $ there exists a universal constant $\kappa>0$ such that 
\begin{equation}\label{eq_defnkappa}
\lambda_n(\operatorname{Cov}(x_1, \cdots, x_n)) \geq \kappa,
\end{equation}
where $\lambda_n(\cdot)$ is the smallest eigenvalue of the given matrix and $\operatorname{Cov}(\cdot)$ is the covariance matrix of the given vector. 
\end{assu}
}   
{
We then provide a simple sufficient condition for UPDC. 
Denote the covariance matrix of $\{x_i\}_{i=1}^n$ as $\Sigma_{x,n}=(\sigma_{ij,n})_{i,j=1}^n.$ For $k \in \mathbb{N}$ and $k<n,$ we denote the banded truncation of $\Sigma_{x,n}$ by $\Sigma_{x,k}=(\sigma_{ij,k})_{i,j=1}^n$ such that 
\begin{equation*}
\sigma_{ij,k}=
\begin{cases}
\sigma_{ij,n} & |i-j| \leq k; \\
0 & \text{Otherwise}.
\end{cases}
\end{equation*}

{

\begin{lem}\label{eq_sufficinetconditionupdc}  Suppose that for all $n \in \mathbb{N},$ there exists an $0 \leq n_0 \leq n$ such that for some universal constant $\varsigma>0,$ we have  
\begin{equation}\label{eq_condition1}
\lambda_n(\Sigma_{x,n_0}) \geq \varsigma.
\end{equation}
Moreover, assume that for some positive constant $\delta \equiv \delta(n)$ such that {$\delta<\varsigma/2$ } 
\begin{equation}\label{eq_conidition2}
\max_i \left|\sum_{j=n_0+1}^n\sigma_{ij,n}\right|< \delta.
\end{equation}
Then the UPDC condition holds. 
\end{lem}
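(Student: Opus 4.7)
The plan is to treat $\Sigma_{x,n}$ as a small symmetric perturbation of its banded truncation $\Sigma_{x,n_0}$ and apply Weyl's monotonicity principle. I decompose
$$\Sigma_{x,n} \;=\; \Sigma_{x,n_0} \;+\; R_n, \qquad R_n \;:=\; \Sigma_{x,n} - \Sigma_{x,n_0},$$
so that $R_n$ is the symmetric matrix supported on the off-band entries $\{(i,j):|i-j|>n_0\}$. Weyl's inequality then yields
$$\lambda_n(\Sigma_{x,n}) \;\geq\; \lambda_n(\Sigma_{x,n_0}) \;-\; \|R_n\|,$$
where $\|\cdot\|$ denotes the spectral (operator) norm. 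Hypothesis (\ref{eq_condition1}) directly supplies $\lambda_n(\Sigma_{x,n_0}) \geq \varsigma$, so the whole proof reduces to showing $\|R_n\| \leq \delta$. Once this is established, the assumption $\delta < \varsigma/2$ yields $\lambda_n(\Sigma_{x,n}) \geq \varsigma - \delta > \varsigma/2$, which gives the UPDC condition with the universal constant $\kappa := \varsigma/2$, independent of $n$.

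For the operator-norm bound on $R_n$, I would invoke the classical fact that a symmetric matrix has spectral norm dominated by its maximum absolute row sum (equivalently, $\|R_n\| \leq \|R_n\|_{\infty\to\infty}$ by interpolation together with $\|R_n\|_{1\to 1} = \|R_n\|_{\infty\to\infty}$ for symmetric matrices). Concretely, $\|R_n\| \leq \max_i \sum_{j:|i-j|>n_0}|\sigma_{ij,n}|$. Condition (\ref{eq_conidition2}) provides exactly this control after a brief reorganization: splitting each off-band row sum into its two tails ($j-i>n_0$ and $i-j>n_0$) and using the symmetry $\sigma_{ij,n}=\sigma_{ji,n}$ reduces both tails to the form appearing in (\ref{eq_conidition2}), so the uniform $\delta$-bound transfers to the full absolute row sum.

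The main obstacle is not conceptual but bookkeeping: one has to reconcile the one-sided row sum written in (\ref{eq_conidition2}) with the two-sided off-band absolute row sum that the symmetric operator-norm estimate requires. This is handled by the symmetry of $\Sigma_{x,n}$ together with the triangle inequality, after which the rest is a clean Weyl-type perturbation argument with no further subtleties. No random matrix input or spectral-decay machinery from \cite{DMS} is needed here; those tools are reserved for the deeper AR approximation theorem that this lemma supports.
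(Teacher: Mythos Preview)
Your approach is essentially the paper's: its one-line proof invokes the Gershgorin circle theorem, which applied to the zero-diagonal symmetric remainder $R_n=\Sigma_{x,n}-\Sigma_{x,n_0}$ is precisely your max-absolute-row-sum operator-norm bound, after which the eigenvalue comparison $\lambda_n(\Sigma_{x,n})\ge\lambda_n(\Sigma_{x,n_0})-\|R_n\|$ is Weyl. One caveat on your final paragraph: symmetry and the triangle inequality do \emph{not} convert the hypothesis \eqref{eq_conidition2} as literally written (absolute value outside the sum, column index $j=n_0+1,\dots,n$) into the off-band $\ell_1$ row bound $\max_i\sum_{j:|i-j|>n_0}|\sigma_{ij,n}|<\delta$ that Gershgorin and your argument both require; this is a minor imprecision in the stated hypothesis that the paper's proof glosses over in the same way, so it is not a defect of your strategy but something to read charitably.
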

}
}


{
\subsection{AR approximation for general non-stationary time series} \label{sec:nonzeromeandiscussion}
Throughout the paper, unless otherwise specified, we always {assume} that 
\begin{equation} \label{eq_defnbeplsilon}
b=O(n^{(1+\epsilon)/\tau}),
\end{equation}
{ where $0<\epsilon \leq \varpi/10$ is an arbitrarily small and fixed constant. Here $\varpi$ is defined in Assumption \ref{phy_generalts}. }
In this section, we show that under Assumptions \ref{phy_generalts} and \ref{assu_pdc}, $x_i$ can be well approximated by an $\text{AR}(b)$ process. 


For $i>b,$ denote $\widehat{x}_i$ as the best linear prediction based on its predecessors $x_1, \cdots, x_{i-1},$ i.e.,
\begin{equation*}
\widehat{x}_i=\phi_{i0}+\sum_{j=1}^{i-1} \phi_{ij} x_{i-j}, \ i=b+1, \cdots, n.
\end{equation*}

Denote $\epsilon_i:=x_i-\widehat{x}_i.$ Then
\begin{equation}\label{eq_arapproxiamtionnoncenter}
x_i=\phi_{i0}+\sum_{j=1}^{i-1} \phi_{ij} x_{i-j}+\epsilon_i, \ i=b+1,\cdots, n.
\end{equation} 

The theorem below provides a control for $\phi_{ij}$. It extends Baxter's inequality to the non-stationary domain and is an important consequence of Assumptions \ref{phy_generalts} and \ref{assu_pdc} . It states that the magnitude of $\phi_{ij}$ is negligible when $j \geq b$ uniformly for $i>j$ and the best linear forecast coefficients of $x_i$ based on $i-1$ and $b$ predecessors are close uniformly in $i$. 
{
\begin{thm}\label{lem_phibound} 
For $x_i$ defined in (\ref{eq_xi}), suppose Assumptions \ref{phy_generalts} and \ref{assu_pdc} hold true. For any fixed small constant $\epsilon>0$ defined in (\ref{eq_defnbeplsilon}) and $\kappa$ defined in (\ref{eq_defnkappa}),  there exists some constant $C>0,$ such that for $n$ satisfying $n \geq (\frac{2}{\kappa})^{C(1+\epsilon)/(\tau-1)}$ and $n-n^{(1+\epsilon/2)/(1+\epsilon)} \geq C$,we have for $j<i,$  
\begin{equation}\label{eq_phibound1}
| \phi_{ij}| \leq C j^{-(\tau-1)/(1+\epsilon)}, \ j \geq 1. 
\end{equation}
Moreover, denote by $\{\phi_{ij}^b\}$ the best linear forecast coefficients of $x_i$ based on $x_{i-1}, \cdots, x_{i-b}$ satisfying  
\begin{equation*}
x_i=\phi_{i0}^b+\sum_{j=1}^b \phi_{ij}^b x_{i-j}+\epsilon_i^b, \ i>b. 
\end{equation*} 
Then we have that 
\begin{equation}\label{eq_phibound2}
\max_{1 \leq j \leq b}|\phi_{ij}-\phi_{ij}^b| \leq C n^{-1+(3+2\epsilon)/\tau}, \ |\phi_{i0}-\phi_{i0}^b| \leq C n^{-1+(3.5+2.5\epsilon)/\tau}.
\end{equation}
%
\end{thm}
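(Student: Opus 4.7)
My plan is to translate the theorem into two statements about the covariance matrix of $(x_1,\ldots,x_{i-1})$: a polynomial off-diagonal decay of its inverse, and a perturbation bound under banded truncation at level $b=O(n^{(1+\epsilon)/\tau})$. I will combine the physical-dependence bound on the covariance with the two deep ingredients advertised in the introduction: a random-matrix estimate for the proximity of $\Sigma_{x,n}$ to its banded truncation $\Sigma_{x,b}$, and the result of \cite{DMS} controlling the off-diagonal decay of inverses of such matrices under uniform spectral positivity.

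First, I will use Assumption~\ref{phy_generalts} to derive the covariance decay $|\sigma_{ij,n}| \leq C|i-j|^{-(\tau-1)}$ via a standard coupling argument using $\delta^g(j,2)$: writing $x_i-\mathbb{E} x_i$ as a sum of martingale differences along the filtration $\mathcal{F}_i$ and using orthogonality yields $|\mathrm{Cov}(x_i,x_j)|\lesssim \sum_{k\ge|i-j|}\delta^g(k,2)$, whose polynomial tail is $|i-j|^{-(\tau-1)}$. With this in hand, I bound $\|\Sigma_{x,n}-\Sigma_{x,b}\|_{\mathrm{op}}$ by a Schur/Gershgorin estimate on row sums, obtaining an operator-norm error of order $b^{-(\tau-2)}$, which the choice $b=O(n^{(1+\epsilon)/\tau})$ makes a negative power of $n$.

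Next, I invoke Assumption~\ref{assu_pdc} together with this perturbation bound to conclude that $\Sigma_{x,b}$ itself has smallest eigenvalue at least $\kappa/2$ for $n$ large (this is exactly where the hypothesis $n\geq(2/\kappa)^{C(1+\epsilon)/(\tau-1)}$ is used), after which the \cite{DMS} theorem yields polynomial off-diagonal decay of $(\Sigma_{x,b})^{-1}$ at rate $(\tau-1)/(1+\epsilon)$; the loss $1+\epsilon$ is the price paid for a polynomially (rather than exponentially) decaying matrix. The perturbation identity $\Sigma_{x,n}^{-1}-\Sigma_{x,b}^{-1}=-\Sigma_{x,n}^{-1}(\Sigma_{x,n}-\Sigma_{x,b})\Sigma_{x,b}^{-1}$ then transfers the same decay to $\Sigma_{x,n}^{-1}$. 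Writing the best forecast vector as $\boldsymbol{\phi}_i=\Sigma_{x,n}^{-1}\mathbf{c}_i$, with $\mathbf{c}_i$ the covariance vector of $x_i$ against its predecessors, and convolving the polynomial decay of $\Sigma_{x,n}^{-1}$ with that of $\mathbf{c}_i$, I obtain (\ref{eq_phibound1}). For (\ref{eq_phibound2}), I compare $\phi_{ij}$ with $\phi_{ij}^b$ by evaluating the same perturbation identity against the truncated covariance column: the row-sum estimate on $\Sigma_{x,n}-\Sigma_{x,b}$ contributes a factor of order $n^{-1+(3+2\epsilon)/\tau}$ after balancing against the $O(1)$ operator-norm bounds on both inverses from UPDC, while the intercept bound picks up an extra factor from $\max_i|\mathbb{E} x_i|$, which is finite under (\ref{eq_boundx}) and accounts for the slightly worse exponent in the second part of (\ref{eq_phibound2}).

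The main obstacle is keeping the constants uniform in $i$ and $n$ while juggling three scales simultaneously: the decay exponent $\tau$, the loss factor $1+\epsilon$ introduced by the \cite{DMS} inversion step, and the bandwidth $b=O(n^{(1+\epsilon)/\tau})$. In particular, one must verify the spectral hypothesis of \cite{DMS} not merely for $\Sigma_{x,n}$ but for every leading principal and sliding $b\times b$ submatrix used to define $\phi_{ij}$ and $\phi_{ij}^b$ (which follows from UPDC via Cauchy interlacing), and then balance the operator-norm and entrywise estimates so that the power of $n$ saved by truncation is not undone when passing between $\ell^2$, $\ell^1$, and $\ell^\infty$ norms of the decaying inverse rows; any slack there would perturb the exponent $(3+2\epsilon)/\tau$ in (\ref{eq_phibound2}) and is the delicate bookkeeping step of the proof.
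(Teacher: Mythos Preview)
Your approach to (\ref{eq_phibound2}) via a linear-system perturbation is essentially what the paper does (it packages the argument as Lemma~\ref{lem_nuem}), and your derivation of the covariance decay is standard, though note that the paper obtains $|\mathrm{Cov}(x_k,x_l)|\le C|k-l|^{-\tau}$ rather than $|k-l|^{-(\tau-1)}$.

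There is, however, a genuine gap in your argument for (\ref{eq_phibound1}). You apply the \cite{DMS} result to the fixed-bandwidth truncation $\Sigma_{x,b}$ and claim this yields polynomial off-diagonal decay of $(\Sigma_{x,b})^{-1}$ at rate $(\tau-1)/(1+\epsilon)$. But Lemma~\ref{lem_band} (the DMS result as stated here) applies to an $m$-banded matrix and gives $|(A^{-1})_{kl}|\le C\lambda^{|k-l|}$ with $\lambda=q^{2/m}$; with $m=b$ this bound is $O(1)$ whenever $|k-l|\le b$. Consequently, when you form $\phi_{ij}=\sum_k(\Gamma_i^{-1})_{jk}\gamma_{ik}$ for an intermediate lag $1\ll j\ll b$, the terms with small $k$ (where $\gamma_{ik}\asymp k^{-\tau}$ is largest) are multiplied by inverse entries that are only $O(1)$, and the sum is $O(1)$ rather than $O(j^{-(\tau-1)/(1+\epsilon)})$. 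The paper's key idea, which your proposal is missing, is to use a \emph{$j$-dependent} bandwidth: for each target lag $j$ it truncates $\Gamma_i$ to bandwidth $j^{1/(1+\epsilon)}$, so that the DMS exponential rate becomes $\delta^{|k-l|/j^{1/(1+\epsilon)}}$. Then for $k$ near $1$ the factor $\delta^{|j-k|/j^{1/(1+\epsilon)}}\le \delta^{\,c\,j^{\epsilon/(1+\epsilon)}}$ is super-polynomially small in $j$, which kills the dangerous small-$k$ contribution; the remaining $k\gtrsim j^{1/(1+\epsilon)}$ terms are handled by the tail of $\gamma_{ik}$. The $1+\epsilon$ loss in the final exponent comes precisely from this bandwidth choice, not from a polynomial-decay version of DMS. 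Once (\ref{eq_phibound1}) is in hand, the paper also feeds it back into the proof of (\ref{eq_phibound2}) to bound the extra term $\bm{E}_3\widetilde{\bm{\phi}}_i^b$ arising in the perturbed system, a dependency your sketch does not make explicit.
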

}

An important consequence of Theorem  \ref{lem_phibound} is that,  under the short-range dependence and UPDC assumptions, any non-stationary time series can be efficiently approximated by an AR process of slowly diverging order.  And the order of such approximation is adaptive to the temporal decay rate of the time series dependence. Formally, we summarize the above statements in the following proposition and theorem. 


\begin{prop}\label{prop_dependent}
Suppose Assumptions \ref{phy_generalts} and \ref{assu_pdc} hold true. Then we have 
\begin{equation}\label{eq_nonzeromean11}
x_i=\phi_{i0}+\sum_{j=1}^{\min\{b,i-1\}} \phi_{ij} x_{i-j}+\epsilon_i+O_{\mathbb{P}}(n^{-1+(2+\epsilon)/\tau}).
\end{equation}
\end{prop}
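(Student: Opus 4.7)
The plan is to treat Proposition 2.4 as a short corollary of Theorem 2.3 combined with the uniform moment bound on $x_i$. Start from the exact best linear prediction expansion (2.8),
\[
x_i = \phi_{i0} + \sum_{j=1}^{i-1} \phi_{ij}\,x_{i-j} + \epsilon_i, \qquad i > b,
\]
which holds by definition of the best linear predictor and its residual. Subtracting the truncated sum $\sum_{j=1}^{\min\{b,i-1\}} \phi_{ij} x_{i-j}$ that appears on the right-hand side of the proposition, the discrepancy is precisely
\[
R_{i,b} := \sum_{j=b+1}^{i-1} \phi_{ij}\,x_{i-j},
\]
with the convention $R_{i,b}=0$ whenever $i \le b$ (in which case the proposition reduces to (2.8) with no remainder). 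The remaining task is to show that $R_{i,b}=O_{\mathbb P}(n^{-1+(2+\epsilon)/\tau})$ uniformly in $i$.

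To bound $R_{i,b}$, I would apply Minkowski's inequality together with the uniform moment control $\sup_i\|x_i\|_q \le M < \infty$ from Assumption 2.1 and the polynomial decay of the forecast coefficients from Theorem 2.3:
\[
\|R_{i,b}\|_q \;\le\; \sum_{j=b+1}^{i-1} |\phi_{ij}|\,\|x_{i-j}\|_q \;\le\; CM \sum_{j=b+1}^{\infty} j^{-(\tau-1)/(1+\epsilon)}.
\]
Since $\tau > 5+\varpi$ and $\epsilon \le \varpi/10$, the exponent $(\tau-1)/(1+\epsilon)$ comfortably exceeds $1$, so the tail series converges and equals $O\!\bigl(b^{\,1-(\tau-1)/(1+\epsilon)}\bigr) = O\!\bigl(b^{-(\tau-2-\epsilon)/(1+\epsilon)}\bigr)$. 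Substituting $b = O(n^{(1+\epsilon)/\tau})$ from (2.9) collapses the exponent to exactly the target rate:
\[
\|R_{i,b}\|_q \;=\; O\!\left(n^{-(\tau-2-\epsilon)/\tau}\right) \;=\; O\!\left(n^{-1 + (2+\epsilon)/\tau}\right),
\]
uniformly in $i$. A Markov inequality then upgrades this $\mathcal L^q$ estimate to the uniform $O_{\mathbb P}$ bound stated in the proposition, in the sense defined at the start of Section 2.

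Essentially all the work has been done by Theorem 2.3, whose proof invokes random matrix theory and the banded-inverse control of \cite{DMS}; the polynomial bound on $|\phi_{ij}|$ is the nontrivial ingredient. Thus there is no real obstacle in the present argument. The two points worth checking are (i) that the summability threshold $(\tau-1)/(1+\epsilon) > 1$ is met, which is automatic from the hypotheses on $\tau$ and $\epsilon$; and (ii) that both the moment bound and the decay bound are uniform in $i$, so that combining them through Minkowski produces a genuinely uniform $O_{\mathbb P}$ estimate consistent with the paper's notational convention.
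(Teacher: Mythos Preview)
Your proposal is correct and follows essentially the same approach as the paper's proof: decompose $x_i$ via (2.8) into the truncated AR($b$) part plus the tail $\sum_{j=b+1}^{i-1}\phi_{ij}x_{i-j}$, and control the tail using the coefficient decay from Theorem~\ref{lem_phibound} together with the uniform moment bound on $x_i$. The paper's argument is terser, simply asserting the $O_{\mathbb P}$ bound on the tail, whereas you spell out the Minkowski/tail-sum/Markov steps explicitly; but the underlying idea is identical.
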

Observe that $\{\epsilon_i\}$ is a time-varying white noise process, i.e.,
\begin{equation*}
\mathbb{E}\epsilon_i=0 \ , \operatorname{Cov} (\epsilon_i, \epsilon_j)= \mathbf{1}(i=j) \sigma_i^2. 
\end{equation*}
Furthermore, denote the process $\{x_i^*\}$ by
\begin{equation}\label{defn_xistart}
x_i^*=
\begin{cases}
x_i,  & i \leq b; \\
\phi_{i0}+\sum_{j=1}^b \phi_{ij} x_{i-j}^*+\epsilon_i, & i>b.  
\end{cases}
\end{equation}
By definition, $\{x_i^*\}_{i\ge 1}$ is an AR($b$) process.


\begin{thm} \label{thm_arrepresent} Suppose Assumptions \ref{phy_generalts} and \ref{assu_pdc} hold true. Then we have that 
\begin{equation}\label{eq_induction}
\max_{1 \leq  i \leq n} |x_i-x_i^*|=O_{\mathbb{P}}(n^{-1+(2+\epsilon)/\tau}). 
\end{equation} 
\end{thm}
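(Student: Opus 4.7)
The plan is to subtract the truncated AR($b$) recursion \eqref{defn_xistart} from the exact best-linear-prediction representation in Proposition \ref{prop_dependent}, so that the deviation $r_i := x_i - x_i^*$ satisfies a banded lower-triangular linear system whose sole driving input is the tail of the one-sided Wold expansion. By construction $r_i = 0$ for $i \leq b$, while for $i > b$ a direct subtraction yields
\begin{equation*}
r_i \;=\; \sum_{j=1}^{b} \phi_{ij}\, r_{i-j} \;+\; v_i, \qquad v_i \;:=\; \sum_{j=b+1}^{i-1}\phi_{ij}\, x_{i-j}.
\end{equation*}

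First I would bound the driving term $v_i$ uniformly in $\mathcal{L}^q$. By the Baxter-type decay $|\phi_{ij}| \leq C j^{-(\tau-1)/(1+\epsilon)}$ from Theorem \ref{lem_phibound}, the uniform moment bound \eqref{eq_boundx}, and the choice $b = O(n^{(1+\epsilon)/\tau})$, one obtains
\begin{equation*}
\sup_{b<i\leq n}\|v_i\|_q \;\leq\; C \sum_{j>b} j^{-(\tau-1)/(1+\epsilon)} \;=\; O\bigl(b^{1-(\tau-1)/(1+\epsilon)}\bigr) \;=\; O\bigl(n^{-1+(2+\epsilon)/\tau}\bigr),
\end{equation*}
which is exactly the target rate and in fact reproduces the remainder already appearing in Proposition \ref{prop_dependent}.

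Next I would assemble the recursion into the matrix form $L_b \mathbf{r} = \mathbf{v}$, where $L_b$ is the $n \times n$ lower-triangular matrix with $1$'s on the diagonal, entries $-\phi_{ij}$ on sub-diagonals $1,\ldots,b$, and whose first $b$ rows are $e_i^*$ (enforcing $r_i = 0$ for $i \leq b$), and where $\mathbf{v}$ has zeros in its first $b$ coordinates and entries $v_i$ afterwards. Inverting, $\mathbf{r} = L_b^{-1}\mathbf{v}$. The \textbf{main obstacle} is to show that $\|L_b^{-1}\|_{\infty\to\infty}$ is bounded uniformly in $n$: this cannot be read off from Theorem \ref{lem_phibound} alone (mere summability of the rows of $L_b - I$ does not give a bounded inverse), and must combine the UPDC Assumption \ref{assu_pdc} with the modern spectral theory of banded operators referenced in the introduction (cf.\ \cite{DMS}) to produce a geometric entrywise decay bound on $L_b^{-1}$. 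Granting this, one concludes $\sup_i \|r_i\|_q = O(n^{-1+(2+\epsilon)/\tau})$.

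Finally, I would upgrade the uniform $\mathcal{L}^q$ bound (for $q>2$ chosen sufficiently large relative to $\tau$) to the claimed $O_{\mathbb{P}}$ statement via Markov's inequality and a union bound: $\sum_i \mathbb{P}\bigl(|r_i| > M n^{-1+(2+\epsilon)/\tau}\bigr) \leq M^{-q} n \sup_i \bigl(\|r_i\|_q / n^{-1+(2+\epsilon)/\tau}\bigr)^q$, which is handled by absorbing the resulting $n^{1/q}$ factor into the arbitrarily small $\epsilon$ appearing in the rate. The conceptually substantive step, to emphasize, is the uniform boundedness of $L_b^{-1}$, since the whole improvement over Proposition \ref{prop_dependent} hinges on the fact that the banded AR($b$) inverse does not amplify the truncation tail as $n$ grows.
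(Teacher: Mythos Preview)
Your decomposition matches the paper's exactly: both subtract the AR($b$) recursion from the full best-linear-prediction representation to get $r_i=\sum_{j=1}^{b}\phi_{ij}\,r_{i-j}+v_i$ with $r_i=0$ for $i\le b$, and both bound the tail $v_i$ via Theorem~\ref{lem_phibound}. The paper then closes by a one-line induction on $k$, appealing only to $\sum_{j=1}^{b}|\phi_{ij}|<\infty$; it does not address the amplification issue you flag, namely that a row sum which is merely bounded (not necessarily $<1$) does not obviously keep the induction constant uniform over $n-b$ steps. Your matrix reformulation $\mathbf r=L_b^{-1}\mathbf v$ and the identification of $\|L_b^{-1}\|_{\infty\to\infty}<\infty$ as the real content is a more careful version of the same argument, and your diagnosis that this does not follow from summability alone is correct.

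Two caveats on your proposed resolution. First, the banded-inverse result you point to (Lemma~\ref{lem_band}, from \cite{DMS}) is stated for symmetric positive-definite matrices, whereas $L_b$ is unit lower-triangular; to use it you would have to route through $\Sigma_n$ or its Cholesky factorization and then extract control of the triangular factor, which is an extra step you have not sketched. Second, your union-bound upgrade from a uniform $\mathcal L^q$ bound to $\max_i|r_i|=O_{\mathbb P}(\cdot)$ costs a factor $n^{1/q}$; absorbing this into the $\epsilon$-slack requires $q$ large relative to $\tau$, which strengthens~\eqref{eq_boundx} beyond the ``some $q>2$'' actually assumed in the paper.
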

}

\begin{rem}\label{rem_exponentialdecay}
In this paper, our discussions are carried out under Assumption 2.2. Our results can be easily extended to the case when the temporal dependence is of  exponential decay; i.e.
\begin{equation}\label{phy_exponential}
\delta^g(j,q) \leq C a^{j}, \ 0<a<1.
\end{equation}
In this case, we can choose $b=O(\log n)$ and Theorem \ref{lem_phibound} can be updated to 
\begin{equation*}
|\phi_{ij}| \leq C \max\{n^{-2}, a^{j/2}\}, \ j>1, \ C>0 \ \text{is some constant}, 
\end{equation*}
and the bounds in equations (\ref{eq_phibound2}), (\ref{eq_nonzeromean11}) and (\ref{eq_induction}) can be changed to $\log n/n$.
\end{rem}

{}

\subsection{{PACF for general non-stationary time series}}
{The PACF  is a commonly used tool for dependence monitoring and  model identification in time series analysis. In particular, it is well known that the PACF is useful in identifying the order of an AR model for stationary processes. The techniques developed for AR  approximation in the last subsection can be easily employed to study the behaviour of PACF for general non-stationary time series. In the following, we shall explore this aspect in detail.


Consider the non-stationary time series (\ref{eq_xi}), denote the $j$-th order best linear forecast of  $x_i$ as 
\begin{equation*}
\widehat{x}_{i,j}=\phi_{i0,j}+\sum_{k=1}^j \phi_{ik,j} x_{i-k}, \ j \leq i-1.
\end{equation*}
 Let $\epsilon_{i,j}=x_i-\widehat{x}_{i,j}$ and write
\begin{equation}\label{eq_jorderlocal}
x_i=\phi_{i0,j}+\sum_{k=1}^j \phi_{ik,j} x_{i-k}+\epsilon_{i,j}, \ 1\leq j \leq i-1.  
\end{equation}
We next introduce the definition of $j$-th order PACF for  non-stationary time series, which is a natural extension for the corresponding definition of stationary process.  
\begin{defn}\label{defn_pacf} For the  non-stationary time series (\ref{eq_xi}), the $j$-th order PACF at time $i$ is defined as
\begin{equation*}
\rho_{i,j}=\phi_{ij,j}, \ 1 \leq j \leq i-1. 
\end{equation*}
\end{defn}
Under Assumptions \ref{phy_generalts} and \ref{assu_pdc}, similar to Theorem \ref{lem_phibound},  we are able to establish the uniform speed of decay for the PACF. This is formally summarized as the following lemma. 
\begin{lem}\label{lem_pacftruncation} Under Assumptions \ref{phy_generalts} and \ref{assu_pdc}, for some constant $C>0,$ we have that 
\begin{equation*}
|\rho_{i,j}| \leq C j^{-(\tau-1)/(1+\epsilon)}, \  1 \leq j \leq i-1. 
\end{equation*}
\end{lem}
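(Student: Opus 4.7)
The plan is to bound the PACF directly from the $j$-th order Yule--Walker system, supplementing this with Theorem~\ref{lem_phibound} where it applies. By Definition~\ref{defn_pacf}, $\rho_{i,j} = \phi_{ij,j}$ is the last component of $\Sigma_j^{-1} \bm{\gamma}_{i,j}$, where $\Sigma_j := \operatorname{Cov}(x_{i-1},\dots,x_{i-j})$ and $(\bm{\gamma}_{i,j})_k := \operatorname{Cov}(x_i, x_{i-k})$ for $k = 1,\dots,j$. Equivalently, $\rho_{i,j}$ coincides with the truncated AR coefficient $\phi_{ij}^{b}$ of (\ref{eq_phibound2}) at truncation level $b = j$, which is the identification I will exploit.

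For $j$ in the admissible range of (\ref{eq_defnbeplsilon}), I would combine (\ref{eq_phibound1}) and (\ref{eq_phibound2}) applied with $b = j$: the triangle inequality yields
\[
|\rho_{i,j}| = |\phi_{ij}^{j}| \leq |\phi_{ij}| + |\phi_{ij}^{j} - \phi_{ij}| \leq C j^{-(\tau-1)/(1+\epsilon)} + C n^{-1+(3+2\epsilon)/\tau},
\]
and the second summand is absorbed into the first whenever $j$ is small enough relative to a power of $n$ (using $\tau > 5 + \varpi$ and $\epsilon \leq \varpi/10$ to compare exponents). For the remaining $j$, including the endpoint $j = i-1$ where $\rho_{i,i-1} = \phi_{i,i-1}$ so (\ref{eq_phibound1}) applies verbatim, I would work with the Yule--Walker representation $\rho_{i,j} = \bm{e}_j^\top \Sigma_j^{-1} \bm{\gamma}_{i,j}$. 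Assumption~\ref{assu_pdc} keeps $\Sigma_j^{-1}$ uniformly bounded; Assumption~\ref{phy_generalts} yields the polynomial decay $|(\bm{\gamma}_{i,j})_k| \leq C k^{-\tau}$ through the standard physical-dependence-to-covariance translation; and the random-matrix proximity of $\Sigma_j$ to its banded truncation, combined with the off-diagonal decay of the inverse of a well-conditioned banded matrix from \cite{DMS}---the same machinery already used to prove Theorem~\ref{lem_phibound}---supplies the off-diagonal decay of $(\Sigma_j^{-1})_{jk}$ in $j - k$. A convolution-type estimate on $\sum_k (\Sigma_j^{-1})_{jk} (\bm{\gamma}_{i,j})_k$ then returns a bound of order $j^{-(\tau-1)/(1+\epsilon)}$.

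The main obstacle I anticipate is the bookkeeping needed to stitch the two regimes of $j$ together uniformly. The error $C n^{-1+(3+2\epsilon)/\tau}$ in (\ref{eq_phibound2}) is $j$-independent and therefore not automatically dominated by $C j^{-(\tau-1)/(1+\epsilon)}$ across the whole admissible band, which forces the direct Yule--Walker estimate into the intermediate range. The cleanest remedy would be a lag-sensitive strengthening of (\ref{eq_phibound2})---essentially a Baxter-type inequality localized at the single lag $j$, obtained from the banded-inverse estimates rather than invoked as a black box---which would make the triangle-inequality argument of the first step valid uniformly in $1 \leq j \leq i-1$ and remove the need for any case split.
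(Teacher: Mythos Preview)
Your second-regime argument---working directly with the $j\times j$ Yule--Walker system $\rho_{i,j}=\mathbf{e}_j^*\Omega_{i,j}\bm{\gamma}_{i,j}$, approximating $\Gamma_{i,j}$ by a $j^{1/(1+\epsilon)}$-banded truncation, invoking the Demko--Moss--Smith decay for the inverse, and convolving with the covariance decay---is exactly the paper's proof. The paper simply observes that the derivation of (\ref{eq_phibound1}) in Theorem~\ref{lem_phibound} never uses the specific dimension $i-1$; replacing the $(i-1)\times(i-1)$ matrix $\Gamma_i$ by the $j\times j$ matrix $\Gamma_{i,j}$ and re-running the identical estimates yields $|\phi_{ij,j}|\leq Cj^{-(\tau-1)/(1+\epsilon)}$ for all $1\le j\le i-1$ in one stroke, with no case split. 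So your ``cleanest remedy'' at the end is precisely what the paper does, and your regime-two argument already contains it.

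The first-regime triangle-inequality detour, however, has a gap. When you apply (\ref{eq_phibound2}) with truncation level $b=j$, the bound you are entitled to is the one produced by its proof, namely $\|\bm{\phi}_i-\bm{\phi}_i^b\|\leq C\,b^{-(\tau-1)/(1+\epsilon)+2}$; the displayed form $Cn^{-1+(3+2\epsilon)/\tau}$ in the theorem statement is obtained only after substituting the paper's particular $b=O(n^{(1+\epsilon)/\tau})$. With $b=j$ you therefore get
\[
|\rho_{i,j}|\leq Cj^{-(\tau-1)/(1+\epsilon)}+Cj^{-(\tau-1)/(1+\epsilon)+2},
\]
and the second term dominates, missing the target by a factor $j^2$. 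There is no way to recover the lemma from this route without going back to the banded-inverse estimates---at which point you are doing the paper's direct argument anyway. Drop the split and run your second-regime computation uniformly in $j$.
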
 
We remark that when the physical dependence measure is of exponential decay, we can derive similar results as in Remark \ref{rem_exponentialdecay}.



\section{AR approximation for locally stationary time series}\label{sec:arappoximate}

We now focus our study on an important subclass of (\ref{eq_xi}), the locally stationary time series. This class of non-stationary time series is characterized by the fact that the underlying data generating mechanism evolves {\it smoothly} over time.

\subsection{Locally stationary time series}

Following  \cite{WZ1,WZ2}, we say that $x_i$ is a locally stationary time series if 
\begin{equation}\label{defn_model}
x_i=G(\frac{i}{n}, \mathcal{F}_i),
\end{equation}
where 
$G:[0,1] \times \mathbb{R}^{\infty} \rightarrow \mathbb{R}$ is a measurable function such that $\xi_i(t):=G(t, \mathcal{F}_i)$ is a properly defined random variable for all $t \in [0,1].$ { In (\ref{defn_model}), by allowing the data generating mechanism $G$ depending on the time
index $t$ in such a way that $G(t,\mathcal{F}_i)$ changes smoothly with respect to $t$, one
has local stationarity in the sense that the subsequence $\{x_i
, . . . , x_{i+j-1} \}$ is
approximately stationary if its length $j$ is sufficiently small compared to $n$.} For locally stationary time series $x_i$, define the physical dependence measures
{
\begin{equation}\label{eq_phyoriginal}
\delta(j,q):=\sup_{t \in [0,1]} \left|\left|G(t, \mathcal{F}_0)-G(t, \mathcal{F}_{0,j})\right|\right|_q.
\end{equation}
}
{

The following assumption guarantees that the data generating mechanism changes smoothly over time and thus the time series can be locally approximated by a stationary one. } 
\begin{assu}\label{assum_local}
$G(\cdot, \cdot)$ defined in (\ref{defn_model}) satisfies the property of stochastic Lipschitz continuity, i.e.,  for some $q>2$ and $C>0,$ 
\begin{equation}\label{assum_lip}
\left| \left| G(t_1, \mathcal{F}_{i})-G(t_2,\mathcal{F}_i) \right|\right|_q \leq C|t_1-t_2|, 
\end{equation}
where $t_1, t_2 \in [0,1].$ Furthermore, 
\begin{equation}\label{assum_moment}
\sup_t \max_i ||G(t,\mathcal{F}_i) ||_q<\infty.
\end{equation}
\end{assu}

 The following assumptions \ref{assu_smoothtrend} and \ref{assu_smmothness} states that the mean and covariance functions of $x_i$ are $d$-times continuous differentiable for some positive integer $d$.
\begin{assu}\label{assu_smoothtrend} For some given integer $d>0$, we assume that there exists a smooth function $\mu(\cdot) \in C^d([0,1]),$ where $C^d([0,1])$ is the function space on $[0,1]$ of continuous functions that have continuous first $d$ derivatives, such that
\begin{equation*}
\mathbb{E} \ G(t, \mathcal{F}_0)=\mu(t). 
\end{equation*}
\end{assu}

 For each fixed $t \in [0,1],$ we denote the covariance function of the locally stationary time series $\{x_i\}$ as 
  \begin{equation}\label{eq_defncov}
\gamma(t,j)=\text{Cov}(G(t, \mathcal{F}_0), G(t, \mathcal{F}_{j})).
\end{equation}
The assumptions
(\ref{assum_lip}) and (\ref{assum_moment}) ensure that   $\gamma(t,j)$ is Lipschiz continuous in $t$. Next, we impose the following mild assumption on the smoothness of $\gamma(t,j).$
\begin{assu}\label{assu_smmothness} There exists some integer $d>0,$  such that $\gamma(t,j) \in C^d([0,1])$ for any $j \geq 0$.
\end{assu}

Armed with  Assumptions \ref{phy_generalts}, \ref{assu_pdc}, \ref{assum_local} and \ref{assu_smmothness}, we can conclude that the covariance function $\gamma(t, j)$ decays polynomially fast uniformly in $t$ (c.f. Lemma \ref{lem_coll}).  Before concluding this section, we provide an insight on how to check UPDC condition for locally stationary time series. {
For stationary time series, Herglotz's theorem asserts that UPDC holds if the spectral density function is bounded from below by a constant (see \cite[Section 4.3]{BD} for more details). Our next proposition extends such results to locally stationary time series with short-range dependence.   
\begin{prop}\label{prop_pdc} If $\{x_i\}$ is  locally stationary time series satisfying Assumptions \ref{phy_generalts}, \ref{assum_local} and \ref{assu_smmothness}, and there exists some constant $\kappa>0$ such that $f(t,\omega) \geq \kappa$ for all $t$ and $\omega,$ where 
\begin{equation}\label{eq_spetraldensity}
f(t, \omega)=\sum_{j=-\infty}^{\infty} \gamma(t,j) e^{-\mathrm{i} j \omega}, \ \mathrm{i}=\sqrt{-1},
\end{equation} 
then $\{x_i\}$ satisfies UPDC. Conversely, if $\{x_i\}$ satisfies Assumptions \ref{phy_generalts}, \ref{assu_pdc}, \ref{assum_local} and \ref{assu_smmothness}, then there exists some constant $\kappa>0,$ such that $f(t,\omega) \geq \kappa$ for all $t$ and $\omega.$
\end{prop}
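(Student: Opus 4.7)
The plan is to prove the two implications by a block-wise stationary approximation, using Herglotz's theorem as the bridge between the local spectrum and the covariance matrix. The two key analytic inputs are the uniform polynomial covariance decay $|\gamma(t,j)|\leq Cj^{-\tau}$ (which under Assumptions 2.2, 3.1, and 3.3 follows from the authors' claim that $\gamma(t,j)$ decays polynomially fast uniformly in $t$) and the Lipschitz continuity of $G(\cdot,\mathcal{F}_i)$ in its time argument, which gives $|\mathrm{Cov}(x_i,x_j)-\gamma((i+j)/(2n),i-j)|=O(|i-j|/n)$.

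For the forward direction ($f\geq\kappa\Rightarrow$ UPDC), I would invoke Lemma 2.1 with a slowly diverging band $n_0$. The off-band row sum is $\max_i\sum_{|i-j|>n_0}|\mathrm{Cov}(x_i,x_j)|=O(n_0^{-(\tau-1)})$, so condition (2.4) is met for any prescribed $\delta>0$ once $n_0$ is large enough. On the banded part $\Sigma_{x,n_0}$, I would replace each entry by the symmetrized proxy $\tilde\gamma_{ij}:=\gamma((i+j)/(2n),i-j)\mathbf{1}_{|i-j|\leq n_0}$, incurring an operator-norm error of order $O(n_0^2/n)$. For any unit vector $v$, I would then insert the spectral representation $\gamma(t,k)=\frac{1}{2\pi}\int_{-\pi}^\pi f(t,\omega)e^{\mathrm{i}k\omega}d\omega$, partition $\{1,\ldots,n\}$ into blocks $B_\ell$ of length $L\gg n_0$, and freeze $f(\cdot,\omega)$ at each block midpoint $t_\ell$ at the cost of a Lipschitz error $O(L/n)$. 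The within-block contribution equals $\sum_\ell \tfrac{1}{2\pi}\int f(t_\ell,\omega)|\hat v_{B_\ell}(\omega)|^2\,d\omega$, which by Herglotz applied to the truncated Toeplitz block is at least $(\kappa-o(1))\sum_\ell\|v_{B_\ell}\|^2=(\kappa-o(1))\|v\|^2$, since the banded spectrum $f_{n_0}(t_\ell,\omega)=\sum_{|j|\leq n_0}\gamma(t_\ell,j)e^{-\mathrm{i}j\omega}$ converges uniformly to $f(t_\ell,\omega)\geq\kappa$. Cross-block entries of $\tilde\gamma$ involve only indices within distance $n_0$ of a boundary and thus contribute at most $O(n_0/L)\|v\|^2$ in absolute value. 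Choosing, for instance, $n_0=n^{\alpha}$ and $L=n^{\beta}$ with $0<\alpha<\beta<1/2$ (feasible under $\tau>5+\varpi$) makes every error $o(1)$, so $\lambda_n(\Sigma_{x,n_0})\geq\kappa/2$ eventually and Lemma 2.1 yields UPDC.

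For the converse direction ($\text{UPDC}\Rightarrow f\geq\kappa$), I would argue by contradiction. Suppose $f(t_0,\omega_0)<\kappa-\varepsilon$ for some $(t_0,\omega_0)$ and $\varepsilon>0$. Construct the localized plane-wave test vector $v^{(n)}_j=c_n\psi(n(j/n-t_0)/L)e^{\mathrm{i}\omega_0 j}$, where $\psi$ is smooth with compact support in $[-1,1]$, $L=L(n)\to\infty$ with $L/n\to 0$, and $c_n$ normalizes so that $\|v^{(n)}\|=1$. Lipschitz continuity of $\gamma(\cdot,k)$ lets me freeze the first argument at $t_0$ on the support of $v^{(n)}$, and the polynomial decay of $\gamma(t_0,\cdot)$ renders the lag tails negligible; a direct Fourier computation then yields $v^{(n)*}\Sigma_{x,n}v^{(n)}\to f(t_0,\omega_0)<\kappa-\varepsilon$. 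Splitting $v^{(n)}$ into real and imaginary parts $u,w$ and using that $v^{(n)*}\Sigma_{x,n}v^{(n)}=u^\top\Sigma_{x,n}u+w^\top\Sigma_{x,n}w$ and $\|v^{(n)}\|^2=\|u\|^2+\|w\|^2$, at least one of the real vectors $u,w$ has Rayleigh quotient below $\kappa-\varepsilon/2$ for large $n$, contradicting Assumption 2.4.

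The main obstacle is the forward direction, specifically the cross-block coupling: entries of $\tilde\gamma$ within bandwidth $n_0$ of a block boundary are not pointwise controllable by applying Herglotz inside a single block. The resolution is that such entries populate only an $O(n_0/L)$ fraction of the quadratic form's mass, so with $L\gg n_0$ their contribution vanishes in the limit. Balancing this against the within-block Lipschitz error $O(L/n)$ and the off-band error $O(n_0^{-(\tau-1)})$ is where the assumption $\tau>5+\varpi$ provides the necessary slack.
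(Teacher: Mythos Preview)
Your converse direction is correct and, in fact, cleaner than the paper's somewhat sketchy argument: the localized plane-wave test vector is the right object, and the real/imaginary-part splitting is exactly the way to pass from the complex Rayleigh quotient to a real eigenvector violating UPDC.

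The forward direction, however, has a genuine gap at precisely the point you flag as the main obstacle. Your claim that the cross-block entries ``contribute at most $O(n_0/L)\|v\|^2$'' is false for adversarial $v$. The cross-block part of $\tilde\gamma$ is supported on the boundary set $I=\{j:\operatorname{dist}(j,\partial B_\ell)\leq n_0 \text{ for some }\ell\}$, and its operator norm is bounded by a fixed constant $C_0$ (from $\sup_t\sum_k|\gamma(t,k)|<\infty$), \emph{not} by $O(n_0/L)$. Hence the cross-block quadratic form is controlled only by $C_0\|v_I\|^2$, and if $v$ is concentrated on $I$ this is $C_0\|v\|^2$, which may well exceed the within-block gain $\kappa\|v\|^2$. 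The ``fraction of indices'' heuristic $|I|/n=O(n_0/L)$ does not transfer to the quadratic form without an averaging step.

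The paper's fix is an overlapping-window argument that you are missing: instead of a single partition into blocks of size $L$, slide a window of size $d_n$ across all starting positions $i$ and average. Each window gives $F(\bm a,i)\geq\varsigma\sum_{l=i}^{i+d_n}a_l^2$ by the stationary approximation and Herglotz (your Lemma~\ref{lem_spectralbound}), so $\frac{1}{d_n}\sum_i F(\bm a,i)\geq\varsigma\|\bm a\|^2$. The point is that $\frac{1}{d_n}\sum_i F(\bm a,i)$ differs from $\bm a^*\Sigma^{d_n}\bm a$ only by the triangular weight $\frac{d_n+1-|k-l|}{d_n}$ on each entry, and the discrepancy $\sum_{|k-l|\leq d_n}a_ka_l\operatorname{Cov}(x_k,x_l)\frac{|k-l|-1}{d_n}$ is genuinely $O(d_n^{-1})\|\bm a\|^2$ by the covariance decay. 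This averaging is what converts the boundary problem from a pointwise issue into a density issue; your non-overlapping partition cannot do this without an additional randomization over shifts.
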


We shall call $f(t,w)$  the \emph{instantaneous spectral density function.} Proposition \ref{prop_pdc} implies that the verification of UPDC reduces to showing that the instantaneous spectral density function is uniformly bounded from below by a constant, which can be easily checked for many non-stationary processes. Finally, we list the following example satisfying Assumptions \ref{phy_generalts}, \ref{assum_local} and \ref{assu_smmothness} and the UPDC condition using Proposition \ref{prop_pdc}.} 

\begin{exam}[Non-stationary linear process]\label{eexample_linear} Let $\{\epsilon_i\}$ be {zero-mean i.i.d. random variables with variance $\sigma^2$.} We also assume $a_j(\cdot), j=0,1,\cdots$ be $C^d([0,1])$ functions such that 
\begin{equation}\label{ex_linear}
G(t, \mathcal{F}_i)=\sum_{k=0}^{\infty} a_k(t) \epsilon_{i-k}. 
\end{equation} 
It is easy to see that Assumptions \ref{phy_generalts}, \ref{assum_local} and \ref{assu_smmothness} will be satisfied if 
\begin{equation*}
 \sup_{t \in [0,1]} |a_j(t)| \leq C j^{-\tau}, \ j \geq 1; \ \sum_{j=0}^{\infty} \sup_{t \in [0,1]} |a_j'(t)|<\infty, 
\end{equation*} 
 and 
\begin{equation*}
 \sup_{t \in [0,1]} |a_j^{(d)}(t)|\leq  C j^{-\tau}, \ j \geq 1.
\end{equation*} 
{Further, we note that the instantaneous spectral density function of $G(t, \mathcal{F}_i)$ can be written as $f(t,w)= \sigma^2|\psi(t, e^{-\mathrm{i} j \omega})|^2,$ where $\psi(\cdot,\cdot)$ is defined such that $G(t, \mathcal{F}_i)=\psi(t, B) \epsilon_i$ with $B$ being the  backshift operator. By Proposition \ref{prop_pdc}, the UPDC is satisfied if $\sigma^2|\psi(t, e^{-\mathrm{i} j \omega})|^2 \geq \kappa$ for all $t$ and $\omega,$ where $\kappa>0$ is some universal constant. } 
\end{exam}

\subsection{Smooth AR approximation for locally stationary time series}

 In this subsection, we focus our discussion on locally stationary time series (\ref{defn_model}) satisfying Assumptions \ref{phy_generalts}, \ref{assu_pdc}, 
 \ref{assum_local}, \ref{assu_smoothtrend} and \ref{assu_smmothness}. 
We will show that there exists a smooth function $\phi_j(i/n)$ which approximates $\phi_{ij}$ when $i >b.$ {Specifically, denote $\widetilde{\bm{\phi}}^b(\frac{i}{n}):=(\phi_1(\frac{i}{n}), \cdots, \phi_b(\frac{i}{n}))^*$ via $\widetilde{\bm{\phi}}^b(\frac{i}{n})=(\widetilde{\Gamma}^b_i)^{-1} \widetilde{\bm{\gamma}}^b_i,$
where $\widetilde{\Gamma}^b_i$ and $\widetilde{\bm{\gamma}}^b_i$ are defined as 
\begin{equation*}
\widetilde{\Gamma}^b_i=\operatorname{Cov}(\widetilde{\bm{x}}_{i-1},\widetilde{\bm{x}}_{i-1}), \ \widetilde{\bm{\gamma}}_i=\operatorname{Cov}(\widetilde{\bm{x}}_{i-1}, \widetilde{x}_i),
\end{equation*}
with 
$\widetilde{\bm{x}}_{i-1,k}=G(\frac{i}{n},\mathcal{F}_{i-k}), \ k=1,2,\cdots,b,$ and $\widetilde{\bm{x}}_{i-1,k}$ is the $k$-th entry of $\widetilde{\bm{x}}_{i-1}.$ Moreover, we denote the time-varying function $\phi_0(t)$ by  
{
\begin{equation*}
\phi_0(\frac{i}{n})=\mu(\frac{i}{n})-\sum_{j=1}^{b} \phi_j(\frac{i}{n}) { \mu(\frac{i}{n})}.
\end{equation*}
}
The statistical properties of the coefficients are summarized in the following theorem. 
%

\begin{thm} \label{thm_locallynonzero} Consider the locally stationary time series (\ref{defn_model}). Suppose Assumptions \ref{phy_generalts}, \ref{assu_pdc}, \ref{assum_local}, \ref{assu_smoothtrend} and \ref{assu_smmothness} hold true. Then we have that $\phi_j(t) \in C^d([0,1]), 0 \leq j \leq b.$ Furthermore, there exists some constant $C>0,$ such that for all $1 \leq j \leq b,$ we have 
\begin{equation*}
\sup_{i>b}\left| \phi_{ij}-\phi_j(\frac{i}{n}) \right| \leq C n^{-1+3(1+\epsilon)/(2\tau)}, \ \
\sup_{i>b}\left| \phi_{i0}-\phi_0(\frac{i}{n}) \right| \leq C n^{-1+5(1+\epsilon)/(2\tau)}.
\end{equation*}
\end{thm}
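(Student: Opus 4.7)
The plan is to establish the two claims of the theorem separately: first the $C^d$ smoothness of the coefficient functions $\phi_j(\cdot)$ on $[0,1]$, and then the uniform approximation rates for $|\phi_{ij} - \phi_j(i/n)|$ and $|\phi_{i0} - \phi_0(i/n)|$.

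For the smoothness claim, I would work directly from the definition $\widetilde{\bm{\phi}}^b(t) = (\widetilde{\Gamma}^b(t))^{-1}\widetilde{\bm{\gamma}}^b(t)$. The entries of $\widetilde{\Gamma}^b(t)$ are of the form $\gamma(t,k-l)$ for $k,l=1,\dots,b$ and those of $\widetilde{\bm{\gamma}}^b(t)$ are $\gamma(t,k)$ for $k=1,\dots,b$, all of which lie in $C^d([0,1])$ by Assumption \ref{assu_smmothness}. Proposition \ref{prop_pdc} together with Assumption \ref{assu_pdc} guarantees $\|(\widetilde{\Gamma}^b(t))^{-1}\|_2\leq 1/\kappa$ uniformly in $t\in[0,1]$ and in $b$. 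Cramer's rule, or iterated differentiation of the identity $\widetilde{\Gamma}^b(t)(\widetilde{\Gamma}^b(t))^{-1}=I$, then shows that the entries of the inverse are $C^d$ in $t$, and hence so are those of $\widetilde{\bm{\phi}}^b(t)$. Smoothness of $\phi_0(t)=\mu(t)-\sum_{j=1}^b\phi_j(t)\mu(t)$ follows from $\mu\in C^d$ (Assumption \ref{assu_smoothtrend}).

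For the approximation rate, I would pass through the intermediate $b$-lag best-linear-forecast coefficients $\phi_{ij}^b$ and write
\begin{equation*}
\phi_{ij}-\phi_j(i/n)=(\phi_{ij}-\phi_{ij}^b)+(\phi_{ij}^b-\phi_j(i/n)),
\end{equation*}
since Theorem \ref{lem_phibound} already controls the first summand. The substantive step is to bound the second. The vectors $\bm{\phi}_i^b$ and $\widetilde{\bm{\phi}}^b(i/n)$ solve respectively $\Sigma_i^b\bm{\phi}_i^b=\bm{\gamma}_i^b$ and $\widetilde{\Gamma}_i^b\widetilde{\bm{\phi}}^b(i/n)=\widetilde{\bm{\gamma}}_i^b$, where $\Sigma_i^b=\operatorname{Cov}(x_{i-1},\dots,x_{i-b})$ and $\bm{\gamma}_i^b=\operatorname{Cov}((x_{i-1},\dots,x_{i-b})^*,x_i)$. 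The standard perturbation identity gives
\begin{equation*}
\bm{\phi}_i^b-\widetilde{\bm{\phi}}^b(i/n)=(\Sigma_i^b)^{-1}\bigl[(\bm{\gamma}_i^b-\widetilde{\bm{\gamma}}_i^b)-(\Sigma_i^b-\widetilde{\Gamma}_i^b)\widetilde{\bm{\phi}}^b(i/n)\bigr].
\end{equation*}
The factor $\|(\Sigma_i^b)^{-1}\|_2$ is bounded by $1/\kappa$ via UPDC, and $\|\widetilde{\bm{\phi}}^b(i/n)\|_2$ is uniformly bounded in $b$ and $i$ because of the polynomial decay $|\phi_j(\cdot)|\lesssim j^{-(\tau-1)/(1+\epsilon)}$ inherited from the analogue of Theorem \ref{lem_phibound}. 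Bounding the $\ell_\infty$-norm of the left-hand side by its $\ell_2$-norm reduces the problem to sharp spectral-norm control of the covariance perturbations.

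The main obstacle is obtaining these covariance bounds at the sharp rate $O(n^{-1+3(1+\epsilon)/(2\tau)})=O(b^{3/2}/n)$. The naive entrywise Lipschitz estimate $|(\Sigma_i^b-\widetilde{\Gamma}_i^b)_{kl}|\leq C(k\vee l)/n$, obtained from Assumption \ref{assum_local} and Cauchy--Schwarz, plugged into a Frobenius bound yields only $O(b^2/n)$, which is too crude. To gain the needed refinement I would split indices according to $|k-l|$: on the near-diagonal band I use the Lipschitz bound, while for $|k-l|$ large I bound both $(\Sigma_i^b)_{kl}$ and $\gamma(i/n,k-l)$ separately by the polynomial covariance decay guaranteed by Assumption \ref{phy_generalts} via Lemma \ref{lem_coll}, and then optimize the split so that the two contributions balance. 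The same strategy yields the analogous $\ell_2$ bound on $\bm{\gamma}_i^b-\widetilde{\bm{\gamma}}_i^b$. Finally, the slightly larger rate $n^{-1+5(1+\epsilon)/(2\tau)}$ for the intercept $\phi_{i0}=\mu(i/n)-\sum_{j=1}^b\phi_{ij}\mu(i/n)+\text{error}$ picks up an extra factor $b^{1/2}$ from aggregating the approximation errors in $\phi_j$ over the $b$ summands.
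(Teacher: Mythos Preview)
Your smoothness argument is fine and matches the paper's one-line justification via term-by-term differentiation. The overall structure of the approximation argument---a perturbation identity for two Yule--Walker systems, combined with UPDC and decay of the coefficient vector---is also correct in spirit and is exactly what the paper's analogous detailed calculation (for the PACF in Lemma~\ref{lem_pacf}) does.

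There is, however, a genuine gap in your route to the rate $n^{-1+3(1+\epsilon)/(2\tau)}$. You triangulate
\[
\phi_{ij}-\phi_j(i/n)=(\phi_{ij}-\phi_{ij}^b)+(\phi_{ij}^b-\phi_j(i/n))
\]
and invoke Theorem~\ref{lem_phibound} for the first summand. But \eqref{eq_phibound2} only gives $|\phi_{ij}-\phi_{ij}^b|\leq Cn^{-1+(3+2\epsilon)/\tau}$, and since $(3+2\epsilon)/\tau>3(1+\epsilon)/(2\tau)$, this already exceeds the target rate; no matter how sharply you handle the second summand, the triangle inequality cannot recover $n^{-1+3(1+\epsilon)/(2\tau)}$. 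The truncation step from $(i-1)$ to $b$ lags in Theorem~\ref{lem_phibound} introduces a uniform-in-$j$ tail error that is simply too large here.

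The paper avoids this by comparing same-dimensional systems directly (it cites a companion lemma, and the mechanics are spelled out in the proof of Lemma~\ref{lem_pacf}). One writes the perturbation identity between $\Gamma_i\bm{\phi}_i=\bm{\gamma}_i$ and its locally-stationary counterpart with entries $\gamma(i/n,\cdot)$, and uses the entrywise bound
\[
\bigl|\operatorname{Cov}(x_{i-k},x_{i-l})-\gamma(i/n,|k-l|)\bigr|\leq C\min\{\max(k,l)/n,\ |k-l|^{-\tau}\},
\]
splitting indices at $b$: for $k\leq b$ the Lipschitz bound gives $\leq b/n$, and for $k>b$ the covariance decay dominates. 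Summing squares over the first regime yields $b\cdot(b/n)^2=b^3/n^2=n^{-2+3(1+\epsilon)/\tau}$, which is exactly the square of the claimed rate; the second regime is negligible. Your splitting instinct is therefore the right one---it just needs to be applied to the direct comparison rather than to the already-triangulated piece.

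Two smaller remarks. First, for the $b$-dimensional matrix term your worry about the Frobenius bound is unnecessary: since $\|\widetilde{\bm{\phi}}^b\|_1=O(1)$ by the polynomial decay, each entry of $(\Sigma_i^b-\widetilde{\Gamma}_i^b)\widetilde{\bm{\phi}}^b$ is $\leq (b/n)\|\widetilde{\bm{\phi}}^b\|_1=O(b/n)$, and the $\ell_2$ norm over $b$ entries gives $b^{3/2}/n$ directly. Second, the intercept picks up an extra factor of $b=n^{(1+\epsilon)/\tau}$ (not $b^{1/2}$) when you sum the $O(n^{-1+3(1+\epsilon)/(2\tau)})$ errors in $\phi_{ij}-\phi_j(i/n)$ against the bounded $\mu(i/n)$ over $j=1,\dots,b$; this is what produces the exponent $5(1+\epsilon)/(2\tau)$.
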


Armed with Theorem \ref{thm_locallynonzero}, we find that 
\begin{equation}\label{eq_choleskylocal}
x_i=\phi_{0}(\frac{i}{n})+\sum_{j=1}^b \phi_j(\frac{i}{n}) x_{i-j}+\epsilon_i+O_{\mathbb{P}}(n^{-1+5(1+\epsilon)/(2\tau)}).
\end{equation}
Moreover, results similar to Theorem \ref{thm_arrepresent} can be proved, which is summarized in the following corollary. Denote
\begin{equation*}
x_i^{**}=
\begin{cases}
x_i,  & i \leq b; \\
\phi_0(\frac{i}{n})+\sum_{j=1}^b \phi_{j}(\frac{i}{n}) x_{i-j}^{**}+\epsilon_i, & i>b.  
\end{cases}
\end{equation*}
\begin{cor} \label{cor_localboundmse} Suppose that the assumptions of Theorem \ref{thm_locallynonzero} hold.  Then we have 
\begin{equation*}
\max_{1 \le i \le n  }|x_i-x_i^{**}|=O_{\mathbb{P}}(n^{-1+5(1+\epsilon)/(2\tau)}). 
\end{equation*}
\end{cor}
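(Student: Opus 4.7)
The plan is to set up a time-varying AR recursion for $\Delta_i := x_i - x_i^{**}$ driven by the error term in equation~(\ref{eq_choleskylocal}), and then show that this recursion is stable with uniformly summable Green's function, so that the $O_{\mathbb{P}}$-rate of the driving remainder transfers directly to $\max_i |\Delta_i|$. By definition $\Delta_i = 0$ for $i \leq b$, so I only need to work on indices $i > b$.

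The first step is to derive the recursion. Subtracting the recursive definition of $x_i^{**}$ from the expansion (\ref{eq_choleskylocal}) gives, for $i > b$,
\begin{equation*}
\Delta_i = \sum_{j=1}^b \phi_j(i/n)\,\Delta_{i-j} + R_i,\qquad \max_{b<i\le n}|R_i| = O_{\mathbb{P}}(n^{-1+5(1+\epsilon)/(2\tau)}),
\end{equation*}
with initial condition $\Delta_k = 0$ for $k\le b$. Unrolling yields a representation $\Delta_i = \sum_{k=0}^{i-b-1} \Psi_{i,k} R_{i-k}$, where $\{\Psi_{i,k}\}$ is the Green's function of the lower-triangular time-varying difference operator $I - \Phi_n$ whose entries are built from $\{\phi_j(i/n)\}$. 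The second step is to show $\sup_i \sum_k |\Psi_{i,k}| \le C$. Combining (a)~the polynomial decay $|\phi_j(t)| \le C j^{-(\tau-1)/(1+\epsilon)}$ (inherited from Theorem~\ref{lem_phibound} through Theorem~\ref{thm_locallynonzero}) with (b)~Proposition~\ref{prop_pdc}, which converts UPDC into the uniform lower bound $\inf_{t,\omega} f(t,\omega) \ge \kappa > 0$ and thereby places the roots of the instantaneous AR polynomial $A(t,z) = 1 - \sum_{j=1}^b \phi_j(t) z^j$ uniformly outside a neighbourhood of the closed unit disk, one obtains the desired uniform summability. Multiplying through gives
\begin{equation*}
\max_{1 \le i \le n}|\Delta_i| \le \Bigl(\sup_i \sum_k |\Psi_{i,k}|\Bigr)\cdot \max_i |R_i| = O_{\mathbb{P}}(n^{-1+5(1+\epsilon)/(2\tau)}).
\end{equation*}

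The main obstacle is step two: the uniform-in-$t$ control of the $\ell^1$ norm of the Green's function for a time-varying AR of slowly diverging order $b$. In the stationary case this is a routine consequence of spectral factorisation, but here it must be upgraded to a statement uniform in the time index and the order. A clean route is to apply the banded-inverse machinery of \cite{DMS} used in the proof of Theorem~\ref{lem_phibound} directly to the operator $I - \Phi_n$, whose associated covariance structure inherits UPDC from that of $\{x_i\}$ via Proposition~\ref{prop_pdc}. A more pedestrian alternative is to route through Theorem~\ref{thm_arrepresent}: bound $|\Delta_i|\le |x_i-x_i^*| + |x_i^*-x_i^{**}|$, control the first summand by $O_{\mathbb{P}}(n^{-1+(2+\epsilon)/\tau})$ (which is already dominated by the target since $(2+\epsilon)/\tau < 5(1+\epsilon)/(2\tau)$), and then run the same recursion for $D_i := x_i^* - x_i^{**}$ whose source term is, by Theorem~\ref{thm_locallynonzero} and $b = O(n^{(1+\epsilon)/\tau})$, of order $b\cdot n^{-1+3(1+\epsilon)/(2\tau)} + n^{-1+5(1+\epsilon)/(2\tau)} = O(n^{-1+5(1+\epsilon)/(2\tau)})$, so that the same stability argument closes the proof.
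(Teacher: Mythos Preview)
Your proposal is correct and follows essentially the same route as the paper. The paper's proof simply states that the argument is analogous to that of Theorem~\ref{thm_arrepresent}, whose proof sets up the same recursion $\Delta_i=\sum_{j=1}^b \phi_{ij}\Delta_{i-j}+R_i$ and appeals to the uniform summability $\sum_{j=1}^b|\phi_{ij}|<\infty$; you make this stability step explicit through the Green's function $\Psi_{i,k}$ and give more detailed justifications (via Proposition~\ref{prop_pdc} and the banded-inverse argument) for the uniform bound $\sup_i\sum_k|\Psi_{i,k}|\le C$, which the paper leaves implicit.
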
 
}


Next, when $i>b,$ we show that there exists a smooth function of time $\rho_j(t),$ such that for each lag $j,$ the PACF $\rho_{i,j}$ can be well approximated by $\rho_j(i/n):=\phi_{j,j}(i/n).$ Denote
\begin{equation}\label{eq_yulewalkereqlocalstationary}
\widetilde{\bm{\phi}}_j(\frac{i}{n})=\widetilde{\Omega}_{i,j} \widetilde{\bm{\gamma}}_{i,j}, \ \widetilde{\bm{\phi}}_j(\frac{i}{n})=\Big(\phi_{1,j}(\frac{i}{n}), \cdots,\phi_{j,j}(\frac{i}{n}) \Big)^*,
\end{equation}
where $\widetilde{\Omega}_{i,j}=[\text{Cov}(\widetilde{\bm{x}}_i^j, \widetilde{\bm{x}}_i^j)]^{-1}$ and $\widetilde{\bm{\gamma}}_{i,j}=\text{Cov}(\widetilde{\bm{x}}_i^j,\widetilde{x}_i),$ where $\widetilde{\bm{x}}_i^j:=(\widetilde{x}_{i-1}, \cdots, \widetilde{x}_{i-j})^*$ with   $\widetilde{x}_{i-r}=G(\frac{i}{n}, \mathcal{F}_{i-r}).$ Denote $\rho_j(\frac{i}{n})=\phi_{j,j}(\frac{i}{n}),\ i>b.$ We summarize the properties of $\rho_j(t)$ in the lemma below.  An example of PACF plot can be found in Figure \ref{pacf_exam}.

\begin{figure}[H]
\includegraphics[width=16cm,height=8cm]{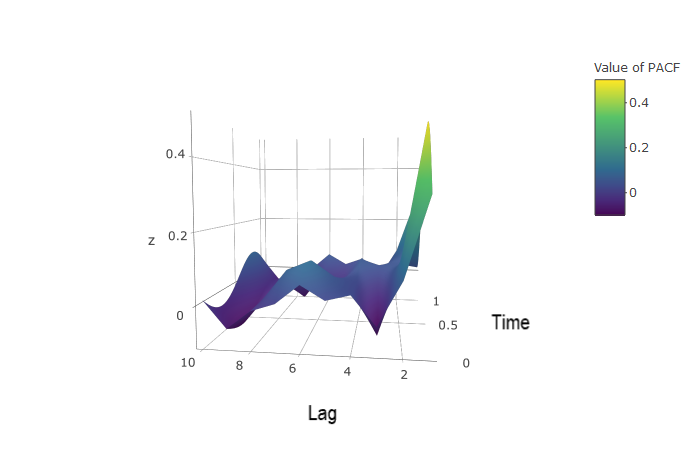}
\caption{Sample PACF plots for the first 10 lags for Model 3 defined in Section \ref{simu_intro}. It can be seen that the lag one sample PACF is much larger than the other lags at any time. }
\label{pacf_exam}
\end{figure}

\begin{lem}\label{lem_pacf}
Under Assumptions \ref{phy_generalts}, \ref{assu_pdc}, \ref{assum_local} and \ref{assu_smmothness}, for $i>b$ and $j<i,$ we have $\rho_j(t) \in C^d([0,1]).$ Moreover,  for some constant $C>0,$ we have   
\begin{equation*}
\sup_{i>b} \left| \rho_j(\frac{i}{n})-\rho_{i,j} \right| \leq Cn^{-1+3(1+\epsilon)/(2\tau)}.
\end{equation*}
\end{lem}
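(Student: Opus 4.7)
The plan is to adapt the argument of Theorem \ref{thm_locallynonzero} to the $j$-th order Yule--Walker system and then read off the last coefficient. Writing $\Sigma_{i,j}=\operatorname{Cov}(\bm{x}_i^j,\bm{x}_i^j)$ and $\bm\gamma_{i,j}=\operatorname{Cov}(\bm{x}_i^j,x_i)$, the true coefficient vector $\bm\phi_{i,j}$ satisfies $\Sigma_{i,j}\bm\phi_{i,j}=\bm\gamma_{i,j}$ and its last coordinate is the PACF $\rho_{i,j}$, while the frozen-time surrogate $\widetilde{\bm\phi}_j(i/n)$ with last coordinate $\rho_j(i/n)$ solves $\widetilde{\Sigma}_{i,j}\widetilde{\bm\phi}_j(i/n)=\widetilde{\bm\gamma}_{i,j}$.

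For the smoothness claim, each entry of $\widetilde{\Sigma}_{i,j}$ and $\widetilde{\bm\gamma}_{i,j}$ is of the form $\gamma(t,k)$ for some $0\le k\le j$, hence lies in $C^d([0,1])$ by Assumption \ref{assu_smmothness}. UPDC combined with Proposition \ref{prop_pdc} gives $\lambda_{\min}(\widetilde{\Sigma}_{i,j})\ge\kappa$ uniformly in $i$, so $\widetilde{\Sigma}_{i,j}^{-1}$ is well-defined and inherits $C^d$ smoothness in $t$ (iterate $\partial_t A^{-1}=-A^{-1}(\partial_t A)A^{-1}$ up to order $d$). Consequently, every coordinate of $\widetilde{\bm\phi}_j(t)=\widetilde{\Sigma}_{i,j}^{-1}\widetilde{\bm\gamma}_{i,j}$ is $C^d$, and in particular $\rho_j(t)=\phi_{j,j}(t)\in C^d([0,1])$.

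For the approximation bound, use the perturbation identity
$$
\bm\phi_{i,j}-\widetilde{\bm\phi}_j(i/n)=\widetilde{\Sigma}_{i,j}^{-1}\bigl[(\bm\gamma_{i,j}-\widetilde{\bm\gamma}_{i,j})-(\Sigma_{i,j}-\widetilde{\Sigma}_{i,j})\bm\phi_{i,j}\bigr].
$$
We have $\|\widetilde{\Sigma}_{i,j}^{-1}\|_{op}\le 1/\kappa$ by UPDC, and $\|\bm\phi_{i,j}\|_2=O(1)$ by the coefficient decay in Theorem \ref{lem_phibound}. The stochastic Lipschitz property (Assumption \ref{assum_local}) yields entrywise bounds of order $O(j/n)$ on both $\Sigma_{i,j}-\widetilde{\Sigma}_{i,j}$ and $\bm\gamma_{i,j}-\widetilde{\bm\gamma}_{i,j}$, while the polynomial covariance decay $|\gamma(t,k)|\le Ck^{-\tau}$ (cf.\ Lemma \ref{lem_coll}) furnishes the competing bound $O(|k-l|^{-\tau})$; combining the two gives the elementwise estimate $O(\min(j/n,|k-l|^{-\tau}))$. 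A row-sum/Gershgorin-type pass then converts this into operator and $\ell_2$ norm bounds that, when $j\le b$, produce $\|\bm\phi_{i,j}-\widetilde{\bm\phi}_j(i/n)\|_\infty\le Cn^{-1+3(1+\epsilon)/(2\tau)}$, and extracting the last coordinate yields the claim. For $j>b$, Theorem \ref{lem_phibound} (applied both to $\{x_i\}$ and to the frozen stationary system at $t=i/n$) gives $|\rho_{i,j}|,|\rho_j(i/n)|=O(j^{-(\tau-1)/(1+\epsilon)})=O(n^{-(\tau-1)/\tau})$, which is already dominated by the target rate.

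The main obstacle is the quantitative interpolation in the third step: a naive Frobenius-norm bound on $\Sigma_{i,j}-\widetilde{\Sigma}_{i,j}$ only yields the too-weak $O(jb/n)$, so one must genuinely exploit the simultaneous $O(j/n)$ Lipschitz and $O(|k-l|^{-\tau})$ decay estimates to extract the sharp exponent $3(1+\epsilon)/(2\tau)$. This is precisely the bookkeeping carried out in the proof of Theorem \ref{thm_locallynonzero}, now transferred to the $j$-dimensional sub-system rather than the full $b$-dimensional one.
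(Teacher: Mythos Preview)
Your proposal is correct and follows essentially the same route as the paper: both arguments write the difference $\rho_{i,j}-\rho_j(i/n)$ via a perturbation identity for the $j$-dimensional Yule--Walker system, invoke UPDC to bound the inverse covariance, and then combine the Lipschitz estimate $O(k/n)$ for $k\le b$ with the covariance decay $O(k^{-\tau})$ for $k>b$ to obtain the rate $n^{-1+3(1+\epsilon)/(2\tau)}$, deferring the sharp bookkeeping to the proof of Theorem~\ref{thm_locallynonzero}. The only cosmetic differences are that the paper factors out $\Omega_{i,j}=\Gamma_{i,j}^{-1}$ whereas you factor out $\widetilde{\Sigma}_{i,j}^{-1}$, and the paper handles all $j$ uniformly by splitting the $\ell_2$-sum at the threshold $\min\{j,b\}$ rather than treating $j\le b$ and $j>b$ as separate cases; your ``Gershgorin-type pass'' phrasing is slightly loose (a pure row-sum bound on $\|\Sigma_{i,j}-\widetilde{\Sigma}_{i,j}\|_{op}$ gives only $O(b^2/n)$), but you correctly flag that the sharp exponent requires bounding the matrix--vector product directly using the $\ell_1$-summability of $\bm{\phi}_{i,j}$, which is exactly what the referenced bookkeeping does.
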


%
%

Finally, we can get similar results as in Remark \ref{rem_exponentialdecay} when the physical dependence measure is of exponential decay. 
%
%
%
%
%

\section{Globally optimal and adaptive forecasting for locally stationary time series}\label{sec:predict}
In this section, we consider short-term forecasting of locally stationary time series by estimating the smooth forecasting coefficients using sieve expansion. We first introduce the notation of \emph{asymptotically optimal predictor}.
{
\begin{defn}\label{defn_asymptotic}
A linear predictor $\widetilde{z}$ of a random variable $z$ based on $x_1,\cdots, x_n$ is called asymptotically optimal if 
\begin{equation}\label{eq_asymptotic}
\mathbb{E}(z-\widetilde z)^2\le \sigma_n^2+o(1/n),
\end{equation} 
 where $\sigma_n^2$ is the mean squared error (MSE) of the best linear predictor of $z$ based on $x_1,\cdots, x_n$. 
\end{defn}
}

The rationale for such definition is that, in practice, the MSE of forecast can only be estimated with a smallest possible error of $O(1/n)$ when time series length is $n$. It is well-known that the parametric rate for estimating the coefficients of a time series model is $O_p(n^{-1/2})$. When one uses the estimated coefficients to forecast the future, the corresponding influence on the MSE of forecast is $O(1/n)$ (at best). Therefore, if a linear predictor achieves an MSE of forecast within $o(1/n)$ range of the optimal one, it is practically indistinguishable from the optimal predictor asymptotically.  



\subsection{Asymptotically optimal short-term forecast for locally stationary time series}
In this subsection, we shall focus on the discussion of one-step ahead prediction. The general case will be discussed briefly due to similarity.  In order to make the forecasting feasible, we assume that the smooth data generating mechanism extends to time $n+1$. That is, we assume $x_{n+1}=G((n+1)/n,{\cal F}_{n+1})$. Naturally, we propose the following estimate for  $\hat x_{n+1}$, the best linear predictor of $x_{n+1}$ based on all its predecessors $x_1, \cdots, x_n$,
\begin{equation}\label{eq_forecast}
\widehat{x}_{n+1}^b=\phi_0(1)+\sum_{j=1}^b \phi_j(1)x_{n+1-j}, \ n>b.
\end{equation}
The next theorem shows that $\widehat{x}^b_{n+1}$ is an asymptotic optimal predictor satisfying (\ref{eq_asymptotic}) in Definition \ref{defn_asymptotic}. 

}


{
\begin{thm}\label{thm_prediction} Suppose Assumptions \ref{phy_generalts}, \ref{assu_pdc}, \ref{assum_local}, \ref{assu_smoothtrend} and \ref{assu_smmothness} hold true. Then there exists some constant $C>0,$ such that for sufficiently large $n$, 
\begin{equation}\label{eq_msemm}
\mathbb{E}(x_{n+1}-\widehat{x}^b_{n+1})^2 \leq \mathbb{E}(x_{n+1}-\widehat{x}_{n+1})^2+ C n^{-2+5(1+\epsilon)/\tau},
\end{equation}
\end{thm}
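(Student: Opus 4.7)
The plan is to apply the orthogonality principle for $\mathcal{L}^2$ linear prediction. Because $\widehat{x}^b_{n+1}$ is an affine function of $x_1,\dots,x_n$, it lies in the linear subspace onto which $x_{n+1}$ is projected to produce $\widehat{x}_{n+1}$. Consequently the residual $x_{n+1}-\widehat{x}_{n+1}$ is orthogonal to $\widehat{x}_{n+1}-\widehat{x}^b_{n+1}$ in $\mathcal{L}^2$, and Pythagoras yields
\begin{equation*}
\mathbb{E}(x_{n+1}-\widehat{x}^b_{n+1})^2 \;=\; \mathbb{E}(x_{n+1}-\widehat{x}_{n+1})^2 + \mathbb{E}(\widehat{x}_{n+1}-\widehat{x}^b_{n+1})^2.
\end{equation*}
The task is thus reduced to proving $\mathbb{E}(\widehat{x}_{n+1}-\widehat{x}^b_{n+1})^2 \leq C n^{-2+5(1+\epsilon)/\tau}$.

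Next I split the coefficient gap into three natural pieces,
\begin{equation*}
\widehat{x}_{n+1}-\widehat{x}^b_{n+1} = \underbrace{\bigl(\phi_{n+1,0}-\phi_0(1)\bigr)}_{A} + \underbrace{\sum_{j=1}^{b}\bigl(\phi_{n+1,j}-\phi_j(1)\bigr)x_{n+1-j}}_{B} + \underbrace{\sum_{j=b+1}^{n}\phi_{n+1,j}\,x_{n+1-j}}_{C},
\end{equation*}
and bound each summand using the two structural results already at our disposal. The intercept $A$ is deterministic; Theorem \ref{thm_locallynonzero} combined with the $C^d$ regularity of $\phi_0$ (to pass from $\phi_0((n+1)/n)$ to $\phi_0(1)$ at the negligible cost $O(n^{-1})$) gives $A^2\le Cn^{-2+5(1+\epsilon)/\tau}$. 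For $B$, write $\mathbb{E}B^2=\bm{v}^*\Sigma_b\bm{v}$ with $\bm{v}_j=\phi_{n+1,j}-\phi_j(1)$. Assumption \ref{phy_generalts} implies the row-sum bound $\max_i\sum_k|\gamma(i,i+k)|\leq C$ and hence $\|\Sigma_b\|_{\mathrm{op}}\leq C$, so the slope bound from Theorem \ref{thm_locallynonzero} yields $\mathbb{E}B^2\leq C b\, n^{-2+3(1+\epsilon)/\tau}=Cn^{-2+4(1+\epsilon)/\tau}$ after substituting $b=O(n^{(1+\epsilon)/\tau})$. For the tail $C$, the same operator-norm bound reduces $\mathbb{E}C^2$ to $C\sum_{j>b}|\phi_{n+1,j}|^2$, and the Baxter-type decay $|\phi_{n+1,j}|\leq Cj^{-(\tau-1)/(1+\epsilon)}$ of Theorem \ref{lem_phibound} gives $\mathbb{E}C^2\leq Cn^{-2+(3+\epsilon)/\tau}$.

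Comparing the three exponents, $A^2$ is the slowest to decay at rate $n^{-2+5(1+\epsilon)/\tau}$ and dominates both $\mathbb{E}B^2$ and $\mathbb{E}C^2$ for sufficiently large $n$, so $\mathbb{E}(A+B+C)^2\leq 3(A^2+\mathbb{E}B^2+\mathbb{E}C^2)$ inserted into the Pythagoras identity closes the estimate. The main obstacle is not analytical but one of bookkeeping: one must keep track of two interacting sources of approximation, namely the Baxter-type truncation error at order $b$ (controlled by Theorem \ref{lem_phibound}) and the smoothness gap between $\phi_{n+1,j}$ and its continuous-time surrogate $\phi_j(\cdot)$ (controlled by Theorem \ref{thm_locallynonzero}), and verify that the choice $b=O(n^{(1+\epsilon)/\tau})$ balances the two so that $A^2$ remains the binding term.
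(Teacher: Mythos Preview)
Your proposal is correct and follows essentially the same route as the paper: both invoke the orthogonality of the best-linear-prediction residual to reduce the claim to bounding $\mathbb{E}(\widehat{x}_{n+1}-\widehat{x}^b_{n+1})^2$, decompose the difference into a low-lag coefficient gap and a tail sum, and appeal to Theorems \ref{lem_phibound} and \ref{thm_locallynonzero}.

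Two minor refinements in your write-up are worth noting. First, you explicitly isolate the intercept piece $A=\phi_{n+1,0}-\phi_0(1)$, which the paper's expansion (\ref{eq_expandifference}) omits; your bookkeeping here is cleaner, and indeed the rate $n^{-2+5(1+\epsilon)/\tau}$ in the statement is exactly the one dictated by the intercept bound in Theorem \ref{thm_locallynonzero}. Second, for the slope block $B$ you bound $\mathbb{E}B^2$ via $\|\Sigma_b\|_{\mathrm{op}}\|\bm{v}\|^2$, picking up only a factor $b$, whereas the paper's more elementary Cauchy--Schwarz bound $(\sum_j|\bm v_j|\,\|x_{n+1-j}\|_2)^2$ yields a factor $b^2$ and already lands at $n^{-2+5(1+\epsilon)/\tau}$ without the intercept. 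Either way the final rate is the same, so these are cosmetic differences rather than a genuinely different argument.
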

}
{
Theorem \ref{thm_prediction} states that the estimator (\ref{eq_forecast}) is an asymptotic optimal one-step ahead forecast {since $\tau>5+\varpi$  and $0<\epsilon \leq \varpi/10$. }


}


\begin{rem}
In the present paper, we focus on one-step ahead prediction.  However, our results can be easily extended to $h$-step ahead prediction for $h \leq h_0,$ where $h_0 \in \mathbb{N}$ is a fixed constant.  We briefly discuss such extension. For general non-stationary time series,  we denote by $\widehat{x}_{i,h}$ the $h$-step ahead best linear prediction of $x_i,$ i.e.
\begin{equation}\label{eq_hstepahead}
\widehat{x}_{i,h}=\phi_{i0,h}+\sum_{j=h}^{i-1} \phi_{ij,h} x_{i-j}, \ i \geq b+h. 
\end{equation}

For locally  stationary time series,  under Assumptions \ref{phy_generalts}, \ref{assu_pdc},  \ref{assum_local} and \ref{assu_smmothness}, it is easy to see that Theorem \ref{thm_locallynonzero} holds true when we replace $\phi_{ij}, \phi_{j}(\cdot)$ with $\phi_{ij,h}, \phi_{j,h}(\cdot).$ Further, (\ref{eq_choleskylocal})  holds true with $\widetilde{\bm{\phi}}^b(i/n)$ replaced with $$\widetilde{\bm{\phi}}^b_{h}(i/n):=(\phi_{1,h}(i/n), \cdots, \phi_{b,h}(i/n))=\widetilde{\Omega}_{i,h}^b \widetilde{\bm{\gamma}}_{i,h}^b,$$ where 
\begin{equation*}
\widetilde{\Omega}_{i,h}^b=[\text{Cov}(\widetilde{\bm{x}}_{i,h}, \widetilde{\bm{x}}_{i,h})]^{-1}, \ \widetilde{\bm{\gamma}}_{i,h}=\text{Cov}(\widetilde{\bm{x}}_{i,h}, \widetilde{x}_i), \  \widetilde{\bm{x}}_{i,h}=(x_{i-h}, \cdots, x_{i-h-b+1}) \in \mathbb{R}^b. 
\end{equation*} 

 For the $h$-step ahead prediction,  we use   
\begin{equation}\label{eq_hhhhh}
\widehat{x}_{n+h}^b=\phi_{0,h}(1)+\sum_{j=h}^{b} \phi_{j,h}(1) x_{n+1-j}, \ n \geq b+h. 
\end{equation} 
Finally, Theorem \ref{thm_prediction} holds true when we place $x_{n+1}, \widehat{x}_{n+1}^b$ with $x_{n+h}, \widehat{x}_{n+h}^b.$
\end{rem}  
%

\subsection{Sieve estimation of AR coefficients and MSE of forecast}\label{subsec:arcoeff}
In this section, we propose a global and adaptive nonparametric sieve method to estimate the coefficient functions $\phi_{j}(\cdot)$ $j=0,\cdots, b$ and the associated MSE of forecast. 
Specifically,  since $\phi_j(t) \in C^d([0,1]),$ we employ the sieve method to approximate it via a finite and diverging term basis expansion.  By \cite[Section 2.3]{CXH}, we have that
\begin{equation}\label{eq_phiform}
\phi_j(\frac{i}{n})=\sum_{k=1}^c a_{jk} \alpha_k(\frac{i}{n})+O(c^{-d}), \ 0 \leq j \leq b, \ i>b,
\end{equation}
where $\{\alpha_k(t)\}$ are some pre-chosen basis functions on $[0,1]$ and $c$ is the number of basis functions which is of the order
\begin{equation}\label{eq_defnc}
c=O(n^{\alpha_1}), \ 0<\alpha_1<1 \ \text{is some given constant whose value is determined by } d.  
\end{equation} 

In light of (\ref{eq_phiform}),  we need to estimate $a_{jk}.$ For $i>b,$ by (\ref{eq_choleskylocal}), write
\begin{equation}\label{eq_choeq}
x_i=\sum_{j=0}^b \sum_{k=1}^c a_{jk} z_{kj}+\epsilon_i+O_{\mathbb{P}}(n^{-1+5(1+\epsilon)/(2\tau)}+bc^{-d}), 
\end{equation}
where $z_{kj} \equiv z_{kj}(i/n):=\alpha_k(i/n) x_{i-j}$ for $j \geq 1$ and $z_{k0}=\alpha_k(i/n).$ {By (\ref{eq_choeq}), we estimate all the $a_{jk}'s$ using one ordinary least squares (OLS) regression with diverging number of predictors. In particular, we write all $a_{jk}, \ j=0,1,2 \cdots, b, \ k=1,2,\cdots, c$ as a vector $\bm{\beta} \in \mathbb{R}^{(b+1)c},$  then the OLS estimator for $\bm{\beta}$ can be written as $\widehat{\bm{\beta}}=(Y^* Y)^{-1}Y^* \bm{x}$, where $\bm{x}=(x_{b+1}, \cdots, x_n)^* \in \mathbb{R}^{n-b}$ and $Y$ is the design matrix.  After estimating $a_{jk}'s,$ $\phi_j(i/n)$ is estimated using (\ref{eq_phiform}). Specifically, 
\begin{equation}\label{eq_phijest}
\widehat{\phi}_j(\frac{i}{n})=\widehat{\bm{\beta}}^*\mathbb{B}_j(\frac{i}{n}),
\end{equation} 
where $\mathbb{B}_j(i/n):=\mathbb{B}_{j,b}(i/n) \in \mathbb{R}^{(b+1)c}$  has $(b+1)$ blocks and the $j$-th block is $\mathbf{B}(\frac{i}{n}), \ j=0,1,2,\cdots,b,$ and zeros otherwise. Next, we provide an example to list some commonly used basis functions.  We also refer to \cite[Section 2.3]{CXH} for a more detailed discussion.  
 
 }


\begin{exam} \label{exam_basis}
(1). Normalized Fourier basis.  For $x \in [0,1],$ consider the following trigonometric polynomials 
\begin{equation*}
\Big\{1, \sqrt{2} \cos(2 k \pi x), \ \sqrt{2} \sin (2 k \pi x), \cdots \Big\}, k \in \mathbb{N}.
\end{equation*}
{We note that the classical trigonometric basis function is well suited for approximating periodic functions on $[0, 1]$. }

(2). Normalized Legendre polynomials \cite{BWbook}. The Legendre polynomial of degree $n$ can be obtained using Rodrigue's formula
\begin{equation*}
P_n(x)=\frac{1}{2^n n!} \frac{d^n}{dx^n} (x^2-1)^n,\ -1 \leq x \leq 1.
\end{equation*}
In this paper, we use the normalized Legendre polynomial 
\begin{equation*}
P_n^*(x)=
\begin{cases}
1, & n=0; \\
\sqrt{\frac{2n+1}{2}}P_n(2x-1), &, n>0.
\end{cases}
\end{equation*}
The coefficients of  the Legendre  polynomials can be obtained using the R package \texttt{mpoly} {and hence  they are easy to implement in R. 

(3).  Daubechies orthogonal wavelet \cite{ ID98,ID92}. For $N \in \mathbb{N},$ a Daubechies (mother) wavelet of class $D-N$ is a function $\psi \in L^2(\mathbb{R})$ defined by 
\begin{equation*}
\psi(x):=\sqrt{2} \sum_{k=1}^{2N-1} (-1)^k h_{2N-1-k} \varphi(2x-k),
\end{equation*}
where $h_0,h_1,\cdots,h_{2N-1} \in \mathbb{R}$ are the constant (high pass) filter coefficients satisfying the conditions
$\sum_{k=0}^{N-1} h_{2k}=\frac{1}{\sqrt{2}}=\sum_{k=0}^{N-1} h_{2k+1},$
as well as, for $l=0,1,\cdots,N-1$
\begin{equation*}
\sum_{k=2l}^{2N-1+2l} h_k h_{k-2l}=
\begin{cases}
1, & l =0 ,\\
0, & l \neq 0.
\end{cases}
\end{equation*} 
And $\varphi(x)$ is the scaling (father) wavelet function is supported on $[0,2N-1)$ and satisfies the recursion equation
$\varphi(x)=\sqrt{2} \sum_{k=0}^{2N-1} h_k \varphi(2x-k),$ as well as the normalization $\int_{\mathbb{R}} \varphi(x) dx=1$ and
$ \int_{\mathbb{R}} \varphi(2x-k) \varphi(2x-l)dx=0, \ k \neq l.$
Note that the filter coefficients can be efficiently computed as listed in \cite{ID92}.  The order $N$, on the one hand, decides the support of our wavelet; on the other hand, provides the regularity condition in the sense that
\begin{equation*}
\int_{\mathbb{R}} x^j \psi(x)dx=0, \ j=0,\cdots,N, \  \text{where} \ N \geq d. 
\end{equation*}
We will employ Daubechies wavelet with a sufficiently high order when forecasting in our simulations and data analysis. The basis functions can be either generated using the library \texttt{PyWavelets} in Python \footnote{For visualization for the families of Daubechies wavelet functions, we refer to \url{http://wavelets.pybytes.com}, where the library \texttt{PyWavelets} is also introduced there.} or the \texttt{wavefun} in the \texttt{Wavelet Toolbox} of Matlab. In the present paper, to construct a sequence of orthogonal wavelet, we will follow the dyadic construction of \cite{ID98}. For a given $J_n$ and $J_0,$ we will consider the following periodized wavelets on $[0,1]$
\begin{equation}\label{eq_constructone}
\Big\{ \varphi_{J_0 k}(x), \ 0 \leq k \leq 2^{J_0}-1; 
\psi_{jk}(x), \ J_0 \leq j \leq J_n-1, 0 \leq k \leq 2^{j}-1  \Big\},\ \mbox{ where}
\end{equation} 
\begin{equation*}
\varphi_{J_0 k}(x)=2^{J_0/2} \sum_{l \in \mathbb{Z}} \varphi(2^{J_0}x+2^{J_0}l-k) , \
\psi_{j k}(x)=2^{j/2} \sum_{l \in \mathbb{Z}} \psi(2^{j}x+2^{j}l-k),
\end{equation*}
or, equivalently \cite{MR1085487}
\begin{equation}\label{eq_meyerorthogonal}
\Big\{ \varphi_{J_n k}(x), \  0 \leq k \leq  2^{J_n-1} \Big\}.
\end{equation}
}
\end{exam}

In light of (\ref{eq_phijest}), with the estimates $\widehat{\phi}_j(\cdot), j=0,1,2,\cdots,b,$ we forecast $x_{n+1}$ using
\begin{equation*}
\widehat{\mathsf{x}}_{n+1}^b=\widehat{\phi}_0(1)+\sum_{j=1}^b \widehat{\mathsf{\phi}}_j(1) x_{n+1-j}.
\end{equation*}

Next, we shall discuss the estimation of the MSE of forecast, i.e., the variance of $\{\epsilon_{n+1}\}$.  Denote the series of estimated forecast error $\{\widehat{\epsilon}^b_i\}$ by
$\widehat{\epsilon}_i^b:=x_i-\sum_{j=1}^b \widehat{\mathsf{\phi}}_j(i/n) x_{i-j}.$
Let the variance of $\{\epsilon_i\}$ be $\{\sigma_i^2\}.$  Similar to \cite[Lemma 3.11]{DZ1}, we find that there exists a smooth function $\varphi(\cdot) \in C^d([0,1])$ such that for some constant $C>0,$
\begin{equation}\label{eq_vvvffff}
\sup_{i>b} |\sigma_i^2-\varphi(\frac{i}{n})| \leq C n^{-1+5(1+\epsilon)/(2\tau)}. 
\end{equation}
Therefore, we shall use sieve expansion to estimate the smooth function $\varphi(\cdot).$ Similar to (\ref{eq_phiform}), we have 
\begin{equation*}
\varphi(\frac{i}{n})=\sum_{k=1}^c \mathfrak{b}_k \alpha_k(\frac{i}{n})+O(c^{-d}). 
\end{equation*}
Furthermore,  by equation (3.14) of \cite{DZ1}, write 
\begin{equation*}
(\widehat{\epsilon}_i^b)^2=\sum_{k=1}^c \mathfrak{b}_k \alpha_k(\frac{i}{n})+\nu_i+O_{\mathbb{P}}\Big(n^{(1+\epsilon)/\tau}(\zeta_c \frac{\log n}{\sqrt{n}}+n^{-d \alpha_1})\Big), \ i \geq b,
\end{equation*}
where $\{\nu_i\}$ is a centred sequence of locally stationary time series satisfying Assumptions \ref{phy_generalts}, \ref{assu_pdc}, \ref{assum_local}, \ref{assu_smoothtrend} and \ref{assu_smmothness}. Consequently, we can use an OLS with $(\widehat{\epsilon}_i^b)^2$ being the response and $\alpha_k(\frac{i}{n})$, $k=1,\cdots, c$ being the explanatory variables  to estimate $\mathfrak{b}_k,$ which are denoted as $\widehat{\mathfrak{b}}_k, k=1,2,\cdots,c.$  
Finally, we estimate 
$$\widehat{\varphi}(i/n)=\sum_{k=1}^c \widehat{\mathfrak{b}}_k \alpha_k(i/n).$$

We are now ready to state the asymptotic behaviour of the estimated coefficients and MSE of (\ref{eq_forecast}). Denote $\zeta_c=\sup_i|| \mathbf{B}(i/n) ||.$ Recall (\ref{eq_defnc}). Note that for the commonly used sieve basis functions, we have $\zeta_c=O(n^{\alpha_1^*}),$ where $\alpha_1^*=\frac{1}{2} \alpha_1$ for the Fourier basis and orthogonal wavelet, and $\alpha_1^*=\alpha_1$ for Legendre polynomial.

\begin{thm}\label{thm_finalresult}
Suppose Assumptions \ref{phy_generalts}, \ref{assu_pdc}, \ref{assum_local},
\ref{assu_smmothness} and \ref{assu_basis} hold true. Then we have 
\begin{equation*}
\sup_{i>b, 0 \leq j \leq b} \left| \phi_j(\frac{i}{n})-\widehat{\phi}_j(\frac{i}{n}) \right|=O_{\mathbb{P}}\left( \zeta_c \sqrt{\frac{\log n}{n}}+n^{-d\alpha_1} \right). 
\end{equation*}
Furthermore, by using the Fourier basis or orthogonal wavelets, if $c=O((n/(\log n))^{1/(2d+1)})$, we have  
\begin{equation}\label{eq:rate}
\sup_{i>b,0 \leq
 j \leq b} \left| \phi_j(\frac{i}{n})-\widehat{\phi}_j(\frac{i}{n}) \right|=O_{\mathbb{P}}\left(\left(n/(\log n) \right)^{-d/(2d+1)}\right). 
\end{equation}
Specifically, when $i=n,$ we obtain the uniform convergence rate for the coefficients of (\ref{eq_forecast}), Moreover, for the MSE of forecast, we have 
\begin{equation*}
\left |{\sigma_{n+1}^2}-\widehat{\varphi}(1)\right |=O_{\mathbb{P}}\Big(n^{(1+\epsilon)/\tau}(\zeta_c \frac{\log n}{\sqrt{n}}+n^{-d \alpha_1})+n^{-1+5(1+\epsilon)/(2\tau)}\Big). 
\end{equation*}
\end{thm}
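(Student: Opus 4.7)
The plan is to treat this as a uniform convergence problem for a sieve-based OLS estimator, with three sources of error to track: the sieve approximation bias, the stochastic error from the OLS, and the AR-approximation error from Theorem \ref{thm_locallynonzero}. First I would rewrite the regression (\ref{eq_choeq}) in matrix form $\bm{x} = Y\bm{\beta} + \bm{\epsilon} + \bm{r}$, where $Y$ is the $(n-b)\times (b+1)c$ design matrix whose rows are the stacked vectors $\mathbb{B}_j(i/n)x_{i-j}$ and $\bm{r}$ collects the AR/sieve approximation error of order $n^{-1+5(1+\epsilon)/(2\tau)} + bc^{-d}$. Then $\widehat{\bm{\beta}} - \bm{\beta} = (Y^*Y)^{-1} Y^*(\bm{\epsilon} + \bm{r})$, and since $\widehat{\phi}_j(i/n) - \phi_j(i/n) = \mathbb{B}_j(i/n)^*(\widehat{\bm{\beta}} - \bm{\beta}) + O(c^{-d})$, the problem reduces to bounding $|\mathbb{B}_j(i/n)^*(Y^*Y)^{-1} Y^*\bm{\epsilon}|$ uniformly in $i$ and $j$.

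Two building blocks are needed. First, a high-probability lower bound on $\lambda_{\min}(Y^*Y/n)$: using the stochastic Lipschitz property (\ref{assum_lip}), $\mathbb{E}[Y^*Y/n]$ can be identified with a block Gram matrix that inherits positive-definiteness from Assumption \ref{assu_pdc} via Proposition \ref{prop_pdc}; a concentration inequality for quadratic forms of locally stationary processes (of the type developed in the authors' earlier work) then yields $\|Y^*Y/n - \mathbb{E}[Y^*Y/n]\| = o_{\mathbb{P}}(1)$ provided $bc$ grows sufficiently slowly. Second, a maximal bound $\sup_{i,j}|\mathbb{B}_j(i/n)^*(Y^*Y)^{-1}Y^*\bm{\epsilon}| = O_{\mathbb{P}}(\zeta_c\sqrt{\log n/n})$: once the spectrum of $Y^*Y/n$ is controlled, this reduces to handling sums of the form $n^{-1}\sum_i \alpha_k(i/n) x_{i-j}\epsilon_i$ uniformly in $(j,k)$, which are non-stationary martingale-difference-type sums; a Nagaev-type moment inequality combined with a union bound over the $O(bc)$ coordinates yields the $\sqrt{\log n}$ factor, while the prefactor $\zeta_c = \sup_i \|\mathbf{B}(i/n)\|$ emerges from $\|\mathbb{B}_j(i/n)\|\le \zeta_c$. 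Combining with the bias $c^{-d}$ from (\ref{eq_phiform}) and transferring $\bm{r}$ through $(Y^*Y)^{-1}Y^*$ gives the first display.

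The minimax rate (\ref{eq:rate}) then follows by bias-variance balancing. For Fourier and Daubechies wavelet bases one has $\zeta_c = O(\sqrt{c})$, so the error becomes $\sqrt{c\log n/n} + c^{-d}$; minimizing gives $c \asymp (n/\log n)^{1/(2d+1)}$ and the classical Stone rate $(n/\log n)^{-d/(2d+1)}$ from \cite{stone1982}. One must also verify that the AR-approximation remainder $n^{-1+5(1+\epsilon)/(2\tau)}$ is dominated by this rate, which is precisely where the condition $\tau > 5+\varpi$ enters. For the MSE part I would apply exactly the same blueprint to the secondary sieve regression displayed just before the theorem: the only differences are an enlarged noise term of order $n^{(1+\epsilon)/\tau}(\zeta_c\log n/\sqrt{n} + n^{-d\alpha_1})$—with the extra $n^{(1+\epsilon)/\tau}$ factor accounting for the $b=O(n^{(1+\epsilon)/\tau})$ lags contributing to $\widehat{\epsilon}_i^b - \epsilon_i$—and the (\ref{eq_vvvffff}) approximation between $\sigma_{n+1}^2$ and $\varphi(1)$. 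Evaluating at $i=n$ and collecting terms gives the stated rate.

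The main obstacle will be the maximal inequality $\sup_{i,j}|\mathbb{B}_j(i/n)^*(Y^*Y)^{-1}Y^*\bm{\epsilon}|$ in the non-stationary setting. The combination of diverging dimension $(b+1)c$, dependent and heteroscedastic errors $\{\sigma_i^2\}$ absorbed into $\bm{\epsilon}$, and the need for a genuinely uniform bound over $i$ (to obtain only a logarithmic inflation rather than a polynomial loss) means one cannot directly invoke off-the-shelf empirical-process tools; instead one must couple Nagaev's moment inequality with an $m$-dependent approximation of $\{x_i, \epsilon_i\}$ derived from Assumption \ref{phy_generalts} and carefully track how $c$, $b$, and $\tau$ interact in the resulting tail bound so that the $\zeta_c\sqrt{\log n/n}$ rate survives the union bound.
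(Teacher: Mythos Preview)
Your proposal is correct and matches the approach the paper takes: the paper's own proof simply defers to \cite[Theorem~3.7, Corollary~3.8 and Theorem~3.2]{DZ1}, and what you have outlined is precisely the sieve-OLS argument carried out there (matrix decomposition of $\widehat{\bm{\beta}}-\bm{\beta}$, spectral control of $Y^*Y/n$, a maximal/Nagaev-type bound on $n^{-1}Y^*\bm{\epsilon}$, and bias--variance balancing for the Stone rate). One small simplification: you propose to derive the lower bound on $\lambda_{\min}(\mathbb{E}[Y^*Y/n])$ from UPDC via Proposition~\ref{prop_pdc}, but the paper (and \cite{DZ1}) instead \emph{assumes} it directly as part~(1) of Assumption~\ref{assu_basis}, and the concentration $\|Y^*Y/n-\overline{\Sigma}\|=O_{\mathbb{P}}(\zeta_c\log n/\sqrt{n})$ is already recorded as Lemma~\ref{lem_coll}(2); invoking these shortens the argument without changing its substance.
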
 
The rate in (\ref{eq:rate}) achieves  globally minimax rate for nonparametric function estimation in the sense of \cite{stone1982}.   


{

}

\subsection{Sieve estimation of the PACF of locally stationary time series}\label{sec:estpacf}
In this section, we discuss the estimation of the PACF for locally stationary time series using the method of sieves.  
By a discussion similar to (\ref{eq_phiform}), we have that
\begin{equation}\label{eq_rhocxh}
\rho_j(\frac{i}{n})=\sum_{k=1}^c d_{jk} \alpha_k(\frac{i}{n})+O(c^{-d}), \ i>b. 
\end{equation}
Therefore, similar to (\ref{eq_choeq}), we can write
\begin{equation*}
x_i=\sum_{s=0}^j \sum_{k=1}^c d_{sk} z_{ks}+\epsilon_{i,j}+O_{\mathbb{P}}(n^{-1+5(1+\epsilon)/(2\tau)}+jc^{-d}), \ i>b,
\end{equation*}
where $\epsilon_{i,j}$ is defined in (\ref{eq_jorderlocal}), $z_{ks}(i/n)=\alpha_k(i/n)x_{i-s}$ for $j \geq 1$ and $z_{k0}=\alpha_k(i/n).$ 
 Let $Y_j$ be the $(n-b) \times (j+1)c$ rectangular matrix whose $i$-th row is $\bm{x}_{i,j} \otimes \mathbf{B}(l/n),$  where $\bm{x}_{i,j}=(1,x_{i-1}, \cdots, x_{i-j}) \in \mathbb{R}^{j+1},$ $\mathbf{B}(i/n)=(\alpha_1(i/n), \cdots, \alpha_c(i/n)) \in \mathbb{R}^c$. We put all the $d_{sk}'s,  s=0,1,\cdots, j, \ k=1,2,\cdots, c$ into a vector $\bm{d}_j \in \mathbb{R}^{(j+1)c}.$ The OLS estimator for $\bm{d}_j$ can be written as $\bm{\widehat{d}}_j=(Y_j^* Y_j)^{-1} Y_j^* \bm{x},$ where $\bm{x}=(x_{b+1}, \cdots, x_n)^* \in \mathbb{R}^{n-b}.$ 
 
 Denote the sieve estimator of PACF as 
\begin{equation*}
\widehat{\rho}_j(\frac{i}{n})=\sum_{k=1}^c \widehat{d}_{jk} \alpha_k(\frac{i}{n}). 
\end{equation*}
%
We have the following results. 

%
%
\begin{thm} \label{thm_consistency}
Suppose Assumptions \ref{phy_generalts}, \ref{assu_pdc}, \ref{assum_local}, \ref{assu_smoothtrend},
\ref{assu_smmothness} and \ref{assu_basis} hold true. Then we have 
\begin{equation*}
\sup_{i>b,j \leq b} \left| \rho_j(\frac{i}{n})-\widehat{\rho}_j(\frac{i}{n}) \right|=O_{\mathbb{P}}\left( \zeta_c \sqrt{\frac{\log n}{n}}+n^{-d\alpha_1} \right). 
\end{equation*}
Furthermore, by using the Fourier basis and orthogonal wavelets, if $c=O((n/(\log n))^{1/(2d+1)})$, we have  
\begin{equation*}
\sup_{i>b,j \leq b} \left| \rho_j(\frac{i}{n})-\widehat{\rho}_j(\frac{i}{n}) \right|=O_{\mathbb{P}}\left(\left(n/(\log n) \right)^{-d/(2d+1)}\right). 
\end{equation*}
\end{thm}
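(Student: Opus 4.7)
The plan is to essentially replicate, for each fixed $j\le b$, the argument that was used to establish Theorem~\ref{thm_finalresult} (which corresponds to the case $j=b$), and then to absorb the union bound over $j$ into the $\sqrt{\log n/n}$ factor. I would begin with the bias-variance decomposition
\begin{equation*}
\widehat{\rho}_j(\tfrac{i}{n})-\rho_j(\tfrac{i}{n})
=\bigl[\widehat{\rho}_j(\tfrac{i}{n})-\rho_j^{c}(\tfrac{i}{n})\bigr]+\bigl[\rho_j^{c}(\tfrac{i}{n})-\rho_j(\tfrac{i}{n})\bigr],
\qquad \rho_j^{c}(t):=\sum_{k=1}^{c} d_{jk}\alpha_k(t).
\end{equation*}
The sieve bias term is $O(c^{-d})=O(n^{-d\alpha_1})$ uniformly in $i>b$ and $j\le b$, because Lemma~\ref{lem_pacf} guarantees $\rho_j(\cdot)\in C^d([0,1])$ with Hölder-type constants bounded uniformly in $j$, and (\ref{eq_rhocxh}) supplies the basis-approximation bound.

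For the variance term, I would turn to the normal equations for the $j$th-order sieve regression. Writing $\widehat{\bm d}_j-\bm d_j=(Y_j^*Y_j)^{-1}Y_j^*\bm r_j$, the residual vector $\bm r_j$ decomposes into three pieces: the $j$th-order prediction error $\epsilon_{i,j}$ from (\ref{eq_jorderlocal}); the smoothing error of order $O_{\mathbb P}(n^{-1+5(1+\epsilon)/(2\tau)})$ incurred by replacing $\phi_{is,j}$ with $\phi_{s,j}(i/n)$, analogously to Corollary~\ref{cor_localboundmse}; and the sieve-approximation error $O(jc^{-d})$. The next step is to show that the smallest eigenvalue of $(n-b)^{-1}Y_j^*Y_j$ is bounded below with high probability uniformly in $j\le b$. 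This reduces, via concentration of sample covariance matrices for short-range-dependent locally stationary series, to the population version; the latter inherits its lower bound from UPDC (Assumption~\ref{assu_pdc}) combined with the bounded-below Gram matrix of $\{\alpha_k\}$, paralleling the argument already performed in the proof of Theorem~\ref{thm_finalresult}.

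With the design matrix under control, I would then bound $\|Y_j^*\bm r_j\|_\infty/(n-b)$. Each coordinate is a weighted sum of products of $\alpha_k(i/n)$, a lag $x_{i-s}$, and the three residual pieces. Physical-dependence concentration (as in the proof of Theorem~\ref{thm_finalresult}, originating from the $m$-dependent approximation technique referenced in the introduction) gives a bound of order $\zeta_c\sqrt{\log n/n}$ per coordinate with high probability. To make the bound uniform over $j=1,\dots,b$ and over $i>b$, a union bound suffices: $b=O(n^{(1+\epsilon)/\tau})$ is polynomial in $n$, so $\log b\lesssim \log n$ and is absorbed into $\sqrt{\log n/n}$; uniformity in $i$ follows from $\sup_i\|\mathbf{B}(i/n)\|\le \zeta_c$. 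Multiplying by $\mathbf{B}(i/n)$ and combining with the bias term yields the first assertion, and substituting $c=O((n/\log n)^{1/(2d+1)})$ with $\zeta_c=O(\sqrt c)$ balances bias and variance to produce the minimax rate $(n/\log n)^{-d/(2d+1)}$.

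The main obstacle is the uniformity over $j\le b$ rather than the fixed-$j$ analysis. Unlike Theorem~\ref{thm_finalresult}, which deals with a single regression of order $b$, here the residual process $\{\epsilon_{i,j}\}$ and the design matrix $Y_j$ both vary with $j$; so one needs a version of the concentration argument that is uniform in $j$. Fortunately, the physical dependence measure of $\{\epsilon_{i,j}\}$ inherits the polynomial decay from $\{x_i\}$ with constants that are uniform in $j$ (as already exploited in Lemma~\ref{lem_pacftruncation}), and the population covariance of the $j$-lag predictors is uniformly well conditioned by UPDC. These uniform bounds are precisely what let the union bound go through cleanly, so that the proof amounts, modulo this uniformity, to a direct adaptation of the sieve-regression analysis underlying Theorem~\ref{thm_finalresult}.
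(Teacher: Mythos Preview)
Your proposal is correct and takes essentially the same approach as the paper: both reduce the result to the sieve-regression analysis underlying Theorem~\ref{thm_finalresult}, with the only change being that the design matrix $Y_j$ has $(j+1)c$ columns rather than $(b+1)c$. The paper's own proof is a one-line remark that ``as $j\le b$, the discussion can be applied to our case directly,'' whereas you spell out the bias--variance decomposition and explicitly handle the uniformity over $j\le b$ via a union bound (using $\log b\lesssim\log n$); this extra care is warranted but does not constitute a different argument.
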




{

}



%

\section{Test of stability for locally stationary time series prediction}\label{sec:test}


As we mentioned in the introduction, it is of great practical importance to check whether the optimal forecasting coefficients are indeed time-varying. In this section, we propose a test of stability for the best linear prediction
 based on the estimated AR coefficients from Section \ref{subsec:arcoeff}.


As $\phi_0(\cdot)$ is related to the trend of the time series and in some real applications the trend is removed before performing forecasting, we shall first test the stability of $\phi_j(\cdot)$,  $1 \leq j \leq b.$ Furthermore, as we observe from Theorem \ref{lem_reducedtest} below, test of stability for $\phi_j(\cdot)$,  $1 \leq j \leq b$ is asymptotically equivalent to testing correlation stationarity of $x_i$ which may be of separate interest. Formally, the null hypothesis we would like to test is
\begin{equation}\label{eq_testnonzero1}
\mathbf{H}_{0}: \ \phi_j(\cdot) \ \text{is a  constant function} \ \text{on} \ [0,1], \ j=1,2,\cdots,b.   
\end{equation} 
%
%
%
Before providing the test statistic for $\mathbf{H}_0$, we shall first investigate the interesting insight that $\mathbf{H}_0$ is asymptotically equivalent to testing whether $\{x_i\}_{i=1}^n$ is correlation stationary, i.e., there exists some function $\varrho$ such that
\begin{equation}\label{defn_ho}
\mathbf{H}^{\prime }_{0}: \ \operatorname{Corr} (x_i, x_j)=\varrho(|i-j|),
\end{equation}
where $\text{Corr}(x_i, x_j)$ stands for the correlation between $x_i$ and $x_j.$
We formalize the above statements in Theorem \ref{lem_reducedtest}. 
\begin{thm}\label{lem_reducedtest}
Suppose Assumptions \ref{phy_generalts}, \ref{assu_pdc}, \ref{assum_local} and \ref{assu_smmothness} hold true. 
For  $j \leq b$, on the one hand, when $\mathbf{H}_0^{\prime}$ holds true, then $\phi_j(\frac{i}{n})=\phi_j$ which is independent of time. On the other hand, when $\phi_j(\frac{i}{n})=\phi_j, j=1,2,\cdots, b$, there exists some smooth function $\varrho,$ such that 
\begin{equation*}
\operatorname{Corr} (x_i, x_{i+j})=\varrho_{|i-j|}+O(n^{-1+(1+\epsilon)/\tau}).
\end{equation*}   
\end{thm}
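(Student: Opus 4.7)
The plan is to reduce both directions to an analysis of the Yule-Walker equation $\widetilde{\bm{\phi}}^b(t) = R(t)^{-1}\bm{r}(t)$ for the frozen stationary process $\{G(t,\mathcal{F}_i)\}_{i\in\mathbb{Z}}$, where $R(t)$ is the $b\times b$ symmetric Toeplitz matrix with entries $\rho(t,|j-k|):=\gamma(t,|j-k|)/\gamma(t,0)$ and $\bm{r}(t):=(\rho(t,1),\ldots,\rho(t,b))^*$. The key observation is that the variance factor $\gamma(t,0)$ cancels out of the Yule-Walker solution, so $\widetilde{\bm{\phi}}^b(t)$ depends on $t$ only through the autocorrelation function $\rho(t,\cdot)$. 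Combining the stochastic Lipschitz continuity from Assumption \ref{assum_local} with UPDC (which keeps $\gamma(t,0)$ bounded below) yields the elementary bridge
\begin{equation*}
\operatorname{Corr}(x_i,x_{i+j}) \;=\; \rho(i/n,\,j) \;+\; O(j/n),
\end{equation*}
which translates statements about the observed correlations into statements about $\rho(\cdot,\cdot)$, and vice versa.

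For direction one, $\mathbf{H}_0'$ gives $\operatorname{Corr}(x_i,x_{i+j})=\varrho_j$ exactly for all admissible $(i,n)$, so the bridge forces $|\rho(i/n,j)-\varrho_j|=O(j/n)$. Since $\rho(\cdot,j)\in C^d([0,1])$ by Assumption \ref{assu_smmothness} and the grid $\{i/n\}$ is dense in $[0,1]$, letting $n\to\infty$ along sequences with $i/n\to t$ yields the exact identity $\rho(t,j)=\varrho_j$ for every $t\in[0,1]$ and every $j$. Thus $R(t)$ and $\bm{r}(t)$ are both independent of $t$, so $\widetilde{\bm{\phi}}^b(t)$ is constant and $\phi_j(t)\equiv\phi_j$ for $1\le j\le b$. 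For direction two, with $\bm{\phi}=(\phi_1,\ldots,\phi_b)^*$ fixed, I would rewrite $R(t)\bm{\phi}=\bm{r}(t)$ as a linear system $A(\bm{\phi})\bm{\rho}(t)=\bm{v}(\bm{\phi})$ in the unknown vector $\bm{\rho}(t):=(\rho(t,1),\ldots,\rho(t,b))^*$, which is possible because $R(t)$ is affine in $\rho(t,1),\ldots,\rho(t,b-1)$ and $\bm{r}(t)$ equals $\bm{\rho}(t)$. If $A(\bm{\phi})$ is invertible, then $\bm{\rho}(t)\equiv\varrho$ for a constant sequence, and feeding this back into the bridge together with $j\le b=O(n^{(1+\epsilon)/\tau})$ delivers $\operatorname{Corr}(x_i,x_{i+j})=\varrho_{|i-j|}+O(n^{-1+(1+\epsilon)/\tau})$.

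The main technical obstacle is justifying the invertibility of $A(\bm{\phi})$ in direction two. UPDC yields positive definiteness of $R(t)$ and hence unique solvability of the forward YW map $\bm{\rho}\mapsto R(\bm{\rho})^{-1}\bm{r}(\bm{\rho})$ that sends a correlation sequence to its AR($b$) coefficient vector; what is needed is the injectivity of this forward map, so that fixing $\bm{\phi}$ pins $\bm{\rho}(t)$ down uniquely. The cleanest route is to observe that the correlation function of a causal stationary AR($b$) process is uniquely reconstructed from its coefficients via the spectral density $\sigma^2\bigl|1-\sum_{k=1}^b\phi_k e^{-\mathrm{i}k\omega}\bigr|^{-2}$ together with the normalization $\rho(0)=1$; the required causality and stability of the approximating AR($b$) then follows from UPDC combined with the Baxter-type bound in Theorem \ref{lem_phibound}, which ensures that $\bm{\phi}$ lies in the admissible stability region.
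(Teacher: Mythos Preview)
Your proposal is correct, and for the first direction ($\mathbf{H}_0'$ implies constancy of $\phi_j$) it is essentially the paper's argument: both reduce to the correlation form of the Yule--Walker equation $\widetilde{\bm{\phi}}^b(t)=R(t)^{-1}\bm{r}(t)$ and observe that the variance factor cancels. You are more explicit than the paper about the bridge from observed to frozen correlations and about the density/continuity step that upgrades $\rho(i/n,j)=\varrho_j+O(j/n)$ to the exact identity $\rho(t,j)\equiv\varrho_j$.

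For the second direction the two routes differ. The paper expands $x_{i+j}$ via the AR representation, takes covariances with $x_i$, and arrives at $\operatorname{Corr}(x_i,x_{i+j})\approx\sum_{k}\phi_k\,\eta_{|k-j|}$ with $\eta_l:=\rho((i+j)/n,l)$; it then simply \emph{sets} $\varrho_j=\sum_k\phi_k\,\eta_{|k-j|}$ without explicitly arguing why this expression is independent of $i$. Your approach---recasting $R(t)\bm{\phi}=\bm{r}(t)$ as a linear system $A(\bm{\phi})\bm{\rho}(t)=\bm{v}(\bm{\phi})$ and invoking uniqueness of the stationary autocorrelation of a causal AR($b$) process---supplies precisely this missing ingredient: injectivity of the Yule--Walker map forces $\bm{\rho}(t)$ (hence $\eta$) to be time-constant, after which the bridge with $j\le b=O(n^{(1+\epsilon)/\tau})$ yields the claimed $O(n^{-1+(1+\epsilon)/\tau})$ error. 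This makes your argument more self-contained than the paper's. One small refinement: causality of the fitted AR($b$) polynomial follows already from positive definiteness of $R(t)$ via the Levinson--Durbin recursion (all reflection coefficients have modulus strictly below one), so UPDC by itself suffices here; the Baxter-type decay bound of Theorem~\ref{lem_phibound} is not actually needed for this step.
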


In some cases, practitioners and researchers may be interested  in testing whether all optimal forecast coefficient functions $\phi_j(\cdot)$, $j=0,1,\cdots, b$ do not change over time. That is equivalent to testing whether both the trend and the correlation structure of the time series stay constant over time. 
In this case, one will test 
\begin{equation}\label{eq_nonzeromeangeneral}
\mathbf{H}_{0,g}: \ \phi_j(\cdot) \ \text{is a constant function}  \ \text{on} \ [0,1], \ j=0,1,\cdots, b. 
\end{equation}

\subsection{Test statistics and asymptotic normality}
In this subsection, we propose test statistics for  $\mathbf{H}_0$ in (\ref{eq_testnonzero1}) and $\mathbf{H}_{0,g}$ in (\ref{eq_nonzeromeangeneral}).   Recall (\ref{eq_phijest}). We focus our discussion on $\mathbf{H}_0$ and briefly discuss $\mathbf{H}_{0,g}$ in the end. To test $\mathbf{H}_0,$ we use the following statistic 
\begin{equation} \label{eq_defnntori}
T=\sum_{j=1}^b \int_0^1(\widehat{\phi}_j(t)-\overline{\widehat{\phi}}_j)^2 dt,  \  \overline{\widehat{\phi}}_j=\int_0^1 \widehat{\phi}_j(t) dt.
\end{equation}  
{\color{green}  

}

Next, we show that the study of the statistic $T$ reduces to the investigation of a weighted quadratic form of high dimensional non-stationary time series. Denote 
$\bar{B}=\int_0^1 \mathbf{B}(t)dt$ and $W=I-\bar{B} \bar{B}^*.$ Let $\mathbf{W}$ be a $(b+1)c \times (b+1)c$ dimensional diagonal block matrix with diagonal block $W$ and $\mathbf{I}_{bc}$ be a $(b+1)c \times (b+1)c$ dimensional diagonal matrix whose non-zero entries are ones and  in the lower $bc \times bc$ major part. Recall $\bm{x}_i=(1, x_{i-1}, \cdots, x_{i-b})^*.$ Let $p=(b+1)c.$ We denote the sequence of $p$-dimensional vectors $\bm{z}_i$ by
\begin{equation}\label{eq_zikronecker}
\bm{z}_i=\bm{h}_i \otimes \mathbf{B}(\frac{i}{n}) \in \mathbb{R}^p, \ \bm{h}_i=\bm{x}_i \epsilon_i.
\end{equation}
\begin{lem}\label{lem_reducedquadratic}
Denote
$\mathbf{X}=\frac{1}{\sqrt{n}} \sum_{i=b+1}^n \bm{z}_i^*,$ 
and  the $p \times p$ matrix $\Gamma$ by
$
\Gamma=\overline{\Sigma}^{-1} \mathbf{I}_{bc} \mathbf{W} \overline{\Sigma}^{-1}, 
$
where 
\begin{equation}\label{eq_overlinesigma}
\overline{\Sigma}=\begin{pmatrix}
\mathbf{I}_c & \bm{0} \\
\bm{0} & \Sigma
\end{pmatrix}, \ \Sigma=\int_0^1 \Sigma^b(t) \otimes (\mathbf{B}(t) \mathbf{B}^*(t))dt.
\end{equation}
Then under the assumptions of Theorem \ref{thm_finalresult}, we have 
\begin{equation}\label{eq_tfinalexpress}
nT=\mathbf{X}^* \Gamma \mathbf{X}+o_{\mathbb{P}}(1). 
\end{equation}
\end{lem}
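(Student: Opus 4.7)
}

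The plan is to express $T$ as a quadratic form in the OLS coefficient vector $\widehat{\bm{\beta}}$, eliminate the ``signal'' part under $H_0$, and then linearize $\widehat{\bm{\beta}}-\bm{\beta}$ using the standard $(Y^{*}Y)^{-1}Y^{*}\bm{\epsilon}$ decomposition together with the AR and sieve approximations already established in Sections~\ref{sec:preliminary}--\ref{sec:predict}. Throughout the proof I would work under $\mathbf{H}_{0}$, where $\phi_{j}(t)\equiv\phi_{j}$ is constant for $j=1,\dots ,b$.

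First, for each $j\ge 1$ I would write $\widehat{\phi}_{j}(t)-\overline{\widehat{\phi}}_{j}=\widehat{\bm{\beta}}^{*}\bigl(\mathbb{B}_{j}(t)-\overline{\mathbb{B}}_{j}\bigr)$ with $\overline{\mathbb{B}}_{j}=\int_{0}^{1}\mathbb{B}_{j}(s)ds$. Using orthonormality of the basis, $\int_{0}^{1}\mathbf{B}(t)\mathbf{B}^{*}(t)dt=\mathbf{I}_{c}$, so after integrating and summing the blocks over $j=1,\dots,b$ one gets
\begin{equation*}
T=\widehat{\bm{\beta}}^{*}\mathbf{I}_{bc}\mathbf{W}\widehat{\bm{\beta}}.
\end{equation*}
Let $\bm{\beta}$ denote the sieve coefficient vector representing the true (constant) forecast functions; for instance $\bm{\beta}$ has only its ``constant-basis'' entries nonzero when $\alpha_{1}\equiv 1$. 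Because $\bm{\beta}^{*}\mathbb{B}_{j}(t)=\phi_{j}$ is independent of $t$, the block structure gives $\mathbf{W}\mathbf{I}_{bc}\bm{\beta}=0$ up to an $O(c^{-d})$ sieve error. Hence writing $\widehat{\bm{\beta}}=\bm{\beta}+(\widehat{\bm{\beta}}-\bm{\beta})$, both the ``pure signal'' term $\bm{\beta}^{*}\mathbf{I}_{bc}\mathbf{W}\bm{\beta}$ and the cross term are negligible, reducing the problem to $T=(\widehat{\bm{\beta}}-\bm{\beta})^{*}\mathbf{I}_{bc}\mathbf{W}(\widehat{\bm{\beta}}-\bm{\beta})+o_{\mathbb{P}}(1/n)$.

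Next I would linearize $\widehat{\bm{\beta}}-\bm{\beta}$. Since the $i$-th row of $Y$ is $(\bm{x}_{i}\otimes \mathbf{B}(i/n))^{*}$, the OLS identity gives $\widehat{\bm{\beta}}-\bm{\beta}=(Y^{*}Y)^{-1}Y^{*}(\bm{x}-Y\bm{\beta})$. By Theorem~\ref{thm_locallynonzero}, Corollary~\ref{cor_localboundmse}, and the sieve approximation (\ref{eq_choeq}), under $\mathbf{H}_{0}$ we have $\bm{x}-Y\bm{\beta}=\bm{\epsilon}+\bm{r}$ with $\|\bm{r}\|_{\infty}=O_{\mathbb{P}}(n^{-1+5(1+\epsilon)/(2\tau)}+c^{-d})$. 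A direct computation then shows
\begin{equation*}
Y^{*}\bm{\epsilon}=\sum_{i=b+1}^{n}\bm{x}_{i}\epsilon_{i}\otimes\mathbf{B}(i/n)=\sum_{i=b+1}^{n}\bm{z}_{i},
\end{equation*}
so $Y^{*}\bm{\epsilon}/\sqrt{n}=\mathbf{X}$. The core ``averaging'' step is to show that, in an appropriate operator sense, $n^{-1}Y^{*}Y\to \overline{\Sigma}$, which I would prove by expressing $n^{-1}Y^{*}Y=n^{-1}\sum_{i}(\bm{x}_{i}\bm{x}_{i}^{*})\otimes(\mathbf{B}(i/n)\mathbf{B}^{*}(i/n))$, approximating $\mathbb{E}(\bm{x}_{i}\bm{x}_{i}^{*})$ by the smooth locally stationary surrogate $\Sigma^{b}(i/n)$ (with $\mathbf{I}_{c}$ in the constant block since $x_{i}$ is centered), applying a Riemann-sum approximation to pass to $\int \Sigma^{b}(t)\otimes \mathbf{B}(t)\mathbf{B}^{*}(t)dt$, and finally using a concentration bound for quadratic forms of locally stationary time series to control the stochastic fluctuations. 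This gives
\begin{equation*}
\sqrt{n}(\widehat{\bm{\beta}}-\bm{\beta})=\overline{\Sigma}^{-1}\mathbf{X}+o_{\mathbb{P}}(1),
\end{equation*}
after which the identity $nT=\mathbf{X}^{*}\overline{\Sigma}^{-1}\mathbf{I}_{bc}\mathbf{W}\overline{\Sigma}^{-1}\mathbf{X}+o_{\mathbb{P}}(1)=\mathbf{X}^{*}\Gamma\mathbf{X}+o_{\mathbb{P}}(1)$ follows.

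The main obstacle I anticipate is the operator-norm control of $n^{-1}Y^{*}Y-\overline{\Sigma}$ and of the inverse, because both $b$ and $c$ diverge with $n$, so dimension $(b+1)c$ grows. One cannot use a fixed-dimensional LLN; instead one needs a quantitative bound of the form $\|n^{-1}Y^{*}Y-\overline{\Sigma}\|=o_{\mathbb{P}}(1)$ in the spectral norm sense, which in turn rests on (i) uniform bounds on $\zeta_{c}$, (ii) the UPDC-driven lower bound on $\lambda_{\min}(\overline{\Sigma})$ so that $\overline{\Sigma}^{-1}$ stays bounded, and (iii) polynomial decay of the physical dependence measure to get sharp moment inequalities for the weighted sums $\sum_{i}\bm{x}_{i}\bm{x}_{i}^{*}\otimes \mathbf{B}(i/n)\mathbf{B}^{*}(i/n)$. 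The remaining routine book-keeping is to check that every cross term between $(\widehat{\bm{\beta}}-\bm{\beta})$ and the AR/sieve residuals $\bm{r}$ is $o_{\mathbb{P}}(1/n)$ after being multiplied by $\mathbf{I}_{bc}\mathbf{W}$ (bounded in norm), which follows from the Cauchy--Schwarz inequality together with the rates already established in Theorem~\ref{thm_finalresult}.
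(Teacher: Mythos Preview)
Your proposal is correct and follows essentially the same route as the paper: reduce $T$ to a quadratic form in $\widehat{\bm{\beta}}-\bm{\beta}$ under $\mathbf{H}_0$, use the OLS identity $\widehat{\bm{\beta}}-\bm{\beta}=(Y^{*}Y/n)^{-1}(Y^{*}\bm{\epsilon}/n)$, and replace $(Y^{*}Y/n)^{-1}$ by $\overline{\Sigma}^{-1}$. Two small remarks: (i) the paper shortcuts your signal/cross-term elimination by noting directly that under $\mathbf{H}_0$, $\widehat{\phi}_j(t)-\overline{\widehat{\phi}}_j=(\widehat{\phi}_j(t)-\phi_j(t))-\int_0^1(\widehat{\phi}_j(s)-\phi_j(s))\,ds$, which immediately writes $T$ in terms of $(\bm{\beta}_j-\widehat{\bm{\beta}}_j)^{*}W(\bm{\beta}_j-\widehat{\bm{\beta}}_j)$ plus the $O(bc^{-d})$ sieve error; (ii) the operator-norm bound you flag as the main obstacle is already available as part~(2) of Lemma~\ref{lem_coll}, namely $\|\widehat{\Sigma}-\overline{\Sigma}\|=O_{\mathbb{P}}(\zeta_c\log n/\sqrt{n})$, so this step does not require new work.
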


From Lemma \ref{lem_reducedquadratic}, we find that it suffices to establish the distribution of $\mathbf{X}^* \Gamma \mathbf{X}.$ To this end, we shall establish a Gaussian approximation result for this quadratic form of high-dimensional non-stationary time series $\{\bm{z}_i\}$. 
Note that when $i>b,$ $\bm{h}_i$ is a locally stationary time series 
\begin{equation}\label{eq_defnh}
\bm{h}_i=\mathbf{U}(\frac{i}{n}, \mathcal{F}_i), \ i>b. 
\end{equation}

Choose a sequence of centred Gaussian random vectors $\{\bm{v}_i\}_{i=b+1}^n$ which preserves the covariance structure of $\{\bm{h}_i\}_{i=b+1}^n$ and define 
$\bm{g}_i=\bm{v}_i \otimes \mathbf{B}(\frac{i}{n}).$
Denote 
\begin{equation*}
\mathbf{Y}=\frac{1}{\sqrt{n}} \sum_{i=b+1}^n \bm{g}_i^*.
\end{equation*}
We will control the Kolmogorov distance 
\begin{equation}\label{eq_kolomogorv}
\mathcal{K}(\mathbf{X}, \mathbf{Y})=\sup_{x \in \mathbb{R}} \left| \mathbb{P} \Big( \mathbf{X}^*\Gamma \mathbf{X} \leq x \Big)-\mathbb{P} \Big( \mathbf{Y}^*\Gamma \mathbf{Y} \leq x \Big) \right|.
\end{equation}
Denote 
\begin{equation*}
\xi_c:=\sup_{1\leq i \leq c} \sup_{t \in [0,1]} \Big |\alpha_i(t) \Big |.
\end{equation*}
It is notable that $\xi_c$ can be well controlled for commonly used basis functions. For instance, $\xi_c=O(1)$ for the Fourier basis and the normalized orthogonal polynomials; $\xi_c=O(\sqrt{c})$ for orthogonal wavelet.  The following theorem provides a bound for $\mathcal{K}(\mathbf{X}, \mathbf{Y}).$
\begin{thm} \label{thm_gaussian} Under Assumptions \ref{phy_generalts}, \ref{assu_pdc},  \ref{assum_local}, \ref{assu_smoothtrend}, \ref{assu_smmothness} and \ref{assu_basis}, there exists a constant $C>0$ and a small constant $\delta>0,$ such that 
\begin{align*}
\mathcal{K}(\mathbf{X}, \mathbf{Y}) \leq C \Big(\frac{\xi_c}{M_z}+& p^{\frac{7}{4}} n^{-1/2}M_z^3 M^2+M^{\frac{-q\tau+1}{2q+1}}\xi_c^{(q+1)/(2q+1)} p^{\frac{q+1}{2q+1}} n^{\frac{\delta q}{2q+1}}  \\
& +p^{1/4}\xi_c^{1/2}\left(   (p\xi_c M^{-\tau+1}+p\xi_c^q M_z^{-(q-2)}) \right)^{1/2} +n^{-\delta} \Big),
\end{align*}
where $M_z, M \rightarrow \infty$ when $n \rightarrow \infty.$
\end{thm}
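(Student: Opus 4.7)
\textbf{Proof plan for Theorem \ref{thm_gaussian}.} The plan is to bound $\mathcal{K}(\mathbf{X}, \mathbf{Y})$ by successively approximating the non-stationary, heavy-tailed, long-memory quadratic form $\mathbf{X}^*\Gamma\mathbf{X}$ by one to which the convex-set Gaussian approximation results of \cite{CF, FX} apply. The three approximation steps are: \emph{(i)} truncate the entries of $\bm{h}_i$ at level $M_z$; \emph{(ii)} replace the truncated sequence by its $M$-dependent coupled version constructed from the physical dependence measure in Assumption \ref{phy_generalts}; and \emph{(iii)} apply a convex-set Gaussian approximation to the resulting bounded, finitely-dependent sum, then reverse steps (i)--(ii) on the Gaussian side. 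The key observation making step (iii) feasible is that, since $\Gamma$ is positive semi-definite (by its definition $\Gamma = \overline{\Sigma}^{-1}\mathbf{I}_{bc}\mathbf{W}\overline{\Sigma}^{-1}$ arranged symmetrically via the structure of $\mathbf{W}$), the level set $\{u\in\mathbb{R}^p:u^*\Gamma u\le x\}$ is an ellipsoid, hence convex.

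First I would define $\widetilde{\bm{h}}_i=\bm{h}_i\mathbf{1}(\|\bm{h}_i\|_\infty\le M_z)$, with its centered, $M$-dependent coupled version $\widetilde{\bm{h}}_i^{M}$ obtained by replacing the innovations $\eta_{i-M-1},\eta_{i-M-2},\ldots$ with an independent copy, and then set $\widetilde{\bm{z}}_i^{M}=\widetilde{\bm{h}}_i^{M}\otimes \mathbf{B}(i/n)$ and $\widetilde{\mathbf{X}}^M=n^{-1/2}\sum_i(\widetilde{\bm{z}}_i^{M})^*$. The error introduced by truncation is controlled by $\|\bm{h}_i\|_q^q/M_z^{q-1}$ times the supremum $\xi_c$ of the basis, which produces the $\xi_c/M_z$ and $p\xi_c^q M_z^{-(q-2)}$ type terms; the $M$-dependent coupling error is controlled by $\delta^g(M,q)\lesssim M^{-\tau}$ from Assumption \ref{phy_generalts}, giving rise to the $p\xi_c M^{-\tau+1}$ term and, after propagation through the quadratic form via the elementary inequality $|u^*\Gamma u-v^*\Gamma v|\le \|\Gamma\|\,\|u-v\|(\|u\|+\|v\|)$ together with a tail bound on $\|\widetilde{\mathbf{X}}^{M}\|$, to the $p^{1/4}\xi_c^{1/2}(\cdot)^{1/2}$ factor in the bound.

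Next, on the truncated $M$-dependent sum I would invoke the high-dimensional Gaussian approximation over convex sets of \cite{CF, FX}: grouping consecutive blocks of length $\asymp n^{\delta}$, the resulting block sums are independent; each block sum is bounded in $\ell_\infty$ by $O(Mn^{\delta}M_z\xi_c)$, and the total number of summands is $O(n^{1-\delta})$, yielding a Berry--Esseen-type rate for convex sets of order $p^{7/4}n^{-1/2}(M_z M)^{3}$ after optimizing the convex-set rate in dimension $p$; this produces the $p^{7/4}n^{-1/2}M_z^3M^2$ term. The residual $M^{(-q\tau+1)/(2q+1)}\xi_c^{(q+1)/(2q+1)}p^{(q+1)/(2q+1)}n^{\delta q/(2q+1)}$ term arises from optimally balancing the $M$-dependent coupling error against the block approximation, and the additive $n^{-\delta}$ term comes from anti-concentration of the Gaussian quadratic form near any level $x$, used to convert the integrated (smoothed indicator) distance produced by the CF/FX inequality back into the Kolmogorov distance via a Gaussian anti-concentration bound for ellipsoids.

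The main obstacle, in my view, is step (iii): standard CF/FX-type convex-set Gaussian approximations are stated for sums of independent random vectors and degrade rapidly in dimension $p$, whereas here $p=(b+1)c$ is diverging and the summands are non-stationary and temporally dependent. Handling this cleanly requires a big-block/small-block decomposition whose block length $M$ must simultaneously (a) be large enough for $\delta^g(M,q)$ to kill the coupling error after propagation through the quadratic form, (b) be small enough that $n/M$ independent block sums drive the CF/FX rate, and (c) interact correctly with the truncation level $M_z$ and the basis bound $\xi_c$. Finding the parametrization that yields exactly the five terms in the stated bound---and in particular deriving the anti-concentration for $\mathbf{Y}^*\Gamma\mathbf{Y}$ with sharp enough dependence on $p$ to absorb the $n^{-\delta}$ conversion---will be the technical heart of the argument.
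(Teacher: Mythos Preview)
Your overall architecture---truncation, $M$-dependent approximation, convex-set Gaussian approximation, and anti-concentration to pass back to the Kolmogorov distance---matches the paper's proof, and you correctly identify that convexity of the sublevel sets $\{u:u^*\Gamma u\le x\}$ is what makes the CF/FX machinery applicable. Two differences are worth noting.

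First, the paper reverses your order of operations: it forms the $M$-dependent approximation \emph{first}, via the conditional expectation $\bm{h}_i^M=\mathbb{E}(\bm{h}_i\mid\eta_{i-M},\ldots,\eta_i)$ rather than by coupling, and only \emph{then} truncates the entries of $\bm{z}_i^M$ at level $M_z$. This is a minor point; either order can be made to work, and conditional expectation versus coupling for the $M$-dependent step are interchangeable here.

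Second, and more substantively, the big-block/small-block decomposition you flag as the ``main obstacle'' is not needed. The Fang--Xu result the paper uses (their Lemma~S.5, from \cite{FX}) is stated for sums admitting a \emph{local dependence} structure: for each $i$ there are neighborhoods $N_i\subset N_{ij}\subset N_{ijk}$ of cardinality at most $n_1,n_2,n_3$ such that removing $X_{N_i}$ (resp.\ $X_{N_{ij}}$, $X_{N_{ijk}}$) leaves something independent of $X_i$ (resp.\ $\{X_i,X_j\}$, $\{X_i,X_j,X_k\}$). An $M$-dependent sequence satisfies this with $n_1=n_2=n_3=O(M)$, and the lemma gives directly
\[
\mathcal{K}(\overline{\mathbf{X}}^M,\widetilde{\mathbf{Y}}^M)\le C\,p^{1/4}\,n\,\|\Sigma^{-1/2}\|^3\,\beta^3\,n_1\Big(n_2+\frac{n_3}{p}\Big)
\]
with $\beta=O(\sqrt{p}M_z/\sqrt{n})$, which is precisely the $p^{7/4}n^{-1/2}M_z^3M^2$ term. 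No blocking, and hence no extra $n^\delta$ factor in the block-sum bound, is introduced at this stage. The $M^{(-q\tau+1)/(2q+1)}\xi_c^{(q+1)/(2q+1)}p^{(q+1)/(2q+1)}n^{\delta q/(2q+1)}$ term instead comes from bounding $\mathcal{K}(\mathbf{X},\mathbf{X}^M)$ separately: one restricts to the high-probability event $\{\max_j|X_j-X_j^M|\le\Delta_M\}$, uses the anti-concentration Lemma~S.6 (a chi-square small-ball bound) to control the increment of $\mathbb{P}(\mathbf{Y}^*\Gamma\mathbf{Y}\le x)$ over an interval of length $O(p\Delta_M\xi_c n^\delta)$, and then optimizes over $\Delta_M$. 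The additive $n^{-\delta}$ is the price of that optimization, not of blocking. So your plan would work, but the paper's route is shorter because the FX lemma already absorbs the $M$-dependence.
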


{As indicated by Theorem \ref{thm_gaussian}, since $\tau>0$ is large, when $\xi_c=O(1)$ and $q>0$ is large enough, we can allow $p=n^{2/7-\delta_1},$ where $\delta_1>0$ is a sufficiently small constant.} Asymptotic normality of $nT$ can be readily derived by the above Gaussian approximation. Denote the long-run covariance matrix for $\{\bm{h}_i\}$ as
\begin{equation}\label{eq_longrunh}
\Omega(t)=\sum_{j=-\infty}^{\infty} \text{Cov} \Big(\mathbf{U}(t, \mathcal{F}_0), \mathbf{U}(t, \mathcal{F}_j) \Big),
\end{equation}
and the aggregated covariance matrix as 
$\Omega=\int_0^1 \Omega(t) \otimes \Big( \mathbf{B}(t) \mathbf{B}^*(t) \Big)dt.$ 
$\Omega$ can be regarded as the integrated long-run covariance matrix of $\{\bm{h}_i\}.$ For $k \in \mathbb{N}, $ we define
\begin{equation} \label{eq_f}
f_k=\Big( \text{Tr}[ \Omega^{1/2} \Gamma \Omega^{1/2} ]^k \Big)^{1/k}.
\end{equation}
\begin{prop} \label{prop_normal} Under Assumptions \ref{phy_generalts}, \ref{assu_pdc},  \ref{assum_local}, \ref{assu_smoothtrend}, \ref{assu_smmothness} and \ref{assu_basis}, when $\mathbf{H}_0$ holds true, we have 
\begin{equation*}
\frac{nT-f_1}{f_2} \Rightarrow \mathcal{N}(0,2).
\end{equation*}
\end{prop}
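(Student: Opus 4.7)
The plan is to chain together the reductions already prepared in the excerpt and then turn a high-dimensional Gaussian quadratic form into a weighted sum of independent $\chi^2_1$ variables. First, under $\mathbf{H}_0$, Lemma \ref{lem_reducedquadratic} already gives
\begin{equation*}
nT=\mathbf{X}^*\Gamma\mathbf{X}+o_{\mathbb{P}}(1),
\end{equation*}
so it suffices to study the asymptotic law of $\mathbf{X}^*\Gamma\mathbf{X}$. The next step is to replace $\mathbf{X}$ by its Gaussian analogue $\mathbf{Y}$: Theorem \ref{thm_gaussian} bounds $\mathcal{K}(\mathbf{X},\mathbf{Y})$, and I would check that under the stated choices $c=O((n/\log n)^{1/(2d+1)})$, $p=(b+1)c$, $\xi_c$ as in Assumption \ref{assu_basis}, together with a suitable choice of the truncation levels $M,M_z\to\infty$ (say polynomial in $n$ with small exponents), every term in the bound is $o(1)$. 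This reduces the problem to establishing convergence of $(\mathbf{Y}^*\Gamma\mathbf{Y}-f_1)/f_2$ to $\mathcal{N}(0,2)$.

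The second phase is to compute the first two moments of the Gaussian quadratic form. Write $\Omega_n:=\operatorname{Cov}(\mathbf{Y})=\frac{1}{n}\sum_{b<i,j\le n}\operatorname{Cov}(\bm{g}_i,\bm{g}_j)$; using the local stationarity of $\{\bm{h}_i\}$ together with the short-range dependence (Assumption \ref{phy_generalts}) and Riemann sum approximation, I would show $\Omega_n=\Omega+o(1)$ entrywise and in operator norm uniformly in $n$, where $\Omega$ is the aggregated long-run covariance defined before \eqref{eq_f}. Writing $\mathbf{Y}=\Omega_n^{1/2}\mathbf{Z}$ with $\mathbf{Z}\sim\mathcal{N}(0,I_p)$, we get $\mathbf{Y}^*\Gamma\mathbf{Y}=\mathbf{Z}^*\Omega_n^{1/2}\Gamma\Omega_n^{1/2}\mathbf{Z}$. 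Denoting the eigenvalues of $\Omega_n^{1/2}\Gamma\Omega_n^{1/2}$ by $\{\lambda_{n,k}\}$ and those of $\Omega^{1/2}\Gamma\Omega^{1/2}$ by $\{\lambda_k\}$, perturbation bounds (Weyl and Hoffman–Wielandt) together with $\Omega_n\to\Omega$ give $\sum_k\lambda_{n,k}=f_1+o(f_2)$ and $\sum_k\lambda_{n,k}^2=f_2^2(1+o(1))$.

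The third phase is the CLT for weighted chi-squares. Diagonalizing, we have
\begin{equation*}
\mathbf{Y}^*\Gamma\mathbf{Y}-\mathbb{E}[\mathbf{Y}^*\Gamma\mathbf{Y}]=\sum_k \lambda_{n,k}(\chi^2_{k,1}-1),
\end{equation*}
a sum of independent mean-zero random variables with total variance $2\sum_k\lambda_{n,k}^2\sim 2f_2^2$. A Lyapunov/Lindeberg argument applied to this sum yields asymptotic normality provided $\max_k|\lambda_{n,k}|/f_2\to 0$, i.e., no single eigenvalue dominates the Frobenius norm. Dividing by $f_2$ and combining with the mean computation gives $(\mathbf{Y}^*\Gamma\mathbf{Y}-f_1)/f_2\Rightarrow\mathcal{N}(0,2)$, and chaining with Lemma \ref{lem_reducedquadratic} and Theorem \ref{thm_gaussian} finishes the proof, provided one also verifies that $f_2$ is bounded away from zero so that the $o_{\mathbb{P}}(1)$ error from Lemma \ref{lem_reducedquadratic} remains negligible after scaling.

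The main obstacle, I expect, is the verification of the Lindeberg condition $\max_k|\lambda_{n,k}|=o(f_2)$ and the related lower bound on $f_2$. Concretely this means controlling the spectrum of $\Omega^{1/2}\Gamma\Omega^{1/2}$, where $\Gamma=\overline{\Sigma}^{-1}\mathbf{I}_{bc}\mathbf{W}\overline{\Sigma}^{-1}$ has a nontrivial block structure inherited from the sieve design, and $\Omega$ is an integrated long-run covariance. One needs upper bounds on $\|\Omega\|$ and $\|\Gamma\|$ (which follow from UPDC, the physical dependence decay, and boundedness of $\mathbf{B}(t)\mathbf{B}^*(t)$) together with a lower bound on $\operatorname{tr}[(\Omega^{1/2}\Gamma\Omega^{1/2})^2]$ that grows with $p$, so that no single eigenvalue is of order $f_2$. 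The secondary difficulty is bookkeeping: juggling the rate conditions on $b,c,p,M,M_z$ so that all error terms in Theorem \ref{thm_gaussian}, the sieve-approximation error in Lemma \ref{lem_reducedquadratic}, and the perturbation error $\|\Omega_n-\Omega\|$ are simultaneously $o(1)$ relative to $f_2$.
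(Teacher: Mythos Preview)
Your proposal is correct and follows essentially the same route as the paper: reduce to $\mathbf{X}^*\Gamma\mathbf{X}$ via Lemma~\ref{lem_reducedquadratic}, pass to the Gaussian analogue $\mathbf{Y}^*\Gamma\mathbf{Y}$ via Theorem~\ref{thm_gaussian}, diagonalize to a weighted sum of independent $\chi^2_1$'s, and apply Lindeberg's CLT under the condition $d_1/f_2\to 0$. The paper's proof is much more terse than yours; it dispatches the Lindeberg condition in two lines by observing that, from part~(1) of Assumption~\ref{assu_basis} and the submultiplicativity of eigenvalues for products of positive semi-definite matrices, each eigenvalue $d_i$ of $\Omega^{1/2}\Gamma\Omega^{1/2}$ is $O(1)$ while the rank is $r=O(bc)\to\infty$, so $f_2\asymp\sqrt{r}\to\infty$ and $d_1/f_2\to 0$. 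The perturbation step you worry about ($\Omega_n$ versus $\Omega$) is not carried out explicitly in the paper; it is absorbed into the statement and use of Theorem~\ref{thm_gaussian}.
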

{
We now discuss the power of the test under the following local alternative. For a given $\alpha,$
\begin{equation*}
\mathbf{H}_a:   \sum_{j=1}^{\infty} \int_0^1 \Big( \phi_j(t)-\bar{\phi}_j\Big)^2 dt>C_{\alpha} \frac{\sqrt{bc}}{n},
\end{equation*}
where $\bar{\phi}_j=\int_0^1 \phi_j(t)dt$ and 
$C_{\alpha} \equiv C_{\alpha}(n) \rightarrow \infty $  as $n \rightarrow \infty.$ For instance, we can choose
$C_{\alpha}> n^{\kappa} \mathcal{Z}_{1-\alpha}, \ \kappa>0,$ where $\mathcal{Z}_{1-\alpha}$ is the $(1-\alpha)\%$ quantile of the standard Gaussian distribution.

\begin{prop} \label{prop_power} Under Assumptions \ref{phy_generalts}, \ref{assu_pdc}, \ref{assum_local}, \ref{assu_smoothtrend}, \ref{assu_smmothness} and \ref{assu_basis}, when $\mathbf{H}_a$ holds true, we have 
\begin{equation*}
\frac{nT-f_1-n\sum_{j=1}^{\infty} \int_0^1 \Big( \phi_j(t)-\bar{\phi}_j \Big)^2 dt}{f_2} \Rightarrow \mathcal{N}(0,2). 
\end{equation*}
\end{prop}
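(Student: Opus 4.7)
The plan is to decompose the statistic $T$ into a deterministic signal, a stochastic component that matches the null distribution, and a cross term, and then to show the cross term is of lower order. Writing $e_j(t) := \widehat\phi_j(t) - \phi_j(t)$ and $\psi_j(t) := \phi_j(t) - \bar\phi_j$, and noting $\int_0^1 \psi_j(t)dt = 0$, expand
\begin{equation*}
T = \underbrace{\sum_{j=1}^b \int_0^1 \psi_j^2(t)\,dt}_{=:\,T_I} \;+\; \underbrace{2\sum_{j=1}^b \int_0^1 \psi_j(t)\,e_j(t)\,dt}_{=:\,T_{II}} \;+\; \underbrace{\sum_{j=1}^b \int_0^1 (e_j(t)-\bar e_j)^2\,dt}_{=:\,T_{III}}.
\end{equation*}
The goal is to show (i) $nT_I = n\sum_{j=1}^\infty \int \psi_j^2\,dt + o(f_2)$, (ii) $nT_{III} = f_1 + \sqrt{2}\,f_2\,Z + o_p(f_2)$ with $Z \sim \mathcal{N}(0,1)$, and (iii) $nT_{II} = o_p(f_2)$; the sum of these then yields the claim after dividing by $f_2$.

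For the signal term $T_I$, I would invoke the uniform decay bound $|\phi_j(t)| = O(j^{-(\tau-1)/(1+\epsilon)})$ from Theorem \ref{lem_phibound} together with the smoothing of Theorem \ref{thm_locallynonzero}, so that the truncated tail satisfies $n\sum_{j>b} \int \psi_j^2(t)\,dt \le C n \sum_{j>b} j^{-2(\tau-1)/(1+\epsilon)}$, which under $\tau>5+\varpi$ and $b \asymp n^{(1+\epsilon)/\tau}$ is $o(1)=o(f_2)$ since $f_2 \to \infty$. For the null-like stochastic component $T_{III}$, the key observation is that whenever $\phi_j$ is constant one has $e_j - \bar e_j = \widehat\phi_j - \overline{\widehat\phi}_j$; consequently $T_{III}$ has the exact structural form of the test statistic computed under $\mathbf{H}_0$. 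Therefore the sieve representation of $\widehat\phi_j - \phi_j$ developed in the proofs of Theorem \ref{thm_finalresult} and Lemma \ref{lem_reducedquadratic}, combined with the Gaussian approximation in Theorem \ref{thm_gaussian} and Proposition \ref{prop_normal}, yields $nT_{III} = f_1 + \sqrt{2}\,f_2\,Z + o_p(f_2)$ with the same centering and scaling as under the null.

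The main obstacle is the cross term $T_{II}$. Writing $\widehat{\bm\beta} - \bm\beta \approx (Y^*Y)^{-1} Y^*\bm\epsilon$ up to a sieve bias of order $O(c^{-d})$, I would represent
\begin{equation*}
\sum_{j=1}^b \int_0^1 \psi_j(t)\,e_j(t)\,dt \;\approx\; \bm u^*(\widehat{\bm\beta} - \bm\beta), \qquad \bm u_j := \int_0^1 \psi_j(t)\,\mathbb{B}_j(t)\,dt, \qquad \bm u := \sum_{j=1}^b \bm u_j.
\end{equation*}
By orthonormality of the basis $\{\alpha_k\}$ and the block structure of the $\mathbb{B}_j$ one has $\|\bm u\|^2 \le \sum_j \int \psi_j^2\,dt \le D$ with $D := \sum_{j=1}^\infty \int \psi_j^2\,dt$. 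Using the matrix concentration $\|(Y^*Y)/n - \bar\Sigma\| = o_p(1)$ (already employed in the proof of Theorem \ref{thm_finalresult}) together with the integrated long-run variance $\Omega$ governing $Y^*\bm\epsilon$, the variance of $\bm u^*(\widehat{\bm\beta}-\bm\beta)$ is at most $CD/n$, so $nT_{II} = O_p(\sqrt{nD}) + O(n\sqrt{D}\,c^{-d})$. Hence $nT_{II} = o_p(f_2)$ in the regime $nD = o(f_2^2) \asymp o(bc)$ with $c$ of the optimal order from Theorem \ref{thm_finalresult}, which is compatible with the lower bound $nD > C_\alpha\sqrt{bc}$ in $\mathbf{H}_a$. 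The hardest part is to make these variance and bias estimates rigorous \emph{uniformly} over the alternative, which requires combining the physical-dependence decay of Assumption \ref{phy_generalts}, the spectral bounds implied by Assumption \ref{assu_pdc}, and the quadratic-form Gaussian approximation of Theorem \ref{thm_gaussian}; once this is established, summing the three pieces gives $(nT - f_1 - nD)/f_2 \Rightarrow \mathcal{N}(0,2)$.
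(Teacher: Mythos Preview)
Your decomposition $T=T_I+T_{II}+T_{III}$ is exactly the paper's: the paper defines $\mathcal T:=T_{III}$, expands $n\mathcal T = nT - nT_I - nT_{II}$, invokes Proposition \ref{prop_normal} to get $(n\mathcal T-f_1)/f_2\Rightarrow\mathcal N(0,2)$, and then bounds the cross term via the same sieve representation $\bm\beta_j^*\widehat B(\bm\beta_j-\widehat{\bm\beta}_j)$ with $\widehat B=\int_0^1(\mathbf B(t)-\bar B)(\mathbf B(t)-\bar B)^*dt$. So the architecture of your proof matches the paper's.

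The one substantive difference is in how the cross term is controlled. You bound the variance of $\bm u^*(\widehat{\bm\beta}-\bm\beta)$ by $CD/n$ with $D=\sum_j\int\psi_j^2$, giving $nT_{II}=O_p(\sqrt{nD})$, and then you must impose the extra condition $nD=o(bc)$ for this to be $o_p(f_2)$. That condition is \emph{not} part of $\mathbf H_a$: the alternative only lower-bounds $D$, so for a fixed alternative with $D$ bounded away from zero your bound blows up. The paper instead applies Cauchy--Schwarz with $\|\widehat B\|=O(1)$ and the rate on $\|\bm\beta_j-\widehat{\bm\beta}_j\|$ from Theorem \ref{thm_finalresult} to get a per-$j$ bound $O_{\mathbb P}(\sqrt{\log n}\,(bc)^{1/4}/n)$ that does not scale with $D$, which is what removes the restriction you flagged. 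Replacing your variance argument by this direct bound would close the gap and align your proof with the paper's.
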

}

The above proposition states that under  $\mathbf{H}_a,$ the power of our test will asymptotically be 1, i.e., 
\begin{equation*}
\mathbb{P} \Big( \left| \frac{nT-f_1}{f_2} \right| \geq \sqrt{2} \mathcal{Z}_{1-\alpha} \Big) \rightarrow 1, \ n \rightarrow \infty.
\end{equation*}

{Finally, we briefly discuss how to test $\mathbf{H}_{0,g}$. 
To test $\mathbf{H}_{0,g}$ in (\ref{eq_nonzeromeangeneral}), we shall use 
\begin{equation*} 
T_g=\sum_{j=0}^b \int_0^1(\widehat{\phi}_j(t)-\overline{\widehat{\phi}}_j)^2 dt,  \  \overline{\widehat{\phi}}_j=\int_0^1 \widehat{\phi}_j(t) dt.
\end{equation*} 
It can be further written as 
\begin{equation*}
nT_g=\mathbf{X}^* \Gamma_g \mathbf{X}+o_{\mathbb{P}}(1), \ \Gamma_g=\overline{\Sigma}^{-1}  \mathbf{W} \overline{\Sigma}^{-1},
\end{equation*}
where we recall (\ref{eq_overlinesigma}). 
%
By Theorem \ref{thm_gaussian}, we can prove similar results to $n T_g$ as in Propositions \ref{prop_normal} and \ref{prop_power}.  We shall omit further details. 
}

\subsection{Robust bootstrap procedure}\label{sec_bootstrapping}
It is difficult to directly use Proposition \ref{prop_normal} to carry out the stability test since the quantities $f_1$ and $f_2$ are hard to estimate. Additionally, the high-dimensional Gaussian quadratic form  $\mathbf{Y}^*\Gamma \mathbf{Y}$ converges at a slow rate.  To overcome these difficulties, we extend the strategy of \cite{ZZ1} and use a high-dimensional mulitplier bootstrap statistic to mimic the distributions of $nT$ and $nT_g$. We focus on the discussion of $nT.$ Recall that
\begin{equation} \label{eq_defnnt}
nT= \Big( \frac{1}{\sqrt{n}} \sum_{i=b+1}^n \bm{z}_i^* \Big) \Gamma \Big( \frac{1}{\sqrt{n}} \sum_{i=b+1}^n \bm{z}_i \Big).
\end{equation}
Recall (\ref{eq_zikronecker}).  Denote
\begin{equation}\label{eq_defnphi}
\Phi=\frac{1}{\sqrt{n-m-b+1}\sqrt{m}} \sum_{i=b+1}^{n-m} \Big[ \Big(\sum_{j=i}^{i+m} \bm{h}_i \Big) \otimes \Big( \mathbf{B}(\frac{i}{n}) \Big) \Big]  R_i,
\end{equation}
where $R_i, i=b+1, \cdots, n-m$ are i.i.d. standard Gaussian random variables. Denote the bootstrap quadratic form 
\begin{equation}\label{eq_defnmathcalt}
\mathcal{T}:=\Phi^* \widehat{\Gamma} \Phi,
\end{equation}
where $\widehat{\Gamma}:= \widehat{\Sigma}^{-1} \mathbf{I}_{bc} \mathbf{W} \widehat{\Sigma}^{-1}$ with $\widehat{\Sigma}=\frac{1}{n} Y^* Y.$ Since $\{\epsilon_i\}$ cannot be observed directly, we shall use the residuals
\begin{equation*}
\widehat{\epsilon}^b_i:=x_i-\widehat{\phi}_0(\frac{i}{n})-\sum_{j=1}^b \widehat{\phi}_j(\frac{i}{n})x_{i-j}.
\end{equation*}
Accordingly, define $\bm{\widehat{h}}_i,\widehat{\Phi}$ and $\widehat{\mathcal T}$ by replacing $\epsilon_i$ in $\bm{h}_i, \Phi$ and $\mathcal T$ with $\widehat\epsilon_i^b$, respectively.


We claim that $\widehat{\mathcal{T}}$ mimics the distribution of $nT$ asymptotically. Before formally introduce our results, we first introduce the following assumption, which states that $p$ diverges in a moderate way.  Let $m=O(n^{\alpha_2}).$ .

\begin{assu} \label{assu_parameter} 
For $\alpha_2 \in (0,1),$ we assume that 
\begin{equation*}
\frac{1}{\tau}+2\alpha_1^*+\frac{\alpha_2}{2}-\frac{1}{2} \alpha_1<\frac{1}{2}, \ \frac{1}{\tau}+2\alpha_1^*-\alpha_2-\frac{1}{2} \alpha_1<0.
\end{equation*}
Furthermore, we assume that Assumption \ref{assum_local} holds with $q>4.$
\end{assu} 
\begin{rem}
The above assumption is equivalent to 
\begin{equation}\label{eq_assumimply}
\sqrt{b}\zeta_c^2 c^{-1/2} \Big( \sqrt{\frac{m}{n}}+\frac{1}{m} \Big)=o(1).
\end{equation}
Therefore, when $\alpha_1^*=\frac{1}{2} \alpha_1,$ the above assumption can be read as 
\begin{equation*}
\sqrt{p}   \Big( \sqrt{\frac{m}{n}}+\frac{1}{m} \Big)=o(1).
\end{equation*}
Hence, in the optimal case when $m=O(n^{1/3}),$ Assumption \ref{assu_parameter} allows one to choose $p \ll n^{2/3}.$ 
\end{rem}

\begin{thm}\label{thm_bootstrapping}
Under Assumptions \ref{phy_generalts}, \ref{assu_pdc}, \ref{assum_local}, \ref{assu_smoothtrend}, \ref{assu_smmothness}, \ref{assu_smoothtrend}, \ref{assu_basis} and \ref{assu_parameter}, when $\mathbf{H}_0$ holds true,  there exists some set $\mathcal{A}_n$ such that $\mathbb{P}(\mathcal{A}_n)=1-o(1)$ and under the event $\mathcal{A}_n,$ we have that conditional on the data $\{x_i\}_{i=b+1}^n,$
\begin{equation*}
\sup_{x \in \mathbb{R}} \left| \mathbb{P} \left( \frac{\widehat{\mathcal{T}}-f_1}{\sqrt{2}f_2} \leq x \right)-\mathbb{P} \left( \Psi \leq x \right) \right|=o(1),
\end{equation*} 
where $\Psi \sim \mathcal{N}(0,1)$ is a standard normal random variable. 
\end{thm}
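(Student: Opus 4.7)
The plan is to reduce the conditional distribution of $\widehat{\mathcal T}$ given the data to that of a high-dimensional Gaussian quadratic form whose kernel is close to $\Omega^{1/2}\Gamma\Omega^{1/2}$, and then apply a Lyapunov-type CLT for such forms. The structural reduction is immediate: because $R_{b+1},\ldots,R_{n-m}$ are i.i.d.\ $\mathcal N(0,1)$ independent of the data, $\widehat\Phi$ conditional on $\{x_i\}$ is centred Gaussian with conditional covariance
$$
\widehat\Sigma_\Phi := \frac{1}{(n-m-b+1)m}\sum_{i=b+1}^{n-m} \Bigl(\sum_{j=i}^{i+m}\widehat{\bm h}_j\Bigr)\Bigl(\sum_{j=i}^{i+m}\widehat{\bm h}_j\Bigr)^{*}\otimes\bigl(\mathbf B(i/n)\mathbf B^{*}(i/n)\bigr),
$$
and writing $\widehat\Phi=\widehat\Sigma_\Phi^{1/2}\bm{Z}$ with $\bm{Z}\sim\mathcal N(\bm 0,\mathbf I_p)$ independent of the data gives $\widehat{\mathcal T}=\bm{Z}^{*}\widehat\Sigma_\Phi^{1/2}\widehat\Gamma\widehat\Sigma_\Phi^{1/2}\bm{Z}$. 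It therefore suffices to exhibit an event $\mathcal A_n$ with $\mathbb P(\mathcal A_n)\to 1$ on which $\widehat\Sigma_\Phi^{1/2}\widehat\Gamma\widehat\Sigma_\Phi^{1/2}\approx\Omega^{1/2}\Gamma\Omega^{1/2}$ in an appropriate quantitative sense, and to combine this with a CLT for $\bm{Z}^{*}\Omega^{1/2}\Gamma\Omega^{1/2}\bm{Z}$.

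I would control three approximation errors separately. \emph{(a) Residual replacement}: Theorem \ref{thm_finalresult} yields $\sup_{j,i}|\widehat\phi_j(i/n)-\phi_j(i/n)|=O_{\mathbb P}(\zeta_c\sqrt{\log n/n}+n^{-d\alpha_1})$, so $\widehat{\bm h}_j-\bm h_j$ is uniformly small; plugging the difference into the double sum and applying Cauchy--Schwarz with moment bounds on $x_i$ and $\epsilon_i$ gives $\|\widehat\Sigma_\Phi-\Sigma_\Phi\|_{\mathrm{op}}=o_{\mathbb P}(1)$, where $\Sigma_\Phi$ is built from the true $\bm h_j$. \emph{(b) Convergence of $\Sigma_\Phi$ to $\Omega$}: decompose $\Sigma_\Phi-\Omega=(\Sigma_\Phi-\mathbb E\Sigma_\Phi)+(\mathbb E\Sigma_\Phi-\Omega)$. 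For the deterministic bias, the short-range decay of $\delta^g(\cdot,\cdot)$ together with local stationarity gives $m^{-1}\operatorname{Cov}\bigl(\sum_{j=i}^{i+m}\bm h_j\bigr)-\Omega(i/n)=O(1/m)$, and Lipschitz continuity of $\Omega(t)$ with Riemann summation yields $\|\mathbb E\Sigma_\Phi-\Omega\|=O(m/n+1/m)$. For the stochastic part, the physical-dependence concentration inequality already invoked in the proof of Theorem \ref{thm_gaussian} applied to the matrix-valued $m$-dependent block averages gives a Frobenius-norm error of order $\sqrt{pm/n}$ up to logarithmic factors. Combining (a) and (b) with Assumption \ref{assu_parameter}, which by (\ref{eq_assumimply}) is equivalent to $\sqrt p\bigl(\sqrt{m/n}+1/m\bigr)=o(1)$, yields $\|\widehat\Sigma_\Phi-\Omega\|_{\mathrm{op}}=o_{\mathbb P}(1)$. \emph{(c) Consistency of $\widehat\Gamma$}: the bound $\|\widehat\Gamma-\Gamma\|_{\mathrm{op}}=o_{\mathbb P}(1)$ follows from concentration of $\widehat\Sigma=n^{-1}Y^{*}Y$ already used in Theorem \ref{thm_finalresult}, combined with Lipschitzness of matrix inversion under UPDC.

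On $\mathcal A_n$ I would decompose $\widehat\Sigma_\Phi^{1/2}\widehat\Gamma\widehat\Sigma_\Phi^{1/2}=\Omega^{1/2}\Gamma\Omega^{1/2}+\Delta_n$ with $\|\Delta_n\|_F=o_{\mathbb P}(f_2)$, and apply Hanson--Wright conditional on the data to conclude $\bm{Z}^{*}\Delta_n\bm{Z}=o_{\mathbb P}(f_2)$. For the leading term, the Gaussian quadratic form $\bm{Z}^{*}\Omega^{1/2}\Gamma\Omega^{1/2}\bm{Z}=\sum_{k=1}^p \lambda_k(\Omega^{1/2}\Gamma\Omega^{1/2})\chi^{2}_{1,k}$ has mean $f_1$ and variance $2f_2^{2}$; the Lyapunov ratio $\max_k\lambda_k/f_2$ vanishes because $\|\Omega^{1/2}\Gamma\Omega^{1/2}\|_{\mathrm{op}}=O(1)$ under UPDC while $f_2\to\infty$ as $p\to\infty$, yielding $(\bm{Z}^{*}\Omega^{1/2}\Gamma\Omega^{1/2}\bm{Z}-f_1)/(\sqrt 2\,f_2)\Rightarrow\mathcal N(0,1)$. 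A standard anti-concentration argument upgrades weak convergence to the claimed Kolmogorov-distance uniformity. The main obstacle I anticipate is part (b) above: obtaining a sharp enough Frobenius-norm concentration for $\Sigma_\Phi-\mathbb E\Sigma_\Phi$ when both the block length $m$ and the ambient dimension $p=(b+1)c$ diverge with $n$. This is where the physical-dependence $m$-dependent approximation is essential, Assumption \ref{assu_parameter} is binding, and one must track the smoothing bias ($1/m$) and stochastic fluctuation ($\sqrt{m/n}$) simultaneously across all $p^{2}$ entries of the matrix rather than merely coordinate-wise.
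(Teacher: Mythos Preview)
Your overall architecture matches the paper's: reduce $\widehat{\mathcal T}$ to a conditional Gaussian quadratic form, show the data-dependent kernel is close to $\Omega^{1/2}\Gamma\Omega^{1/2}$, and invoke a Lyapunov-type CLT. Steps (a)--(c) line up with the paper's two-step decomposition (first handle $\mathcal T$ with the true $\bm h_i$, then pass to residuals), and your identification of (b) as the crux is correct; the paper devotes two technical lemmas to exactly that point.

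There is, however, a genuine gap in the perturbation step. You write $\widehat\Sigma_\Phi^{1/2}\widehat\Gamma\widehat\Sigma_\Phi^{1/2}=\Omega^{1/2}\Gamma\Omega^{1/2}+\Delta_n$ and claim that Hanson--Wright together with $\|\Delta_n\|_F=o_{\mathbb P}(f_2)$ gives $\bm Z^{*}\Delta_n\bm Z=o_{\mathbb P}(f_2)$. But Hanson--Wright only controls the deviation $\bm Z^{*}\Delta_n\bm Z-\operatorname{Tr}(\Delta_n)$; the mean $\operatorname{Tr}(\Delta_n)$ must be shown to be $o(f_2)$ separately, and this does \emph{not} follow from the Frobenius bound. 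Indeed, with $\operatorname{rank}(\Delta_n)=O(p)$ one only gets $|\operatorname{Tr}(\Delta_n)|\le \sqrt{p}\,\|\Delta_n\|_F=o(p)$, whereas $f_2\asymp\sqrt{p}$. Your operator-norm bounds $\|\widehat\Sigma_\Phi-\Omega\|_{\mathrm{op}}=o(1)$ and $\|\widehat\Gamma-\Gamma\|_{\mathrm{op}}=o(1)$ likewise give only $|\operatorname{Tr}(\widehat\Gamma\widehat\Sigma_\Phi)-\operatorname{Tr}(\Gamma\Omega)|=o(p)$ via $|\operatorname{Tr}(AB)|\le\operatorname{rank}(A)\|A\|_{\mathrm{op}}\|B\|_{\mathrm{op}}$, which is too weak by a factor $\sqrt p$.

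The paper sidesteps this by \emph{not} perturbing the kernel. It applies the CLT directly to $\mathbf G^{*}\Lambda^{1/2}\widehat\Gamma\Lambda^{1/2}\mathbf G$ using its own eigenvalues $\lambda_1\ge\cdots\ge\lambda_r$, obtaining $\bigl(\mathbf G^{*}\Lambda^{1/2}\widehat\Gamma\Lambda^{1/2}\mathbf G-\sum\lambda_i\bigr)/\sqrt{\sum\lambda_i^2}\Rightarrow\mathcal N(0,2)$; the event $\mathcal A_n$ is then defined precisely as $\{|\sum_i(\lambda_i-d_i)|\le b_n\sqrt{bc},\ |\sum_i(\lambda_i^2-d_i^2)|\le c_n\sqrt{bc}\}$, and a Slutsky argument transfers the centering and scaling to $f_1,f_2$. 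To verify $\mathbb P(\mathcal A_n)\to 1$ the paper exploits that the Gershgorin-type entrywise bounds it uses for $\|\Lambda-\Omega\|$ control the diagonal sums (hence the trace difference) at the same rate as the operator norm, not at $p$ times that rate. So the missing ingredient in your plan is exactly this trace control; once you see it is needed, the cleanest fix is the paper's route of running the CLT on the random kernel and defining $\mathcal A_n$ in terms of eigenvalue sums rather than trying to absorb $\operatorname{Tr}(\Delta_n)$ into a Hanson--Wright remainder.
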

For the detailed construction of $\mathcal{A}_n,$ we refer the reader to the proof of the above theorem. A theoretical discussion of  the accuracy of the bootstrap can be found in Section \ref{sec_appendix_one}.



%

{

Finally, the following steps are proposed for practical implementation of the bootstrap: 
\begin{enumerate}
\item Select the tuning parameters $b$, $c$ and $m$ by the methods demonstrated in Section \ref{sec:choiceparameter}. 
\item Compute $\widehat{\Sigma}^{-1}$ using $n(Y^*Y)^{-1}$ and the residuals $\{\widehat{\epsilon}^b_i\}_{i=b+1}^n.$
\item  Generate B (say 1000) i.i.d. copies of $\{\Phi^{(k)}\}_{k=1}^B.$ Compute $\widehat{\mathcal{T}}_k, k=1,2,\cdots, B$ correspondingly.
\item Let $\widehat{\mathcal{T}}_{(1)} \leq \widehat{\mathcal{T}}_{(2)} \leq \cdots \leq \widehat{\mathcal{T}}_{(B)}$ be the order statistics of $\widehat{\mathcal{T}}_k, k=1,2,\cdots, B.$ Reject $\mathbf{H}_0$ at the level $\alpha$ if $nT>\widehat{\mathcal{T}}_{(B(1-\alpha))},$ where $\lfloor x \rfloor$ denotes the largest integer smaller or equal to $x.$  Let $B^*=\max\{r: \widehat{\mathcal{T}}_{r} \leq nT\}.$ The $p$-value of the test can be computed as $1-\frac{B^*}{B}.$
\end{enumerate}


%

\subsection{{Test of PACF for locally stationary time series}}\label{sec_estimationpacf} 


This subsection is devoted to testing whether a group of PACF are uniformly zero across time which is important when selecting a preliminary order of an AR model.  
We observe  from Lemma \ref{lem_pacftruncation} and Theorem \ref{thm_consistency} that the following statistic should be small under $\mathbf{H}^{\rho,b_0,b_1}_0$ : $\rho_j(\cdot)\equiv 0$, $j=b_1,b_1+1,\cdots,b_0$,
\begin{equation*}
T_{\rho}=\sum_{j=b_1}^{b_0} \int_0^1 \widehat{\rho}_j^2(t) dt, \ b<b_1<b_0,
\end{equation*} 
where $b_0$ is a given sufficiently large lag. Consequently, $T_{\rho}$ can be used to test $\mathbf{H}_0^{\rho,b_0,b_1}$. However, in order to obtain the value of $\mathcal{T}_{\rho},$ we need to do $(b_0-b_1+1)$ high-dimensional OLS regressions which is computationally intensive. The following lemma suggests that we can simply use 
\begin{equation*}
T_{\phi}=\sum_{j=b_1}^{b_0} \int_0^1 \widehat{\phi}^2_j(t)dt, \ b<b_1<b_0,
\end{equation*}
as our test statistic, where only one high-dimensional OLS regression is need in order to obtain its value. 
\begin{lem}\label{lem_controltbtrho} For $b_0>b_1>b,$ we have that 
\begin{equation*}
T_{\phi}=T_{\rho}+O_{\mathbb{P}}(n^{-1+(4+3\epsilon)/\tau}).
\end{equation*}
\end{lem}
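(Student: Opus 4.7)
The plan is to exploit that both $\widehat{\phi}_j(t)$ and $\widehat{\rho}_j(t)$ are sieve estimators of population coefficients that are themselves close to a common ``infinite-order'' best linear predictor coefficient, so that after an algebraic factoring the difference $T_\phi - T_\rho$ reduces to a Baxter-type remainder summed over $b_1\le j\le b_0$.

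First I would nail down the \emph{population} gap between $\phi_j(\cdot)$ and $\rho_j(\cdot)$. The coefficient $\phi_j(t)$ arises from the AR$(b_0)$-type smooth fit (this is the only fit that contains lag $j>b$), while $\rho_j(t)=\phi_{j,j}(t)$ is the top coefficient of the AR$(j)$ smooth fit. Applying Theorem \ref{lem_phibound} locally (in the form used in Theorem \ref{thm_locallynonzero}) to each of these finite-order fits shows that both lie within $O(n^{-1+(3+2\epsilon)/\tau})$ of the infinite-order smooth forecast coefficient $\phi_j^{\infty}(t)$, uniformly in $b_1\le j\le b_0$ and $t\in[0,1]$. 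Hence
\begin{equation*}
\sup_{b_1\le j\le b_0,\; t\in[0,1]}|\phi_j(t)-\rho_j(t)| = O\!\left(n^{-1+(3+2\epsilon)/\tau}\right).
\end{equation*}
In parallel I would invoke the tail bounds of Theorem \ref{lem_phibound} and Lemma \ref{lem_pacftruncation} to get $\sup_t(|\phi_j(t)|+|\rho_j(t)|)\le Cj^{-(\tau-1)/(1+\epsilon)}$, which decays fast since $\tau>5+\varpi$.

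Next I would transfer this to the sieve estimators. By Theorems \ref{thm_finalresult} and \ref{thm_consistency}, the estimation errors $\widehat{\phi}_j-\phi_j$ and $\widehat{\rho}_j-\rho_j$ are $O_{\mathbb{P}}(\zeta_c\sqrt{\log n/n}+n^{-d\alpha_1})$ uniformly. Because both sieve regressions share the same data, their leading stochastic parts cancel against each other up to a term governed by the population gap, giving
\begin{equation*}
\sup_{b_1\le j\le b_0,\; t}|\widehat{\phi}_j(t)-\widehat{\rho}_j(t)| = O_{\mathbb{P}}\!\left(n^{-1+(3+2\epsilon)/\tau}\right).
\end{equation*}
Then I would use the factoring
\begin{equation*}
T_\phi - T_\rho = \sum_{j=b_1}^{b_0}\int_0^1\!\bigl(\widehat{\phi}_j(t)-\widehat{\rho}_j(t)\bigr)\bigl(\widehat{\phi}_j(t)+\widehat{\rho}_j(t)\bigr)\,dt
\end{equation*}
together with the sup-bound above and $|\widehat{\phi}_j+\widehat{\rho}_j|\le 2Cj^{-(\tau-1)/(1+\epsilon)}+o_{\mathbb{P}}(1)$ uniformly, so that
\begin{equation*}
|T_\phi - T_\rho| \le \sum_{j=b_1}^{b_0}O_{\mathbb{P}}\!\left(n^{-1+(3+2\epsilon)/\tau}\right)\cdot O(1) \le (b_0-b_1+1)\cdot O_{\mathbb{P}}\!\left(n^{-1+(3+2\epsilon)/\tau}\right).
\end{equation*}
Since $b_0=O(n^{(1+\epsilon)/\tau})$ by \eqref{eq_defnbeplsilon}, the product is $O_{\mathbb{P}}(n^{-1+(4+3\epsilon)/\tau})$, which is the claimed rate.

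The main obstacle is the second step: showing that the sieve noise in $\widehat{\phi}_j$ and $\widehat{\rho}_j$ cancels in their difference down to the tiny Baxter-type gap, rather than accumulating to the individual sieve rate $\zeta_c\sqrt{\log n/n}$. This requires a coupled analysis of the two OLS systems, leveraging that the AR$(j)$ design is nested inside the AR$(b_0)$ design and that both population coefficient vectors are extremely close (in the metric induced by the sieve Gram matrix) to the truncation of the infinite-order coefficient sequence.
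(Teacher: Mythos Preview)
Your approach is essentially the same as the paper's. The paper obtains the population-level gap by directly applying the linear-system perturbation Lemma \ref{lem_nuem} (comparing the AR($j$) Yule--Walker system $\Gamma_{i,j}\bm{\phi}_i^j=\bm{\gamma}_{i,j}$ with the padded shorter-order system), which yields
\[
\|\bm{\phi}_i^j-\overline{\bm{\phi}}_i^b\|\le C\,n^{-1+(3+2\epsilon)/\tau},
\]
i.e.\ exactly the Baxter-type bound you propose via Theorem \ref{lem_phibound}/Theorem \ref{thm_locallynonzero}. It then finishes with one sentence invoking ``the definitions of $T_\rho$ and $T_\phi$, the triangle inequality, Theorems \ref{lem_phibound}, \ref{thm_finalresult} and \ref{thm_consistency}'', and your final count---multiplying the per-lag bound by $b_0-b_1+1=O(n^{(1+\epsilon)/\tau})$ to reach $O_{\mathbb P}(n^{-1+(4+3\epsilon)/\tau})$---is the intended arithmetic.

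On the obstacle you flag (that $|\widehat\phi_j-\widehat\rho_j|$ must inherit the tiny population rate rather than the individual sieve rate $\zeta_c\sqrt{\log n/n}$): the paper's proof is \emph{terser} than yours at precisely this step. It does not carry out a coupled analysis of the two nested OLS systems; it simply cites the triangle inequality together with the two sieve-consistency theorems. So your sketch is at least as detailed as the paper's here, and the subtlety you raise is not something the paper resolves more explicitly.
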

Similar to the discussion of  Theorem \ref{thm_bootstrapping}, $\mathcal{T}_{\phi}$ is normally distributed and so does $\mathcal{T}_{\rho}$. This is summarized as the following lemma. {Denote $\bm{w}_i=\bm{x}_{b_0, i} \epsilon_i, \ i>b_0,$ where $\bm{x}_{b_0,i}=(1,x_{i-1}, \cdots, x_{i-b_0})^* \in \mathbb{R}^{b_0+1}.$  Similar to the discussion of (\ref{eq_defnh}),  write 
\begin{equation}\label{eq_defnwi}
\bm{w}_i=\mathbf{V}(\frac{i}{n}, \mathcal{F}_i), i>b_0.
\end{equation}
Denote the long-run covariance matrix $\Pi(t)$
as
$
\Pi(t)=\sum_{j=-\infty}^{\infty} \text{Cov} \Big( \mathbf{V}(t, \mathcal{F}_0), \mathbf{V}(t, \mathcal{F}_j) \Big),
$
and the integrated long-run covariance matrix as 
$
\Pi= \int_{0}^1 \Pi(t) \otimes (\mathbf{B}(t) \mathbf{B}^*(t)) dt. 
$
For $k \in \mathbb{N},$ define 
\begin{equation*}
g_k=(\text{Tr}[\Pi^{1/2} \overline{\Sigma}_0^{-1} \mathbf{M} \overline{\Sigma}_0^{-1} \Pi^{1/2}]^k)^{1/k}, \ \overline{\Sigma}_0=\begin{pmatrix}
\mathbf{I}_c & \bm{0} \\
\bm{0} & \Sigma_0
\end{pmatrix},
\end{equation*}
where $\Sigma_0:=\int_0^1 \Sigma^{b_0}(t) \otimes (\mathbf{B}(t) \mathbf{B}^*(t))dt$ and $\mathbf{M}$ is a diagonal block matrix whose lower $(b_0-b_1+1)c \times (b_0-b_1+1)c$ major part are identity matrices and zeros otherwise. 
\begin{lem}\label{lem_distributionpacf} Suppose $b_0>b_1>b.$  Under Assumptions \ref{phy_generalts}, \ref{assu_pdc},  \ref{assum_local}, \ref{assu_smoothtrend}, \ref{assu_smmothness} and \ref{assu_basis}, when $(4+3\epsilon)/\tau<\frac{1}{2},$ we have 
\begin{equation*}
\frac{n T_{\rho}-g_1}{g_2} \Rightarrow \mathcal{N}(0,2). 
\end{equation*}
\end{lem}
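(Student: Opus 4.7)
The plan is to mirror the proof strategy used for Proposition \ref{prop_normal}, but applied to $T_\phi$ (with AR order $b_0$ instead of $b$, and using the block matrix $\mathbf{M}$ in place of $\mathbf{I}_{bc}\mathbf{W}$), and then to transfer the resulting limit from $nT_\phi$ to $nT_\rho$ via Lemma \ref{lem_controltbtrho}. I would proceed in four steps.

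First, I would adapt Lemma \ref{lem_reducedquadratic} to $nT_\phi$. Since $\int_0^1 \widehat\phi_j^2(t)\,dt$ is a quadratic form in the sieve coefficients $\widehat{\bm d}_j$ with the (diagonal, block-identity) Gram matrix $\int_0^1 \mathbf{B}(t)\mathbf{B}^*(t)\,dt$, stacking across $j = b_1,\dots,b_0$ and using the block selector $\mathbf{M}$ gives
\begin{equation*}
nT_\phi \;=\; \widetilde{\mathbf X}^*\,\overline{\Sigma}_0^{-1}\mathbf{M}\,\overline{\Sigma}_0^{-1}\,\widetilde{\mathbf X} \;+\; o_{\mathbb{P}}(1),
\qquad
\widetilde{\mathbf X} \;=\; \frac{1}{\sqrt{n}} \sum_{i>b_0} \bm w_i \otimes \mathbf{B}\!\Big(\tfrac{i}{n}\Big),
\end{equation*}
where $\bm w_i$ is the locally stationary vector from \eqref{eq_defnwi}. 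This reduction follows by the same Taylor-expansion plus OLS-normal-equation manipulations used in the proof of Lemma \ref{lem_reducedquadratic}, with the bounds on residual cross-terms controlled exactly as there, replacing $\Sigma$ by $\Sigma_0$ and $b$ by $b_0$ throughout.

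Second, I would invoke the Gaussian approximation of Theorem \ref{thm_gaussian} applied to $\widetilde{\mathbf X}$ and the matrix $\widetilde\Gamma := \overline{\Sigma}_0^{-1}\mathbf{M}\,\overline{\Sigma}_0^{-1}$. The hypotheses of that theorem are stable under replacing $(\mathbf{h}_i,\Gamma)$ by $(\mathbf{w}_i,\widetilde\Gamma)$ since $\mathbf{w}_i$ has the same locally stationary structure with the same physical dependence assumption, and $\widetilde\Gamma$ is bounded in spectral norm by UPDC applied at order $b_0$. This yields a centered Gaussian vector $\widetilde{\mathbf Y}$ with the same covariance as $\widetilde{\mathbf X}$ such that $\widetilde{\mathbf X}^*\widetilde\Gamma \widetilde{\mathbf X}$ and $\widetilde{\mathbf Y}^*\widetilde\Gamma \widetilde{\mathbf Y}$ are close in Kolmogorov distance.

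Third, I would apply a CLT for quadratic forms in high-dimensional Gaussian vectors to $\widetilde{\mathbf Y}^*\widetilde\Gamma \widetilde{\mathbf Y}$. Writing it as $\sum_k \lambda_k \chi^2_{1,k}$ where $\lambda_k$ are the eigenvalues of $\Pi^{1/2}\widetilde\Gamma\Pi^{1/2}$, Lyapunov's condition reduces to $g_4/g_2 \to 0$, i.e. $\|\Pi^{1/2}\widetilde\Gamma\Pi^{1/2}\|_{\mathrm{op}}/\|\Pi^{1/2}\widetilde\Gamma\Pi^{1/2}\|_F \to 0$, which follows from the standard UPDC-type lower bound on the smallest eigenvalue of $\Pi$ together with the divergence of $(b_0-b_1)c$. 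This gives $(nT_\phi - g_1)/g_2 \Rightarrow \mathcal{N}(0,2)$.

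Finally, I would transfer the limit from $nT_\phi$ to $nT_\rho$ using Lemma \ref{lem_controltbtrho}: the difference is $O_{\mathbb{P}}(n^{(4+3\epsilon)/\tau})$ and the hypothesis $(4+3\epsilon)/\tau<1/2$ gives this is $o(n^{1/2})$. Combined with the fact that $g_2$ is at least of order $\sqrt{(b_0-b_1)c}$ (hence $\gg n^{(4+3\epsilon)/\tau}$ under Assumption \ref{assu_parameter}-type choices), Slutsky's lemma completes the proof. The main technical obstacle I anticipate is the last step: verifying that the residual $n^{(4+3\epsilon)/\tau}$ is genuinely small compared to $g_2$ requires a careful two-sided control on $g_2$ obtained from the spectral bounds on $\Pi$ and on $\overline{\Sigma}_0^{-1}\mathbf{M}\overline{\Sigma}_0^{-1}$; the Gaussian approximation step is a direct reuse of Theorem \ref{thm_gaussian}, and the reduction of Step 1 is essentially bookkeeping.
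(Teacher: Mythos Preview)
Your proposal is correct and follows essentially the same approach as the paper's proof: first establish the asymptotic normality of $nT_\phi$ by rerunning the chain Lemma~\ref{lem_reducedquadratic} $\to$ Theorem~\ref{thm_gaussian} $\to$ Lindeberg/Lyapunov CLT (which is exactly what the paper means by ``a discussion similar to Proposition~\ref{prop_normal}''), and then transfer to $nT_\rho$ via Lemma~\ref{lem_controltbtrho}. You have in fact supplied more detail than the paper, including correctly identifying that $\mathbf{M}$ replaces $\mathbf{I}_{bc}\mathbf{W}$ because the null here is $\phi_j\equiv 0$ rather than $\phi_j$ constant, and flagging the need to check $n^{(4+3\epsilon)/\tau}=o(g_2)$, a point the paper leaves implicit.
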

}



%

\subsection{Choices of tuning parameters}\label{sec:choiceparameter}

In this subsection, we discuss how to choose the parameters in both the forecasting and testing procedures. We start with tuning parameter selection for forecasting. As we have seen from (\ref{eq_forecast}) and (\ref{eq_phiform}), we need to choose two important parameters in order to get an accurate prediction: $b$ and $c.$ We use a data-driven procedure proposed in \cite{bishop2013pattern} to choose such parameters. 

For a given integer $l,$ say $l=\lfloor 3 \log_2 n \rfloor,$ we divide the time series into two parts: the training part $\{x_i\}_{i=1}^{n-l}$ and the validation part $\{x_i\}_{i=n-l+1}^n.$  With some preliminary initial pair $(b,c)$, we propose a sequence of candidate pairs  $(b_i, c_j), \ i=1,2,\cdots, u, \ j=1,2,\cdots, v,$ in an appropriate neighbourhood of $(b,c)$ where $u, v$ are some given integers. For each pair of the choices $(b_i, c_j),$  we fit a time-varying AR($b_i$) model (i.e., $b=b_i$ in (\ref{eq_forecast})) with $c_j$ sieve basis expansion using the training data set.  Then using the fitted model, we forecast the time series in the validation part of the time series.  Let $\widehat x_{n-l+1,ij}, \cdots, \widehat x_{n,ij}$ be the forecast of $x_{n-l+1},..., x_n,$ respectively using the parameter pair $(b_i, c_j)$. Then we choose the pair $(b_{i_0},c_{j_0})$ with the minimum sample MSE of forecast, i.e.,
 \begin{equation*} 
({i_0},{j_0}):= \argmin_{((i,j): 1 \leq i \leq u, 1 \leq j \leq v)} \frac{1}{l}\sum_{k=n-l+1}^n (x_k-\widehat x_{k,ij})^2.
 \end{equation*}
 We will discuss how to choose some initial values of $b$ and $c$ in the testing procedure.

Next, we  discuss how to choose the parameters $b,c$ and $m$ for the stability tests. In particular, the parameters $b$ and $c$ selected for the testing also serve as suitable preliminary initial values for the forecasting as we discussed above. Next, we will make use of the PACF to find a suitable $b$ where the sample PACF are uniformly insignificant after lag $b$. 
 
In light of Lemma \ref{lem_distributionpacf}, we can follow the bootstrapping procedure as discussed in the end of Section \ref{sec_bootstrapping} by replacing $\bm{h}_i$ with $\bm{w}_i$ (c.f. (\ref{eq_defnwi})) in (\ref{eq_defnphi}) to perform the test for $nT_{\rho}$. 
For a sufficiently large $b_0$ and a given nominal level $\alpha,$ denote 
\begin{equation}\label{eq_widewidewidewidewidewidehatb}
\widehat{b}=\max_{b_1<b_0}\{b_1<b_0: {\mathbf{H}_0^{\rho, b_0, b_1}} \ \text{is rejected}  \}.
\end{equation}
Then we can use $\widehat{b}$ for our test. Observe that the value of $\widehat{b}$ can be roughly determined by the PACF plot which is helpful in terms of reducing computational complexity. Also note that all PACF after lag $\widehat{b}$ are uniformly statistically insignificant and hence can be treated as 0.   


Then we discuss the choice of $c$ using the criterion of  cross-validation \cite{BH}. The key difference is that our observations are not i.i.d. samples, so we need to slightly modify the procedure. For a given large value $\theta$ such that
\begin{equation}\label{eq_hassum}
\theta \rightarrow \infty, \ \frac{\theta}{n} \rightarrow 0.
\end{equation} 
Denote $\{\widehat{\phi}_{j,c}^{\theta}(t), \ j=1,2,\cdots,b\}$ as the estimation using the data points $\{x_{b+1}, \cdots, x_{n-\theta}\}$ and $c$ basis functions. Denote cross-validation rule as 
\begin{equation*}
\text{CV}(c)=\frac{1}{\theta} \sum_{k=1}^{\theta} \widehat{\epsilon}^2_{k,c}, \ \widehat{\epsilon}_{k,c}=x_{n-k}-\widehat{\phi}_{0,c}^\theta-\sum_{j=1}^b \widehat{\phi}_{j,c}^{\theta}(\frac{n-k}{n})x_{n-k-j}.
\end{equation*}
 Therefore, we choose choose the estimate of $c$ using 
\begin{equation}\label{eq_chosencmethod}
\widehat{c}:=\argmin_{c \leq c_0} \text{CV}(c), \ c_0 \ \text{is a pre-chosen large value}.
\end{equation}

{
Finally, we discuss how to choose $m$ for practical implementation. In \cite{ZZ1}, the author used the minimum volatility (MV) method to choose the window size $m$ for the scalar covariance function. The MV method does not depend on the specific form of the underlying time series dependence structure and hence is robust to misspecification of
the latter structure \cite{politis1999subsampling}. The MV method utilizes the fact that the covariance structure of $\widehat{\Omega}$ becomes stable when the
block size $m$ is in an appropriate range, where $\widehat{\Omega}=E[\Phi\Phi^*|(x_1,\cdots,x_n)]=$ is defined as 
{ 
\begin{equation}\label{eq_widehatomega}
\widehat{\Omega}:=\frac{1}{(n-m-b+1)m} \sum_{i=b+1}^{n-m} \Big[ \Big(\sum_{j=i}^{i+m} \bm{h}_i \Big) \otimes \Big( \mathbf{B}(\frac{i}{n}) \Big) \Big] \times \Big[ \Big(\sum_{j=i}^{i+m} \bm{h}_i \Big) \otimes \Big( \mathbf{B}(\frac{i}{n}) \Big) \Big]^*.
\end{equation}
}    Therefore, it desires to minimize the standard errors of the latter covariance structure in a suitable range of candidate $m$'s.

In detail, for a give large value $m_{n_0}$ and a neighbourhood control parameter $h_0>0,$  we can choose a sequence of window sizes $m_{-h_0+1}<\cdots<m_1< m_2<\cdots<m_{n_0}<\cdots<m_{n_0+h_0}$  and obtain $\widehat{\Omega}_{m_j}$ by replacing $m$ with $m_j$ in (\ref{eq_defnphi}), $j=-h_0+1,2, \cdots, n_0+h_0.$ For each $m_j, j=1,2,\cdots, m_{n_0},$ we calculate the matrix norm error of $\widehat{\Omega}_{m_j}$ in the $h_0$-neighborhood, i.e., 
\begin{equation*}
\mathsf{se}(m_j):=\mathsf{se}(\{ \widehat{\Omega}_{m_{j+k}}\}_{k=-h_0}^{h_0})=\left[\frac{1}{2h_0} \sum_{k=-h_0}^{h_0} \| \overline{\widehat{\Omega}}_{m_j}-\widehat{\Omega}_{m_j+k} \|^2 \right]^{1/2},
\end{equation*}
where $\overline{\widehat{\Omega}}_{m_j}=\sum_{k=-h_0}^{h_0} \widehat{\Omega}_{m_j+k} /(2h_0+1).$
Therefore, we choose the estimate of $m$ using 
\begin{equation*}
\widehat{m}:=\argmin_{m_1 \leq m \leq m_{n_0}} \mathsf{se}(m).
\end{equation*}
Note that in \cite{ZZ1} the author used $h_0=3$ and we also adopt this choice in the current paper. 
}

\section{Simulation studies}\label{sec:simu}
 In this section, we perform extensive Monte Carlo simulations to study the finite-sample accuracy of the nonparametric sieve forecasting method and the finite sample accuracy and power of the stability test and compare them with those of some existing methods in the literature. 
\subsection{Simulation setup}\label{simu_intro}
We consider four different types of non-stationary time series models: two linear time series models, a two-regime model, a Markov switching model and a bilinear model.  
\begin{enumerate}
\item Linear AR model: Consider the following time-varying AR(2) model
\begin{equation*}
x_i=\sum_{j=1}^2 a_j(\frac{i}{n}) x_{i-j}+\epsilon_i, \ \epsilon_i=\Big(0.4+0.4\Big|\sin(2\pi\frac{i}{n})\Big|) \eta_i,  
\end{equation*} 
where $\eta_i, i=1,2,\cdots, n,$ are i.i.d. random variables whose distributions will be specified when we finish introducing the models. It is elementary to see that when $a_j(\frac{i}{n}), j=1,2,$ are constants, the prediction is stable. 
{
\item Linear MA model: Consider the following time-varying MA(2) model
\begin{equation*}
x_i=\sum_{j=1}^2 a_j(\frac{i}{n}) \epsilon_{i-j}+\epsilon_i, \  \epsilon_i=\Big(0.4+0.4\Big|\sin(2\pi\frac{i}{n})\Big|) \eta_i. 
\end{equation*}
}
\item Two-regime model: Consider the following  self-exciting threshold auto-regressive (SETAR) model \cite{FY,HT2011}
\begin{equation*}
x_i=
\begin{cases}
a_1(\frac{i}{n})x_{i-1}+\epsilon_i, \ x_{i-1} \geq 0, \\
a_2(\frac{i}{n}) x_{i-1}+\epsilon_i, \ x_{i-1}<0. 
\end{cases}
\epsilon_i=\Big(0.4+0.4\Big|\sin(2\pi\frac{i}{n})\Big|) \eta_i.
\end{equation*}
It is easy to check that the SETAR model is stable if  { $a_j(\frac{i}{n}), \  j=1,2,$} are constants and bounded by one.
\item Markov two-regime switching model:  Consider the following Markov switching AR(1) model 
\begin{equation*}
x_i=
\begin{cases}
a_1(\frac{i}{n})x_{i-1}+\epsilon_i, \ s_i=0, \\
a_2(\frac{i}{n}) x_{i-1}+\epsilon_i, \ s_i=1.
\end{cases}
\epsilon_i=\Big(0.4+0.4\Big|\sin(2\pi\frac{i}{n})\Big|) \eta_i,
\end{equation*}
where the unobserved state variable $s_i$ is a discrete Markov chain taking values $0$ and $1,$ with transition probabilities $p_{00}=\frac{2}{3}, \ p_{01}=\frac{1}{3}, \ p_{10}=p_{11}=\frac{1}{2}.$  It is easy to check that the above model is stable if the
functions $a_j(\cdot), j=1,2,$ are constants and bounded by one \cite{REQ}. In the simulations, the initial state is chosen to be 1. 
\item Simple bilinear model: Consider the first order bilinear model
\begin{equation*}
x_i=\Big( a_1(\frac{i}{n})\epsilon_{i-1}+a_2(\frac{i}{n}) \Big)x_{i-1}+\epsilon_i , \ \epsilon_i=\Big(0.4+0.4\Big|\sin(2\pi\frac{i}{n})\Big|) \eta_i.
\end{equation*}
It is known from \cite{FY} that when the functions $a_j(\cdot),j=1,2,$ are constants and bounded by one, $x_i$ has an ARMA representation and hence stable.  

\end{enumerate} 

In the simulations below, we record our results based on 1,000 repetitions and for the bootstrapping  procedure described in the end of Section \ref{sec_bootstrapping}, we choose $B=1,000.$ {For the choices of random variables $\eta_i, i=1,2,\cdots,$ we set $\eta_i$ to be student-$t$ distribution with degree of $5$, i.e., $t$(5) for models 1-2 and standard normal random variables for models 3-5. }

\subsection{Prediction of locally stationary time series} \label{sec:predictioncompare}

In this section, we study the prediction accuracy of our adaptive sieve forecast (\ref{eq_forecast}) by comparing it with some state-of-the-art methods. Specifically, we compare with the Tapered Yule-Walker estimate (TTVAR) in \cite{RSP}, the non-decimated wavelet estimate (LSW) in \cite{FBS}, the model switching method (SNSTS) in \cite{KPF},  the best linear prediction using the previous samples (SBLP) \footnote{The prediction is based on the stationary assumption and an ARIMA model.}, the best linear prediction using $b$ recent samples (PBLP) and our adaptive sieve forecast (\ref{eq_forecast}). We implement TTVAR with constant taper function $g \equiv 1$ and the bandwidth is selected according to \cite[Corollary 4.2]{RSP}. For the wavelet method, we use the matlab codes from the first author's website (see \url{http://stats.lse.ac.uk/fryzlewicz/flsw/flsw.html}) and for the model switching method, we use the R package \texttt{forecastSNSTS}. For our sieve method, we use the orthogonal wavelets (\ref{eq_meyerorthogonal}) with Daubechies-9 wavelet and the data-driven approach described in Section \ref{sec:choiceparameter} to choose $b$ and $c.$ 

In Table \ref{table_compare_prediction}, we record the mean square error over 1,000 simulations  for one-step ahead prediction of the models 1-5 in Section \ref{simu_intro}. {Specifically,  we use
\begin{equation*}
a_1(\frac{i}{n}) \equiv 0.4, \ a_2(\frac{i}{n})=0.2+ \delta \sin(2 \pi \frac{i}{n}), 
\end{equation*}
where  $\delta=0.35$ for models 1-2 and $\delta=0.5$ for models 3-5.} It can be seen that our sieve method outperforms the other methods in literature for five models in both sample sizes $n=256$ and $n=512$. The forecasting accuracy improvement is more significant for non-AR type models such as the MA and bilinear models.

\begin{table}[ht]
\begin{center}
\setlength\arrayrulewidth{1pt}
\renewcommand{\arraystretch}{1.5}
{\fontsize{10}{10}\selectfont 
\begin{tabular}{|c|ccccccc|lllllll|}
\hline
Model & \multicolumn{1}{c|}{TTVAR} & \multicolumn{1}{c|}{LSW} & \multicolumn{1}{c|}{SNSTS} & \multicolumn{1}{c|}{SBLP} & \multicolumn{1}{c|}{PBLP} & \multicolumn{1}{c|}{Sieve} & Improvement  \\ \hline
      & \multicolumn{7}{c|}{$n$=256}                                                                                                                                                                                                                                                                                         \\ \hline
1     &   0.24                                  &     0.21                     &     0.45                      &                     0.284      &                          0.24 & {\bf 0.189} & 10 $\%$  \\
2     &                 0.28                    &      0.27                   &       0.28                   &                  0.273         &                 0.283        &  {\bf 0.22} & 18.5 $\%$  \\ 
3     &     0.21                                 &                          0.185 & 0.198                         &                 0.241          &                          0.194 &  {\bf 0.178} & 3.8 $\%$  \\
4     &        0.207                              &                          0.195 & 0.2                         &                      0.247     &                   0.199       & {\bf 0.187} & 4.1 $\%$     \\
5     &                 0.22                    &      0.22                   &       0.24                   &                  0.246         &                 0.273        &  {\bf 0.176} & 20 $\%$  \\ 
\hline
      & \multicolumn{7}{c|}{$n$=512}                                                                                                                                                                                                                                                                                         \\ \hline
1     &                        0.21             &                          0.2 & 0.2                          &                   0.233      &                         0.209 &  {\bf 0.181} & 9.5 $\%$\\
2     &                 0.26                   &      0.26                   &       0.264                   &                  0.276         &                 0.283        &  {\bf 0.196} & 24.62 $\%$ \\ 
3     &    0.207                                &                          0.183 & 0.192                         &                0.213           &                      0.194   & {\bf 0.18} & 1.7 $\%$   \\
4     &      0.205                               &                          0.175 & 0.188                           &                   0.211        &         0.181              & {\bf 0.17 } & 2.86 $\%$   \\
5     &       0.23                               &                          0.21 & 0.24                          &       0.23                    &                        0.22 &  {\bf  0.183} & 12.86 $\%$    \\
 \hline
\end{tabular}
}
\end{center}
\caption{Comparison of prediction accuracy for models 1-5 using different methods.  We highlight the smallest mean square errors and record the percentage of improvement of our method compared with the next best method.  }
\label{table_compare_prediction}
\end{table}

\subsection{Accuracy and power of the stability test}\label{sec_poer}
In this section, we study the performance of the proposed test (\ref{eq_testnonzero1}). First, we study the finite sample accuracy of our test under correlation stationarity when  
\begin{equation}\label{eq_testingcases}
a_1(\frac{i}{n}) = a_2(\frac{i}{n}) \equiv 0.4.
\end{equation}
Observe that the simulated time series are not covariance stationary as the marginal variances change smoothly over time. We choose the values of $b,c$ and $m$ according to the methods described in Section \ref{sec:choiceparameter}. It can be seen from Table \ref{table_typei} that our bootstrap testing procedure behaves reasonably accurate for all three types of sieve basis functions even for a smaller sample size $n=256.$ 

Second, we study the power of the tests and report the results in Table \ref{table_power} when the underlying time series is not correlation stationary. Specifically, we use
\begin{equation}\label{eq_testingcasesalternative}
a_1(\frac{i}{n}) \equiv 0.4, \ a_2(\frac{i}{n})=0.2+ \delta \sin(2 \pi \frac{i}{n}), 
\end{equation}
for the models 1-5 in Section \ref{simu_intro}. It can be seen that the simulated powers are reasonably good even for smaller $\delta$ and the sample size, and the results will be improved when $\delta$ and the sample size increase. Additionally, the power performances of the three types of sieve basis functions are similar in general.
\begin{table}[ht]
\begin{center}
\setlength\arrayrulewidth{1pt}
\renewcommand{\arraystretch}{1.5}
{\fontsize{10}{10}\selectfont 
\begin{tabular}{|c|ccccc|ccccc|}
\hline
      & \multicolumn{5}{c|}{$\alpha=0.1$}                                                                                                                       & \multicolumn{5}{c|}{$\alpha=0.05$}                                                                                                                        \\ \hline
Basis/Model & \multicolumn{1}{c|}{1} & \multicolumn{1}{c|}{2} & \multicolumn{1}{c|}{3} & \multicolumn{1}{c|}{4} & \multicolumn{1}{c|}{5}  & \multicolumn{1}{c|}{1} & \multicolumn{1}{c|}{2} & \multicolumn{1}{c|}{3} & \multicolumn{1}{c|}{4} & \multicolumn{1}{c|}{5} \\ 
\hline
     & \multicolumn{10}{c|}{$n$=256}                                                                                                                                                                                                                                                                                          \\
   \hline
Fourier     &          0.132  & 0.11                          &                          0.12 &         0.13                  &                          0.11 &                    0.067   & 0.07    & 0.06   &                                     0.04 &       0.06                    \\
Legendre    &     0.091       & 0.136                          &                          0.13 &   0.12                        &           0.13                &              0.06    & 0.059         &  0.041  &                                     0.07 &                            0.07 \\
Daubechies-9    &  0.132 & 0.12   & 0.11                        &      0.133             &         0.132                 & 0.063                 &0.067 &  0.059  &          0.068                            &                 0.065   \\
\hline
      & \multicolumn{10}{c|}{$n$=512}                                                                                                                                                                                                                                                                                         \\
       \hline
Fourier     &            0.09      & 0.13                 &                          0.11 &      0.13                     &  0.127  &                         0.05 & 0.06 & 0.067   &    0.068                                  &              0.069                 \\
Legendre     &    0.09    & 0.094    &                  0.092                                &          0.12                 &                       0.118   & 0.04   & 0.058                       & 0.07    &        0.043                              &                            0.057 \\
Daubechies-9     &  0.091  & 0.11    &      0.098                   &                         0.11   & 0.118                          &          0.048               &  0.052 & 0.054 &                                 0.053  &                  0.054           \\
 \hline
\end{tabular}
}
\end{center}
\caption{Simulated type I errors using the setup (\ref{eq_testingcases}). 
}
\label{table_typei}
\end{table}

\begin{table}[ht]
\begin{center}
\setlength\arrayrulewidth{1pt}
\renewcommand{\arraystretch}{1.5}
{\fontsize{10}{10}\selectfont 
\begin{tabular}{|c|ccccc|ccccc|}
\hline
      & \multicolumn{5}{c|}{$\delta=0.2/0.5$}                                                                                                                       & \multicolumn{5}{c|}{$\delta=0.35/0.7$}                                                                                                                        \\ \hline
Basis/Model & \multicolumn{1}{c|}{1} & \multicolumn{1}{c|}{2} &  \multicolumn{1}{c|}{3} & \multicolumn{1}{c|}{4} & \multicolumn{1}{c|}{5}  & \multicolumn{1}{c|}{1} & \multicolumn{1}{c|}{2} &  \multicolumn{1}{c|}{3} & \multicolumn{1}{c|}{4} & \multicolumn{1}{c|}{5} \\ 
\hline
      & \multicolumn{10}{c|}{$n$=256}                                                                                                                                                                                                                                                                                          \\
   \hline
Fourier     & 0.84     & 0.86                              &                          0.84 &                  0.837          &                      0.94 &                        0.97 &  0.97 & 0.96  & 0.99                                      &              0.98              \\
Legendre     &         0.8   & 0.806                          &                          0.81 &               0.84           & 0.83                          & 0.97    & 0.968                       &  0.95  &                                     0.97 &        0.91                     \\
Daubechies-9    &     0.81 & 0.81 & 0.86                        &                       0.81   & 0.81                          &           0.97       & 0.96 & 0.983  & 0.98                                    &        0.98           \\
\hline
      & \multicolumn{10}{c|}{$n$=512}                                                                                                                                                                                                                                                                                         \\
       \hline
Fourier     &  0.91         & 0.9                       &                           0.96&          0.9              &  0.93                      &           0.96               &  0.97  &  0.973 &    0.98                                &                  0.97             \\
Legendre    &     0.9         & 0.91                       &                          0.92&                        0.893   & 0.91                           &           0.94    & 0.95          &0.98  &        0.97                             &        0.96   \\
Daubechies-9     &             0.87  & 0.88                       &                      0.93     &  0.91                          &                       0.91    &  0.96                         & 0.99 & 0.97  &        0.97                              &  0.96                             \\
 \hline
\end{tabular}
}
\end{center}
\caption{Simulated power under the setup (\ref{eq_testingcasesalternative}) using nominal level $0.1.$ For models 1-2, we consider the cases $\delta=0.2 $ and $\delta=0.35$, whereas for models 3-5, we use $\delta=0.5$ and $\delta=0.7.$ 
}
\label{table_power}
\end{table}

\subsection{Comparison with tests for covariance stationarity}
 
In this subsection, we compare our method with some existing works on the tests of covariance stationarity: the $\mathcal{L}^2$ distance method in \cite{DPV}, the discrete Fourier transform method in \cite{DR} and the Haar wavelet periodogram method in \cite{GN}. The first method is easy to implement; for the second method, we use the codes from the author's website (see \url{https://www.stat.tamu.edu/~suhasini/test_papers/DFT_covariance_lagl.R}); and for the third method, we employ the R package
 \texttt{locits}, which is contributed by the author.  For the purpose of comparison of accuracy, besides the five models considered in Section \ref{simu_intro}, 
 we also consider the following two strictly stationary time series. 
\begin{enumerate}
\item[6.] Linear time series: stationary ARMA(1,1) process. We consider the following  process
\begin{equation*}
x_i-0.5x_{i-1}=\epsilon_i+0.5 \epsilon_{i-1}, 
\end{equation*}
where $\epsilon_i$ are i.i.d. $\mathcal{N}(0,1)$ random variables.
\item[7.] Nonlinear time series: stationary SETAR. We consider the following model 
\begin{equation*}
x_i=
\begin{cases} 
0.4 x_{i-1}+\epsilon_i, & x_{i-1} \geq 0, \\
0.5 x_{i-1}+\epsilon_i, & x_{i-1}<0,
\end{cases}
\end{equation*}  
where $\epsilon_i$ are i.i.d. $\mathcal{N}(0,1)$ random variables. 
\end{enumerate}  
Furthermore, for the comparison of power, we  consider the following two non-stationary time series whose errors have constant variances.
\begin{enumerate}
\item[$6^{\#}$.] Non-stationary linear time series. We consider the following process
\begin{equation*}
x_i=\delta \sin (4\pi\frac{i}{n}) x_{i-1}+\epsilon_i,
\end{equation*} 
where $\epsilon_i, i=1,2,\cdots,n,$ are i.i.d. standard normal random variables. 
\item[$7^{\#}$.] Non-stationary nonlinear time series.  We consider the following process
\begin{equation*}
x_i=
\begin{cases}
\delta \sin(4 \pi \frac{i}{n}) x_{i-1}+\epsilon_i,& 1 \leq i \leq 0.75n, \\
0.4 x_{i-1}+\epsilon_i, & 0.75n<i \leq n \ \text{and} \ x_{i-1} \geq 0, \\
0.3 x_{i-1}+\epsilon_i,  & 0.75n<i \leq n \ \text{and} \ x_{i-1} < 0, \\
\end{cases}
\end{equation*}
where $\epsilon_i, i=1,2,\cdots,n,$ are i.i.d. standard normal random variables. 
\end{enumerate}

In the simulations below, we report the type I error rates under the nominal levels $0.05$ and $0.1$ for the above seven models in Table \ref{table_compare_typeone}, where for models 1-5 we use the setup (\ref{eq_testingcases}). Our simulation results are based on 1,000 repetitions, where $\mathcal{L}^2$  refers to the $\mathcal{L}^2$ distance method, DFT 1-3 refer to the discrete Fourier method using the imagery  part, real part, both imagery and real parts of the discrete Fourier transform method, HWT is the Haar wavelet periodogram method and RB is our robust bootstrap method using orthogonal wavelets constructed by (\ref{eq_meyerorthogonal}) with Daubechies-9 wavelet. 

Since HWT needs the length to be a power of two, we set the length of time series to be 256 and 512. For the $\mathcal{L}^2$ test, we use $M=8, N=32$ for $n=256$ and $M=8, N=64$ for $n=512.$ For the DFT, we choose the lag to be $0$ as suggested by the authors in \cite{DR}. Since the mean of model 5 is non-zero, we test its first order difference for the methods mentioned above. Moreover, we report the power of the above tests under certain alternatives in Table \ref{table_comprare_power} for models $6^\#-7^\#$ and models 1-5 under the setup (\ref{eq_testingcasesalternative}).

{

}

\begin{table}[ht]
\begin{center}
\setlength\arrayrulewidth{1pt}
\renewcommand{\arraystretch}{1.5}
\newcolumntype{L}[1]{>{\raggedright\arraybackslash}p{#1}}
{\fontsize{10}{10}\selectfont 
\begin{tabular}{|p{0.9cm}|p{0.75cm}p{0.7cm}p{0.7cm}p{0.7cm}p{0.7cm}p{0.8cm}|p{0.75cm}p{0.7cm}p{0.7cm}p{0.7cm}p{0.7cm}p{0.8cm}|}
\hline
      & \multicolumn{6}{c|}{$\alpha=0.1$}                                                                                                                       & \multicolumn{6}{c|}{$\alpha=0.05$}                                                                                                                        \\ \hline
Model & \multicolumn{1}{l|}{$\mathcal{L}^2$} & \multicolumn{1}{l|}{DFT1} & \multicolumn{1}{l|}{DFT2} & \multicolumn{1}{l|}{DFT3} & \multicolumn{1}{l|}{HWT} & RB & \multicolumn{1}{l|}{$\mathcal{L}^2$} & \multicolumn{1}{l|}{DFT1} & \multicolumn{1}{l|}{DFT2} & \multicolumn{1}{l|}{DFT3} & \multicolumn{1}{l|}{HWT} & RB \\ \hline
      & \multicolumn{12}{c|}{$n$=256}                                                                                                                                                                                                                                                                                         \\ \hline
1     &          0.08                            &                          0.148 &         0.057                  &                        0.13   &                          0.18 & 0.132   &                                     0.024 &                          0.067 &     0.017                      &         0.063                  &                          0.083& 0.063   \\
2     &      0.081                               &                          0.097 &    0.068                       &                          0.12 &       0.085                    &    0.12  &           0.038                           &                          0.04 &     0.07                     &          0.057                &               0.028         &  0.067   \\
3     &     0.171                                 &                          0.183 &   0.04                        &                          0.137 &                          0.227&  0.11  &                                     0.087 &                          0.103 &    0.011                      &           0.033               &                          0.093 &   0.059 \\
4     &     0.2                                 &                        0.163   &      0.05                    &                          0.12 &                   0.176       & 0.133   &                                     0.077 &                    0.087       &  0.013 &              0.034             &               0.113           & 0.068    \\
5     &   0.46                                  &                          0.293 &         0.077                  &                          0.19 &                          0.153 & 0.132  &                                     0.29 &                       0.21    &        0.03                   &             0.14              &       0.12 & 0.065   \\ 
6     &         0.11                          &                          0.105 &           0.096                &                          0.09 &                 0.087          &  0.088  &   0.047                                   &       0.053                    &        0.053                 &             0.039              &               0.052          &  0.057   \\
7     &      0.051                               &                          0.097 &    0.08                       &                          0.092 &       0.085                    &    0.127  &           0.018                           &                          0.04 &     0.06                     &          0.047                &               0.038         &  0.061   \\
\hline
      & \multicolumn{12}{c|}{$n$=512}                                                                                                                                                                                                                                                                                         \\ \hline
1     &   0.087                                   &                          0.127 &       0.03                    &                          0.13 &     0.237                     &  0.091  &                                     0.023 &      0.1                     &                          0.02 &                 0.043          &                         0.137 &  0.048  \\
2     &      0.051                             &                          0.096 &    0.085                       &                          0.093 &       0.075                    &    0.11  &           0.026                           &                          0.036 &     0.067                     &          0.044                &               0.033         &  0.052   \\
3     &            0.26                          &                          0.16 &      0.04                     &                          0.117 &       0.243                  & 0.098   &                                    0.127  &     0.1                      &                          0.007 &      0.037                     &                         0.14 & 0.054   \\
4     &  0.287                                    &                          0.167 &             0.027              &                          0.09 &         0.247                 & 0.11   &                                     0.177 &        0.103                   &                          0.013 &           0.073                &                         0.163 & 0.053    \\
5     &     0.64                                 &                          0.303 &        0.087                   &                          0.283 &          0.35                &   0.118 &                                     0.413 &      0.26                     &                          0.063 &      0.167                     &                         0.23 & 0.054   \\
6    &    0.11                                 &                          0.093 &        0.084                   &                          0.088 &      0.088                     &   0.092 &                                     0.035 &         0.046                 &   0.047      &                        0.048 &    0.053    &  0.048  \\
7     &      0.051                               & 0.087                           &          0.113             &      0.083                     &       0.093                    &  0.092  &                                     0.013 &  0.037                         &                          0.047 &       0.043                    &                        0.04 & 0.051   \\
 \hline
\end{tabular}
}
\end{center}
\caption{Comparison of accuracy for models 1-7 using different methods. 
}
\label{table_compare_typeone}
\end{table}

We first discuss the results for models 6-7 since they are not only correlation stationary but also covariance stationary. 
It can be seen from Table \ref{table_compare_typeone} that all the methods including our RB achieve a reasonable level of accuracy for the linear model 6. However, for the nonlinear model 7, we conclude from Table \ref{table_compare_typeone} that the $\mathcal{L}^2$ method loses its accuracy due to the fact that the latter test is designed only for linear models. Regarding the power in Table \ref{table_comprare_power}, for model $6^\#$, when the sample size and $\delta$ are smaller, only our RB method is powerful. When $n=256$ and $\delta$ increases, the $\mathcal{L}^2$ test starts to become powerful. Further, when both the sample size and $\delta$ increase, the HWT method becomes powerful. Similar discussion holds for model $7^\#.$ Therefore, we conclude that, when the marginal variance of the time series stays constant, even though other methods in the literature may be accurate for the purpose of testing for correlation stationarity, our RB method is generally more powerful when the sample size is moderate and/or the departure from stationary is small.

\begin{table}[ht]
\begin{center}
\setlength\arrayrulewidth{1pt}
\renewcommand{\arraystretch}{1.5}
\newcolumntype{L}[1]{>{\raggedright\arraybackslash}p{#1}}
{\fontsize{10}{10}\selectfont 
\begin{tabular}{|p{0.9cm}|p{0.75cm}p{0.7cm}p{0.7cm}p{0.7cm}p{0.7cm}p{0.8cm}|p{0.75cm}p{0.7cm}p{0.7cm}p{0.7cm}p{0.7cm}p{0.8cm}|}
\hline
      & \multicolumn{6}{c|}{$\delta=0.2/0.5$}                                                                                                                       & \multicolumn{6}{c|}{$\delta=0.35/0.7$}                                                                                                                        \\ \hline
Model & \multicolumn{1}{l|}{$\mathcal{L}^2$} & \multicolumn{1}{l|}{DFT1} & \multicolumn{1}{l|}{DFT2} & \multicolumn{1}{l|}{DFT3} & \multicolumn{1}{l|}{HWT} & RB & \multicolumn{1}{l|}{$\mathcal{L}^2$} & \multicolumn{1}{l|}{DFT1} & \multicolumn{1}{l|}{DFT2} & \multicolumn{1}{l|}{DFT3} & \multicolumn{1}{l|}{HWT} & RB \\ \hline
      & \multicolumn{12}{c|}{n=256}                                                                                                                                                                                                                                                                                         \\ \hline
1     &      0.263                                &                          0.14 &        0.03                   &                          0.07 &                  0.3         & 0.81  &                             0.503       &                          0.113 &               0.053            &       0.089                   &                          0.4 & 0.97   \\
2     &      0.183                               &                          0.497 &    0.08                       &                          0.092 &       0.585                    &    0.81  &           0.68                           &                          0.14 &     0.06                     &          0.047                &               0.38         &  0.96   \\
3     &         0.44                             &                          0.153 &           0.04                &                          0.16 &                   0.393       &  0.86  &                          0.7           &                          0.14 &               0.05            &        0.09                   &                         0.64 &    0.983 \\
4     &    0.603                                  &                         0.16 &         0.04                  &                          0.203 &            0.44              &  0.81 &                             0.86        &                 0.2          &          0.07                 &           0.12               &       0.647                  &  0.98  \\
5     &      0.92                                &                          0.243 &             0.143              &                          0.24 &        0.57                 &    0.81 &                           0.997           &                          0.347 &            0.193               &       0.397                    &                         0.797 &  0.98   \\
$6^{\#}$     &      0.697                                &                          0.12 &             0.093             &                          0.11 &        0.327                  & 0.86   &                                    0.923 &  0.16                         &            0.15               &       0.15                    &                         0.563 & 0.94     \\
$7^{\#}$     &      0.463                                &                          0.137 &             0.107             &                          0.133 &        0.273                 &  0.85   &                           0.81          &                          0.193 &            0.203             &       0.223                   &                         0.483 &  0.96   \\
 \hline
      & \multicolumn{12}{c|}{n=512}                                                                                                                                                                                                                                                                                         \\ \hline
1     &     0.477                                 &                          0.173 &        0.04                   &                          0.08 &            0.52              &  0.87  &                                     0.857 &         0.137                  &                          0.03 &         0.1                  &                         0.75 &  0.96  \\
2     &      0.51                               &                          0.297 &    0.082                       &                          0.092 &       0.385                    &    0.88  &           0.918                           &                          0.24 &     0.06                     &          0.047                &               0.838         &  0.99   \\
3     &        0.657                              &                          0.24 &          0.05                 &                          0.083 &                  0.61        & 0.93   &                                     0.96 &       0.17                    &                          0.24 &      0.113                     &                         0.95 &  0.97  \\
4     &   0.84                                   &                          0.23 &                    0.043       &                          0.143 &               0.773           &0.91    &                                     0.987 &      0.293                     &                          0.053 &             0.19              &                         0.97 &0.97    \\
5     &          0.963                           &                          0.297 &  0.127                         &                          0.263 &  0.87                         &  0.91  & 0.983                                      &          0.523                 &     0.24                      &       0.478 &    0.994 & 0.96  \\
$6^{\#}$     & 0.847                                 &                          0.147 &             0.087          &                          0.103 &          0.67              &  0.88   &                                     0.95 &               0.13            &                      0.09 &                         0.133 &                         0.963 &  0.95   \\
$7^{\#}$     &    0.69                               &                          0.14 &         0.13                  &                          0.217 &       0.383                &  0.91 &                                     0.953 &            0.3             &                          0.313 &           0.383                &                         0.823 &  0.943    \\ 
 \hline
\end{tabular}
}
\end{center}
\caption{ Comparison of power at nominal level 0.1 using different methods. 
}
\label{table_comprare_power}
\end{table}

Next, we study models 1-5 from Section \ref{simu_intro}. None of these models is covariance stationary. For the type I error rates, we use the setting (\ref{eq_testingcases}) where  all the models are correlation stationary. For the power, we use the setup (\ref{eq_testingcasesalternative}). We find that DFT-3 is accurate for models 1-4 but with low power across all the models. Moreover, the $\mathcal{L}^2$ test seems to have a high power for models 3-5. But this is at the cost of blown-up type I error rates. This inaccuracy increases when the sample size becomes larger. For the HWT method, even though its power becomes larger when the sample size and $\delta$ increase, it also loses its accuracy. Finally, for all the models 1-5, our RB method both obtain high accuracy and power. In summary, most of the existing tests for covariance stationarity are not suitable for the purpose of testing for correlation stationarity. From our  simulation studies, our robust bootstrap method performs well for the latter purpose.

\section{Empirical illustrations}\label{sec:realdata}

\subsection{Global temperature data} 
In this first application, we study the global temperature time series using the dataset Global component of Climate at a Glance (GCAG). As explained on the website of National Oceanic and Atmospheric Administration (NOAA) \footnote{\url{https://www.ncdc.noaa.gov/cag/global/data-info}}, GCAG comes from the Global Historical Climatology Network-Monthly (GHCN-M) Data Set and International Comprehensive Ocean-Atmosphere Data Set (ICOADS), which have data from 1880 to the present. These two datasets are blended into a single product to produce the combined global land and ocean temperature anomalies.
The term \textit{temperature anomaly} means a departure from a reference value or long-term average. 

 The available time series of global-scale temperature anomalies are calculated with respect to the 20th century average \cite{SRPL}, while the mapping tool displays global-scale temperature anomalies with respect to the 1981-2016 based period. ( see \url{https://datahub.io/core/global-temp#readme} for the dataset). This dataset is a global-scale climate diagnostic tool and provides a big picture overview of average global temperatures compared to a reference value.

\begin{figure}[H]
\centering
\includegraphics[width=10cm,height=4cm]{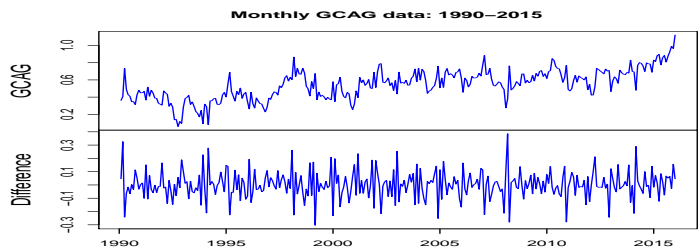}
\caption{Monthly (1990-2015) global temperature using data set GCAG. }
\label{annal_timeseries}
\end{figure}
We study the monthly time series from this dataset for the time period  1990-2015 (Figure \ref{annal_timeseries}). 
As indicated from the above figure, the  global temperature has an increasing trend and we consider its first order difference.

\begin{figure}[ht]
\centering
\includegraphics[width=13cm,height=7cm]{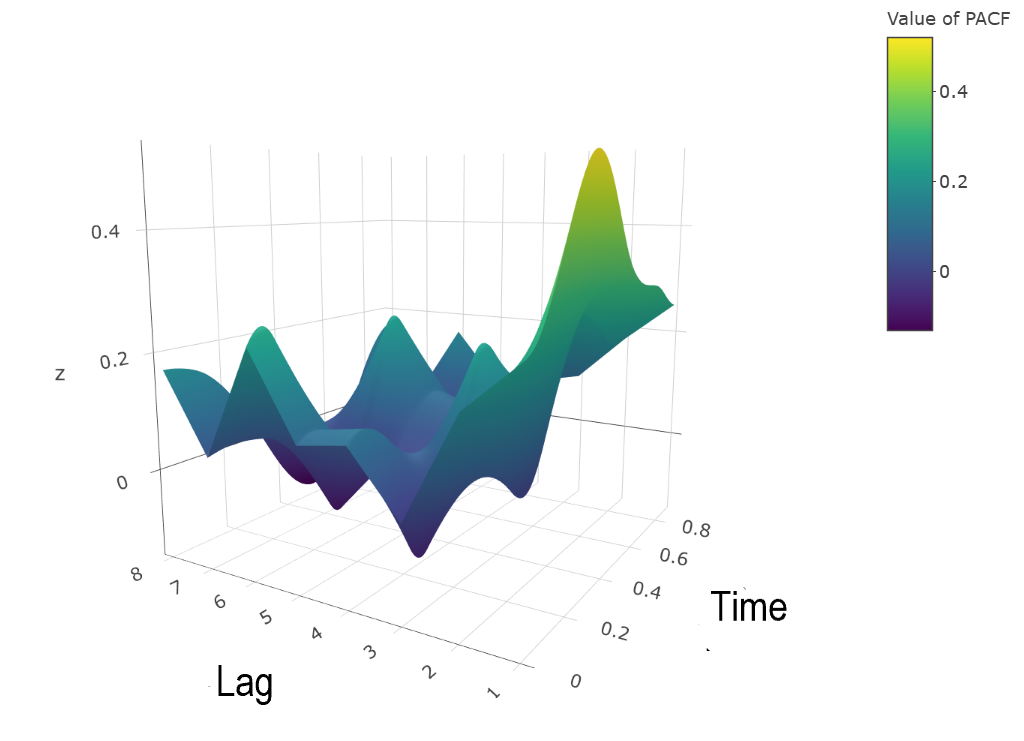}
\caption{PACF plots for the first order difference of the Monthly (1990--2015)  global temperature dataset.  It can be seen that the first few lags of the PACF are larger than the other lags at any time. Here we use the orthogonal wavelets (\ref{eq_meyerorthogonal}) with Daubechies-9 wavelet with $J_n=3.$}
\label{pacf_exam1}
\end{figure}

{
Then we apply the methodologies described in Sections \ref{sec:predict} and \ref{sec:test} to study the time series.  We first employ the methods from Section \ref{sec:test} to test whether this time series is correlation stationary.  There are three parameters, $b, c$ and $m$ needed to be properly chosen.  Especially, we make use of the PACF defined in  Definition \ref{defn_pacf} to choose $b$ (c.f. Section \ref{sec_estimationpacf}). In Figure \ref{pacf_exam1}, we make a 3-D plot of the PACF for the time series (first order difference) between 1990 and 2015. It can be seen that the temporal dependence of this time series decays uniformly in time. For the sieve basis functions, we use the orthogonal wavelets constructed by (\ref{eq_meyerorthogonal}) with Daubechies-9 wavelet. The tuning parameters $b,c$ and $m$ are chosen according to Section \ref{sec:choiceparameter} which yields $b=5$, $J_n=3$ (i.e. $c=8$) and $m=10$. We apply the bootstrap procedure described in the end of Section \ref{sec_bootstrapping} to test the stationarity of the correlation and find that the $p$-value is $0.026$. We hence conclude that the prediction is unstable during this time period. 

}

{
Next, we use time series 1990-2015  as the training dataset to study the prediction performance over the year 2016, i.e., we do a one-step ahead prediction for each month of 2016 and take the average of the square error.  We use the data-driven approach as described in Section \ref{sec:choiceparameter} to choose $b=6$ and $J_n=3.$ The MSE of our prediction 
is  $0.381.$  We compare this result with the methods mentioned in  Section \ref{sec:predictioncompare} and record the results in Table \ref{table:tem}. We find that our prediction performs better than the other methods. Especially, we get a $16.6 \%$ improvement compared to simply fitting a stationary model using all the time series from 1990 to 2015 (SBLP).    

\begin{table}[H]
\center{
\begin{threeparttable}
\begin{tabular}{cccccc}
\hline
Method & {\bf Sieve} & TTVAR & LSW & SNSTS & SBLP \\ \hline
MSE  &  {\bf 0.381}       &    0.3913        &   0.3851            &      0.3969        &  0.45706             \\ 
\hline
\end{tabular}
\end{threeparttable}}
\caption{Comparison of prediction accuracy for GCAG. We refer to Section \ref{sec:predictioncompare} for the short-hand notation of the names of the methods. 
}\label{table:tem}
\end{table}
}

\subsection{Stock return data of Nigerian Breweries}
In the second application, we study the stock return data of the Nigerian Breweries (NB) Plc. This stock is traded in Nigerian Stock Exchange (NSE). Regarding on market
returns, the brewery industry in Nigerian has done pretty well in
outperforming Brazil, Russia, India, and China (BRIC)
and emerging markets by a wide margin over the past
ten years. Nigerian Breweries Plc is the  largest brewing company in Nigeria, which  mainly serves the Nigerian market and also exports to other parts of West Africa. 
The data can be found on the website of morningstar  
(see  \url{http://performance.morningstar.com/stock/performance-return.action?p=price_history_page&t=NIBR&region=nga&culture=en-US}). We are interested in predicting the volatility of the NB stock. We shall study the absolute value of the  daily log-return of the stock for the latter purpose.

\begin{figure}[ht]
\centering
\includegraphics[width=10cm,height=4cm]{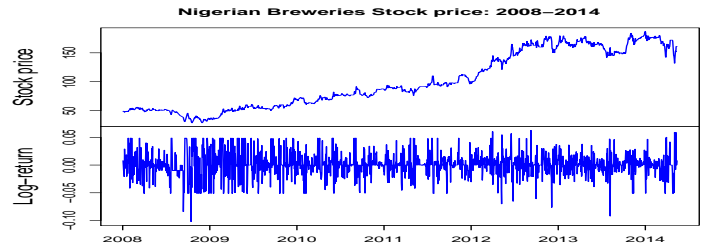}
\caption{Nigerian Breweries stock return from 2008 to 2014. }
\label{0814_timeseries}
\end{figure}
{
We perform our analysis on the time period 2008-2014 (Figure \ref{0814_timeseries}). This time series  contains  the data of the 2008 global financial crisis and its post period. As said in the report from the Heritage Foundation \cite{Report11}, "the economy is experiencing the slowest recovery in 70 
years" and even till 2014, the economy does not fully recover.    
}

\begin{figure}[h]
\centering
\includegraphics[width=13cm,height=7cm]{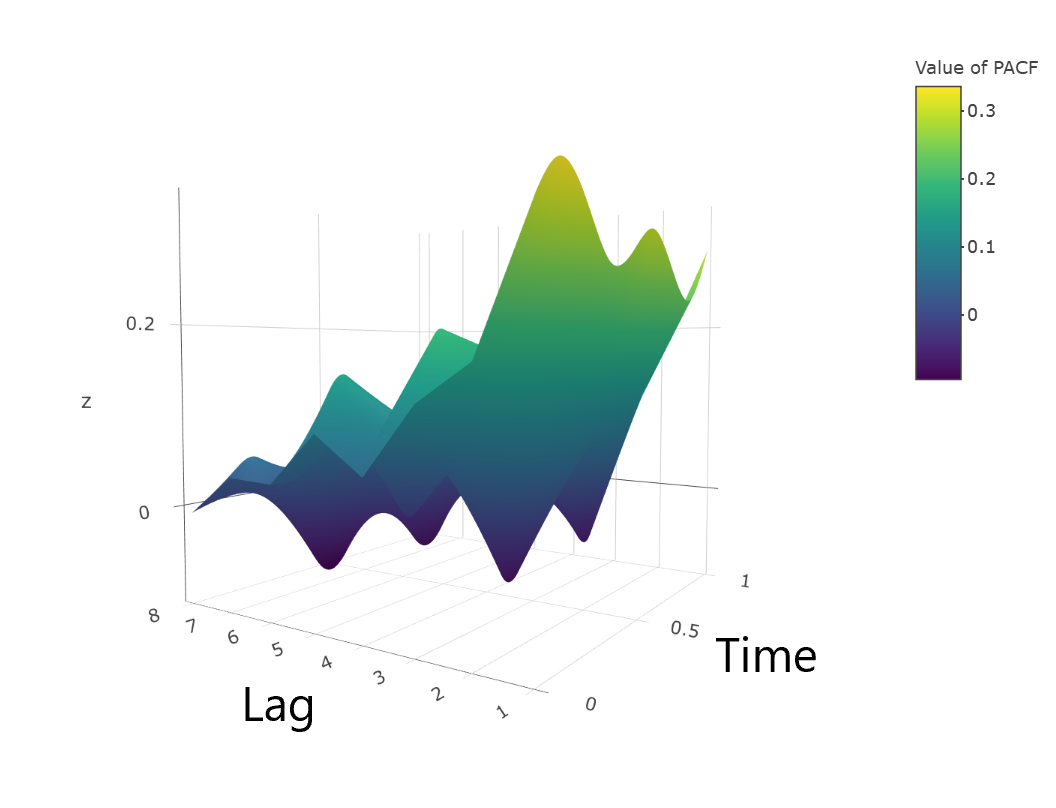}
\caption{PACF plots for the absolute values of the first order differences of the logarithms  of the Nigerian Breweries stock return datatset (2008-2014).  
}
\label{pacf_exam2}
\end{figure}

{Then we apply the methodologies described in Sections \ref{sec:predict} and \ref{sec:test} for the absolute values of log-return time series. It is clear that we need to fit a mean curve for this model. In Figure \ref{pacf_exam2}, we make a 3-D plot of the PACF for the time series between 2008 and 2014. It can be seen that the temporal dependence of this  time series decays uniformly in time. Then we test the stability of the best linear prediction as described in Section \ref{sec:test}. For the sieve basis functions, we use the orthogonal wavelets constructed by (\ref{eq_meyerorthogonal}) with Daubechies-9 wavelet. We choose the parameters $b,c$ and $m$ based on the discussion of Section \ref{sec:choiceparameter} which yields $b=7,$ $J_n=5$ (i.e., $c=32$) and $m=18$. We apply the bootstrap procedure described in the end of Section \ref{sec_bootstrapping} and find that the $p$-value is $0.0825$. We hence conclude that the prediction is likely to be unstable during this time period. }

{
Next, we use the time series 2008-2014 as the training dataset to study the prediction performance over the first month of 2015. We employ the data-driven approach from Section \ref{sec:choiceparameter} to choose $b=5$ and $J_n=3.$
The MSE is  $0.194.$  We compare this result with the methods mentioned in  Section \ref{sec:predictioncompare} and record the results in Table \ref{table:fin}. We find that our prediction performs better than the other methods. Especially,  we get a $24.5 \%$ improvement compared to simply fitting a stationary model using all the time series from 2008 to 2014.    
\begin{table}[H]
\center{
\begin{threeparttable}
\begin{tabular}{cccccc}
\hline
Method & {\bf Sieve} & TTVAR & LSW & SNSTS & SBLP \\ \hline
MSE  &  {\bf 0.194}       &    0.198        &   0.198            &      0.202        &  0.257             \\ 
\hline
\end{tabular}
\end{threeparttable}}
\caption{Comparison of prediction accuracy for GCAG. We refer to Section \ref{sec:predictioncompare} for the short-hand notations of the names of the methods. For SBLP,  we use all the time series from 2008 to 2014 to fit a stationary ARMA model. }\label{table:fin}
\end{table}

Finally, we further study the absolute value of the stock return from 2012 to 2014. We apply our bootstrap procedure described in the end of Section \ref{sec_bootstrapping} to test correlation stationarity of the time series. We select $b=6,$ $J_n=4$ (i.e., $c=16$) and $m=12$ for this sub-series and find that the $p$-value is $0.599$. We hence conclude that the prediction is stable during this time period. Therefore, we fit a best stationary ARMA model to this sub-series and do the prediction. This yields an MSE of 0.195. We find that our sieve method is still slightly better. The result from this sub-series shows an interesting trade-off between forecasting using a shorter and stationary time series and a longer but non-stationary series. The forecast model of the shorter stationary period can be estimated at a faster rate but at the expense of a smaller sample size.  The opposite happens to the longer non-stationary period. Note that 2012-2014 is nearly half as long as 2008-2014 and hence the length of the shorter stationary period is substantial compared to that of the long period. In this case we see that the forecasting accuracy using the short period is comparable to that of the longer period. In many applications where the data generating mechanism is constantly changing, the stable period is typically very short and a nonparametric model for the longer period is preferred. Finally, we emphasize that the correlation stationarity test is an important tool to decide a period of prediction stability.


}

\vspace{3pt}


%

{\footnotesize
\bibliographystyle{abbrvnat}
\setcitestyle{authoryear}

\bibliography{corrtest}

\begin{thebibliography}{57}
\providecommand{\natexlab}[1]{#1}
\providecommand{\url}[1]{\texttt{#1}}
\expandafter\ifx\csname urlstyle\endcsname\relax
  \providecommand{\doi}[1]{doi: #1}\else
  \providecommand{\doi}{doi: \begingroup \urlstyle{rm}\Url}\fi

\bibitem[Baxter(1962)]{Baxter_1962}
G.~Baxter.
\newblock An asymptotic result for the finite predictor.
\newblock \emph{Mathematica Scandinavica}, 10:\penalty0 137--144, 1962.

\bibitem[Baxter(1963)]{baxter1963}
G.~Baxter.
\newblock A norm inequality for a “finite-section” wiener-hopf equation.
\newblock \emph{Illinois J. Math.}, 7\penalty0 (1):\penalty0 97--103, 1963.

\bibitem[Bell(2004)]{BWbook}
W.~W. Bell.
\newblock \emph{Special Functions for Scientists and Engineers (Dover Books on
  Mathematics)}.
\newblock Dover Publications, 2004.

\bibitem[Bishop(2013)]{bishop2013pattern}
C.~Bishop.
\newblock \emph{Pattern Recognition and Machine Learning}.
\newblock Information science and statistics. Springer, 2013.

\bibitem[Brockwell and Davis(1987)]{BD}
P.~Brockwell and R.~Davis.
\newblock \emph{Time series: Theory and Methods}.
\newblock Springer-Verlag, 1987.

\bibitem[Brockwell and Davis(2002)]{tsbookfore}
P.~Brockwell and R.~Davis.
\newblock \emph{Introduction to Time Series and Forecasting}.
\newblock Springer Texts in Statistics. Springer--Verlag, 2nd edition, 2002.

\bibitem[Cai et~al.(2016)Cai, Liu, and Zhou]{cai2016}
T.~T. Cai, W.~Liu, and H.~H. Zhou.
\newblock Estimating sparse precision matrix: Optimal rates of convergence and
  adaptive estimation.
\newblock \emph{Ann. Statist.}, 44:\penalty0 455--488, 2016.

\bibitem[Chandrasekaran and Ipsen(1995)]{num_paper}
S.~Chandrasekaran and I.~C.~F. Ipsen.
\newblock On the sensitivity of solution components in linear systems of
  equations.
\newblock \emph{SIAM Journal on Matrix Analysis and Applications}, 16\penalty0
  (1):\penalty0 93--112, 1995.

\bibitem[Chen and Fang(2011)]{CF}
L.~Chen and X.~Fang.
\newblock Multivariate normal approximation by stein's method: The
  concentration inequality approach.
\newblock \emph{arXiv preprint arXiv:1111.4073}, 2011.

\bibitem[Chen(2007)]{CXH}
X.~Chen.
\newblock \emph{Large Sample Sieve Estimation of Semi-nonparametric Models}.
\newblock Chapter 76 in Handbook of Econometrics, Vol. 6B, James J. Heckman and
  Edward E. Leamer, 2007.

\bibitem[Chen and Christensen(2015)]{CC}
X.~Chen and T.~M. Christensen.
\newblock Optimal uniform convergence rates and asymptotic normality for series
  estimators under weak dependence and weak conditions.
\newblock \emph{Journal of Econometrics}, 188\penalty0 (2):\penalty0 447 --
  465, 2015.

\bibitem[Chen et~al.(2013)Chen, Xu, and Wu]{chen2013}
X.~Chen, M.~Xu, and W.~B. Wu.
\newblock Covariance and precision matrix estimation for high-dimensional time
  series.
\newblock \emph{Ann. Statist.}, 41\penalty0 (6):\penalty0 2994--3021, 2013.

\bibitem[Cheng and Pourahmadi(1993)]{Cheng1993}
R.~Cheng and M.~Pourahmadi.
\newblock {B}axter's inequality and convergence of finite predictors of
  multivariate stochastic processess.
\newblock \emph{Probability Theory and Related Fields}, 95\penalty0
  (1):\penalty0 115--124, 1993.

\bibitem[Das and Politis(2017)]{DP}
S.~Das and D.~Politis.
\newblock Predictive inference for locally stationary time series with an
  application to climate data.
\newblock \emph{arXiv preprint arXiv:1712.02383}, 2017.

\bibitem[Daubechies(1988)]{ID98}
I.~Daubechies.
\newblock Orthonormal bases of compactly supported wavelets.
\newblock \emph{Commun. Pure Appl. Math.}, 41:\penalty0 909--996, 1988.

\bibitem[Daubechies(1992)]{ID92}
I.~Daubechies.
\newblock \emph{Ten Lectures on Wavelets}.
\newblock SIAM series: CBMS-NSF Regional Conference Series in Applied
  Mathematics, 1992.

\bibitem[Demko et~al.(1984)Demko, Moss, and Smith]{DMS}
S.~Demko, W.~Moss, and P.~Smith.
\newblock Decay rates for inverses of band matrices.
\newblock \emph{Math. Comput.}, 43:\penalty0 491--499, 1984.

\bibitem[Dette et~al.(2011)Dette, Preubb, and Vetter]{DPV}
H.~Dette, P.~Preubb, and M.~Vetter.
\newblock A measure of stationarity in locally stationary processes with
  applications to testing.
\newblock \emph{J. Am. Stat. Assoc.}, 106:\penalty0 1113--1124, 2011.

\bibitem[Dette et~al.(2019)Dette, Wu, and Zhou]{MR3931381}
H.~Dette, W.~Wu, and Z.~Zhou.
\newblock Change point analysis of correlation in non-stationary time series.
\newblock \emph{Statist. Sinica}, 29\penalty0 (2):\penalty0 611--643, 2019.

\bibitem[Ding and Zhou(2019)]{DZ1}
X.~Ding and Z.~Zhou.
\newblock Estimation and inference for precision matrices of non-stationary
  time series.
\newblock \emph{Ann. Statist. (to appear)}, 2019.

\bibitem[Dwivedi and Rao(2011)]{DR}
Y.~Dwivedi and S.~S. Rao.
\newblock A test for second--order stationarity of a time series based on the
  discrete {F}ourier transform.
\newblock \emph{Journal of Time Series Analysis}, 32:\penalty0 68--91, 2011.

\bibitem[Fan and Yao(2003)]{FY}
J.~Fan and Q.~Yao.
\newblock \emph{Nonlinear Time Series: Nonparametric and Parametric Methods}.
\newblock Springer, 2003.

\bibitem[Fang(2016)]{FX}
X.~Fang.
\newblock A multivariate {CLT} for bounded decomposable random vectors with the
  best known rate.
\newblock \emph{Journal of Theoretical Probability}, 29\penalty0 (4):\penalty0
  1510--1523, 2016.

\bibitem[Fryzlewicz et~al.(2003)Fryzlewicz, Van~Bellegem, and von Sachs]{FBS}
P.~Fryzlewicz, S.~Van~Bellegem, and R.~von Sachs.
\newblock Forecasting non-stationary time series by wavelet process modelling.
\newblock \emph{Annals of the Institute of Statistical Mathematics},
  55\penalty0 (4):\penalty0 737--764, 2003.

\bibitem[Grenander and Szeg{\"o}(2001)]{grenander2001toeplitz}
U.~Grenander and G.~Szeg{\"o}.
\newblock \emph{Toeplitz Forms and Their Applications}.
\newblock AMS Chelsea Publishing Series. University of California Press, 2001.

\bibitem[Hansen(2014)]{BH}
B.~Hansen.
\newblock \emph{Nonparametric Sieve Regression: Least Squares, Averaging Least
  Sqaures, and Cross-Validation}.
\newblock Chapter 8: The {O}xford Handbook of Applied Nonparametric and
  Semiparametric Econometrics and Statistics, 2014.

\bibitem[Inoue et~al.(2018)Inoue, Kasahara, and Pourahmadi]{inoue2018}
A.~Inoue, Y.~Kasahara, and M.~Pourahmadi.
\newblock Baxter’s inequality for finite predictor coefficients of
  multivariate long-memory stationary processes.
\newblock \emph{Bernoulli}, 24\penalty0 (2):\penalty0 1202--1232, 2018.

\bibitem[Kac(1954)]{kac1954}
M.~Kac.
\newblock Toeplitz matrices, translation kernels and a related problem in
  probability theory.
\newblock \emph{Duke Math. J.}, 21\penalty0 (3):\penalty0 501--509, 1954.

\bibitem[{Kley} et~al.(2016){Kley}, {Preu{\ss}}, and {Fryzlewicz}]{KPF}
T.~{Kley}, P.~{Preu{\ss}}, and P.~{Fryzlewicz}.
\newblock {Predictive, finite-sample model choice for time series under
  stationarity and non-stationarity}.
\newblock \emph{arXiv preprint arXiv 1611.04460}, 2016.

\bibitem[Kreiss(1988)]{kreiss1988}
J.-P. Kreiss.
\newblock Asymptotical inference for a class of stochastic processes.
\newblock \emph{Habilitationsschrift}, Universit{\" a}t Hamburg, 1988.

\bibitem[Kreiss et~al.(2011)Kreiss, Paparoditis, and Politis]{kreiss2011}
J.-P. Kreiss, E.~Paparoditis, and D.~N. Politis.
\newblock On the range of validity of the autoregressive sieve bootstrap.
\newblock \emph{Ann. Statist.}, 39\penalty0 (4):\penalty0 2103--2130, 2011.

\bibitem[Liu and Lin(2009)]{LL}
W.~Liu and Z.~Lin.
\newblock Strong approximation for a class of stationary processes.
\newblock \emph{Stochastic Process. Appl.}, 119:\penalty0 249--280, 2009.

\bibitem[Meyer et~al.(2015)Meyer, McMurry, and Politis]{Meyer2015}
M.~Meyer, T.~McMurry, and D.~Politis.
\newblock Baxter's inequality for triangular arrays.
\newblock \emph{Mathematical Methods of Statistics}, 24\penalty0 (2):\penalty0
  135--146, 2015.

\bibitem[Meyer(1990)]{MR1085487}
Y.~Meyer.
\newblock \emph{Ondelettes et op\'{e}rateurs. {I}}.
\newblock Actualit\'{e}s Math\'{e}matiques. Hermann, Paris, 1990.
\newblock Ondelettes.

\bibitem[Nason(2013)]{GN}
G.~Nason.
\newblock A test for second--order stationarity and approximate confidence
  intervals for localized autocovariances for locally stationary time series.
\newblock \emph{J.R. Statist. Soc. B}, 75:\penalty0 879--904, 2013.

\bibitem[Paparoditis(2010)]{EP2010}
E.~Paparoditis.
\newblock Validating stationarity assumptions in time series analysis by
  rolling local periodograms.
\newblock \emph{Journal of the American Statistical Association}, 105\penalty0
  (490):\penalty0 839--851, 2010.

\bibitem[Politis et~al.(1999)Politis, Wolf, Romano, Wolf, Bickel, Diggle, and
  Fienberg]{politis1999subsampling}
D.~Politis, D.~Wolf, J.~Romano, M.~Wolf, P.~Bickel, P.~Diggle, and S.~Fienberg.
\newblock \emph{Subsampling}.
\newblock Springer Series in Statistics. Springer New York, 1999.

\bibitem[Pourahmadi(2001)]{pourahmadi2001foundations}
M.~Pourahmadi.
\newblock \emph{Foundations of Time Series Analysis and Prediction Theory}.
\newblock Wiley Series in Probability and Statistics. Wiley, 2001.

\bibitem[Quandt(1972)]{REQ}
R.~Quandt.
\newblock A new approach to estimating switching regressions.
\newblock \emph{J. Am. Stat. Assoc.}, 67:\penalty0 306--310, 1972.

\bibitem[Rosenblatt(1952)]{rosenblatt1952}
M.~Rosenblatt.
\newblock Remarks on a multivariate transformation.
\newblock \emph{Ann. Math. Statist.}, 23\penalty0 (3):\penalty0 470--472, 1952.

\bibitem[Roueff and Sanchez-Perez(2018)]{RSP}
F.~Roueff and A.~Sanchez-Perez.
\newblock Prediction of weakly locally stationary processes by auto-regression.
\newblock \emph{Lat. Am. J. Probab. Math. Stat.}, 15:\penalty0 1215--1239,
  2018.

\bibitem[Sherk(2014)]{Report11}
J.~Sherk.
\newblock \emph{Not Looking for Work: Why Labor Force Participation Has Fallen
  During the Recovery}.
\newblock
  \url{https://www.heritage.org/jobs-and-labor/report/not-looking-work-why-labor-force-participation-has-fallen-during-the-recovery},
  2014.

\bibitem[Smith et~al.(2008)Smith, Reynolds, Peterson, and Lawrimore]{SRPL}
T.~Smith, R.~Reynolds, T.~Peterson, and J.~Lawrimore.
\newblock Improvements to {NOAA}'s {H}istorical {M}erged {L}and-{O}cean
  {S}urface {T}emperature {A}nalysis (1880--2006).
\newblock \emph{J. Clim.}, 21:\penalty0 249--280, 2008.

\bibitem[Stone(1982)]{stone1982}
C.~J. Stone.
\newblock Optimal global rates of convergence for nonparametric regression.
\newblock \emph{Ann. Statist.}, 10\penalty0 (4):\penalty0 1040--1053, 1982.

\bibitem[Tasaki(2009)]{HT}
H.~Tasaki.
\newblock Convergence rates of approximate sums of {R}iemann integrals.
\newblock \emph{J. Approx. Theory}, 161:\penalty0 477--490, 2009.

\bibitem[Toeplitz(1911)]{MR1511625}
O.~Toeplitz.
\newblock Zur {T}heorie der quadratischen und bilinearen {F}ormen von
  unendlichvielen {V}er\"{a}nderlichen.
\newblock \emph{Math. Ann.}, 70\penalty0 (3):\penalty0 351--376, 1911.

\bibitem[Tong(2011)]{HT2011}
H.~Tong.
\newblock Threshold models in time series analysis - 30 years on.
\newblock \emph{Stat. Interface}, 4:\penalty0 107--118, 2011.

\bibitem[Wiener and Masani(1958)]{Wiener1958}
N.~Wiener and P.~Masani.
\newblock The prediction theory of multivariate stochastic processes, {II}.
\newblock \emph{Acta Mathematica}, 99\penalty0 (1):\penalty0 93--137, 1958.

\bibitem[Wu(2005)]{WW}
W.~Wu.
\newblock Nonlinear system theory: Another look at dependence.
\newblock \emph{Proc Natl Acad Sci U S A.}, 40:\penalty0 14150--14151, 2005.

\bibitem[Xiao and Wu(2012)]{XW}
H.~Xiao and W.~B. Wu.
\newblock Covariance matrix estimation for stationary time series.
\newblock \emph{Ann. Statist.}, 40\penalty0 (1):\penalty0 466--493, 2012.

\bibitem[{Xu} et~al.(2014){Xu}, {Zhang}, and {Wu}]{XZW}
M.~{Xu}, D.~{Zhang}, and W.~{Wu}.
\newblock {$L^2$ Asymptotics for High-Dimensional Data}.
\newblock \emph{arXiv preprint arXiv 1405.7244}, 2014.

\bibitem[Yuan(2010)]{Yuan2010}
M.~Yuan.
\newblock High dimensional inverse covariance matrix estimation via linear
  programming.
\newblock \emph{J. Mach. Learn. Res.}, 11:\penalty0 2261--2286, 2010.

\bibitem[Zhou(2013{\natexlab{a}})]{ZZ}
Z.~Zhou.
\newblock Inference for non-stationary time series auto regression.
\newblock \emph{Journal of Time Series Analysis}, 34:\penalty0 508--516,
  2013{\natexlab{a}}.

\bibitem[Zhou(2013{\natexlab{b}})]{ZZ1}
Z.~Zhou.
\newblock Heteroscedasticity and autocorrelation robust structural change
  detection.
\newblock \emph{J. Am. Stat. Assoc.}, 108:\penalty0 726--740,
  2013{\natexlab{b}}.

\bibitem[Zhou(2014)]{ZZ2}
Z.~Zhou.
\newblock Inference of weighted {V}-statistics for nonstationary time series
  and its applications.
\newblock \emph{Ann. Stat.}, 1:\penalty0 87--114, 2014.

\bibitem[Zhou and Wu(2009)]{WZ1}
Z.~Zhou and W.~Wu.
\newblock Local linear quantile estimation for non-stationary time series.
\newblock \emph{Ann. Stat.}, 37:\penalty0 2696--2729, 2009.

\bibitem[Zhou and Wu(2010)]{WZ2}
Z.~Zhou and W.~Wu.
\newblock Simultaneous inference of linear models with time varying
  coefficents.
\newblock \emph{J.R. Statist. Soc. B}, 72:\penalty0 513--531, 2010.

\end{thebibliography}
}

\newpage
\begin{center}
{\Large Supplementary material for \\
Globally optimal and adaptive short-term forecasting of locally stationary time series and a test for its stability}
\end{center}
This supplementary material contains further explanation,  auxiliary lemmas and technical proofs for the main results of the paper.

\begin{appendix}

\renewcommand{\theequation}{S.\arabic{equation}}
\renewcommand{\thetable}{S.\arabic{table}}
\renewcommand{\thefigure}{S.\arabic{figure}}
\renewcommand{\thesection}{S.\arabic{section}}
\renewcommand{\thelem}{S.\arabic{lemma}}

\section{A few further remarks}\label{sec_appendix_one}
{ First, we will need the following further assumption in the paper. 

\begin{assu}\label{assu_basis} We assume that the following assumptions hold true for the sieve basis functions and parameters: \\
(1). For any $k=1,2,\cdots, b, $ denote $\Sigma^k(t) \in \mathbb{R}^{k \times k}$ whose $(i,j)$-th entry is  $\Sigma^k_{ij}(t)=\gamma(t, |i-j|),$ we assume that the eigenvalues of 
$$ \int_0^1 \Sigma^k(t) \otimes \left( \mathbf{B}(t) \mathbf{B}^*(t) \right),$$
are bounded above and also away from zero by a universal constant $\kappa>0$.\\
(2).  There exist constants $\omega_1, \omega_2 \geq 0,$ for some constant $C>0,$ we have 
\begin{equation*}
\sup_t || \nabla \mathbf{B}(t) || \leq C n^{\omega_1} c^{\omega_2}.
\end{equation*}
(3). We assume that for $\tau$ defined in Assumption \ref{phy_generalts}, $d$ defined in Assumption \ref{assu_smmothness} and $\alpha_1$ defined in (\ref{eq_defnc}), there exists a large constant $C>2,$ such that
\begin{equation*}
\frac{C}{\tau}+\alpha_1<1 \ \text{and} \ d \alpha_1>2.
\end{equation*}
\end{assu}
{We mention that the above assumptions are mild and easy to check. First, (1) of Assumption \ref{assu_basis} guarantees the invertibility of the design matrix $Y$ and the existence of the OLS solution. It can be easily verified that for the linear non-stationary process (\ref{ex_linear}), (1) will be satisfied if $\sup_t \sum |a_j(t)|<1.$ (See \cite[Lemma 3.4]{DZ1} for detailed discussion.)  
Second, (2) is a mild regularity condition on the sieve basis functions and satisfied by the commonly  used basis functions. We refer the readers to \cite[Assumption 4]{CC} for further details. (3)  can be easily satisfied by choosing $C<\tau$
and $\alpha_1$ accordingly. When the physical dependence is of exponential decay,
we only need  $d \alpha_1>2.$ We refer the readers to \cite[Assumption 3.5]{DZ1} for more details. }

Second, the accuracy of the robust bootstrap in Section \ref{sec_bootstrapping} is determined by the closeness of its conditional covariance structure to that of $\Omega$. Following  \cite[Section 4.1.1]{ZZ1}, we shall use
\begin{equation}\label{eq_omegadefn}
\mathcal{L}(m)=\left| \left|\widehat{\Omega}-\Omega\right| \right|,
\end{equation}
where $\widehat{\Omega}$ is defined in (\ref{eq_widehatomega}), to quantify the latter closeness. The following theorem establishes the bound for $\mathcal{L}(m)$. Its proof will be given in Section \ref{sec_mainproof}.

\begin{thm}[Optimal choice of $m$] \label{thm_choicem} Under the assumptions of Theorem \ref{thm_bootstrapping}, we have 
\begin{equation*}
\mathcal{L}(m)=O \left( b \zeta_c^2 \Big( \sqrt{\frac{m}{n}}+\frac{1}{m} \Big)  \right).
\end{equation*}
Consequently, the optimal choice is $\widehat{m}=O(n^{1/3}).$
\end{thm}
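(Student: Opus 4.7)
The plan is to split $\widehat{\Omega}-\Omega$ into a bias term $\mathbb{E}\widehat{\Omega}-\Omega$ and a stochastic fluctuation $\widehat{\Omega}-\mathbb{E}\widehat{\Omega}$, and to bound each in operator norm separately. The bias will be responsible for the $1/m$ piece and the stochastic fluctuation for the $\sqrt{m/n}$ piece, while the prefactor $b\zeta_c^2$ will arise in both from the dimension of $\bm h_i\in\mathbb{R}^{b+1}$ together with the Kronecker factor $\mathbf{B}(i/n)\mathbf{B}^*(i/n)$ (which has operator norm $\zeta_c^2$).

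For the bias, I would first fix an index $i$ and let $\bm S_{i,m}=\sum_{j=i}^{i+m}\bm h_j$. Local stationarity of $\{\bm h_i\}$ (equation \eqref{eq_defnh}) together with the polynomial decay of the physical dependence measure (Assumption \ref{phy_generalts}) yields $\mathbb{E}[\bm S_{i,m}\bm S_{i,m}^*]/m=\Omega(i/n)+O(1/m)$ in operator norm; the error combines the truncation of the long-run covariance at lag $m$ and the Lipschitz-in-$t$ error $O(m/n)$, which is absorbed into $O(1/m)$ for the ultimate scale $m=O(n^{1/3})$. Taking the Kronecker product with $\mathbf{B}(i/n)\mathbf{B}^*(i/n)$, picking up the $\zeta_c^2$ factor, and then performing a Riemann-sum comparison of $\tfrac{1}{n-m-b+1}\sum_i\Omega(i/n)\otimes\mathbf{B}(i/n)\mathbf{B}^*(i/n)$ with $\Omega$ supplies an additional $O(\zeta_c^2\|\nabla\mathbf{B}\|/n)$ remainder that is negligible under Assumption \ref{assu_basis}. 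The dimension factor $b$ enters through the trace bound $\|\Omega(t)\|_{\mathrm{op}}=O(b)$.

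For the stochastic term, I would replace $\bm h_i$ by an $M$-dependent approximation $\tilde{\bm h}_{i,M}$ obtained by truncating $\mathcal{F}_i$ at lag $M$; Assumption \ref{phy_generalts} guarantees that the replacement error is negligible once $M$ is a small power of $n$. With this truncation the summands in $\widehat{\Omega}-\mathbb{E}\widehat{\Omega}$ split into essentially independent $(m+M)$-blocks. A Rosenthal/Burkholder-type moment inequality applied entrywise, together with the decomposition of the operator norm across the block Kronecker structure (the basis factor $\mathbf{B}(i/n)$ is deterministic and contributes $\zeta_c^2$, while the random fluctuation lives in the $(b+1)$-dimensional block and contributes $b$) produces the variance bound $b\zeta_c^2\sqrt{m/n}$. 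Summing the two contributions gives the claimed $\mathcal{L}(m)=O(b\zeta_c^2(\sqrt{m/n}+1/m))$; balancing $\sqrt{m/n}\asymp 1/m$ then yields the optimal $\widehat{m}\asymp n^{1/3}$.

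The main obstacle I anticipate is the operator-norm control of the stochastic term at the right scale. Because the ambient dimension is $p=(b+1)c$ which diverges with $n$, a naive matrix concentration inequality would insert an unwanted $\sqrt{p}$ or $\log p$ factor and destroy the $b\zeta_c^2\sqrt{m/n}$ rate. Avoiding this will require carefully exploiting the Kronecker structure (separating the deterministic basis block from the low-dimensional stochastic block) and leveraging the $m$-dependent approximation to reduce to a sum of roughly $n/m$ nearly independent matrices whose entries already satisfy $q$-th moment bounds under Assumption \ref{assum_local}.
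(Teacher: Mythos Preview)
Your proposal is correct and follows essentially the same bias--variance decomposition that underlies the paper's argument; indeed, the paper's proof of this theorem is a one-line reference to equation~\eqref{eq_lamdaomega}, which is established inside the proof of Theorem~\ref{thm_bootstrapping} via Lemmas~\ref{lem_a2} and~\ref{lem_a3} using precisely the split $\widehat{\Omega}-\mathbb{E}\widehat{\Omega}$ plus $\mathbb{E}\widehat{\Omega}-\Omega$ and the Kronecker separation you describe.

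Two small differences in tooling are worth noting. First, for the stochastic fluctuation the paper does not pass through an $M$-dependent approximation and Rosenthal/Burkholder; it applies the physical-dependence moment inequality (Lemma~\ref{lem_con}) directly to the entries of $H_iH_i^*$ and then to the averaged quantity, obtaining \eqref{eq_lambdaconvergence}. Second, the factor $b$ does not come from an operator-norm bound on $\Omega(t)$ as you suggest but from the crude Gershgorin step (Lemma~\ref{lem_disc}) that passes from entrywise control on the $(b{+}1)\times(b{+}1)$ block $H_iH_i^*-\mathbb{E}H_iH_i^*$ to its operator norm. Your Kronecker-separation idea is exactly how the paper isolates the deterministic $\zeta_c^2$ factor from the low-dimensional random block and avoids the spurious dimension penalty you flagged as the main obstacle.
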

Note that compared to \cite[Theorem 4]{ZZ1}, the difference from Theorem \ref{thm_choicem} is that we get an extra factor $b\zeta_c^2$ due to the high dimensionality. For instance, when we use the Fourier basis, normalized Chebyshev orthogonal polynomials and orthogonal wavelet, we shall have that $b\zeta_c^2=p,$ which is the dimension of  $\bm{z}_i$ defined in (\ref{eq_zikronecker}). However, it will not influence the optimal choice of $m$.

} 

\section{Some auxiliary lemmas}

In this section, we collect some preliminary lemmas which will be used for our technical proofs.  First of all, we collect a result which provides a deterministic bound for the spectrum of a  square matrix.  Let  $A=(a_{ij})$ be a complex $ n\times n$ matrix. For  $1 \leq i \leq n,$ let  $R_{i}=\sum _{{j\neq {i}}}\left|a_{{ij}}\right| $ be the sum of the absolute values of the non-diagonal entries in the  $i$-th row. Let  $ D(a_{ii},R_{i})\subseteq \mathbb {C} $ be a closed disc centered at $a_{ii}$ with radius  $R_{i}$. Such a disc is called a \emph{Gershgorin disc.}
\begin{lem}[Gershgorin circle theorem]\label{lem_disc} Every eigenvalue of $ A=(a_{ij})$ lies within at least one of the Gershgorin discs  $D(a_{ii},R_{i})$, where $R_i=\sum_{j\ne i}|a_{ij}|$.
\end{lem}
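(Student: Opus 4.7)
The plan is to give the standard one-line proof via the ``largest coordinate'' trick applied to an eigenvector. Let $\lambda$ be any eigenvalue of $A$ and let $v = (v_1, \ldots, v_n)^* \neq 0$ be a corresponding eigenvector. First I would pick an index $i$ that achieves the maximum modulus among the coordinates, i.e. $|v_i| = \max_{1 \le k \le n} |v_k|$; since $v \neq 0$ we have $|v_i| > 0$, which is the only nontrivial observation needed.

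Next I would read off the $i$-th component of the eigenvalue equation $Av = \lambda v$, obtaining $\sum_{j=1}^n a_{ij} v_j = \lambda v_i$, and rearrange it as $(\lambda - a_{ii}) v_i = \sum_{j \neq i} a_{ij} v_j$. Taking moduli and using the triangle inequality yields
\begin{equation*}
|\lambda - a_{ii}|\,|v_i| \;\le\; \sum_{j \neq i} |a_{ij}|\,|v_j| \;\le\; |v_i| \sum_{j \neq i} |a_{ij}| \;=\; |v_i|\, R_i,
\end{equation*}
where the second inequality uses the maximality $|v_j| \le |v_i|$ for all $j$.

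Dividing both sides by $|v_i| > 0$ gives $|\lambda - a_{ii}| \le R_i$, i.e.\ $\lambda \in D(a_{ii}, R_i)$. Since $\lambda$ was an arbitrary eigenvalue, this shows every eigenvalue lies in at least one Gershgorin disc. There is no real obstacle here — the only subtle point worth flagging is that the choice of the index $i$ depends on the eigenvector $v$, so different eigenvalues may land in different discs, but the conclusion of the lemma only claims membership in \emph{some} disc, which is exactly what the argument produces.
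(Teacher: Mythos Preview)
Your proof is correct and is exactly the standard ``largest coordinate'' argument for the Gershgorin circle theorem. The paper itself does not supply a proof of this lemma --- it is listed among the auxiliary lemmas as a classical fact --- so there is nothing to compare against; your argument would be a perfectly acceptable insertion.
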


{ The next lemma provides a lower bound for the eigenvalues of a Toeplitz matrix in terms of its associated spectral density function.  Since the autocovariance matrix of any stationary time series is a Toeplitz matrix, we can use the following lemma to bound the smallest eigenvalue of the autocovariance matrix. It will be used in the proof of Proposition \ref{prop_pdc} and can be found in \cite[Lemma 1]{XW}. 
\begin{lem}\label{lem_spectralbound}
Let $h$ be a continuous function on $[-\pi, \pi].$ Denote by $\underline{h}$ and $\overline{h}$ its minimum and maximum, respectively. Define $a_k=\int_{-\pi}^{\pi} h(\theta) e^{- \mathrm{i} k \theta} d \theta$ and the $T \times T$ matrix $\Gamma_T=(a_{s-t})_{1 \leq s, t \leq T}.$ Then 
\begin{equation*}
 2 \pi \underline{h} \leq \lambda_{\min}(\Gamma_T) \leq \lambda_{\max}(\Gamma_T) \leq 2 \pi \overline{h}. 
\end{equation*}
\end{lem}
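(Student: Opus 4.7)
The plan is to prove both inequalities simultaneously via the Rayleigh--Ritz characterization of extreme eigenvalues of the Hermitian matrix $\Gamma_T$, by reducing the quadratic form $x^*\Gamma_T x$ to an integral of $h$ weighted by a squared trigonometric polynomial. First, I would fix an arbitrary $x=(x_1,\dots,x_T)^*\in\mathbb C^T$ and compute
\begin{equation*}
x^*\Gamma_T x \;=\; \sum_{s,t=1}^{T} \bar x_s\, a_{s-t}\, x_t \;=\; \sum_{s,t=1}^{T} \bar x_s x_t \int_{-\pi}^{\pi} h(\theta)\, e^{-\mathrm i(s-t)\theta}\, d\theta,
\end{equation*}
exchange sum and integral (legal since the sum is finite and $h$ is continuous on a compact interval), and recognize the inner sum as the squared modulus of the trigonometric polynomial $P(\theta):=\sum_{t=1}^T x_t e^{\mathrm i t\theta}$, giving $x^*\Gamma_T x = \int_{-\pi}^{\pi} h(\theta)\,|P(\theta)|^2\, d\theta$.

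Second, I would evaluate the $L^2$ norm of $P$ on $[-\pi,\pi]$ by the orthogonality of complex exponentials, $\int_{-\pi}^{\pi} e^{\mathrm i(t-s)\theta}d\theta = 2\pi\,\mathbf 1(s=t)$, which yields the Parseval-type identity $\int_{-\pi}^{\pi}|P(\theta)|^2\,d\theta = 2\pi\|x\|^2$. Third, I would use the pointwise sandwich $\underline h \le h(\theta)\le \overline h$ on $[-\pi,\pi]$ together with $|P(\theta)|^2\ge 0$ to obtain
\begin{equation*}
2\pi\underline h\,\|x\|^2 \;\le\; x^*\Gamma_T x \;\le\; 2\pi\overline h\,\|x\|^2.
\end{equation*}
Finally, dividing by $\|x\|^2$ and invoking the Rayleigh quotient characterization $\lambda_{\min}(\Gamma_T)=\inf_{x\ne 0} x^*\Gamma_T x/\|x\|^2$ and $\lambda_{\max}(\Gamma_T)=\sup_{x\ne 0} x^*\Gamma_T x/\|x\|^2$ (noting that $\Gamma_T$ is Hermitian because $a_{-k}=\overline{a_k}$ whenever $h$ is real-valued, which is implicit since $\underline h$ and $\overline h$ are real) yields the claimed two-sided bound.

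There is essentially no hard obstacle here: the result is a standard textbook fact and the proof is a direct computation. The only two mild points worth checking are (i) that the quadratic form manipulation is valid for complex $x$, which is automatic since $\Gamma_T$ acts on $\mathbb C^T$, and (ii) that $h$ is real-valued so that $\Gamma_T$ is Hermitian and has real eigenvalues, which is needed to even speak of $\lambda_{\min}$ and $\lambda_{\max}$ meaningfully. Both are implicit in the statement, so the argument above suffices.
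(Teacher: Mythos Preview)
Your proof is correct and is the standard Rayleigh--Ritz argument for bounding the spectrum of a Toeplitz matrix by the extrema of its symbol. The paper does not supply its own proof of this lemma but simply cites an external reference (\cite[Lemma~1]{XW}); your self-contained argument is exactly what that reference (and most textbooks) would give.
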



%

}

The following lemma indicates that, under suitable condition, the inverse of a banded matrix can also be approximated by another banded-like matrix. It will be used in the proof of Theorem \ref{lem_phibound} and can be found in \cite[Proposition 2.2]{DMS}.  We say that $A$ is $m$-banded if $$A_{ij}=0, \ \text{if} \ |i-j|>m/2.$$
\begin{lem}\label{lem_band} Let $A$ be a positive definite, $m$-banded, bounded and bounded invertible matrix.  Let $[a,b]$ be the smallest interval containing the spectrum of $A.$ Set $r=b/a, q=(\sqrt{r}-1)/(\sqrt{r}+1)$ and set $C_0=(1+r^{1/2})^2/(2ar)$ and $\lambda=q^{2/m}.$ Then we have 
\begin{equation*}
|(A^{-1})_{ij}| \leq C \lambda^{|i-j|},
\end{equation*}
where 
\begin{equation*}
C:=C(a,r)=\max\{a^{-1}, C_0\}. 
\end{equation*}

\end{lem}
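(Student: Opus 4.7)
The plan is to exploit two structural facts in tandem: first, any polynomial of a banded matrix is itself banded, with bandwidth growing linearly in the polynomial degree; second, the reciprocal function $f(x)=1/x$ admits a uniformly exponentially accurate polynomial approximation on any interval $[a,b]\subset(0,\infty)$, where the exponential base is exactly the $q$ appearing in the lemma. Combining these via the spectral theorem will convert the uniform polynomial error on $[a,b]$ into an operator-norm error between $A^{-1}$ and $P_k(A)$, and whenever $|i-j|$ exceeds the bandwidth of $P_k(A)$, the $(i,j)$-entry of $A^{-1}$ is pinned down by that operator-norm error alone.

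More concretely, I would first record the bandedness lemma: if $A$ is $m$-banded in the convention $A_{ij}=0$ for $|i-j|>m/2$, then an easy induction on chains $(i,\ell_1,\ldots,\ell_{k-1},j)$ contributing to $(A^k)_{ij}$ shows $A^k$ is $km$-banded, hence any polynomial $P_k$ of degree at most $k$ satisfies $(P_k(A))_{ij}=0$ whenever $|i-j|>km/2$. Next I would invoke the classical Chebyshev best-approximation estimate for $1/x$ on $[a,b]$: after the affine change of variable sending $[a,b]$ to $[-1,1]$, the function $1/x$ becomes a meromorphic function on $\mathbb{C}\setminus\{-(b+a)/(b-a)\}$, and Bernstein's theorem together with the explicit conformal mapping of the Bernstein ellipse yields
\begin{equation*}
\inf_{\deg P_k\le k}\sup_{x\in[a,b]}\bigl|1/x-P_k(x)\bigr|\;\le\;C_0\,q^{k+1},
\qquad q=\frac{\sqrt{r}-1}{\sqrt{r}+1},\quad C_0=\frac{(1+\sqrt{r})^{2}}{2ar},
\end{equation*}
where the constant $C_0$ comes from evaluating the Joukowski-type expansion at the pole of $1/x$ outside the image interval. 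This is the one place where I expect the bookkeeping to be delicate — the $C_0$ formula must match the lemma's exactly, and the cleanest route is probably to reproduce the argument of Meinardus (or cite it) rather than re-derive the constants from scratch.

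Having fixed such a near-optimal polynomial $P_k$, the spectral theorem for the symmetric positive-definite $A$ gives $\|A^{-1}-P_k(A)\|_{2}\le\sup_{x\in[a,b]}|1/x-P_k(x)|\le C_0 q^{k+1}$, so in particular
\begin{equation*}
\bigl|(A^{-1})_{ij}-(P_k(A))_{ij}\bigr|\;\le\;C_0\,q^{k+1}\qquad\text{for every }i,j.
\end{equation*}
For a fixed pair $(i,j)$ with $i\ne j$, I then choose $k$ to be the largest integer with $km/2<|i-j|$, i.e.\ $k+1\ge 2|i-j|/m$; by the bandedness fact $(P_k(A))_{ij}=0$, and plugging in yields
\begin{equation*}
|(A^{-1})_{ij}|\;\le\;C_0\,q^{k+1}\;\le\;C_0\,\bigl(q^{2/m}\bigr)^{|i-j|}\;=\;C_0\,\lambda^{|i-j|},
\end{equation*}
which is the claimed geometric decay off the diagonal.

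The only remaining case is $i=j$, where the entry $(A^{-1})_{ii}$ need not be small; here the bound $|(A^{-1})_{ii}|\le\|A^{-1}\|_{2}\le a^{-1}$ (because the spectrum of $A$ lies in $[a,b]$) covers the diagonal with constant $a^{-1}$. Taking $C=\max\{a^{-1},C_0\}$ in front of $\lambda^{|i-j|}$ — noting that $\lambda^{0}=1$ on the diagonal and $C_0\ge$ the off-diagonal bound elsewhere — absorbs both regimes into the single inequality in the statement. The main obstacle, as flagged, is matching the explicit constant $C_0=(1+\sqrt{r})^2/(2ar)$ from Chebyshev approximation theory; once that estimate is in hand the rest of the argument is a short combination of bandedness and functional calculus.
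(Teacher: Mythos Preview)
Your proposal is correct and is precisely the argument of Demko--Moss--Smith (the reference \cite{DMS} in the paper), combining Chebyshev polynomial approximation of $1/x$ on $[a,b]$ with the bandedness of polynomials in $A$. The paper does not give its own proof of this lemma at all: it simply cites \cite[Proposition 2.2]{DMS} and moves on, so there is nothing further to compare.
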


The following lemma provides an upper bound for the error of solutions of perturbed linear system. It can be found in the standard numerical analysis literature, for instance see \cite{num_paper}. It will be used in the proof of Theorem \ref{lem_phibound} and Lemma \ref{lem_controltbtrho}. 

\begin{lem}\label{lem_nuem} Consider a matrix $A$ and vectors $x,v$ satisfying the linear system
\begin{equation*}
Ax=v.
\end{equation*}
Recall that the conditional number of $A$ is defined as 
\begin{equation*}
\kappa(A)=\frac{\lambda_{\max}(A)}{\lambda_{\min}(A)}.
\end{equation*} 
If we add perturbations on both $A$ and $v$ such that  
\begin{equation*}
(A+\Delta A)(x+\Delta x)=v+\Delta v.
\end{equation*}
Assuming that the linear system is well-conditioned, i.e.,  the conditional number $\kappa(A)$  satisfies that, for some constant $C>0,$  
\begin{equation*}
\frac{\kappa(A)}{1-\kappa(A) \frac{|| \Delta A||}{A}} \leq C,
\end{equation*}
then we have that
\begin{equation*}
\frac{|| \Delta x ||}{|| x ||} \leq C \left( \frac{|| \Delta A||}{|| A||}+ \frac{|| \Delta v ||}{|| v||} \right).
\end{equation*}
\end{lem}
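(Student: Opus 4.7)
The plan is to follow the standard perturbation argument for linear systems. First I would subtract the unperturbed equation $Ax=v$ from the perturbed equation $(A+\Delta A)(x+\Delta x) = v + \Delta v$ after expanding the left-hand side. This yields
$$
A\,\Delta x + \Delta A\,x + \Delta A\,\Delta x = \Delta v,
$$
so rearranging gives $A\,\Delta x = \Delta v - \Delta A\,x - \Delta A\,\Delta x$. Since $A$ is invertible, left-multiplying by $A^{-1}$ and applying sub-multiplicativity of the operator norm produces
$$
\|\Delta x\| \;\le\; \|A^{-1}\|\,\bigl(\|\Delta v\| + \|\Delta A\|\,\|x\| + \|\Delta A\|\,\|\Delta x\|\bigr).
$$

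Next I would convert this into a relative-error bound. Dividing through by $\|x\|$ and multiplying and dividing the $\|\Delta v\|$ term by $\|A\|\cdot\|v\|/\|v\|$, while noting that $\|v\| = \|Ax\| \le \|A\|\,\|x\|$, and recognizing that $\|A^{-1}\|\,\|A\| = \kappa(A)$ when the operator norm is used (so that $\lambda_{\max}/\lambda_{\min}$ agrees with $\|A\|\cdot\|A^{-1}\|$ for symmetric positive definite $A$, which is the relevant case in the paper's applications), one obtains
$$
\frac{\|\Delta x\|}{\|x\|} \;\le\; \kappa(A)\left(\frac{\|\Delta v\|}{\|v\|} + \frac{\|\Delta A\|}{\|A\|}\right) + \kappa(A)\,\frac{\|\Delta A\|}{\|A\|}\,\frac{\|\Delta x\|}{\|x\|}.
$$
Moving the last term to the left-hand side and factoring out $\|\Delta x\|/\|x\|$ gives
$$
\left(1 - \kappa(A)\,\frac{\|\Delta A\|}{\|A\|}\right)\frac{\|\Delta x\|}{\|x\|} \;\le\; \kappa(A)\left(\frac{\|\Delta v\|}{\|v\|} + \frac{\|\Delta A\|}{\|A\|}\right).
$$

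Finally I would divide by the factor $1 - \kappa(A)\,\|\Delta A\|/\|A\|$, which the well-conditioning hypothesis guarantees to be positive, and use the assumption $\kappa(A)/(1-\kappa(A)\,\|\Delta A\|/\|A\|) \le C$ to arrive at the claimed bound. There is essentially no hard step here; the only piece of care required is ensuring that $1 - \kappa(A)\,\|\Delta A\|/\|A\| > 0$ so that the division is legitimate and the inequality preserves direction, and this follows tautologically from the stated bound on $\kappa(A)/(1-\kappa(A)\,\|\Delta A\|/\|A\|)$. The main ``obstacle,'' if any, is simply the bookkeeping needed to absorb the nonlinear cross term $\|\Delta A\|\,\|\Delta x\|$ into the left-hand side, which is what forces the well-conditioning hypothesis into the statement.
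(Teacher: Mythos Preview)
Your argument is correct and is exactly the standard derivation of this perturbation bound. The paper does not prove this lemma at all; it simply cites it as a known result from the numerical analysis literature (see \cite{num_paper}) and uses it as a black box in the proofs of Theorem~\ref{lem_phibound} and Lemma~\ref{lem_controltbtrho}.
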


The following lemma provides Gaussian approximation result on convex sets for the sum of an $m$-dependent sequence, which is \cite[Theorem 2.1]{FX}. It will be used in the proof of Theorem \ref{thm_gaussian}. 
\begin{lem}\label{lem_xf} Let $W=\sum_{i=1}^n X_i$ be a sum of $d$-dimensional random vectors such that $\mathbb{E}(X_i)=0$ and $\operatorname{Cov} (W)=\Sigma_w.$ Suppose $W$ can be decomposed as follows: 
\begin{enumerate}
\item $\forall i \in [n], \ \exists i \in N_i \subset [n],$ such that $W-X_{N_i}$ is independent of $X_i$, where $[n]=\{1,\cdots,n\}.$
\item $\forall i \in [n], j \in N_i, \ \exists N_i \subset N_{ij} \subset [n],$ such that $W-X_{N_{ij}}$ is independent of $\{X_i, X_j\}.$
\item  $\forall i \in [n], j \in N_i, \ k \in N_{ij}, \ \exists N_{ij} \subset N_{ijk} \subset [n]$ such that $W-X_{N_{ijk}}$ is independent of $\{X_i, X_j, X_k\}.$
\end{enumerate}
Suppose further that for each $i \in [n], j \in N_i, k \in N_{ij},$
\begin{equation*}
|X_i| \leq \beta, |N_i| \leq n_1, |N_{ij}| \leq n_2, |N_{ijk}| \leq n_3,
\end{equation*}
where $|\cdot|$ is the Euclidean norm of a vector. Then there exists a universal constant $C$ such that 
\begin{equation*}
\mathcal{K}(W,Z) \leq C d^{1/4} n || \Sigma^{-1/2}||^3 \beta^3 n_1(n_2+\frac{n_3}{d}),
\end{equation*}
where $Z$ is a $d$-dimensional Gaussian random vector preserving the covariance structure of $W$ and $\mathcal{K}(\cdot, \cdot)$ is  the Kolmogorov distance defined in (\ref{eq_kolomogorv}). 
\end{lem}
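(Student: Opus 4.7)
The plan is to prove this Kolmogorov-distance bound over convex sets via Stein's method for the multivariate normal, combined with a Gaussian smoothing argument and a Taylor expansion that exploits the nested local-dependence neighborhoods $N_i \subset N_{ij} \subset N_{ijk}$. I would begin by standardizing: set $\widetilde W = \Sigma_w^{-1/2} W$ so that the target becomes $\mathcal N(0,I_d)$. Since the convex-set Kolmogorov distance is affine-invariant (the image of a convex class is again a convex class), the problem reduces to comparing $\widetilde W$ to $\mathcal N(0,I_d)$; the operator norm $\|\Sigma_w^{-1/2}\|$ enters later through three derivatives of $\widetilde X_i = \Sigma_w^{-1/2} X_i$, accounting for the cubic factor in the final bound.

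Next I would perform the smoothing step. For a convex set $A$, replace $\mathbf 1_A$ by its Gaussian mollification $h_{A,\varepsilon}(x) = \mathbb E[\mathbf 1_A(x + \varepsilon Z')]$ with $Z' \sim \mathcal N(0,I_d)$ independent. Nazarov's anti-concentration inequality bounds $\sup_A |\mathbb P(Z \in A^\varepsilon) - \mathbb P(Z \in A)|$ by $C d^{1/4}\varepsilon$, which is exactly where the $d^{1/4}$ factor in the conclusion comes from. The smoothing trades regularity for error: the $k$-th derivative of $h_{A,\varepsilon}$ is bounded in sup norm by $C/\varepsilon^{k-1}$ (for $k\ge 1$).

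The core of the argument is Stein's equation for the multivariate normal: find $f : \mathbb R^d \to \mathbb R^d$ with $\nabla \cdot f(x) - \langle x, f(x)\rangle = h(x) - \mathbb E h(Z)$, solved via the Ornstein–Uhlenbeck generator, so that $\|\partial^\alpha f\|_\infty \lesssim \|\partial^{\alpha-1} h\|_\infty$. Plugging in $\widetilde W$ and writing $\mathbb E[\langle \widetilde W, f(\widetilde W)\rangle] = \sum_i \mathbb E[\langle \widetilde X_i, f(\widetilde W)\rangle]$, I would Taylor-expand $f(\widetilde W)$ at $\widetilde W - \widetilde X_{N_i}$, then expand the remainder $\partial f$ at $\widetilde W - \widetilde X_{N_{ij}}$, then expand $\partial^2 f$ at $\widetilde W - \widetilde X_{N_{ijk}}$. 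At each level the respective independence hypothesis from (1)–(3) makes the leading term vanish (for the zeroth-order piece) or cancels against the corresponding piece from $\nabla \cdot f(\widetilde W)$ (for the second-order trace term), leaving only a third-order Taylor remainder that is bounded by $\|\partial^3 f\|_\infty \cdot |\widetilde X_i|\,|\widetilde X_j|\,|\widetilde X_k|$. Summing over $i \in [n]$, $j \in N_i$, $k \in N_{ij}$ yields the combinatorial factor $n\,n_1\,n_2$, each $|\widetilde X|$ contributes one $\|\Sigma_w^{-1/2}\|\beta$, and $\|\partial^3 f\|_\infty \lesssim 1/\varepsilon^2$; optimizing $\varepsilon$ against $d^{1/4}\varepsilon$ balances the two sources of error.

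The main obstacle will be obtaining the term $n_1 \cdot n_3/d$ rather than $n_1 \cdot n_3$ in the final bound. This refinement comes from the fact that one specific piece of the third-order remainder pairs the divergence $\nabla \cdot f$ against the \emph{trace} of the Hessian of $f$ evaluated on a matrix-valued quantity coming from $X_{N_{ijk}}$; taking this trace saves a factor of $d$ via $\mathrm{tr}(AB) \le \|A\|_{\mathrm{F}}\|B\|_{\mathrm{F}}$ rather than the crude entry-wise bound. Carefully keeping the tensor structure of the third-order expansion, separating the two types of contractions (one giving $n_2$, one giving $n_3/d$), and matching them to the corresponding derivative norms of the Stein solution $f$ is the genuinely delicate part; everything else is routine Stein calculus, combinatorial counting using $|N_i| \le n_1$, $|N_{ij}| \le n_2$, $|N_{ijk}| \le n_3$, and the standard Nazarov smoothing bookkeeping.
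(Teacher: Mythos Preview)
The paper does not prove this lemma at all: it is quoted verbatim from \cite[Theorem 2.1]{FX} as an auxiliary result, with no proof given. So there is nothing in the paper to compare your proposal against.

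That said, your outline is essentially the proof that the cited reference carries out. The standardization by $\Sigma_w^{-1/2}$, the Gaussian smoothing with Nazarov's Gaussian perimeter bound producing the $d^{1/4}$ factor, the Stein equation solved via the Ornstein--Uhlenbeck semigroup, and the three-level Taylor expansion exploiting the nested dependence neighborhoods $N_i\subset N_{ij}\subset N_{ijk}$ are exactly the ingredients of the Fang--Koike argument. Your identification of where the $n_3/d$ (rather than $n_3$) saving comes from --- the trace contraction in one of the third-order terms --- is also correct and is indeed the delicate point in that proof. For the purposes of the present paper, however, you only need to cite the result; reproducing the Stein-method machinery here would be out of scope.
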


{
The following lemma offers a control for the summation of Chi-square random variables, which will be employed in the proof of Theorem \ref{thm_gaussian}. It can be found in \cite[Lemma 7.2]{XZW}.
\begin{lem} \label{lem_chisquare} Let $a_1 \geq a_2 \geq \cdots \geq a_p \geq 0$ such that $\sum_{i=1}^p a^2_i=1;$ let $\eta_i$ be i.i.d. $\chi_1^2$ random variables. Then for all $h>0,$ we have 
\begin{equation*}
\sup_t \mathbb{P}(t \leq \sum_{k=1}^p a_k \eta_k \leq t+h) \leq \sqrt{h} \sqrt{4/\pi}.
\end{equation*} 
\end{lem}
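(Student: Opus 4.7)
The plan is to establish this anti-concentration bound by writing $\eta_k=Z_k^2$ with $Z_k$ i.i.d.\ standard normal and reducing to a one-dimensional bound by conditioning, then combining this with a convolution-based density estimate.

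First I note that, conditional on $Z_2,\ldots,Z_p$, the quantity $V:=\sum_{k\ge 2}a_k Z_k^2$ is deterministic, so the event $\{t\le\sum a_k \eta_k\le t+h\}$ becomes $\{s\le a_1Z_1^2\le s+h\}$ with $s:=t-V$. By the law of total probability it therefore suffices to bound, uniformly in $s\in\mathbb{R}$, the univariate probability $\mathbb{P}(s\le a_1 Z_1^2\le s+h)$. For $s\ge 0$ this equals $2\mathbb{P}(\sqrt{s/a_1}\le Z_1\le\sqrt{(s+h)/a_1})$; combining the elementary inequality $\sqrt{u+h}-\sqrt u\le\sqrt h$ (valid for $u\ge 0$) with the uniform bound $(2\pi)^{-1/2}$ on the standard normal density yields
\begin{equation*}
\mathbb{P}(s\le a_1 Z_1^2\le s+h)\le 2\cdot\tfrac{1}{\sqrt{2\pi}}\cdot\sqrt{h/a_1}=\sqrt{2h/(\pi a_1)}.
\end{equation*}
The analogous bound for $s<0$ follows since the relevant interval in $Z_1$-space has Lebesgue length at most $\sqrt{h/a_1}$.

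When $a_1\ge 1/2$ this immediately delivers $\sqrt{4h/\pi}$. In the complementary regime $a_1<1/2$, the constraint $\sum_k a_k^2=1$ together with the monotonicity forces $p\ge 2$ with $a_2>0$; treating $a_1 Z_1^2+a_2 Z_2^2$ as the leading block, I compute the explicit two-fold convolution
\begin{equation*}
f_{a_1Z_1^2+a_2 Z_2^2}(y)=\frac{1}{2\pi\sqrt{a_1 a_2}}\int_0^y\frac{e^{-(y-s)/(2a_1)-s/(2a_2)}}{\sqrt{s(y-s)}}\,ds,
\end{equation*}
bound the exponential by $1$, and evaluate $\int_0^y(s(y-s))^{-1/2}\,ds=\pi$ via the standard substitution $s=y\sin^2\theta$. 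This yields the sup-norm bound $\|f_{a_1Z_1^2+a_2Z_2^2}\|_\infty\le 1/(2\sqrt{a_1 a_2})$; since conditioning on the remaining $Z_k$ cannot increase this sup-norm, one obtains $\mathbb{P}(Y\in[t,t+h])\le h/(2\sqrt{a_1 a_2})$, and combining with the trivial bound $1$ (applicable when $h\ge\pi/4$, for which $\sqrt{4h/\pi}\ge 1$) closes out the remaining parameter range.

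The main obstacle is the delicate balance between these regimes and the sharp universal constant: the one-coordinate estimate is tight when $a_1$ carries most of the variance, whereas the convolution estimate is tight when the variance is spread across two comparable leading coordinates. Verifying that the two estimates, together with the trivial bound, cover every admissible sequence $(a_k)$ with the constant $\sqrt{4/\pi}$ requires careful case analysis of the ordering $a_1\ge a_2\ge\cdots$ and the normalization $\sum a_k^2=1$; in particular, when both $a_1<1/2$ and $a_1 a_2$ is very small one must invoke the fact that there are then necessarily many subsequent coordinates and exploit their aggregated smoothing effect on the density of $Y$.
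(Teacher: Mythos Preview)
The paper does not supply its own proof of this lemma; it simply imports the result from \cite[Lemma~7.2]{XZW}. So there is no in-paper argument to compare against.

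Your proposal, however, has a genuine gap, and you have correctly located it without closing it. The one-coordinate estimate $\sqrt{2h/(\pi a_1)}$ meets the target only when $a_1\ge 1/2$. The two-coordinate density bound $h/(2\sqrt{a_1a_2})$ meets the target only when $a_1a_2$ is bounded below (specifically, it gives $\sqrt{4h/\pi}$ only for $h\le 16a_1a_2/\pi$). The trivial bound $1$ handles only $h\ge\pi/4$. These three regions do \emph{not} cover the parameter space. Concretely, take $a_1=\cdots=a_p=p^{-1/2}$: then $a_1<1/2$ once $p\ge 5$, and both nontrivial estimates scale like a positive power of $p$ (the first as $p^{1/4}\sqrt{h}$, the second as $p^{1/2}h$), whereas the claim is a bound independent of $p$. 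Your last sentence, that in this regime one must ``exploit their aggregated smoothing effect on the density of $Y$,'' names the missing ingredient but does not provide it---and this is precisely the substance of the lemma.

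A standard way to close the gap is via the characteristic function: $|\mathbb{E}e^{\mathrm{i}\xi Y}|=\prod_k(1+4a_k^2\xi^2)^{-1/4}$, and one shows that the constraint $\sum_k a_k^2=1$ alone forces enough decay to control either $\|f_Y\|_\infty$ or the relevant L\'evy concentration function uniformly in $(a_k)$ and $p$. Alternatively, one can iterate your convolution idea across successive pairs or blocks of coordinates, accumulating smoothing until the density is bounded. Either route requires real additional work beyond what you have written; as it stands, the proposal is a partial argument that handles only the ``concentrated weights'' regime.
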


}

Next, we collect some preliminary results. The first part of  the following lemma shows that the covariance function (\ref{eq_defncov}) decays polynomially fast under suitable assumptions. It can be found in  \cite[Lemma 2.6]{DZ1}.  The second part shows that the sample covariance matrix and its inverse will converge to some deterministic limits. Its proof is similar to \cite[Eq. (B.6)]{DZ1} and we omit the detail here. 

\begin{lem}\label{lem_coll} (1). Suppose  Assumptions \ref{phy_generalts}, \ref{assu_pdc}, \ref{assum_local} and \ref{assu_smmothness} hold true. Then there exists some constant $C>0,$ such that 
\begin{equation*}
\sup_t |\gamma(t,j)| \leq Cj^{-\tau}, \ j \geq 1. 
\end{equation*}
(2). Recall (\ref{eq_overlinesigma}).  Suppose  Assumptions \ref{phy_generalts}, \ref{assu_pdc}, \ref{assum_local}, \ref{assu_smmothness} and \ref{assu_basis} hold true. Then we have that 
\begin{equation*}
|| \widehat{\Sigma}-\Sigma ||=O_{\mathbb{P}} \Big( \frac{\zeta_c \log n}{\sqrt{n}} \Big),
\end{equation*}   
where $\widehat{\Sigma}=n^{-1}Y^*Y.$

\end{lem}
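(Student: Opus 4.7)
My plan is to handle the two parts of Lemma \ref{lem_coll} by quite different techniques: a short coupling argument for (1), and a decomposition-plus-concentration scheme for (2).

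For part (1), I would use the standard coupling trick that underlies the physical-dependence-measure machinery. Let $\{\eta_i'\}$ be an independent i.i.d.\ copy of the innovations and define $\mathcal{F}_j^{\star} := (\mathcal{F}_{-1}, \eta_0', \eta_1, \ldots, \eta_j)$, which has the same distribution as $\mathcal{F}_j$ but is independent of $\mathcal{F}_0$. Since $G(t, \mathcal{F}_j^{\star})$ is then independent of $G(t, \mathcal{F}_0)$, the covariance collapses to
\begin{equation*}
\gamma(t,j) = \mathrm{Cov}\bigl(G(t, \mathcal{F}_0),\, G(t, \mathcal{F}_j) - G(t, \mathcal{F}_j^{\star})\bigr).
\end{equation*}
Cauchy--Schwarz together with the moment bound \eqref{assum_moment} and Assumption \ref{phy_generalts} (using the identification $\delta^g(j,q) = \delta(j,q)$ in the locally stationary setting, which follows from $G_{i,n}(\mathcal{F}) = G(i/n, \mathcal{F})$) then yields $|\gamma(t,j)| \leq C\,\|G(t, \mathcal{F}_0)\|_2 \cdot \delta(j,2) \leq C j^{-\tau}$ uniformly in $t$.

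For part (2), the plan is to decompose $\widehat{\Sigma} - \Sigma = (\widehat{\Sigma} - \mathbb{E}\widehat{\Sigma}) + (\mathbb{E}\widehat{\Sigma} - \Sigma)$ and bound the two pieces separately. Each $(j,k)$-block of $\mathbb{E}\widehat{\Sigma}$ equals $n^{-1}\sum_{i} \mathbb{E}(x_{i-j}x_{i-k})\, \mathbf{B}(i/n)\mathbf{B}^{*}(i/n)$, and by Assumption \ref{assum_local} together with part (1), $\mathbb{E}(x_{i-j}x_{i-k})$ differs from $\gamma(i/n, |j-k|)$ by $O(n^{-1})$ uniformly in $i,j,k$. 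Comparing the resulting Riemann sum against its integral, with Lipschitz integrand, gives a deterministic bias contribution of order $\zeta_c^2/n$ in operator norm, which is dominated by the claimed rate.

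The main obstacle is the stochastic term $\widehat{\Sigma} - \mathbb{E}\widehat{\Sigma}$. For a fixed unit vector $v \in \mathbb{R}^{(b+1)c}$ decomposed into blocks $v_0, \ldots, v_b \in \mathbb{R}^c$, I will rewrite
\begin{equation*}
v^{*}(\widehat{\Sigma} - \mathbb{E}\widehat{\Sigma})\, v = \frac{1}{n}\sum_{i=b+1}^{n}\bigl(W_i^2 - \mathbb{E} W_i^2\bigr), \qquad W_i := \sum_{j=0}^{b} \bigl(v_j^{*} \mathbf{B}(i/n)\bigr)\, x_{i-j},
\end{equation*}
so that $\{W_i\}$ is itself a scalar locally stationary process inheriting the polynomial physical-dependence decay from $\{x_i\}$. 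Using the orthonormality $\int \mathbf{B}(t)\mathbf{B}^{*}(t)\,dt = I_c$ and the spectral lower bound in Assumption \ref{assu_basis}(1), one checks that $\tfrac{1}{n}\sum_i \mathbb{E}W_i^2$ is bounded by a universal constant while the pointwise variance $\mathbb{E}W_i^2$ is at most $C\zeta_c^2$. A Nagaev-type concentration inequality for quadratic forms of locally stationary processes (the Wu--Zhou machinery employed in DZ1, Appendix B.6) then delivers $|v^{*}(\widehat{\Sigma} - \mathbb{E}\widehat{\Sigma}) v| = O_{\mathbb{P}}(\zeta_c \log n/\sqrt{n})$ for each fixed $v$. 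To upgrade to the operator norm one can either apply a matrix Bernstein-type inequality adapted to the short-range-dependent setting, or combine the pointwise tail estimate with a standard $\varepsilon$-net over the unit sphere of $\mathbb{R}^{(b+1)c}$, absorbing the cardinality of the net via Assumption \ref{assu_basis}(3). The delicate point, where I expect the main technical difficulty to lie, is carefully tracking the $\zeta_c$-dependence through the quadratic-form concentration bound so that the amplification from the basis functions does not accumulate into a worse rate; the orthonormality structure and the bounded spectrum of $\Sigma$ are exactly what keep the \emph{average} variance of $W_i$ independent of $\zeta_c$, preventing such accumulation.
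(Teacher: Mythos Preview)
Your coupling construction in part~(1) does not do what you claim. With $\mathcal{F}_j^{\star} = (\mathcal{F}_{-1}, \eta_0', \eta_1, \ldots, \eta_j)$ you have only replaced the single innovation $\eta_0$; the whole tail $\mathcal{F}_{-1} = (\ldots, \eta_{-2}, \eta_{-1})$ is still shared with $\mathcal{F}_0$, so $G(t,\mathcal{F}_j^{\star})$ is \emph{not} independent of $G(t,\mathcal{F}_0)$ and the covariance does not collapse as you wrote. If you repair this by replacing the entire past $\{\eta_k: k\le 0\}$ with an independent copy, you do get independence, but then $\|G(t,\mathcal{F}_j)-G(t,\mathcal{F}_j^{\star})\|_2$ is bounded by a telescoping sum $\sum_{l\ge j}\delta(l,2)$, which under Assumption~\ref{phy_generalts} only delivers $Cj^{-\tau+1}$, one power short of the stated rate. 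The sharp $j^{-\tau}$ bound comes instead from the martingale-projection identity
\[
\gamma(t,j)=\sum_{k\le 0}\mathbb{E}\bigl[\mathcal{P}_k G(t,\mathcal{F}_0)\cdot \mathcal{P}_k G(t,\mathcal{F}_j)\bigr],
\qquad \|\mathcal{P}_k G(t,\mathcal{F}_0)\|_2\le \delta(-k,2),\ \|\mathcal{P}_k G(t,\mathcal{F}_j)\|_2\le \delta(j-k,2),
\]
where $\mathcal{P}_k(\cdot)=\mathbb{E}[\cdot\mid\mathcal{F}_k]-\mathbb{E}[\cdot\mid\mathcal{F}_{k-1}]$; Cauchy--Schwarz and the convolution $\sum_{l\ge 0}\delta(l,2)\delta(j+l,2)=O(j^{-\tau})$ then give the claim. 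The paper itself does not spell this out either---it simply cites \cite[Lemma~2.6]{DZ1}---but that reference proceeds via exactly this projection device, not the single-coupling trick you propose.

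For part~(2) your bias/variance decomposition and the scheme of reducing $v^*(\widehat\Sigma-\mathbb{E}\widehat\Sigma)v$ to a scalar partial sum of $\{W_i^2-\mathbb{E}W_i^2\}$ is essentially the route taken in \cite[Eq.~(B.6)]{DZ1}, which the paper invokes without detail; your plan is sound and matches the intended argument.
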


Finally, we collect the concentration inequalities for  non-stationary process using the physical dependence measure.  It is the key ingredient for the proof of most of the theorems and lemmas. 
It can be found in  \cite[Lemma 6]{ZZ1}. 

\begin{lem}\label{lem_con} Let $x_i=G_i(\mathcal{F}_i),$ where $G_i(\cdot)$ is a measurable function and  $\mathcal{F}_i=(\cdots, \eta_{i-1}, \eta_i)$ and $\eta_i, \ i  \in \mathbb{Z}$ are i.i.d  random variables. Suppose that $\mathbb{E}x_i=0$ and $\max_i \mathbb{E}|x_i|^q<\infty$ for some $q>1.$ For some $k>0,$ let $\delta_x(k):=\max_{ 1 \leq i \leq n} \norm{G_i(\mathcal{F}_i)-G_i(\mathcal{F}_{i,i-k})}_q.$ We further let $\delta_x(k)=0$ if $k<0.$ Write $\gamma_k=\sum_{i=0}^k \delta_x(i).$ Let $S_i=\sum_{j=1}^i x_j.$ \\
(i). For $q'=\min(2,q),$
\begin{equation*}
\norm{S_n}_q^{q'} \leq C_q \sum_{i=-n}^{\infty} (\gamma_{i+n}-\gamma_i)^{q'}.
\end{equation*}
(ii). If $\Delta:=\sum_{j=0}^{\infty} \delta_x(j) <\infty,$ we then have 
\begin{equation*}
\norm{\max_{1 \leq i \leq n}|S_i|}_q \leq C_q n^{1/q'} \Delta. 
\end{equation*}
In (i) and (ii), $C_q$ are generic finite constants which only depend on $q$ and can vary from place to place.  
\end{lem}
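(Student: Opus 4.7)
The approach is Wu's projection-operator method. Define the projection $P_k(\cdot) := \mathbb{E}[\cdot \mid \mathcal{F}_k] - \mathbb{E}[\cdot \mid \mathcal{F}_{k-1}]$. Since $x_j$ has mean zero and is $\mathcal{F}_j$-measurable, the orthogonal expansion $x_j = \sum_{k \le j} P_k x_j$ converges in $L^q$. The central projection bound is $\norm{P_k x_j}_q \le \delta_x(j-k)$ for $j \ge k$, which follows from a coupling argument: if $x_j^\ast := G_j(\mathcal{F}_{j, k})$ is the version obtained by replacing $\eta_k$ with an independent copy $\eta_k'$, then $\mathbb{E}[x_j^\ast \mid \mathcal{F}_k] = \mathbb{E}[x_j^\ast \mid \mathcal{F}_{k-1}]$ by independence, so $P_k x_j = \mathbb{E}[x_j - x_j^\ast \mid \mathcal{F}_k]$, and Jensen's inequality gives $\norm{P_k x_j}_q \le \norm{x_j - x_j^\ast}_q \le \delta_x(j-k)$.

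To prove (i), I would interchange the order of summation to obtain $S_n = \sum_{k \le n} M_k$, where $M_k := \sum_{j=\max(1,k)}^n P_k x_j$. The sequence $\{M_k\}_{k \le n}$ is then a martingale difference sequence in the filtration $\{\mathcal{F}_k\}$. Applying Burkholder's inequality when $q \ge 2$, or the martingale von Bahr--Esseen inequality when $1 < q < 2$, yields $\norm{S_n}_q^{q'} \le C_q \sum_{k \le n} \norm{M_k}_q^{q'}$ with $q' = \min(2, q)$. The triangle inequality combined with the projection bound then gives $\norm{M_k}_q \le \sum_{j=\max(1,k)}^n \delta_x(j-k)$; reparameterising via $i = -k$ and using the convention that $\delta_x(\cdot)$ vanishes on the negatives, this sum equals $\gamma_{n+i}-\gamma_i$ for $i \ge -n$, which is exactly the desired bound in (i).

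For (ii), set $a_i := \gamma_{n+i} - \gamma_i$ and note two elementary facts: $0 \le a_i \le \Delta$ uniformly in $i$, and by Fubini $\sum_{i \ge -n} a_i = \sum_{l \ge 0} \delta_x(l) \cdot |\{i : i+1 \le l \le i+n\}| \le n \Delta$. Hence for $q' \ge 1$,
\begin{equation*}
\sum_{i \ge -n} a_i^{q'} \;\le\; \Delta^{q'-1} \sum_{i \ge -n} a_i \;\le\; n \Delta^{q'},
\end{equation*}
so (i) already yields $\norm{S_n}_q \le C_q n^{1/q'} \Delta$. To extend this to $\max_{1 \le i \le n}|S_i|$, I would apply the same decomposition to the partial sums, writing $S_l = \sum_{k \le l} M_{k,l}$ with $M_{k,l} := \sum_{j=\max(1,k)}^l P_k x_j$, and combine it with Doob's maximal inequality embedded in a block/chaining argument in $l$ as developed in \cite{ZZ1}; this preserves the $n^{1/q'}$ rate after one more application of Burkholder/von Bahr--Esseen across $k$.

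The delicate step is the maximal inequality in (ii): although $\{M_{k,l}\}_k$ is a martingale difference in $k$ for each fixed $l$, it is \emph{not} a martingale in $l$ for fixed $k$ (the summands $P_k x_j$ are $\mathcal{F}_k$-measurable and hence already $\mathcal{F}_l$-measurable, rather than fresh increments). Converting the pointwise-in-$l$ bound from (i) into a uniform bound without losing a factor of $\log n$ or $n^{1/q'}$ requires a careful block-decomposition or chaining argument rather than a direct application of Doob's inequality; this is the main technical hurdle I would borrow, essentially verbatim, from the cited reference.
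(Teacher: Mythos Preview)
The paper does not prove this lemma at all; it simply cites it as \cite[Lemma 6]{ZZ1}. Your proposal therefore goes well beyond what the paper does, and your approach via Wu's projection operators $P_k$, the coupling bound $\|P_k x_j\|_q\le\delta_x(j-k)$, and Burkholder/von Bahr--Esseen for the resulting martingale difference decomposition is exactly the standard route used in the original source and in Wu's earlier work. Your argument for part (i) is correct, including the reparameterisation $i=-k$ which matches $\gamma_{i+n}-\gamma_i$ once one adopts the convention $\gamma_m=0$ for $m<0$. For part (ii), your reduction $\sum_i a_i^{q'}\le n\Delta^{q'}$ is clean and correct for the pointwise bound $\|S_n\|_q\le C_q n^{1/q'}\Delta$; you are also right that the passage to the maximal form cannot be done by a na\"ive Doob inequality in $l$ and requires the dyadic/chaining maximal inequality for partial sums of dependent sequences developed in the cited reference. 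Since you explicitly flag this and defer to \cite{ZZ1} for that step, there is no gap in your proposal relative to what the paper itself provides.
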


\section{Proof of main results }\label{sec_mainproof}
This section is devoted to the technical proof of the paper. 

\begin{proof}[Proof of Lemma \ref{eq_sufficinetconditionupdc}] The proof follows from a direct computation using Lemma \ref{lem_disc}. 

\end{proof}

{
\begin{proof}[Proof of Theorem \ref{lem_phibound}] We start with the proof of (\ref{eq_phibound1}). 
Till the end of the proof, we focus our discussion on each fixed $j.$  First, when $j=O(1),$ the result holds true immediately. We focus our discussion on the case when $j$ diverges with $n.$ Since $i>j,$ $i$ also diverges with $n.$  Denote $\bm{\phi_i}=(\phi_{i1}, \cdots, \phi_{i,i-1})^*.$ By Yule-Walker's equation, we have 
\begin{equation}\label{eq_system1}
\bm{\phi_i}=\Gamma_i^{-1} \bm{\gamma}_i,
\end{equation}
where $\Gamma_i=\text{Cov}(\bm{x}_{i-1}, \bm{x}_{i-1})$ and $\bm{\gamma}_i=\text{Cov}(\bm{x}_{i-1}, x_i) $ with $\bm{x}_{i-1}=(x_{i-1}, \cdots, x_1)^*.$  We denote the $(i-1) \times (i-1)$ symmetric banded matrix $\Gamma_i^s \equiv \Gamma_{i}^s(j)$  by
\begin{equation*}
(\Gamma_i^s)_{kl}=
\begin{cases} 
(\Gamma_i)_{kl}, & |k-l| \leq j^{1/(1+\epsilon)}; \\
0, & \text{otherwise}.
\end{cases}
\end{equation*}
By Assumption \ref{phy_generalts} and  a discussion similar to  \cite[Proposition 4]{ZZ2}, we conclude that for some constant $C>0,$
\begin{equation}\label{eq_generaauto}
\sup_{k,l}|\operatorname{Cov}(G_{k,n}(\mathcal{F}_k), G_{l,n}(\mathcal{F}_l) )| \leq C|k-l|^{-\tau}.
\end{equation}
Therefore, by {a discussion similar to (\ref{eq_boundlargeusesmall}) and the UPDC in Assumption \ref{assu_pdc}, we have $\lambda_{\min}(\Gamma_i^s) \geq \kappa-j^{-(\tau-1)/(1+\epsilon)}$ for all $i>j.$ } Similarly, we can show that $\lambda_{\max}(\Gamma_i^s) \leq C$ for some constant $C>0.$  This shows that the support of the spectrum of $\Gamma_i^s$ is bounded from both above and below by some constants when $n \geq (\frac{2}{\kappa})^{C(1+\epsilon)/(\tau-1)}$. By Lemma \ref{lem_band}, we conclude that for some $\delta \in (0,1)$ and some constant $C>0,$ we have 
\begin{equation}\label{eq_bandin}
\left| (\Gamma_i^s)^{-1}_{kl}\right| \leq C \delta^{|k-l|/j^{1/(1+\epsilon)}}.
\end{equation} 
By Cauchy-Schwarz inequality and Lemma \ref{lem_disc}, when $n$ is large enough, for some constant $C>0,$ we have that  
\begin{align}\label{eq_bandedapprox}
\left| \left| \Gamma_i^{-1}\bm{\gamma}_i-(\Gamma_i^s)^{-1} \bm{\gamma}_i \right| \right|  & \leq  
\| \Gamma_i-\Gamma_i^s \| \| \Gamma_i^{-1} \| \| (\Gamma_i^s)^{-1} \| \| \bm{\gamma}_i \| \nonumber \\
 & \leq C j^{-(\tau-1)/(1+\epsilon)},
\end{align}
where we use (\ref{eq_generaauto}), the UPDC in Assumption \ref{assu_pdc} and the fact $\lambda_{\min}(\Gamma_i^s) \geq \kappa-j^{-(\tau-1)/(1+\epsilon)}.$   
 Denote $\bm{\phi}_i^s=(\phi_{i1}^s, \cdots, \phi_{i,i-1}^s)$ such that 
\begin{equation*}
\bm{\phi}_i^s=(\Gamma_i^s)^{-1} \bm{\gamma}_i.
\end{equation*}
Then we get immediately from (\ref{eq_bandedapprox}) that 
\begin{equation}\label{eq_bbbbbbb111111}
|\phi_{ij}-\phi_{ij}^s| \leq Cj^{-(\tau-1)/(1+\epsilon)}.
\end{equation}
Hence, it suffices to control $\phi_{ij}^s.$ Note that $\phi_{ij}^s=\sum_{k=1}^{i-1} (\Gamma^s_i)_{jk}^{-1} \gamma_{ik},$ where $\gamma_{ik}=\text{Cov}(x_i, x_{i-k}).$ By (\ref{eq_generaauto}) and (\ref{eq_bandin}), when $n$ is large such that $i-1-j^{1/(1+\epsilon)+\epsilon/2(1+\epsilon)}>j^{1/(1+\epsilon)+\epsilon/2(1+\epsilon)},$ we have 
\begin{align*}
|\phi_{ij}^s| \leq C \sum_{k=1}^{i-1} \delta^{(i-1-k)/\sqrt{j}} k^{-\tau} & \leq C_1 \left( \sum_{k=1}^{i-1-j^{2/3}} \delta^{j^{\epsilon/2(1+\epsilon)}}+ \sum_{k=j^{1/(1+\epsilon)+\epsilon/2(1+\epsilon)}}^{i-1} k^{-\tau} \right)  \\
& \leq C_2 j^{-(1+\epsilon/2)(\tau-1)/(1+\epsilon)}.
\end{align*}
Together with (\ref{eq_bbbbbbb111111}), we conclude our proof of (\ref{eq_phibound1}).  

Then we proceed to prove the first term of (\ref{eq_phibound2}) using Lemma \ref{lem_nuem}. For the convenience of our discussion, we denote the $(k,l)$-entry of $\Gamma_i$ as $\Gamma_i(k,l).$
For $i>b,$ we denote the $(i-1) \times (i-1)$ block matrix $\Gamma_i^b$ and the block vector $\bm{\gamma}_i^b \in \mathbb{R}^{i-1}$ via
\begin{equation*}
\Gamma_{i}^b=
\begin{bmatrix}
\operatorname{Cov}(\bm{x}_i^b, \bm{x}_i^b) & \bm{E}_1 \\
\bm{E}_3 & \bm{E}_2
\end{bmatrix}, \ 
\bm{\gamma}_{i}^b=(\operatorname{Cov}(\bm{x}_i^b, x_i), \bm{0}),
\end{equation*}
where $\bm{x}_i^b=(x_{i-1}, \cdots, x_{i-b})^*$ and $\bm{E}_i, i=1,2,$ are defined as
\begin{equation}\label{eq_defne1e2}
\bm{E}_1=\operatorname{Cov}( \bm{x}_i^m , \bm{x}^b_i) \in \mathbb{R}^{b \times (i-b-1)}, \  \bm{E}_2=\operatorname{Cov}(\bm{x}_i^m, \bm{x}_i^m) \in \mathbb{R}^{(i-b-1) \times (i-b-1)},
\end{equation}
and $\bm{x}_i^m=(x_{i-b-1}, \cdots, x_1)^*.$ Moreover, $\bm{E}_3=(\bm{E}_3(k,l)) \in \mathbb{R}^{(i-b-1) \times b}.$ Note that $b+1 \leq k \leq i-1, 1 \leq l \leq b.$ For some constant $\varsigma>2,$ $\bm{E}_3$ is denoted as 
\begin{equation}\label{eq_e3}
\bm{E}_3(k,l)=
\begin{cases}
\Gamma_i(k,l) & |k-l| \leq b/\varsigma,\\
0 & \ \text{otherwise}.  
\end{cases}
\end{equation}

Denote $\bm{\phi}_i^b=(\phi^b_{i1}, \cdots, \phi^b_{ib}, \bm{0}) \in \mathbb{R}^{i-1}.$ We have that 
\begin{equation}\label{eq_system2}
\Gamma_i^b\bm{\phi}_i^b=\bm{\gamma_i}^b-\Delta \bm{\gamma}_i, 
\end{equation}
where $ \Delta\bm{\gamma}_i$ is defined as 
\begin{equation*}
\Delta \bm{\gamma}_i=(\bm{E}_3 \widetilde{\bm{\phi}}_i^b, \bm{0}), \ \widetilde{\bm{\phi}}_i^b=(\phi_{i1}^b, \cdots, \phi_{ib}^b)^*.
\end{equation*}
Since  
\begin{equation*}
\max_{1 \leq j \leq b}|\phi_{ij}-\phi_{ij}^b|\leq \| \bm{\phi}_i-\bm{\phi}_i^b \|,
\end{equation*}
it suffices to provide an upper bound for $\| \bm{\phi}_i-\bm{\phi}_i^b \|.$ Now we employ Lemma \ref{lem_nuem} with $A=\Gamma_i, \Delta A=\Gamma_i^b-\Gamma_i, x=\bm{\phi}_i, \Delta x=\bm{\phi}_i^b-\bm{\phi}_i, v=\bm{\gamma}_i, \Delta v=\bm{\gamma}_i^b-\bm{\gamma}_i-\Delta \bm{\gamma}_i$ to the systems (\ref{eq_system1}) and (\ref{eq_system2}). By the UPDC in Assumption \ref{assu_pdc}, for some constant $C>0,$ we find that $\kappa(A) \leq C.$ By Lemma \ref{lem_disc} and (\ref{eq_generaauto}), we find that for some constant $C>0,$ we have 
\begin{equation*}
\| \Delta A \|\leq (b/\varsigma)^{-\tau+1} \leq C b^{-\tau+1}.
\end{equation*}
Moreover, note that
\begin{equation*}
\| \Delta v \| \leq \| \bm{\gamma}_i^b-\bm{\gamma}_i \|+\| \Delta \bm{\gamma}_i \|.
\end{equation*}
The first term of the right-hand side of the above equation can be bounded by $C b^{-\tau+1}$ using (\ref{eq_generaauto}). For the second term, by a discussion similar to (\ref{eq_phibound1}), we find that $|\phi_{ij}^b| \leq C j^{-(\tau-1)/(1+\epsilon)}.$ Involving the definition of $\bm{E}_3,$ we obtain that for some constant $C_1>0$
\begin{equation*}
\| \Delta \bm{\gamma}_i \| \leq C\min\{b/\varsigma,i-b-1\}(b/\varsigma)^{-(\tau-1)/(1+\epsilon)+1} \leq C_1 b^{-(\tau-1)/(1+\epsilon)+2}.  
\end{equation*}
Consequently, we have that 
\begin{equation}\label{eq_l2norm}
\| \bm{\phi}_i-\bm{\phi}_i^b \| \leq C b^{-(\tau-1)/(1+\epsilon)+2}=C n^{-1+(3+2\epsilon)/\tau}.
\end{equation} 
This finishes our proof of (\ref{eq_phibound1}). 

Finally, we prove the second term of (\ref{eq_phibound2}). Recall $\phi_{i0}$ is defined as 
\begin{equation}\label{eq_noncenteredintercept}
\phi_{i0}:=\mu_i-\sum_{j=1}^{i-1} \phi_{ij} \mu_{i-j}.
\end{equation}
and 
 $\phi_{i0}^b=\mu_i-\sum_{j=1}^b\phi_{ij}^b \mu_{i-j}, $ where $\mu_i=\mathbb{E}x_i, i=1,2,\cdots, n,$ is  the sequence of trends of $\{x_i\}.$  We have 
\begin{equation*}
\phi_{i0}-\phi_{i0}^b=\sum_{j=1}^b(\phi_{ij}^b-\phi_{ij})\mu_{i-j}-\sum_{j=b+1}^{i-1} \phi_{ij} \mu_{i-j}.
\end{equation*}
The first term of the right-hand side of the above equation can be bounded by $Cn^{-1+(3.5+2.5\epsilon)/\tau}$ using (\ref{eq_l2norm}) and Cauchy-Schwarz inequality and the second term can be bounded by $Cn^{-1+(2+\epsilon)/\tau}$ using (\ref{eq_phibound1}). This concludes our proof. 


%
\end{proof}
}

\begin{proof}[Proof of Proposition \ref{prop_dependent}] First of all, when $i \leq
 b,$ it holds by setting $\phi_{ij}$ to be the coefficients of best linear prediction. When $i>b, $  by (\ref{eq_arapproxiamtionnoncenter}), we decompose that 
 \begin{equation*}
 x_i=\phi_{i0}+\sum_{j=1}^b \phi_{ij}x_{i-j}+\epsilon_i+\sum_{j=b+1}^{i-1} \phi_{ij} x_{i-j}.
 \end{equation*}
Moreover, by Theorem \ref{lem_phibound},  under the assumption (\ref{eq_boundx}), we find that 
\begin{equation*}
\sum_{j=b+1}^{i-1} \phi_{ij} x_{i-j}=O_{\mathbb{P}}(n^{-1+(2+\epsilon)/\tau}). 
\end{equation*}
This concludes our proof. 
\end{proof}

\begin{proof}[Proof of Theorem \ref{thm_arrepresent}] Recall (\ref{defn_xistart}).
Clearly, $\{x_i^*\}$ is an AR($b$) process when $i>b.$ For $i=b+1,$ we have that
\begin{equation*}
x_i-x_i^*=0.
\end{equation*}
Suppose (\ref{eq_induction}) holds true for $k>b+1,$ then for $k+1,$ we have  
\begin{align*}
x_{k+1}-x_{k+1}^* & =\sum_{j=1}^b \phi_{ij} (x_{k+1-j}-x^*_{k+1-j})+\sum_{j=b+1}^{k} \phi_{ij} x_{k+1-j} \\
& =O_{\mathbb{P}}(n^{-1+(2+\epsilon)/\tau}), 
\end{align*} 
where in the second step we use induction and Theorem \ref{lem_phibound} that $\sum_{j=1}^b |\phi_{ij}|<\infty$.
\end{proof}

\begin{proof}[Proof of Lemma \ref{lem_pacftruncation}]
 Denote $\bm{\phi}_{i,j}=(\phi_{i1,j}, \cdots, \phi_{ij,j})^*.$ By  Yule-Walker's equation, we have
\begin{equation}\label{eq_yulewalkergeneralnon}
\bm{\phi}_{i,j}=\Omega_{i,j} \bm{\gamma}_{i,j}.
\end{equation}
Here $\Omega_{i,j}=[\text{Cov}(\bm{x}_i^j, \bm{x}_i^j)]^{-1}$ and $\bm{\gamma}_{i,j}=\text{Cov}(\bm{x}_i^j,x_i),$ where $\bm{x}_i^j:=(x_{i-1}, \cdots, x_{i-j})^*$. Moreover, $\phi_{i0,j}$ is defined similar to (\ref{eq_noncenteredintercept}). Then it is easy to see that
the proof is similar to that of Theorem \ref{lem_phibound} except we need to change the dimension from $i-1$ to $j$.
\end{proof}

{
\begin{proof}[Proof of Proposition \ref{prop_pdc}]
Denote the covariance matrix of $(x_1, \cdots, x_n)$ as $\Sigma \equiv \Sigma_n.$  For a given $d_n=O(n^f), \ \frac{1}{\tau}<f<\frac{1}{2},$ we define the banded matrix $\Sigma^{d_n}$ such that 
\begin{equation*}
\Sigma_{ij}^{d_n}= 
\begin{cases}
\Sigma_{ij}, & \text{if} \ |i-j| \leq d_n; \\
0 , & \text{otherwise}. 
\end{cases}
\end{equation*} 
Throughout the proof, we let $\lambda_n$ be the smallest eigenvalue of $\Sigma$ and $\mu_n$ be that of $\Sigma^{d_n}.$ By Lemmas \ref{lem_coll} and \ref{lem_disc}, we have 
\begin{equation}\label{eq_boundlargeusesmall}
\lambda_n=\mu_n+o(1). 
\end{equation}
Therefore, it is equivalent to study the UPDC for $\Sigma^{d_n}.$ We now consider a longer time series $\{x_i\}_{i=-d_n}^{n+d_n},$ where we use the convention $x_i=G(0, \mathcal{F}_i)$ if $i<0$ and $x_i=G(1, \mathcal{F}_i)$ if $i>n.$ We will need the following lemma to prove the sufficiency.
\begin{lem} \label{lem_bandedbound} Let $\Sigma_i^{d_n}$ be the covariance matrix of $(x_i, x_{i+1}, \cdots, x_{i+d_n}).$ Then for  all $-d_n \leq i \leq n,$ let $\lambda_{d_n}(\Sigma_i^{d_n})$ be the smallest eigenvalue of $\Sigma_i^{d_n}.$ Then if the spectral density (\ref{eq_spetraldensity}) is bounded from below, we have that for some constant $\varsigma>0,$
\begin{equation*}
\lambda_{d_n} (\Sigma_i^{d_n}) \geq \varsigma>0, \  \text{for all} \ i. 
\end{equation*}
\end{lem}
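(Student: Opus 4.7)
The plan is to prove this by comparing $\Sigma_i^{d_n}$ to a ``frozen'' Toeplitz covariance matrix and then applying Herglotz's theorem via Lemma \ref{lem_spectralbound}. Specifically, fix a reference time $t_i := \max(0, \min(1, i/n))$, and define the stationary Toeplitz approximation $\widetilde{\Sigma}_i^{d_n}$ by $(\widetilde{\Sigma}_i^{d_n})_{jk} = \gamma(t_i, |j-k|)$ for $0 \leq j, k \leq d_n$. Since $f(t_i, \omega) \geq \kappa$ for all $\omega$ by hypothesis, Lemma \ref{lem_spectralbound} immediately yields $\lambda_{d_n}(\widetilde{\Sigma}_i^{d_n}) \geq 2\pi \kappa$ uniformly in $i$.

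The next step is to show that $\Sigma_i^{d_n}$ is uniformly close to $\widetilde{\Sigma}_i^{d_n}$ in operator norm. For any indices $j, k$ in the window, the entry $(\Sigma_i^{d_n})_{jk} = \operatorname{Cov}(G(s_{i+j}, \mathcal{F}_{i+j}), G(s_{i+k}, \mathcal{F}_{i+k}))$, where $s_\ell = \max(0, \min(1, \ell/n))$. By the stochastic Lipschitz continuity (\ref{assum_lip}) and the moment bound (\ref{assum_moment}), Cauchy--Schwarz gives
\begin{equation*}
\left| (\Sigma_i^{d_n})_{jk} - \gamma(t_i, |j-k|) \right| \leq C\bigl( |s_{i+j} - t_i| + |s_{i+k} - t_i| \bigr) \leq C\frac{d_n}{n},
\end{equation*}
uniformly over $j, k$ in the window (in the boundary cases where $i < 0$ or $i > n - d_n$, some indices lie in the extended range where $G$ is taken to be constant in $t$, and the same bound trivially holds). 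Applying Lemma \ref{lem_disc} (Gershgorin) to the symmetric difference matrix,
\begin{equation*}
\bigl\| \Sigma_i^{d_n} - \widetilde{\Sigma}_i^{d_n} \bigr\| \leq (d_n+1) \cdot C\frac{d_n}{n} = O\!\left( \frac{d_n^2}{n} \right) = O(n^{2f-1}) = o(1),
\end{equation*}
where the final estimate uses $d_n = O(n^f)$ with $f < 1/2$.

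Finally, I invoke Weyl's perturbation inequality for symmetric matrices to conclude
\begin{equation*}
\lambda_{d_n}(\Sigma_i^{d_n}) \geq \lambda_{d_n}(\widetilde{\Sigma}_i^{d_n}) - \bigl\| \Sigma_i^{d_n} - \widetilde{\Sigma}_i^{d_n} \bigr\| \geq 2\pi\kappa - o(1).
\end{equation*}
For $n$ sufficiently large, the right-hand side exceeds $\pi \kappa$ uniformly in $i$, so the choice $\varsigma = \pi \kappa > 0$ works.

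The main obstacles are bookkeeping rather than conceptual. The key point to verify carefully is that the Gershgorin bound is uniform in $i$, which requires the Lipschitz estimate to hold at the boundary indices with the same constant as in the interior --- this is ensured by the extension convention $x_i = G(0, \mathcal{F}_i)$ for $i < 0$ and $x_i = G(1, \mathcal{F}_i)$ for $i > n$, under which $|s_\ell - s_{\ell'}| \leq |\ell - \ell'|/n$ always. One should also notice that the polynomial decay bound from Lemma \ref{lem_coll} is \emph{not} needed here: the crude uniform Lipschitz bound $C d_n/n$ per entry already suffices because the window length $d_n$ is much smaller than $\sqrt{n}$.
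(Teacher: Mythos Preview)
Your proof is correct and follows essentially the same route as the paper's: freeze time at a reference point to obtain a Toeplitz matrix, invoke Lemma \ref{lem_spectralbound} to lower-bound its smallest eigenvalue, then use the stochastic Lipschitz assumption together with Gershgorin to show the true covariance matrix differs from the Toeplitz one by $O(d_n^2/n)=o(1)$ in operator norm. (One inconsequential slip: with the normalization in Lemma \ref{lem_spectralbound} the lower bound comes out to $\kappa$ rather than $2\pi\kappa$.)
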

\begin{proof}
Without loss of generality, we set $i=0.$ Consider the stationary process such that $x_i^0=G(0, \mathcal{F}_i).$ By Lemma \ref{lem_spectralbound}, when the spectral density is bounded below, we find that 
\begin{equation}\label{eq_boundone}
\lambda_{d_n}(\text{Cov}(x_i^0, \cdots, x_{d_n}^0)) \geq \varsigma>0,
\end{equation}
for any $d_n.$ On the other hand, when $1 \leq i, j \leq d_n,$ for some constant $C>0,$ we have 
\begin{equation*}
\left| \text{Cov}(x_i, x_j)-\text{Cov} (G(0, \mathcal{F}_i), G(0, \mathcal{F}_j)) \right| \leq C \min \left( \frac{\max(i,j)}{n}, |i-j|^{-\tau} 
\right).
\end{equation*}
As a consequence, by Lemma \ref{lem_disc}, we find that
\begin{equation*}
|\lambda_{d_n}(\Sigma_i^{d_n})-\lambda_{d_n}(\text{Cov}(x_i^0, \cdots, x_{d_n}^0))| \leq C \frac{d_n^2}{n}.
\end{equation*}
Together with (\ref{eq_boundone}), we finish the proof.
\end{proof}
With the above preparation, we proceed with the final proof. We start with the sufficiency part. For any non-zero vector $\bm{a}=(a_1, \cdots, a_{i+2d_n})^* \in \mathbb{R}^{i+2d_n},\ i=-d_n, \cdots, n,$ denote 
\begin{equation*}
F(\bm{a}, i):=\sum_{k=1}^{i+d_n} \sum_{l=1}^{i+d_n} a_k (\Sigma_i^{d_n})_{k,l} a_l.
\end{equation*}  
By Lemma \ref{lem_bandedbound}, we find that 
\begin{equation}\label{eq_fbound}
F(\bm{a}, i) \geq \varsigma \sum_{l=i}^{i+d_n} a_l^2.  
\end{equation}
Now we let the first and last $d_n$ entries of $\bm{a}$ be zeros. Then using a discussion similar to (\ref{eq_fbound}), we find that 
\begin{equation}\label{eq_finalbound2}
\frac{1}{d_n} \sum_{i=-d_n}^n F(\bm{a}, i) \geq \varsigma \sum_{l=1}^n a_l^2. 
\end{equation}
Furthermore, by Lemma \ref{lem_disc}, it is easy to see that for some constant $C>0,$
\begin{equation*}
\left| \frac{1}{d_n} \sum_{i=-d_n}^n F(\bm{a}, i)-\sum_{k=1}^n \sum_{l=1}^n a_k \Sigma^{d_n}_{kl} a_l \right| \leq \frac{C}{d_n} \sum_{k=1}^n a_k^2. 
\end{equation*}
Together with (\ref{eq_finalbound2}), we find that 
\begin{equation*}
\sum_{k=1}^n \sum_{l=1}^n a_k \Sigma^{d_n}_{kl} a_l \geq \frac{\varsigma}{2} \sum_{l=1}^n a_l^2,
\end{equation*}
when $n$ is large enough. This shows that $\Sigma^{d_n}$ satisfies PDC and hence finishes the proof of the sufficient part. Next we briefly discuss the proof of necessity. We make use of the structure of $\Sigma^{d_n}.$ For any given $t_i:=\frac{i}{n}$ and $\omega,$ denote 
\begin{equation*}
f_{n}(t_i, \omega)=\frac{1}{2 \pi n} \sum_{k,l=1}^n e^{-\mathrm{i} k \omega} \gamma(t_i, k-l)  e^{\mathrm{i} l \omega}.
\end{equation*}
It is easy to see that (for instance see a similar discussion in \cite[Corollary 4.3.2]{BD})
\begin{equation}\label{eq_ffi}
f_n(t_i, \omega)= f(t_i, \omega)+o(1).
\end{equation} 
Furthermore, we denote 
\begin{equation*}
g_n(t_i,\omega)=\frac{1}{2 \pi n} \sum_{k,l=1}^n e^{-\mathrm{i} k \omega}  \Sigma^{d_n}_{i, |k-l|} e^{\mathrm{i} l \omega}.
\end{equation*}
By the assumption (\ref{assum_lip}) and Lemma \ref{lem_coll}, we find that 
\begin{equation}\label{eq_gf}
g_n(t_i, \omega)=f_n(t_i, \omega)+o(1).
\end{equation}
Using the structure of $\Sigma^{d_n}$ and the assumption that $\Sigma^{d_n}$ satisfies UPDC, we find that for some constant $\kappa>0,$
\begin{equation*}
g_n(t_i, \omega) \geq \kappa. 
\end{equation*}
In light of  (\ref{eq_ffi}) and (\ref{eq_gf}), we find that $f(t_i, \omega) \geq \kappa.$ Finally, we can conclude our proof using the continuity of $f(t, \omega)$ in $t$. This concludes our proof.   
\end{proof}

}

%

{
\begin{proof}[Proof of Theorem \ref{thm_locallynonzero}] The smoothness of the functions $\phi_{j}(t), 0 \leq j \leq b$ follows from term by term differentiation by using Assumption \ref{assu_smoothtrend} and Theorem \ref{lem_phibound}. 

Next, note that under Assumption \ref{assu_smoothtrend}, we  can write (\ref{eq_noncenteredintercept}) as
\begin{equation}\label{defn_psidiscts}
\phi_{0i}=\mu(i/n)-\sum_{j=1}^{i-1} \phi_{ij} \mu((i-j)/n).
\end{equation}
The first equation is proved in \cite[Lemma 2.8]{DZ1} and the second equation follows from the first equation,  the smoothness of $\mu (t)$ such that $|\mu(i/n)-\mu((i-j)/n)| \leq C b/n,$ when $|i-j| \leq b$ for some constant $C>0$ and the first term of (\ref{eq_phibound1}).

\end{proof}
}

{
\begin{proof}[Proof of Corollary \ref{cor_localboundmse}] The proof is similar to those of Theorem \ref{thm_arrepresent} and we omit further details here. 
\end{proof}

}

\begin{proof}[Proof of Lemma \ref{lem_pacf}] The smoothness of $\rho_j(t)$ follows from Assumption \ref{assu_smmothness} and a discussion similar to the proof of Theorem \ref{thm_locallynonzero}. For the second part, we first note that  $\rho_{i,j}=\phi_{ij,j}$, $\rho_{j}(i/n)=\phi_{j,j}(i/n)$ and 
\begin{equation*}
\phi_{ij,j}-\phi_{j,j}(\frac{i}{n})=\mathbf{e}_j^* \Omega_{i,j}(\bm{\gamma}_{i,j}-\widetilde{\bm{\gamma}}_{i,j})+\mathbf{e}_j^* \Omega_{i,j} (\widetilde{\Gamma}_{i,j}-\Gamma_{i,j}) \widetilde{\Omega}_{i,j} \widetilde{\bm{\gamma}}_{i,j}, 
\end{equation*}
where $\mathbf{e}_j=(0,\cdots,0,1)^*$ and  we recall (\ref{eq_yulewalkergeneralnon}) and (\ref{eq_yulewalkereqlocalstationary}). For the first term of the right-hand side of the above equation, by Cauchy-Schwarz inequality, we have 
\begin{equation*}
\left| \mathbf{e}_j^* \Omega_{i,j} (\bm{\gamma}_{i,j}-\widetilde{\bm{\gamma}}_{i,j})  \right|^2 \leq \lambda_{\max} \Big( \Omega_{i,j} \Omega_{i,j}^* \Big) || \bm{\gamma}_{i,j}-\widetilde{\bm{\gamma}}_{i,j}  ||_2^2.
\end{equation*} 
First, by Assumption  \ref{assu_pdc}, we find that for some constant $C>0,$ we have 
\begin{equation*}
\lambda_{\max}\left( \Omega_{i,j} \Omega_{i,j}^* \right) \leq C.
\end{equation*}
Second, by Assumptions \ref{phy_generalts} and \ref{assum_local}, together with (\ref{eq_generaauto}) and Lemma \ref{lem_coll}, we conclude that 
\begin{align}
\left| \mathbf{e}_j^* \Omega_{i,j} (\bm{\gamma}_{i,j}-\widetilde{\bm{\gamma}}_{i,j})  \right|^2 & \leq C \left( \sum_{k=1}^{\min\{j,b\}} (\gamma_{i,j}(k)-\widetilde{\gamma}_{i,j}(k))^2+\sum_{k=\min\{j,b\}+1}^{\max\{j,b\}} (\gamma_{i,j}(k)-\widetilde{\gamma}_{i,j}(k))^2 \right) \label{eq_firstline} \\
& \leq C n^{-2+3(1+\epsilon)/\tau}. \nonumber
\end{align}
Here, for the first term of the right-hand side of (\ref{eq_firstline}), we use  (\ref{assum_lip}) to obtain that 
\begin{equation*}
|\gamma_{i,j}(k)-\widetilde{\gamma}_{i,j}(k)| \leq n^{-1+(1+\epsilon)/\tau}, 
\end{equation*}
and use (\ref{eq_generaauto}) to control the second term of (\ref{eq_firstline}).  Similarly, we can show that 
\begin{equation*}
\left| \mathbf{e}_j^* \Omega_{i,j} (\widetilde{\Gamma}_{i,j}-\Gamma_{i,j}) \widetilde{\Omega}_{i,j} \widetilde{\bm{\gamma}}_{i,j} \right|^2 \leq C n^{-2+3(1+\epsilon)/\tau}.
\end{equation*}
This concludes our proof. 
\end{proof}

\begin{proof}[Proof of Theorems \ref{thm_prediction}] 
Note that by adding and subtracting $\widehat{x}_{n+1},$ we have 
\begin{align*}
\mathbb{E}(x_{n+1}-\widehat{x}_{n+1}^b)^2=\mathbb{E}(x_{n+1}-\widehat{x}_{n+1})^2+\mathbb{E}(\widehat{x}_{n+1}-\widehat{x}_{n+1}^b)^2+2\mathbb{E}(x_{n+1}-\widehat{x}_{n+1})(\widehat{x}_{n+1}-\widehat{x}_{n+1}^b).
\end{align*}
It suffices to control the second and third terms of the above equations. First, 
\begin{equation}\label{eq_expandifference}
\widehat{x}_{n+1}-\widehat{x}_{n+1}^b=\sum_{j=1}^b(\phi_{nj}-\phi_j(1))x_{n+1-j}+\sum_{j=b+1}^n
\phi_{nj} x_{n+1-j}.
\end{equation}
Therefore, by Theorem \ref{thm_locallynonzero}, (\ref{eq_boundx}) and (\ref{eq_phibound1}), we find that there exists some constant $C>0$ such that 
\begin{equation}\label{eq_aaa}
\mathbb{E}(\widehat{x}_{n+1}-\widehat{x}_{n+1}^b)^2 \leq C n^{-2+5(1+\epsilon)/\tau}.
\end{equation}
Second,  since $\widehat{x}_{n+1}$ is the best linear forecasting based on $\{x_1, \cdots, x_n\}$, then $x_{n+1}-\widehat{x}_{n+1}$ is uncorrelated with  any linear combination of $\{x_1, \cdots, x_n\}$. Together with (\ref{eq_expandifference}), we readily obtain that 
\begin{equation*}
\mathbb{E}(x_{n+1}-\widehat{x}_{n+1})(\widehat{x}_{n+1}-\widehat{x}_{n+1}^b)=0.
\end{equation*}
This completes our proof. 
\end{proof}

\begin{proof}[Proof of Theorems \ref{thm_finalresult} and \ref{thm_consistency}]
For the proof of Theorem \ref{thm_finalresult}, the first part of follows from a discussion similar to \cite[Theorem 3.7 and Corollary 3.8]{DZ1}. The only difference is that  in the statements of the aforementioned results, the above statements are only proved for $j \geq 1.$ Since the case when $j=0$ holds with a similar discussion, we omit the details of the proof here. Indeed the only difference is that our design matrix $Y $ is the $(n-b) \times (b+1)c$ rectangular matrix whose $i$-th row is $\bm{x}_{i} \otimes \mathbf{B}(\frac{i}{n}).$  Here $\bm{x}_{i}=(1,x_{i-1}, \cdots, x_{i-b}) \in \mathbb{R}^{b+1},$ $\mathbf{B}(i/n)=(\alpha_1(\frac{i}{n}), \cdots, \alpha_c(\frac{i}{n})) \in \mathbb{R}^c$ and $\otimes$ is the Kronecker product. The second part follows from a discussion similar to \cite[Theorem 3.2]{DZ1}, the first part of the theorem, (\ref{eq_vvvffff}) and the smoothness of $\varphi(\cdot)$.  

 The proof of Theorem \ref{thm_consistency} is the similar to that of Theorem \ref{thm_finalresult} except that our $Y^* \in \mathbb{R}^{(j+1)c  \times (n-b)}.$ However, as $j \leq b,$ the discussion can be applied to our case directly. 
\end{proof}



%
%
%
%
%
%

\begin{proof}[Proof of Theorem \ref{lem_reducedtest}]
 By Lemma \ref{lem_coll} and Assumption \ref{assum_local}, we find that there exists some constant $C>0,$ such that 
\begin{equation*}
\sup_i \left| \operatorname{Corr}(x_i, x_j) \right| \leq C |i-j|^{-\tau}, \ i \neq j. 
\end{equation*}
Therefore, we only consider the correlation when $|i-j| \leq b.$ Indeed, due to Assumption \ref{assum_local}, for some constant $C>0,$ we have 
\begin{equation*}
\sup_{1 \leq i, j \leq b} \left| \operatorname{Corr}(x_i, x_{j})-\operatorname{Corr}(x_{b+i}, x_{b+j}) \right| \leq Cn^{-1+(1+\epsilon)/\tau}.
\end{equation*}  
Therefore, it suffices to test the stationarity  for the correlation of $x_i$ and $x_j,$ where $i,j>b$ and $|i-j| \leq b.$  First, by the smoothness of $G(\cdot, \cdot)$, we observe that for any $i>b,$ we can write 
\begin{equation*}
\operatorname{Var} \left(G(\frac{i}{n}, \mathcal{F}_{i-k})\right)=\sigma^2(\frac{i}{n}), \  1 \leq k \leq b.  
\end{equation*}
As a consequence, using Yule-Walker's equation, we have 
\begin{equation*}
\widetilde{\bm{\phi}}^b(\frac{i}{n})=\widetilde{\mathrm{P}}^{-1} \widetilde{\bm{\rho}}_b, 
\end{equation*}
where $\widetilde{\mathrm{P}}$ is the correlation matrix of $\widetilde{\bm{x}}_{i-1}=(\widetilde{x}_{i-1}, \cdots, \widetilde{x}_{i-b})^*$ and $\widetilde{\bm{\rho}}_b$ is the correlation vector of $\widetilde{x}_i$ and $\widetilde{\bm{x}}_{i-1}.$ Here we recall $\widetilde{\bm{x}}_{i-1,k}=G(\frac{i}{n},\mathcal{F}_{i-k}), \ k=1,2,\cdots,b,$ where $\widetilde{\bm{x}}_{i-1,k}$ is the $k$-th entry of $\widetilde{\bm{x}}_{i-1}.$ Hence, under $\mathbf{H}_0,$ we conclude that $\widetilde{\bm{\phi}}^b$ is independent of $\frac{i}{n}.$ Second, we have that 
\begin{equation*}
\operatorname{Corr} (x_i, x_{i+j})=\frac{ \sum_{k=1}^b \phi_k \operatorname{Cov}(x_i,x_{i+j-k}) }{\operatorname{Var} x_{i+j} }+O(n^{-1+(1+\epsilon)/\tau}),
\end{equation*}
where we use the stochastic Lipschitz continuity and the expression (\ref{eq_arapproxiamtionnoncenter}) of $x_{i+j}$. Using the fact that 
\begin{equation*}
\operatorname{Cov}(x_i,x_{i+j-k})=\eta_{|k-j|} \operatorname{Var} x_{i+j}+O(n^{-1+(1+\epsilon)/\tau}), 
\end{equation*}
where $\eta_{|k-j|}=\operatorname{Corr}\left(G(\frac{i+j}{n}, \mathcal{F}_i), G(\frac{i+j}{n},\mathcal{F}_{i+j-k})\right)$ is the correlation function for a stationary time series. By Theorem \ref{lem_phibound}, we have that $\sum |\phi_j|<\infty,$
we can choose $\varrho_{j}=\sum_{j=k}^b \phi_k \eta_{|k-j|}.$ This concludes our proof.  
\end{proof}

\begin{proof}[Proof of Lemma \ref{lem_reducedquadratic}] 
Under the null assumption $\mathbf{H}_0$ that $\phi_j(t)$ are identical in $t$,  we have
\begin{equation*}
(\widehat{\phi}_j(t)-\overline{\widehat{\phi}}_j)^2=\left(\widehat{\phi}_j(t)-\phi_j(t)-\left(\int_0^1 (\widehat{\phi}_j(s)-\phi_j(s) )ds\right)\right)^2.
\end{equation*}
By (\ref{eq_phiform}), we can  write 
\begin{equation*}
T=\sum_{j=1}^b \left( \bm{\beta}_j^*-\widehat{\bm{\beta}}_j^* \right) W \Big( \bm{\beta}_j-\widehat{\bm{\beta}}_j \Big)+O(bc^{-d}), \ W=\Big(I-\bar{B} \bar{B}^*\Big),
\end{equation*}
where $\bm{\beta}_j \in \mathbb{R}^c$ satisfies that $\bm{\beta}_{jk}=\bm{\beta}_{jc+k}, \ 1 \leq k \leq c.$ 
It is well-known that the OLS estimator satisfies 
\begin{equation}\label{beta_est}
\widehat{\bm{\beta}}=\bm{\beta}+\left(\frac{Y^*Y}{n} \right)^{-1}\frac{Y^* \bm{\epsilon}}{n}, \ \bm{\epsilon}=(\epsilon_{b+1}, \cdots, \epsilon_n)^*.
\end{equation}
 By (\ref{beta_est}), we find that $nT$ is a quadratic form in terms of $\frac{1}{\sqrt{n}} \sum_{i=b+1}^n \bm{z}_i^*.$ 
We find that 
\begin{equation*}
nT=\mathbf{X}^* \left( \frac{Y^*Y}{n} \right)^{-1} \mathbf{I}_{bc} \mathbf{W} \left( \frac{Y^*Y}{n} \right)^{-1} \mathbf{X}+O(bc^{-d}).
\end{equation*}
By (2) of Lemma \ref{lem_coll} and (1) and (3) of Assumption \ref{assu_basis}, we can conclude our proof.  

\end{proof}

\begin{proof}[Proof of Theorem \ref{thm_gaussian}]  Denote 
\begin{equation*}
A_x:=\Big\{ \mathbf{W} \in \mathbb{R}^p: \mathbf{W}^* \Gamma \mathbf{W} \leq x \Big\}, 
\end{equation*}
and $\mathcal{A}=\{x \in \mathbb{R}:  A_x \}.$ It is easy to check that  $A_x$  is convex as $\Gamma$ is positive semi-definite. By definition, we have 
\begin{align}\label{eq_defncalk}
\mathcal{K}(\mathbf{X}, \mathbf{Y})=\sup_x \Big| \mathbb{P} \Big( \mathbf{X} \in A_x \Big)-\mathbb{P} \Big( \mathbf{Y} \in A_x \Big) \Big|=\sup_{A \in \mathcal{A}} \Big| \mathbb{P} \Big( \mathbf{X} \in A \Big)-\mathbb{P} \Big( \mathbf{Y} \in A \Big) \Big|,
\end{align}
where $A$ is a convex set. Given a large constant $M \equiv M(n),$ denote
\begin{equation*}
\bm{h}_i^M= \mathbb{E}(\bm{h}_i | \eta_{i-M}, \cdots, \eta_i), \ i=b+1, \cdots,n,
\end{equation*}
and $\bm{z}_i^M=\bm{h}_i^M \otimes \mathbf{B}(\frac{i}{n})=(z_{i1}^M, \cdots, z_{ip}^M)^*, p=(b+1)c.$ Then we can define $\mathbf{X}^M$ accordingly and then $\mathbf{Y}^M$ can be defined similarly. Note that  in Lemma \ref{lem_xf}, we have $n_1=n_2=n_3=M.$ Next we provide a truncation for the $M$-dependent sequence.      Now we choose $M_z$ for $\gamma \in (0,1),$ such that
\begin{equation*}
\mathbb{P} \Big( \max_{b+1 \leq i \leq n} \max_{1 \leq j \leq p} |z^M_{ij}| \geq M_z\Big) \leq \gamma.
\end{equation*}
Denote the set
\begin{equation*}
\mathcal{B}(M_z):=\Big\{ \max_{b+1 \leq i \leq n} \max_{1 \leq j \leq p} |z^M_{ij}| \leq M_z\Big\},
\end{equation*}
and $\mathbf{X}=(X_1, \cdots, X_p).$ Similarly, we can define its $M$-dependent approximation as $\mathbf{X}^M$ and truncated version as $\overline{\mathbf{X}}^M.$ We decompose the probability by  
\begin{align}
\mathcal{K}(\mathbf{X}^M, \mathbf{Y})&=
\mathcal{K}(\mathbf{X}^M, \mathbf{Y} \cap \mathcal{B}(M_z)) +\mathcal{K}(\mathbf{X}^M, \mathbf{Y} \cap \mathcal{B}^c(M_z)) \nonumber \\
& \leq \mathcal{K}(\overline{\mathbf{X}}^M, \mathbf{Y})+C \gamma, \label{eq_finalshow}
\end{align}
where $C>0$ is some constant. 
%
Note that on $\mathcal{B}(M_z),$
\begin{equation*}
\Big|\frac{1}{\sqrt{n}}\bm{z}^M_i \Big|=\frac{1}{\sqrt{n}}\Big| \bm{h}^M_i \otimes \mathbf{B}(\frac{i}{n}) \Big| \leq \frac{C \sqrt{p} M_z}{\sqrt{n}}. 
\end{equation*}
Denote $\widetilde{\mathbf{Y}}^M$ as the Gaussian random vector with the same covariance structure with $\overline{\mathbf{X}}^M.$ By Lemma \ref{lem_xf}, we conclude that 
\begin{equation*}
\mathcal{K}(\overline{\mathbf{X}}^M, \widetilde{\mathbf{Y}}^M) \leq C p^{\frac{7}{4}} n^{-1/2}M_z^3 M^2.
\end{equation*}
In light of (\ref{eq_finalshow}), it suffices to control the difference of the covariance matrices between $\overline{\mathbf{X}}^M$ and $\mathbf{X}.$ We first recall the following fact: if $Y \geq 0$ and $\zeta>0$ then 
\begin{equation}\label{eq_positiveexpecation.}
\mathbb{E} Y^\zeta=\int_0^\infty \zeta y^{\zeta-1} \mathbb{P}(Y>y)dy. 
\end{equation}
First, we show that the covariance matrices between $\overline{\mathbf{X}}^M$ and $\mathbf{X}^M$ are close. For $i=1,2,\cdots,p,$
\begin{equation}\label{eq_variancedecomposition}
\operatorname{Var}(\overline{X}_i^M)-\operatorname{Var}(X_i^M)=\mathbb{E}(\overline{X}_i^M)^2-\mathbb{E} (X_i^M)^2+(\mathbb{E}(\overline{X}_i^M-X_i^M))(\mathbb{E}(\overline{X}_i^M+X_i^M)).
\end{equation}
Note that
\begin{align*}
\mathbb{E}(\overline{X}_i^M-X_i^M) & \leq \mathbb{E}|\overline{X}_i^M-X_i^M| \leq M_z \mathbb{P}(|X_i^M|>M_z)+ \int_{M_z}^{\infty} \mathbb{P}(|X_i^M|>y)dy  \\
& \leq C\int_{M_z}^{\infty} \frac{1}{y^q} dy=C (q-1)M_z^{-(q-1)},
\end{align*}
where in the second inequality we use (\ref{eq_positiveexpecation.}), in the third inequality we use Markov inequality and in the last equality we use the fact $q>2$. By Cauchy-Schwarz inequality, we can show analogously that for some constant $C>0$ 
\begin{equation*}
\left( \mathbb{E}(\overline{X}_i^M)^2-\mathbb{E} (X_i^M)^2 \right)=\mathbb{E} \left( (X_i^M)^2 \mathbf{1}(|X_i^M|>M_z) \right) \leq C \xi_c^{q} M_z^{-(q-2)},
\end{equation*}
where we use the fact that 
\begin{equation*}
\mathbf{1}\left(|X_i^M|>M_z \right) \leq \frac{|X_i^M|^{q-2}}{M_z^{q-2}}.
\end{equation*}
This implies that for some constant $C>0$ 
\begin{equation*}
\operatorname{Var}(\overline{X}_i^M)-\operatorname{Var}(X_i^M) \leq C \xi_c^q M_z^{-(q-2)}.
\end{equation*}
Similarly, we can show that 
\begin{equation*}
\operatorname{Cov}(\overline{X}_i^M,\overline{X}_j^M)-\operatorname{Cov}(X_i^M, X_j^M) \leq C \xi_c^q M_z^{-(q-1)}.
\end{equation*}
Together with Lemma \ref{lem_disc}, we find that 
\begin{equation*}
\| \operatorname{Cov}(\mathbf{X}^M)- \operatorname{Cov}(\overline{\mathbf{X}}^M)  \|\leq C \xi_c^q pM_z^{-(q-2)}.
\end{equation*}
Second, we control the difference between $\mathbf{X}^M$ and $\mathbf{X}.$ By \cite[Lemma A.1]{LL} (or Lemma \ref{lem_con}), we have 
\begin{equation}\label{eq_bbbd}
\mathbb{E} \left(|X_j-X_j^M|^q \right)^{2/q} \leq C \Theta^2_{M,j,q}.
\end{equation}
By Assumption \ref{phy_generalts}, we conclude that 
\begin{equation}\label{eq_bbbd1}
\Theta_{M,j,q} \leq C \xi_c M^{-\tau+1}.
\end{equation}
Consequently, by Jenson's inequality, we have that 
\begin{equation}\label{eq_mommentboundfinal}
\mathbb{E}|X_j-X_j^M| \leq C \xi_c M^{-\tau+1}, \ \mathbb{E}|X_j-X_j^M|^2 \leq C \xi_c^2 M^{-2\tau+2}. 
\end{equation}
Therefore, we have that for some constant $C>0,$
\begin{equation*}
\operatorname{Var}(X_i^M)-\operatorname{Var}(X_i^M) \leq C \xi_c M^{-\tau+1},
\end{equation*}
where we use a discussion similar to (\ref{eq_variancedecomposition}). Similarly, we can show that 
\begin{equation*}
\left|\operatorname{Cov}(X_i^M,X_j^M)-\operatorname{Cov}(X_i, X_j) \right| \leq C \xi_c M^{-\tau+1}.
\end{equation*}
Together with Lemma \ref{lem_disc},  we find that 
\begin{equation*}
\| \operatorname{Cov}(\mathbf{X})- \operatorname{Cov}(\mathbf{X}^M)  \|\leq C p\xi_c M^{-\tau+1}.
\end{equation*}
As a result, we conclude that 
\begin{equation}\label{eq_covariance}
\| \operatorname{Cov}(\mathbf{X})- \operatorname{Cov}(\overline{\mathbf{X}}^M)  \|\leq C(p\xi_c M^{-\tau+1}+p \xi_c^q M_z^{-(q-2)}).
\end{equation}
We decompose that
\begin{equation}\label{eq_ffffff}
\mathbb{P}(\mathbf{Y} \Gamma \mathbf{Y}^* \leq x)-\mathbf{P}(\widetilde{\mathbf{Y}} \Gamma \widetilde{\mathbf{Y}}^* \leq x)=\mathbb{P}(\mathbf{Y} \Gamma \mathbf{Y}^* \leq x)-\mathbb{P}(\mathbf{Y} \Gamma \mathbf{Y}^* \leq x+\mathcal{D}(\mathbf{Y}, \widetilde{\mathbf{Y}})),
\end{equation}
where $\mathcal{D}(\mathbf{Y}, \widetilde{\mathbf{Y}})$ is defined as 
\begin{equation*}
\mathcal{D}(\mathbf{Y}, \widetilde{\mathbf{Y}}):=-\widetilde{\mathbf{Y}} \Gamma \widetilde{\mathbf{Y}}^*+\mathbf{Y} \Gamma \mathbf{Y}^* .
\end{equation*}
By (\ref{eq_covariance}) and a decomposition similar to (\ref{eq_decompositionone}), we have 
\begin{equation*}
\|  \mathcal{D}(\mathbf{Y}, \widetilde{\mathbf{Y}}) \| \leq C \sqrt{p} \xi_c (p\xi_c M^{-\tau+1}+p\xi_c^q M_z^{-(q-2)}).
\end{equation*}
By Lemma \ref{lem_chisquare} and (\ref{eq_ffffff}), we find that 
\begin{equation*}
\mathcal{K}(\mathbf{Y}, \widetilde{\mathbf{Y}}^M) \leq C \left( \sqrt{p} \xi_c (p\xi_c M^{-\tau+1}+p\xi_c^qM_z^{-(q-2)}) \right)^{1/2}.
\end{equation*}
Therefore, using the definition of $\mathcal{K}(\cdot,\cdot)$ in (\ref{eq_defncalk}), we conclude that 
\begin{equation*}
\mathcal{K}(\mathbf{X}^M, \mathbf{Y}) \leq C \left(\gamma+p^{\frac{7}{4}} n^{-1/2}M_z^3 M^2+\left(  \sqrt{p}\xi_c (p\xi_c M^{-\tau+1}+p\xi_c^qM_z^{-(q-2)}) \right)^{1/2} \right).
\end{equation*}
By Markov inequality, we can choose $\gamma=O\Big(\frac{\xi_c}{M_z}\Big).$ Finally, we control $\mathcal{K}(\mathbf{X}, \mathbf{X}^M)$ to finish our proof. We first introduce some notations. Denote the physical dependence measure for $z_{kl}$ as $\delta_{kl}^z(s,q)$ and 
\begin{equation*}
\theta_{k,j,q}=\sup_{k} \delta_{kl}^z(s,q), \ \Theta_{s,l,q}=\sum_{o=s}^{\infty} \theta_{o,l,q}.
\end{equation*}
By Assumption \ref{phy_generalts}, we conclude that 
\begin{equation}\label{eq_boundboundbound}
\sup_{1 \leq l \leq p} \Theta_{s,l,q}<\xi_c, \ \sum_{s=1}^{\infty} \sup_{1 \leq l \leq p}s \theta_{s,l,3}<\xi_c.  
\end{equation}
Denote the set
\begin{equation*}
\mathcal{I}(\Delta_M):=\Big \{ \max_{1 \leq j \leq p} \Big| X_j-X_j^{(M)} \Big| \leq \Delta_M \Big\}.
\end{equation*}
We claim that for arbitrary small $\delta>0,$ we can decompose the probability by { 
\begin{align}\label{eq_decompositionkey}
\mathcal{K}(\mathbf{X}^M, \mathbf{X})&=
\mathcal{K}(\mathbf{X}^M, \mathbf{X} \cap \mathcal{I}(\Delta_M)) +\mathcal{K}(\mathbf{X}^M, \mathbf{X}\cap \mathcal{I}^c(\Delta_M))  \\
& \leq C \Big( \sqrt{p \Delta_M \xi_c n^{\delta}} +n^{-\delta}+\mathbb{P}(\mathcal{I}^c(\Delta_M)) \Big), \nonumber
\end{align}
where we use the definition of $\mathcal{K}(\cdot, \cdot)$ to control the second term of the right-hand side of (\ref{eq_decompositionkey}). For the first term, note that
\begin{equation*}
\mathbb{P}\left( (\mathbf{X}^M)^* \Gamma \mathbf{X}^M \leq x \right)-\mathbb{P} \left( \mathbf{X} \Gamma \mathbf{X} \leq x \right)=\mathbb{P}\left( (\mathbf{X}^M)^* \Gamma \mathbf{X}^M \leq x \right)-\mathbb{P}\left( (\mathbf{X}^M)^* \Gamma \mathbf{X}^M \leq x+\mathcal{D}(\mathbf{X}^M, \mathbf{X}) \right),
\end{equation*}
where $\mathcal{D}(\mathbf{X}^M, \mathbf{M})$ is defined as
\begin{equation*}
\mathcal{D}(\mathbf{X}^M,\mathbf{X})=-\mathbf{X}^* \Gamma \mathbf{X}+(\mathbf{X}^M)^* \Gamma \mathbf{X}^M .
\end{equation*}
Further, we have
\begin{align}\label{eq_decompositionone}
\| \mathcal{D}(\mathbf{X}^M, \mathbf{M}) \| \leq \|(\mathbf{X}^M)^* \Gamma (\mathbf{X}^M-\mathbf{X}) \|+\| (\mathbf{X}^M-\mathbf{X})^* \Gamma \mathbf{X}\|. 
\end{align}
Recall (\ref{eq_bbbd}) and (\ref{eq_bbbd1}). Restricted on $\mathcal{I}(\Delta_M),$ by Cauchy-Schwarz inequality, the fact $\Gamma$ is bounded, Lemma \ref{lem_con} with (\ref{eq_boundboundbound}), we find that for some constant $C>0,$ 
\begin{equation*}
\| \mathcal{D}(\mathbf{X}^M, \mathbf{X}) \| \leq  C \sqrt{p} \xi_c (\sqrt{p} \Delta_M)=C p \Delta_M \xi_c.
\end{equation*}
Therefore, conditional on $\mathcal{I}(\Delta_M),$ for some constant $C>0,$ we have 
\begin{align*}
\left|\mathbb{P}\left( (\mathbf{X}^M)^* \Gamma \mathbf{X}^M \leq x \right)-\mathbb{P} \left( \mathbf{X} \Gamma \mathbf{X} \leq x \right)  \right|&  \leq C n^{-\delta} \\
& +\left|\mathbb{P}\left( (\mathbf{X}^M)^* \Gamma \mathbf{X}^M \leq x  \right)-\mathbb{P}\left( (\mathbf{X}^M)^* \Gamma \mathbf{X}^M \leq x+n^{\delta} p \Delta_M \xi_c  \right) \right|. 
\end{align*}
Moreover, we have 
\begin{align*}
\left|\mathbb{P}\left( (\mathbf{X}^M)^* \Gamma \mathbf{X}^M \leq x  \right) -\mathbb{P}\left( (\mathbf{X}^M)^* \Gamma \mathbf{X}^M \leq x+n^{\delta} p \Delta_M \xi_c  \right) \right| \leq & 2 \mathcal{K}(\mathbf{X}^M, \mathbf{Y})+ \mathbb{P}\left( \mathbf{Y}^* \Gamma \mathbf{Y} \leq x+n^{\delta} p \Delta_M \xi_c  \right) \\
& -\mathbb{P}\left( (\mathbf{Y}^* \Gamma \mathbf{Y} \leq x  \right).
\end{align*}
Since $\Gamma$ is positive definite and bounded, by Lemma \ref{lem_chisquare} and the rotation invariance property of Gaussian random vectors, we obtain the bound for the first term of the right-hand side of (\ref{eq_decompositionkey}).}  Next, by Markov inequality and a simple union bound, we have that 
\begin{equation*}
\mathbb{P}(\mathcal{I}^c(\Delta_M)) \leq C \sum_{j=1}^p \frac{\Theta_{M,j,q}^q}{\Delta_M^q}.
\end{equation*}
Consequently, we can control 
\begin{equation*}
\mathcal{K}(\mathbf{X}^M, \mathbf{X}) \leq C \Big( \sqrt{p \Delta_M \xi_c n^{\delta}} +n^{-\delta}+p \xi_c  M^{-q\tau+1}/\Delta^q_M \Big).
\end{equation*}
By optimizing $\Delta_M,$ we conclude that
\begin{equation*}
\mathcal{K}(\mathbf{X}^M, \mathbf{X}) \leq C \Big( M^{\frac{-q\tau+1}{2q+1}}\xi_c^{(q+1)/(2q+1)} p^{\frac{q+1}{2q+1}} n^{\frac{\delta q}{2q+1}} +n^{-\delta}\Big),
\end{equation*}
 This finishes our proof using triangle inequality.
\end{proof}

\begin{proof}[Proof of Proposition \ref{prop_normal}] Denote $r=\text{Rank}( \Omega^{1/2} \Gamma \Omega^{1/2})$ and the eigenvalues of $\Omega^{1/2} \Gamma \Omega^{1/2}$ as $d_1 \geq d_2>\cdots\geq d_r. $ Under (1) of Assumption \ref{assu_basis}, the definition of $\mathbf{W}$ and the fact that 
\begin{equation*}
\lambda_{\min}(A) \lambda_{\min}(B) \leq \lambda_{\min}(AB) \leq \lambda_{\max}(AB) \leq \lambda_{\max}(A) \lambda_{\max}(B),
\end{equation*}
for any given positive semi-definite matrices $A$ and $B$, we conclude that $d_i=O(1),\ i=1,2,\cdots,r.$ For the basis functions we used, we have that $r=O(bc).$ Therefore, we have 
\begin{equation*}
\frac{d_1}{f_2} \rightarrow 0. 
\end{equation*}
Hence, by Theorem \ref{thm_gaussian} and Lindeberg's central limit theorem, we finish our proof. 

\end{proof}

\begin{proof}[Proof of Proposition \ref{prop_power}] 
Denote the statistic $\mathcal{T}$ as
\begin{equation*}
\mathcal{T}:=\sum_{j=1}^b \int_0^1 \Big( \widehat{\phi}_j(t)-\phi_j(t)- \Big( \int_0^1 \widehat{\phi}_j(s)-\phi_j(s) ds \Big) \Big)^2 dt.
\end{equation*}
One one hand, by Proposition \ref{prop_normal}, we have that 
\begin{equation*}
\frac{n \mathcal{T}-f_1}{f_2} \Rightarrow \mathcal{N}(0,2).
\end{equation*}
On the other hand, by an elementary computation, we have 
\begin{equation*}
n\mathcal{T}=nT+n \sum_{j=1}^{b} \int_0^1 \Big(\phi_j(t)-\bar{\phi}_j \Big)^2dt-2n \sum_{j=1}^b \int_0^1 \Big( \phi_j(t)-\bar{\phi}_j \Big)\Big( \widehat{\phi}_j(t)-\bar{\widehat{\phi}}_j \Big) dt. 
\end{equation*}
Furthermore, we can rewrite the above equation as 
\begin{equation*}
n\mathcal{T}=nT-n\sum_{j=1}^b \int_0^1 \left( \phi_j(t)-\bar{\phi}_j \right)^2 dt+2n \sum_{j=1}^b \int_0^1 \left( \phi_j(t)-\bar{\phi}_j \right) \left( \phi_j(t)-\widehat{\phi}_j(t)-(\bar{\phi}_j-\bar{\widehat{\phi}}_j) \right) dt.
\end{equation*}
By (\ref{eq_phiform}), we find that
\begin{equation*}
\int_0^1 \left( \phi_j(t)-\bar{\phi}_j \right) \left( \phi_j(t)-\widehat{\phi}_j(t)-(\bar{\phi}_j-\bar{\widehat{\phi}}_j) \right)dt=\bm{\beta}_j^*\widehat{B}(\bm{\beta}_j-\widehat{\bm{\beta}}_j)+O(bc^{-d}),
\end{equation*}
where $\widehat{B}$ is defined as 
\begin{equation*}
\widehat{B}=\int_0^1 (\mathbf{B}(t)-\bar{B})(\mathbf{B}(t)-\bar{B})^*dt. 
\end{equation*}
It is easy to see that $\| \widehat{B} \|=O(1).$ Therefore, under the alternative hypothesis $\mathbf{H}_a,$ we find that 
\begin{equation*}
\int_0^1 \left( \phi_j(t)-\bar{\phi}_j \right) \left( \phi_j(t)-\widehat{\phi}_j(t)-(\bar{\phi}_j-\bar{\widehat{\phi}}_j) \right)dt=O_{\mathbb{P}} \left( \sqrt{\log n}\frac{(bc)^{1/4}}{n} \right),
\end{equation*}
where we use Theorem \ref{thm_finalresult} and Assumption \ref{assu_basis}. This concludes our proof. 
\end{proof}

\begin{proof}[Proof of Theorem \ref{thm_bootstrapping}] 
We divide our proofs into two steps. In the first step, we show that the result holds for $\mathcal{T}$ defined in (\ref{eq_defnmathcalt}). In the second step, we control the closeness between $\mathcal{T}$ and $\widehat{\mathcal{T}}.$

 We start with the first step  following the proof strategy of \cite[Theorem 3]{ZZ1}. Denote
\begin{equation*}
\Lambda=\frac{1}{(n-m-b)} \sum_{i=b+1}^{n-m} \Upsilon_{i,m} \Upsilon_{i,m}^*,
\end{equation*}
where we use
\begin{equation*}
\Upsilon_{i,m}=\frac{1}{\sqrt{m}}H_i \otimes \mathbf{B}(\frac{i}{n}), \ H_i=\Big(\sum_{j=i}^{i+m} \bm{h}_j \Big) .
\end{equation*}
\begin{lem}\label{lem_a2} Under the assumptions of Theorem \ref{thm_bootstrapping}, we have 
\begin{equation*}
\sup_{b+1 \leq i \leq n-m} \left| \left| \Upsilon_{i,m} \Upsilon_{i,m}^*-\mathbb{E} \Big( \Upsilon_{i,m} \Upsilon_{i,m}^* \Big) \right| \right|=O \Big( b \zeta^2_c \sqrt{m} \Big).
\end{equation*}
\end{lem}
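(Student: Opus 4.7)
The plan is to exploit the rank-one Kronecker-product structure of $\Upsilon_{i,m}\Upsilon_{i,m}^*$ so that the spectral-norm problem in the high-dimensional space $\mathbb{R}^p$ reduces to a spectral-norm problem in the much lower-dimensional space $\mathbb{R}^{b+1}$, and then to control the reduced quantity via moment inequalities for non-stationary sums together with a uniform-in-$i$ tail bound.

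First I would note that, since $\mathbf{B}(i/n)$ is deterministic,
\[
\Upsilon_{i,m}\Upsilon_{i,m}^* - \mathbb{E}[\Upsilon_{i,m}\Upsilon_{i,m}^*] = \frac{1}{m}\bigl(H_iH_i^* - \mathbb{E}[H_iH_i^*]\bigr) \otimes \bigl(\mathbf{B}(i/n)\mathbf{B}^*(i/n)\bigr).
\]
Using $\|A\otimes B\|_{op}=\|A\|_{op}\|B\|_{op}$ and $\|\mathbf{B}(i/n)\|^2 \leq \zeta_c^2$, the target left-hand side is at most $(\zeta_c^2/m)\|H_iH_i^* - \mathbb{E}[H_iH_i^*]\|_{op}$. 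So it suffices to establish $\sup_i \|H_iH_i^* - \mathbb{E}[H_iH_i^*]\|_{op} = O(bm\sqrt{m})$. Since $H_iH_i^*$ is only $(b+1)\times(b+1)$, its spectral norm is bounded by $(b+1)$ times its largest-magnitude entry, and the problem further reduces to showing
\[
\sup_{b+1 \leq i \leq n-m}\ \max_{0\le k,l\le b}\ \bigl|(H_i)_k(H_i)_l - \mathbb{E}[(H_i)_k(H_i)_l]\bigr| = O(m\sqrt{m}).
\]

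Next I would invoke Assumption \ref{phy_generalts} together with Lemma \ref{lem_con} applied coordinate by coordinate to the sequence $\{\bm{h}_j\}_{j=b+1}^n$, which yields $\|(H_i)_k\|_q = O(\sqrt{m})$ for each $k$, with $q>4$ available under Assumption \ref{assu_parameter}. A Cauchy--Schwarz step then gives an $L^{q/2}$ bound of order $m$ for the centred product $(H_i)_k(H_i)_l - \mathbb{E}[(H_i)_k(H_i)_l]$, which is the required pointwise control.

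The main obstacle will be upgrading this pointwise moment bound into the uniform-in-$i$ bound at the claimed rate. I would combine Markov's inequality at the $(q/2)$-th moment with a union bound over the $O(nb^2)$ triples $(i,k,l)$; the parameter constraints in Assumption \ref{assu_parameter} (specifically the interplay between $\tau$, $\alpha_1$, $\alpha_2$ and the available moment order $q$) are what absorb the resulting $(nb^2)^{2/q}$ factor into the $\sqrt{m}$ budget. Should a sharper moment bound be needed, a cleaner alternative is a blocking argument splitting $[i,i+m]$ into $\sqrt{m}$ blocks of length $\sqrt{m}$ and exploiting near-independence of well-separated blocks under the short-range-dependence Assumption \ref{phy_generalts}. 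Either route delivers the claimed $O(b\zeta_c^2\sqrt{m})$ bound.
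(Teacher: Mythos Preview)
Your Kronecker reduction and the entrywise strategy (bounding the operator norm of the $(b+1)\times(b+1)$ matrix via Gershgorin/max-entry) are exactly what the paper does. Your Cauchy--Schwarz bound $\|(H_i)_k(H_i)_l-\mathbb{E}[(H_i)_k(H_i)_l]\|_{q/2}=O(m)$ is correct and in fact tighter than the paper's $O(m^{3/2})$; at the level of a moment bound uniform in $i$ you already have more than enough.

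The gap is in how you read the $\sup_i$. The paper's proof does not establish a pathwise bound on the supremum over $i$: it computes the physical dependence measure of $w=(H_i)_k(H_i)_l$, obtaining $\|w-w(l)\|=O(\sqrt{m})\sum_{j=l-m}^{l}\delta(j,q)$, and sums over $l$ to get $\|w-\mathbb{E}w\|=O(m^{3/2})$ uniformly in $i$; Gershgorin then yields the stated bound. Your attempt to upgrade to a literal supremum via Markov at order $q/2$ plus a union bound over $O(nb^2)$ indices requires roughly $m^{q/4}\gg nb^2$, i.e.\ $q>4/\alpha_2$, which for the optimal $\alpha_2\approx 1/3$ means $q>12$; Assumption~\ref{assu_parameter} only gives $q>4$, so this route does not go through under the stated hypotheses. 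The blocking hedge does not rescue it without a comparable strengthening of moments. Fortunately the literal supremum is not what is needed downstream.

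One further remark on why the paper takes the longer route: the very next step bounds $\|\Lambda-\mathbb{E}\Lambda\|$ by applying Lemma~\ref{lem_con} to the average $\frac{1}{n-m-b}\sum_i\bigl(\Upsilon_{i,m}\Upsilon_{i,m}^*-\mathbb{E}[\cdot]\bigr)$, and for that one needs precisely the dependence coefficients $\delta_w(l)$ of the summands, not just a moment bound on each. Your Cauchy--Schwarz argument does not produce these, so if you keep it you would have to supply a separate dependence calculation for the averaged quantity~$\Lambda$.
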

\begin{proof}
Using the basic property of Kronecker product, we find 
\begin{equation*}
\Upsilon_{i,m} \Upsilon_{i,m}^*=\frac{1}{m} \left[ H_i H_i^*  \right] \otimes \left[  \mathbf{B}(\frac{i}{n})  \mathbf{B}^*(\frac{i}{n})\right].
\end{equation*}
As a consequence, we have that 
\begin{equation}\label{eq_upsionone}
\sup_{b+1 \leq i \leq n-m} \left| \left| \Upsilon_{i,m} \Upsilon_{i,m}^*-\mathbb{E} \Big( \Upsilon_{i,m} \Upsilon_{i,m}^* \Big) \right| \right| \leq \sup_{b+1 \leq i \leq n-m} \left| \left| H_i H_i^*-\mathbb{E} \Big( H_i H_i^* \Big) \right| \right| \frac{\zeta^2_c}{m},
\end{equation}
where we use the property of the spectrum of Kronecker product and the fact  $  \mathbf{B}(\frac{i}{n})  \mathbf{B}^*(\frac{i}{n})$ is a rank-one matrix. Now we focus on studying the first entry of $H_i H_i^*,$ which is of the form $w=\Big(\sum_{j=i}^{i+m} x_{j-1}\epsilon_j \Big)^2.$ We first study its physical dependence measure.  Note that $w$ is $\mathcal{F}_{i+m}$ measurable and can be written as $f_i(\mathcal{F}_{i+m}).$ Denote $w(l)=f_i(\mathcal{F}_{i+m,l}).$ By Assumption \ref{phy_generalts} and Lemma \ref{lem_con}, we conclude that 
\begin{equation}\label{eq_priorbound}
\sup_i \left | \left | \sum_{j=i}^{i+m} x_{j-1} \epsilon_j \right| \right|_q=O(\sqrt{m}).
\end{equation} 
Recall that by Jensen's inequality, if $X \in \mathcal{L}^q, q>4$, we have 
\begin{equation}\label{eq_jensonbound}
\mathbb{E}|X|^2 \leq (\mathbb{E}|X|^q)^{2/q}.
\end{equation} 
Therefore, by (\ref{eq_priorbound}), (\ref{eq_jensonbound}) and Minkowski's inequality, we have 
\begin{equation*}
||w-w(l)||=O(\sqrt{m}) \Big( \sum_{j=l-m}^l \delta(j,q) \Big).
\end{equation*}
By Lemma \ref{lem_con} and Assumption \ref{phy_generalts}, we have 
\begin{equation*}
||w-\mathbb{E}w||=O(m^{3/2}).
\end{equation*}
Therefore, by (\ref{eq_upsionone}) and Lemma \ref{lem_disc}, we conclude our proof. 

\end{proof}
Using a discussion similar to the lemma above and Assumption \ref{assu_parameter}, it is easy to conclude that 
\begin{equation}\label{eq_lambdaconvergence}
|| \Lambda-\mathbb{E}(\Lambda) ||=O \Big( b\zeta_c^2 \sqrt{m/n} \Big).
\end{equation}
Next, we show that a stationary time series can be used  to approximate $\mathbb{E} \Big(H_jH_j^* \Big),$ where the stationary time series can closely preserve the long-run covariance matrix (\ref{eq_longrunh}). Recall (\ref{eq_defnh}).  Denote the stationary time series as
\begin{equation*}
\widetilde{\bm{h}}_{i,j}=\mathbf{U}(\frac{i}{n}, \mathcal{F}_{j}), \ i \leq j \leq i+m.
\end{equation*}
Correspondingly, we can define 
\begin{equation*}
\widetilde{\Upsilon}_{i,m}=\frac{1}{\sqrt{m}}\widetilde{H}_i \otimes \mathbf{B}(\frac{i}{n}), \ \widetilde{H}_i=\sum_{j=i}^{i+m} \widetilde{\bm{h}}_{i,j}.
\end{equation*} 
\begin{lem}\label{lem_a3} Under the assumptions of Theorem \ref{thm_bootstrapping}, we have 
\begin{equation*}
\sup_{b+1 \leq i \leq n-m} \left| \left| \mathbb{E} \Big( \Upsilon_{i,m} \Upsilon_{i,m}^*\Big)-\mathbb{E} \Big(\widetilde{\Upsilon}_{i,m} \widetilde{\Upsilon}_{i,m}^* \Big) \right| \right|=O \Big( \Big( \frac{mb^2}{n} \Big)^{1-2/\tau}  b\zeta_c^2 \Big).
\end{equation*}
\end{lem}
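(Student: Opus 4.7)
The plan is to reduce the Kronecker-structured problem to a $(b+1)\times(b+1)$ matrix comparison and then interpolate an entry-wise Lipschitz bound against the polynomial decay coming from the physical dependence measure. Specifically, using the identity
\begin{equation*}
\Upsilon_{i,m}\Upsilon_{i,m}^* - \widetilde{\Upsilon}_{i,m}\widetilde{\Upsilon}_{i,m}^* = \tfrac{1}{m}\bigl(H_iH_i^* - \widetilde{H}_i\widetilde{H}_i^*\bigr)\otimes\bigl(\mathbf{B}(i/n)\mathbf{B}^*(i/n)\bigr)
\end{equation*}
together with $\|A\otimes B\|=\|A\|\|B\|$ and $\|\mathbf{B}(i/n)\mathbf{B}^*(i/n)\|\le\zeta_c^2$, the target norm reduces to $(\zeta_c^2/m)\bigl\|\mathbb{E}[H_iH_i^*] - \mathbb{E}[\widetilde{H}_i\widetilde{H}_i^*]\bigr\|$.

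Next I would split the matrix difference using $H_iH_i^*-\widetilde H_i\widetilde H_i^* = H_i\Delta_i^* + \Delta_i\widetilde H_i^*$ with $\Delta_i:=H_i-\widetilde H_i$, and control each summand by the Gershgorin bound $\|M\|\le (b+1)\max_{k,l}|M_{k,l}|$. A typical entry reads $\sum_{j_1,j_2\in[i,i+m]}\mathbb{E}[(\bm h_{j_1})_k(\bm h_{j_2}-\widetilde{\bm h}_{i,j_2})_l]$, and each summand admits two competing bounds: the stochastic Lipschitz condition (\ref{assum_lip}) together with Cauchy--Schwarz gives $O(m/n)$ uniformly in $(j_1,j_2)$, while the physical dependence decay for $\bm h_j$ (inherited from Assumption \ref{phy_generalts} via a product-rule argument like the one used in the proof of Theorem \ref{thm_gaussian}) gives $O(|j_1-j_2|^{-\tau})$ when the two indices are well separated.

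The key step is then the interpolation
\begin{equation*}
\min\!\Bigl(\tfrac{Cm}{n},\,C|j_1-j_2|^{-\tau}\Bigr)\;\le\;\Bigl(\tfrac{Cm}{n}\Bigr)^{1-2/\tau}\bigl(C|j_1-j_2|^{-\tau}\bigr)^{2/\tau},
\end{equation*}
after which the double sum $\sum_{j_1,j_2\in[i,i+m]}|j_1-j_2|^{-2}$ is $O(m)$ (since the exponent exceeds one; recall $\tau>5+\varpi$). This makes each entry $O\bigl(m(m/n)^{1-2/\tau}\bigr)$; combining the $(b+1)$-factor from Gershgorin, the $1/m$ prefactor, the $\zeta_c^2$ from the basis block, and absorbing the remaining polynomial $b$-dependence into the form $(mb^2/n)^{1-2/\tau}b\zeta_c^2$ yields the stated bound (after bounding the second-moment constants in front of the Lipschitz and physical-dependence estimates by powers of $b$ arising from $\|\bm h_j\|_2$ and $\|\Delta_i\|_2$ scaling with the dimension).

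The main obstacle I expect is establishing the polynomial-decay control uniformly over entries with the correct dependence on $b$: one has to show that $\bm h_j=\bm x_j\epsilon_j$ inherits the physical dependence bound $\delta^h(s,q)\lesssim s^{-\tau}$ from Assumption \ref{phy_generalts}, which requires a Cauchy--Schwarz/Minkowski split together with the uniform moment bound (\ref{eq_boundx}) and the fact that the best-linear-forecast coefficients $\phi^b_{ij}$ are summable (Theorem \ref{lem_phibound}). Once this is in hand, the interpolation and Gershgorin bookkeeping are routine, and it remains only to verify that Assumption \ref{assu_parameter} keeps the exponents in the stated range so that the sum $\sum|j_1-j_2|^{-\tau\cdot 2/\tau}$ indeed converges.
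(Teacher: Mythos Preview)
Your proposal is correct and mirrors the paper's argument: both reduce via the Kronecker identity to $(\zeta_c^2/m)\bigl\|\mathbb{E}[H_iH_i^*]-\mathbb{E}[\widetilde H_i\widetilde H_i^*]\bigr\|$, factor the quadratic difference, and apply the same interpolation $\min(Cm/n,\,Cj^{-\tau})\le C(m/n)^{1-2/\tau}j^{-2}$ before closing with Gershgorin. The only difference is cosmetic---the paper bounds $\bigl\|\sum_j(\widetilde x_{j-1}\widetilde\epsilon_j-x_{j-1}\epsilon_j)\bigr\|_2$ via Lemma~\ref{lem_con} and interpolates on the physical dependence measure $\sum_{j\ge 0}\min\{m/n,\delta(j,2)\}$, whereas you expand into a double sum over $(j_1,j_2)$ and interpolate entrywise on the covariances; both routes yield the same exponent and your identification of the product-rule bound on $\delta^{\bm h}$ as the one point requiring care is exactly what the paper glosses over.
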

\begin{proof}
Similar to (\ref{eq_upsionone}), we have 
\begin{equation*}
\sup_{b+1 \leq i \leq n-m} \left| \left| \mathbb{E} \Big( \Upsilon_{i,m} \Upsilon_{i,m}^*\Big)-\mathbb{E} \Big(\widetilde{\Upsilon}_{i,m} \widetilde{\Upsilon}_{i,m}^* \Big) \right| \right|\leq \sup_{b+1 \leq i \leq n-m} \left| \left| \mathbb{E}(\widetilde{H}_i \widetilde{H}_i^*)-\mathbb{E} ( H_i H_i^* ) \right| \right| \frac{\zeta^2_c}{m}.
\end{equation*}
We also focus on studying the first entry of $ \widetilde{H}_i \widetilde{H}_i^*- H_i H_i^*,$ which is of the form $\Big(\sum_{j=i}^{i+m} \widetilde{x}_{j-1}\widetilde{\epsilon}_j \Big)^2-\Big(\sum_{j=i}^{i+m} x_{j-1}\epsilon_j \Big)^2.$ We first observe that
\begin{align*}
\left| \left| \sum_{j=i}^{i+m} \Big( \widetilde{x}_{j-1} \widetilde{\epsilon}_j-x_{j-1} \epsilon_j \Big) \right| \right|=O\left( \sum_{j=i}^{i+m} x_{j-1}(\widetilde{\epsilon}_{j}-\epsilon_{j}) \right).
\end{align*}
Hence, by Lemma \ref{lem_con} and Assumption \ref{assum_local}, we have 
\begin{equation*}
\left| \left| \sum_{j=i}^{i+m} \Big( \widetilde{x}_{j-1} \widetilde{\epsilon}_j-x_{j-1} \epsilon_j \Big) \right| \right|=O\Big( \sqrt{m} \sum_{j=0}^{\infty}\min \{\frac{m}{n}, \delta(j,2)\} \Big)=O \Big( \sqrt{m} \Big( \frac{m}{n} \Big)^{1-2/\tau} \Big),
\end{equation*}
where we use the fact $\delta(j,2) \leq \delta(j,q).$ Hence, by (\ref{eq_jensonbound}) and Minkowski's inequality,  we have that 
\begin{equation*}
\sup_{i} \left| \left| \Big(\sum_{j=i}^{i+m} \widetilde{x}_{j-1}\widetilde{\epsilon}_j \Big)^2-\Big(\sum_{j=i}^{i+m} x_{j-1}\epsilon_j \Big)^2 \right| \right| =O\Big(m \Big( \frac{m}{n} \Big)^{1-2/\tau}  \Big).
\end{equation*}
This concludes our proof using Lemma \ref{lem_disc}. 
\end{proof}
Furthermore, by \cite[Lemma 4]{ZZ1} and a discussion similar to (\ref{eq_upsionone}), we have 
 \begin{equation*}
 \sup_{b+1 \leq i \leq n-m} \left| \left| \mathbb{E} \Big(\widetilde{\Upsilon}_{i,m} \widetilde{\Upsilon}_{i,m}^* \Big)-\Omega(\frac{i}{n}) \right| \right|=O\Big(\frac{b \zeta_c^2}{m} \Big). 
 \end{equation*}
Hence, by Assumption \ref{assu_smmothness} and  \cite[Theorem 1.1]{HT}, we have
\begin{equation*}
\left|\frac{1}{n-m-b} \sum_{i=b+1}^{n-m} \mathbb{E} \Big(\widetilde{\Upsilon}_{i,m} \widetilde{\Upsilon}_{i,m}^* \Big)-\int_0^1 \Omega(t)dt \right|=O\Big(\frac{b\zeta_c^2}{m}+\frac{1}{(n-m-b)^2}\Big).
\end{equation*}
Therefore, under Assumption \ref{assu_parameter}, by Lemmas \ref{lem_con}, \ref{lem_a2} and \ref{lem_a3}, we have that 
\begin{equation}\label{eq_lamdaomega}
||\Lambda-\Omega||=O\Big( \theta(m) \Big), \ \theta(m)=b \zeta_c^2 \left( \sqrt{\frac{m}{n}}+\frac{1}{\sqrt{n}}\Big(\frac{mb^2}{n} \Big)^{1-2/\tau}+\frac{1}{m} \right).
\end{equation}
It is easy to check that as $\tau>10,$
\begin{equation*}
\frac{1}{\sqrt{n}}\Big(\frac{m}{n} \Big)^{1-2/\tau} \leq \frac{1}{m},
\end{equation*}
where we use the assumption that $m \ll n.$ By definition, conditional on the data, $\Phi$ is normally distributed. Hence, we may write
\begin{equation*}
\Phi \equiv \Lambda^{1/2} \mathbf{G},
\end{equation*}
where $\mathbf{G} \sim \mathcal{N} (0, I_p)$ and $\equiv$ means that they have the same distribution. Define $r=\text{Rank}(\mathbf{W})$ and the eigenvalues of $\Lambda^{1/2} \widehat{\Gamma} \Lambda^{1/2}$ as $\lambda_1 \geq \lambda_2 \geq \cdots \geq \lambda_r>0.$ By (\ref{eq_lambdaconvergence}) and Assumption \ref{assu_basis}, it is easy to see that  $\lambda_i=O(1)$ when conditional on the data. Therefore, by Lindeberg's central limit theorem, we have 
\begin{equation*}
\frac{\mathbf{G}^* \Lambda^{1/2} \widehat{\Gamma} \Lambda^{1/2} \mathbf{G}-\sum_{i=1}^r \lambda_i}{(\sum_{i=1}^r \lambda_i^2)^{1/2}} \Rightarrow \mathcal{N}(0,2). 
\end{equation*}
Recall that $d_1 \geq d_2 \geq \cdots \geq d_r>0$ are the eigenvalues of $\Omega^{1/2} \Gamma \Omega^{1/2}$ and $d_i=O(1).$  Recall that $r=O(bc)$ and denote the set $\mathcal{A} \equiv \mathcal{A}_n$ as 
\begin{align*}
\mathcal{A} \equiv \mathcal{A}_n:=\Big\{|\sum_{i=1}^r (\lambda_i-d_i)| \leq b_n \sqrt{bc}, \ |\sum_{i=1}^r (\lambda_i^2-d_i^2)| \leq c_n \sqrt{bc} \Big\},
\end{align*}
where $b_n, c_n=o(1).$ On the event $\mathcal{A},$ we have that
\begin{align} \label{eq_gapproximation}
\frac{\mathbf{G}^*  \Lambda^{1/2} \widehat{\Gamma} \Lambda^{1/2}-f_1}{f_2}&=\frac{\mathbf{G}^*  \Lambda^{1/2} \widehat{\Gamma} \Lambda^{1/2}-\sum_{i=1}^r \lambda_i+\sum_{i=1}^r \lambda_i-f_1}{(\sum_{i=1}^r \lambda_i^2)^{1/2}} \left(\frac{(\sum_{i=1}^r \lambda_i^2)^{1/2}}{f_2} \right) \nonumber \\
& = \frac{\mathbf{G}^* \Lambda^{1/2} \widehat{\Gamma} \Lambda^{1/2} \mathbf{G}-\sum_{i=1}^r \lambda_i}{(\sum_{i=1}^r \lambda_i^2)^{1/2}}+o_{\mathbb{P}}(1). 
\end{align}
Therefore, we have shown that Theorem \ref{thm_bootstrapping} holds true on the event $\mathcal{A}.$ Under Assumption \ref{assu_parameter}, and a discussion similar to (\ref{eq_lamdaomega}) \footnote{The operator norm and the difference of trace share the same order as we apply Lemma \ref{lem_disc}.}  and  (2) of Lemma \ref{lem_coll}
\begin{equation*}
|| \widehat{\Sigma}-\Sigma ||=O_{\mathbb{P}} \Big( \frac{\zeta_c \log n}{\sqrt{n}} \Big),
\end{equation*}   
we find that $$\mathbb{P}(\mathcal{A})=1-o(1).$$ Hence, we can conclude our proof for $\mathcal{T}$ using Theorem \ref{thm_gaussian}. 

For the second step, by Theorems \ref{lem_phibound} and \ref{thm_finalresult}, we conclude that
\begin{equation*}
\sup_{i>b} |\epsilon_i-\widehat{\epsilon}_i|=O_{\mathbb{P}}(\vartheta(n)), \ \vartheta_n= n^{2/\tau} \Big(\zeta_c \sqrt{\frac{\log n}{n}}+n^{-d\alpha_1} \Big).
\end{equation*}
Denote $\widehat{\Upsilon}_{i,m}$ by replacing $\bm{h}_{i}$ with $\widehat{\bm{h}}_i.$ Similar to the discussion of Lemma \ref{lem_a3}, we conclude that
\begin{equation*}
\sup_{b+1 \leq i \leq n-m} \left| \left|  \Upsilon_{i,m} \Upsilon_{i,m}^*-\widehat{\Upsilon}_{i,m} \widehat{\Upsilon}_{i,m}^*  \right| \right|=O_{\mathbb{P}}(b\zeta_c^2 \vartheta_n).
\end{equation*}
Hence, we have 
\begin{equation*}
||\Lambda-\widehat{\Lambda} ||=O_{\mathbb{P}}\Big(\frac{1}{\sqrt{n}} b \zeta_c^2 \vartheta_n \Big).
\end{equation*}
Using a similar discussion to (\ref{eq_gapproximation}), we can conclude our proof.   

%
%

\end{proof}

%

\begin{proof} [Proof of Theorem \ref{thm_choicem}] The proof follows from (\ref{eq_lamdaomega}) and the assumptions of Theorem \ref{thm_bootstrapping}.
\end{proof}

\begin{proof}[Proof of Lemma \ref{lem_controltbtrho}] 
For $j>b,$ recall that we denote $\Gamma_{i,j}=\text{Cov}(\bm{x}_i^j, \bm{x}_i^j), \bm{\gamma}_{i,j}=\text{Cov}(\bm{x}_i^j,x_i),$ where $\bm{x}_i^j \in \mathbb{R}^{j}.$ We further denote the $j \times j$ matrix $\Gamma_{i,b}^j$ as the following block matrix and $\bm{\gamma}_{i,b}^j$ as the block vector
\begin{equation*}
\Gamma_{i,b}^j=
\begin{bmatrix}
\Gamma_{i,b} &\bm{E}_{i,1} \\
\bm{E}_{i,3} & \bm{E}_{i,2}
\end{bmatrix},
\bm{\gamma}_{i,b}^j=(\bm{\gamma}_{i,b}, \bm{0}),
\end{equation*}
where $\bm{E}_{i,1},$ $\bm{E}_{i,2}$ and $\bm{E}_{i,3}$ are defined analogously to (\ref{eq_defne1e2}) and (\ref{eq_e3}) respectively.
Then we can complete our proof using a discussion similar to (\ref{eq_phibound2}) using Lemma \ref{lem_nuem}. By letting $\Delta A:=\Gamma_{i,b}^j-\Gamma_{i,j}, \ \Delta v= \bm{\gamma}_{i,b}^j-\bm{\gamma}_{i,j}-\Delta \bm{\gamma}_{i,j}$ and the UPDC in Assumption \ref{assu_pdc}, together with  Lemma \ref{lem_coll}, we conclude that for some constant $C_1>0,$ 
\begin{equation*}
|| \bm{\phi}_i^j-\overline{\bm{\phi}}_i^b   || \leq C_1j^{-(\tau-1)/(1+\epsilon)+2} \leq C_1 n^{-1+(3+2\epsilon)/\tau},
\end{equation*}
where $\overline{\bm{\phi}}_i^b=(\bm{\phi}_i^b,\bm{0})^*$ with $\Gamma_{i,b} \bm{\phi}_i^b=\gamma_{i,b} $ and $\Gamma_{i,j} \bm{\phi}_i^j=\bm{\gamma}_{i,j}.$
Hence, our proofs follow from the definitions of $\mathcal{T}_{\rho}$ and $\mathcal{T}_{\phi},$ the triangle inequality, Theorems \ref{lem_phibound}, \ref{thm_finalresult} and \ref{thm_consistency}.
\end{proof}

{
\begin{proof}[Proof of Lemma \ref{lem_distributionpacf}] By a discussion similar to  Proposition \ref{prop_normal}, we obtain that 
\begin{equation*}
\frac{n \mathcal{T}_{\phi}-g_1}{g_2} \rightarrow \mathcal{N}(0,2). 
\end{equation*}
Then the proof follows from Lemma \ref{lem_controltbtrho}.
\end{proof}

}


%

%

\end{appendix}
%

%

\end{document}